\documentclass[a4paper, oneside]{amsart}

\usepackage{enumitem}
\usepackage{dsfont}
\usepackage{amssymb,amsthm,amsmath}
\usepackage{mathrsfs,mathtools}

\usepackage[usenames,dvipsnames]{xcolor}
\usepackage[colorlinks=true,linkcolor=Red,citecolor=Green]{hyperref}

\usepackage{tikz}

\usepackage[depth=2]{bookmark}

\usepackage{microtype}

\theoremstyle{plain}
\newtheorem{prop}{Proposition}[section]
\newtheorem{lemma}[prop]{Lemma}
\newtheorem{thm}[prop]{Theorem}
\newtheorem{theorem}{Theorem}

\newtheorem{cor}[prop]{Corollary}

\theoremstyle{definition}
\newtheorem{remdef}[prop]{Remark and Definition}
\newtheorem{example}[prop]{Example}
\newtheorem{remark}[prop]{Remark}

\theoremstyle{remark}

\makeatletter
\@namedef{subjclassname@2020}{%
  \textup{2020} Mathematics Subject Classification}
\makeatother

\DeclareMathOperator{\Iso}{Iso}
\DeclareMathOperator{\sing}{Sing}
\DeclareMathOperator{\cusps}{Cusps}
\DeclareMathOperator{\singcusp}{sc}
\newcommand{\thin}{\hspace{.5pt}}
\newcommand{\ssec}{\textnormal{sec}}
\newcommand{\ff}{\textit{ff}}
\newcommand{\fc}{\textit{fc}}
\newcommand{\cf}{\textit{cf}}
\newcommand{\cc}{\textit{cc}}
\newcommand{\twist}{\chi} 
\newcommand{\ZZ}{\mathbb{Z}}
\newcommand{\CC}{\mathbb{C}}
\DeclareMathOperator{\FP}{FP}
\DeclareMathOperator{\Isom}{Isom}
\DeclareMathOperator{\SL}{SL}
\DeclareMathOperator{\PSL}{PSL}
\DeclareMathOperator{\Unit}{U}
\DeclareMathOperator{\diag}{diag}
\DeclareMathOperator{\Tr}{Tr}
\DeclareMathOperator{\tr}{tr}
\DeclareMathOperator{\Ima}{Im}
\DeclareMathOperator{\Rea}{Re}
\DeclareMathOperator{\supp}{supp}
\DeclareMathOperator{\pr}{pr}
\DeclareMathOperator{\res}{res}
\DeclareMathOperator{\Ind}{Ind}
\DeclareMathOperator{\Stab}{Stab}

\newcommand{\parab}{\textnormal{par}}
\newcommand{\parprim}{\textnormal{pp}}
\newcommand{\hyp}{\textnormal{hyp}}
\newcommand{\hypprim}{\textnormal{hp}}
\newcommand{\ellip}{\textnormal{ell}}
\newcommand{\ellprim}{\textnormal{ep}}
\newcommand{\interior}{\textnormal{int}}

\newcommand\N{\mathbb{N}}
\newcommand\Q{\mathbb{Q}}
\newcommand\R{\mathbb{R}}
\newcommand\Z{\mathbb{Z}}
\newcommand\C{\mathbb{C}}
\newcommand{\h}{\mathbb{H}}

\newcommand{\mc}[1]{\mathcal #1}
\newcommand{\wt}{\widetilde}
\newcommand{\wh}{\widehat}

\newcommand{\eps}{\varepsilon}
\DeclareMathOperator{\id}{id}

\newcommand{\mat}[4]{\begin{pmatrix} #1&#2\\#3&#4\end{pmatrix}}
\newcommand{\bmat}[4]{\begin{bmatrix} #1&#2\\#3&#4\end{bmatrix}}
\newcommand{\textmat}[4]{\left(\begin{smallmatrix} #1&#2 \\ #3&#4
\end{smallmatrix}\right)}
\newcommand{\textbmat}[4]{\left[\begin{smallmatrix} #1&#2 \\ #3&#4
\end{smallmatrix}\right]}

\newcommand{\setmid}{\;:\;}

\newcommand{\Sph}{\mathbb{S}}
\newcommand{\FF}{\mathbf{F}}

\DeclarePairedDelimiter\abs{\lvert}{\rvert}
\DeclarePairedDelimiter\norm{\lVert}{\rVert}
\DeclarePairedDelimiter\ang{\langle}{\rangle}
\DeclarePairedDelimiter\floor{\lfloor}{\rfloor}

\newcommand{\bundle}{E_\twist}
\newcommand{\topo}{\textnormal{top}}
\newcommand{\eulertop}{\chi_e^\topo}
\newcommand{\orb}{\textnormal{orb}}
\newcommand{\eulerorb}{\chi_e^\orb}
\DeclareMathOperator{\EV}{EV}
\DeclareMathOperator{\EVP}{EVP}

\DeclareMathOperator{\HS}{HS}
\newcommand{\loc}{\textnormal{loc}}
\newcommand{\cpt}{\textnormal{cpt}}
\newcommand{\hol}{\textnormal{hol}}

\DeclareMathOperator{\zTr}{0-Tr}
\DeclareMathOperator{\zVol}{0-Vol}
\newcommand{\zInt}[1]{ \sideset{^0}{}{\!\!\!\int_{#1}} }
\newcommand{\pa}{\partial}

\newcommand{\ResTwist}{R_{X,\twist}}
\newcommand{\Prd}{\mathcal{P}}
\newcommand{\ProdTwist}{\mathcal{P}_{X,\twist}}
\newcommand{\LapTwist}{\Delta_{X,\twist}}
\newcommand{\ResSet}{\mathcal{R}}
\newcommand{\rel}{\textnormal{rel}}

\newcommand{\group}{\mathit{\Gamma}}
\newcommand{\gammafunc}{\mathrm{\Gamma}}

\makeatletter
\renewcommand\subsubsection{\@secnumfont}{\bfseries}%
\renewcommand\subsubsection{\@startsection{subsubsection}{3}
  \z@{.5\linespacing\@plus.7\linespacing}{-.5em}%
  {\normalfont\bfseries}}
\makeatother

\usepackage[style=rwtmath, dashed=true]{biblatex}
\begin{document}

\title{The divisor of the twisted Selberg zeta function}
\author[M.~Doll]{Moritz Doll}
\address{Moritz Doll, The University of Melbourne, Department of Electrical and Electronic Engineering, Parkville, VIC 3010, Australia}
\email{moritz.doll@unimelb.edu.au}

\author[A.~Pohl]{Anke Pohl}
\address{Anke Pohl, University of Bremen, Department 3: Mathematics and Computer Science, Institute for Dynamical Systems, Bibliothekstr.~5,  28359 Bremen, Germany}
\email{apohl@uni-bremen.de}

\subjclass[2020]{Primary: 11M36, 58J50; Secondary: 30F35}
\keywords{Selberg zeta function, unitary representation, divisor, spectral interpretation of zeros}

\begin{abstract} 
For the Selberg zeta function of geometrically finite infinite-area hyperbolic orbisurfaces with twists by finite-dimensional unitary representations, we establish a factorization formula in terms of a Weierstrass product of the Laplace resonances of the considered hyperbolic orbisurface, Barnes G-functions, gamma functions, and the singularity degrees of the representation. We thereby provide an interpretation of the zeros and poles of the Selberg zeta function by spectral and geometric entities of the orbisurface and the representation. This formula generalizes the factorization result by Borthwick, Judge and Perry to hyperbolic orbisurfaces with orbifold singularities as well as to unitary twists. Also in the untwisted case, the presence of orbifold singularities yields a separate, previously unobserved contribution to the factorization formula.

\end{abstract}

\maketitle

\tableofcontents

\section{Introduction}

Let $X$ be a hyperbolic orbisurface of finite geometry and infinite area. As it is well-known, $X$ can be represented as an orbit space~$\group\backslash\h$, where $\h$ is the hyperbolic plane (realized, e.g., by its upper half-plane model) and $\group$ is the fundamental group of~$X$ (realized, e.g., as a Fuchsian group, i.e., as a discrete subgroup of~$\PSL_2(\R)$, acting by linear fractional transformations on~$\h$). We emphasize that $\group$ may contain elliptic elements and hence $X$
is allowed to be a genuine orbifold rather than a hyperbolic surface in the classical sense (meaning, being a Riemannian manifold).
Let 
\[
\twist\colon \group \to \Unit(V)
\]
be a unitary representation of~$\group$ on a finite-dimensional Hermitian vector space~$V$. We will often refer to~$\twist$ as a \emph{twist}. The Selberg zeta function~$Z_{X,\twist}$ associated to~$X$ and~$\chi$ is determined by the infinite product 
\begin{equation}\label{eq:szfinfprod}
Z_{X,\twist}(s) \coloneqq  \prod_{[g]\in [\group]_{\hypprim}}\prod_{k=0}^\infty \det\left( 1 - \chi(g) N(g)^{-(s+k)} \right)\,,
\end{equation}
valid and convergent for $s\in\C$ with $\Rea s$ sufficiently large (see, e.g., \cite{FP_szf}). Here, $[\group]_{\hypprim}$ denotes the set of $\group$-conjugacy classes of primitive hyperbolic elements in~$\group$, and $N(g)$ denotes the norm of~$g$ for any hyperbolic element~$g\in\group$. We refer to Section~\ref{sec:prelims} for precise definitions. In terms of periodic geodesics, $N(g) = \log \ell_g$ with $\ell_g$ being the (appropriate multiple of the minimal) length of the periodic geodesic on~$X$ associated to~$g$. As part of our main result, we will provide a proof that $Z_{X,\twist}$ extends meromorphically to all of~$\C$. We comment below on the relation to the already known results in this direction.

The main achievement of this article is to determine the divisor of the Selberg zeta function~$Z_{X,\twist}$ in terms of spectral properties of the Laplacian~$\Delta_{X,\twist}$ and of topological properties of~$X$ in combination with the twist~$\twist$. In order to state our main result, we denote by~$\ProdTwist$ a certain Weierstrass product  (the Hadamard product of order~$2$) of the resonances of~$\Delta_{X,\twist}$. We let $n_p=n_p(\twist)$ be the sum of the singularity degrees of~$\twist$ at the cusps of~$X$, and we let $n_d=n_d(\twist)$ be the singularity degree or openness degree of~$\twist$ at disk ends of~$X$. That is, $n_d = 0$ unless $X$ is a parabolic cylinder, in which case $n_d = n_p$. Even though $n_p$ and $n_d$ depend on~$\twist$ (and~$X$), we usually suppress this dependence in the notation for improved readability.
Further, we denote the topological Euler characteristic of~$X$ by~$\eulertop(X)$ and the orbifold Euler characteristic by~$\eulerorb(X)$. For details we refer to Section~\ref{sec:prelims}.

\begin{theorem}\label{thm:selberg-factorization}
Let $X = \group \backslash \h$ be a geometrically finite hyperbolic orbisurface of infinite area and let $\twist\colon\group\to \Unit(V)$ be a finite-dimensional unitary representation of~$\group$ on a Hermitian vector space~$V$.
Then the Selberg zeta function~$Z_{X,\twist}$, as initially given by the infinite product in~\eqref{eq:szfinfprod}, is meromorphic on~$\C$. It admits the factorization
\begin{align}\label{eq:SZF_fact_top}
    Z_{X,\twist}(s) & = e^{q(s)} G_{X^\wedge,\twist}(s)\thin G_\infty(s)^{-\dim(V)\,\eulertop(X)}\cdot
    \frac{\gammafunc\bigl(s-\frac12\bigr)^{n_p}}{\gammafunc\bigl(s+\frac12\bigr)^{n_d}}
    \cdot \ProdTwist(s)
    \\[2mm]\label{eq:SZF_fact_orb}
    & = e^{q(s)} \gammafunc_{X^\wedge,\twist}(s)\thin G_\infty(s)^{-\dim(V)\,\eulerorb(X)}\cdot
    \frac{\gammafunc\bigl(s-\frac12\bigr)^{n_p}}{\gammafunc\bigl(s+\frac12\bigr)^{n_d}}
    \cdot \ProdTwist(s)\,,
\end{align}
valid on all of~$\C$. Here, $q$ is a polynomial of degree at most~$2$, depending on~$X$ and~$\twist$.
The functions~$G_\infty$ and~$G_{X^\wedge,\twist}$ are entire with zeros at the negative integers; they are defined in~\eqref{eq:def-barnes} and \eqref{eq:GXwedge}, respectively.
The function~$\gammafunc_{X^\wedge,\twist}$ is a product of possibly non-integer powers of gamma functions; it is defined in~\eqref{eq:GammaXwedge}.
\end{theorem}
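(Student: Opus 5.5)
Following Borthwick--Judge--Perry, the plan is to compare two expressions for the logarithmic derivative of the relative (0-regularized) wave or heat trace. One expression is spectral (a sum over resonances); the other is geometric (a sum over conjugacy classes). Concretely, we will use a Selberg-type trace formula for $0$-traces of the resolvent or wave group of $\LapTwist$ on~$X$, twisted by~$\twist$. The hyperbolic conjugacy classes contribute $\frac{d}{ds}\log Z_{X,\twist}(s)$, once we express them through the infinite product \eqref{eq:szfinfprod} for $\Rea s$ large. The remaining contributions come from the identity element, the elliptic classes, and the parabolic classes.

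\textbf{Step 1: meromorphic continuation.} We decompose $X$ into a compact core, funnels, cusps and, if present, disk ends, and build a parametrix for the resolvent $\ResTwist(s)$ from model resolvents on the twisted ends. For cusps and disk ends we split~$V$ into the $\twist(\text{parabolic})$-invariant part, of dimension equal to the singularity degree, and its orthogonal complement; the complement behaves like a compact end. This gives meromorphic continuation of $\ResTwist$ to~$\C$ together with the counting bound $\#\{\text{resonances in } |s|\le r\}=O(r^2)$. That bound makes $\ProdTwist$ a well-defined Hadamard product of order~$2$.

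\textbf{Step 2: the trace formula.} We compute the $0$-trace of $\ResTwist(s)-\ResTwist(1-s)$, or equivalently of the wave group paired with a test function, in two ways.
\begin{itemize}
\item Spectrally, the Birman--Krein-type formula expresses it through the resonance set and the scattering phase. The scattering determinant is treated via the funnel and cusp model scattering matrices.
\item Geometrically, we sum over conjugacy classes:
\begin{itemize}
\item the identity term gives $\dim(V)\,\zVol(X)$ times the Plancherel density, and $\zVol(X)=-2\pi\eulertop(X)$-type Gauss--Bonnet relations hold up to orbifold corrections;
\item each elliptic class of order~$m$ contributes a finite sum involving $\tr\twist(g)$ and digamma functions;
\item parabolic classes contribute terms proportional to $n_p$, with the usual $\psi(s+\tfrac12)$ and $\log$ terms;
\item disk ends contribute terms proportional to $n_d$.
\end{itemize}
\end{itemize}

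\textbf{Step 3: integration and identification.} Equating the two expressions gives $\frac{Z'_{X,\twist}}{Z_{X,\twist}}$ as $\frac{\ProdTwist'}{\ProdTwist}$ plus explicit meromorphic functions. We integrate twice, using order~$2$. The identity term, via $\frac{d}{ds}\log G_\infty$, produces $G_\infty^{-\dim(V)\eulerorb(X)}$. The elliptic terms produce $\gammafunc_{X^\wedge,\twist}$, a product of gamma functions $\gammafunc\bigl((s+k)/m\bigr)$ raised to powers built from the multiplicities of eigenvalues of $\twist$ at the elliptic generators. The cusp and disk terms produce the quotient $\gammafunc(s-\frac12)^{n_p}/\gammafunc(s+\frac12)^{n_d}$. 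The integration constants, together with the order-$2$ ambiguity of the Hadamard product, are absorbed into $e^{q(s)}$ with $\deg q\le 2$. This proves \eqref{eq:SZF_fact_orb}, and meromorphy of $Z_{X,\twist}$ follows because every factor on the right is meromorphic.

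To get \eqref{eq:SZF_fact_top} from \eqref{eq:SZF_fact_orb}, we insert $\eulerorb(X)=\eulertop(X)-\sum_j(1-1/m_j)$. We then use the Gauss multiplication formula to rewrite $G_\infty(s)^{\dim(V)(1-1/m_j)}$ times the gamma products as the entire Barnes-type function $G_{X^\wedge,\twist}$; the resulting factor is $e^{\text{poly}}$ by the multiplication formula.

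The main obstacle is Step~2 for the elliptic and parabolic twisted contributions. There the $0$-regularization interacts with non-trivial twists: the twist at a cusp may be only partially singular. We must check that only the singular subspace produces the $\log$-divergent terms, and we must evaluate the elliptic orbital integrals in closed form. We would first attempt the model computation on the twisted cusp $\langle T\rangle\backslash\h$ and on the twisted elliptic cone $\langle R\rangle\backslash\h$ directly, and then glue using the compact-core error terms, which are holomorphic.
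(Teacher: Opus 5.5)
Your Step~3 does not follow from Step~2 as you have set it up. The quantity $\Upsilon_{X,\twist}(s)=(2s-1)\zTr\bigl(\ResTwist(s)-\ResTwist(1-s)\bigr)$ is invariant under $s\mapsto 1-s$. The geometric side gives $\frac{Z_{X,\twist}'}{Z_{X,\twist}}(s)+\frac{Z_{X,\twist}'}{Z_{X,\twist}}(1-s)$ plus explicit terms. The spectral side uses Birman--Krein and then the Hadamard factorization of the relative scattering determinant $\tau_{X,\twist}$, which is where the resonances actually enter; it gives $\frac{\ProdTwist'}{\ProdTwist}(s)+\frac{\ProdTwist'}{\ProdTwist}(1-s)$ plus explicit terms. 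So equating the two only yields a reflection formula $F(s)=e^{q(s)}F(1-s)$ for $F\coloneqq Z_{X,\twist}\big/\bigl(G_\infty^{-\dim(V)\eulertop(X)}G_{X^\wedge,\twist}\,\gammafunc(s-\frac12)^{n_p}\ProdTwist\bigr)$. Such a formula is compatible with $F$ having zeros or poles at symmetric pairs $a$, $1-a$ (think of $(s-a)(1-s-a)$).

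The missing idea is a non-symmetric analysis on the closed half-plane $\Rea s\ge\frac12$. Using the algebraically regularized trace $\Phi_{X,\twist}$ and $\frac{Z_{X,\twist}'}{Z_{X,\twist}}=\Phi_{X,\twist}-n_p\Phi_{C_\infty}+(\text{holomorphic for }\Rea s>0)$, you must show the following. The only poles of $Z_{X,\twist}'/Z_{X,\twist}$ there are simple poles at $s_0$ with $s_0(1-s_0)\in\sigma_d(\LapTwist)$, with residue the multiplicity, and at $s=\frac12$ with residue $m_{X,\twist}(\frac12)-n_p$. The residue at $\frac12$ is the delicate point. It needs the structure of $\ResTwist$ and $S_{X,\twist}(\frac12)$ near $\frac12$, including the cusp block on the singular subspaces, and it is exactly what balances the pole of $\gammafunc(s-\frac12)^{n_p}$ against the zero of $\ProdTwist$. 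Only then is $F$ zero- and pole-free on $\Rea s\ge\frac12$, hence on $\C$ by reflection. The same integer-residue information is what lets you integrate $Z_{X,\twist}'/Z_{X,\twist}$ to a meromorphic function, so meromorphy of $Z_{X,\twist}$ cannot simply be read off from the right-hand side. A wave-trace Poisson formula would give the non-symmetric identity directly, but it is not equivalent to the $0$-trace of $\ResTwist(s)-\ResTwist(1-s)$. It is a Guillop\'e--Zworski type theorem whose proof needs the same factorization and growth estimates, so you cannot use it as a black box.

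Second, $\deg q\le 2$ is not a matter of absorbing integration constants. The factorization of $\tau_{X,\twist}$ comes with a polynomial of degree up to $4$. After the divisor argument you only know $Z_{X,\twist}=e^{p}\,G_\infty^{-\dim(V)\eulertop(X)}G_{X^\wedge,\twist}\gammafunc(s-\frac12)^{n_p}\ProdTwist$ with $p$ entire. To make $p$ a polynomial you need bounds in three regions:
\textbf{(a)} boundedness of $Z_{X,\twist}$ for $\Rea s>\delta$;
\textbf{(b)} the reflection formula for $\Rea s<1-\delta$;
\textbf{(c)} in a strip around $\Rea s=\frac12$, an estimate $\log\abs{\Phi_{X,\twist}(s)}=O(\ang{s}^{2+\kappa})$ off small discs around the resonances. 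The paper gets (c) from the parametrix $\ResTwist=M(\id-L)^{-1}$, Hilbert--Schmidt bounds on $L(s)\rho$, and $\norm{(\id-\eta_3L(s))^{-1}}\lesssim e^{C\ang{s}^{2+\eps}}$.
Then Phragm\'en--Lindel\"of gives degree $\le4$, and letting $\Rea s\to+\infty$, where $\log Z_{X,\twist}$ decays, cuts it to $2$.

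Two smaller points. In the orbifold form, $\gammafunc_{X^\wedge,\twist}$ and $G_\infty^{-\dim(V)\eulerorb(X)}$ have non-integer exponents and are not separately meromorphic on $\C$; only their product is. So you should establish the topological form first. The passage between the two forms is immediate from the definition of $G_{q,\alpha}$ and the formula for $\eulerorb$; no multiplication formula is needed. Also, the funnel-free elementary cases ($\h$, cones, and the parabolic cylinder, which is the only source of $n_d$) fall outside the scattering-determinant framework and have to be computed directly.
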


Some comments on this theorem, its relation to the literature and some special cases are in order.

\begin{itemize}
\item For hyperbolic orbisurfaces of \emph{finite area}, a factorization of the Selberg zeta function analogous to the one in Theorem~\ref{thm:selberg-factorization} is well-known and can be deduced from~\cite{Venkov_book}.
Indeed, the divisor of the Selberg zeta function is given by~\cite[Theorem~5.1.2]{Venkov_book}. The bound in~\cite[Lemma~5.2.3]{Venkov_book} implies that
the Selberg zeta function can be written as a product of explicitly known meromorphic functions and an entire function of order~$2$ without zeros.
For more results on the Selberg zeta function for finite-area hyperbolic orbisurfaces, we refer to~\cite{Phillips_scattering_twist, Phillips_perturb_twist, Guillope, Selberg}.
\item For hyperbolic orbisurfaces of \emph{infinite area}, the factorization of the Selberg zeta function was previously known only for \emph{non-elementary} \emph{untwisted} hyperbolic orbisurfaces \emph{without} orbifold points (i.e., no elliptic elements in the fundamental group), proved by Borthwick, Judge and Perry~\cite{BJP}. In the next comment, we show that our factorization formula reproduces the formula from~\cite{BJP} when reduced to non-elementary untwisted hyperbolic surfaces (without orbifold points).
\item If $\twist$ is trivial (in which case we omit it in the notation), then we obtain
\begin{align}
    Z_X(s) = e^{q(s)} G_{X^\wedge}(s)\thin G_\infty(s)^{-\eulertop(X)} \cdot
    \frac{\gammafunc\left(s-\frac12\right)^{n_p}}{\gammafunc\left(s+\frac12\right)^{n_d}}
    \cdot \mathcal{P}_X(s)\,,
\end{align}
where $n_p$ is the number of cusps of~$X$, and $n_d = n_p$ if $X$ is the parabolic cylinder and $n_d = 0$ otherwise.
In particular, if $\twist$ is trivial, $X$ is non-elementary, and $\group$ does not contain elliptic elements, then we recover the following result by Borthwick, Judge and Perry:

\begin{thm}[{\cite[Theorem~4.1]{BJP}}]\label{thm:BJP}
Suppose $X$ is a non-ele\-ment\-ary geometrically finite hyperbolic orbisurface of
infinite area. Then the Selberg zeta function admits a factorization
\[
Z_X(s) = e^{q(s)} G_\infty(s)^{-\eulertop(X)} \thin
\gammafunc\left(s-\tfrac12\right)^{n_p}\thin \mc P_X(s)\,,
\]
where $q$ is a polynomial of degree at most~$2$.
\end{thm}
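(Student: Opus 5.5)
The plan is to obtain Theorem~\ref{thm:BJP} as the special case of Theorem~\ref{thm:selberg-factorization} with trivial twist and no elliptic elements, rather than proving it from scratch. With $\twist$ trivial we have $V=\C$, so $\dim(V)=1$, $\ProdTwist=\mathcal P_X$, and $n_p$ is the number of cusps of~$X$. We then work through the remaining factors in~\eqref{eq:SZF_fact_top} and show that each one that does not appear in Theorem~\ref{thm:BJP} is identically~$1$.

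First, the disk-end factor. Since $X$ is non-elementary, it is not a parabolic cylinder, so $n_d=0$ and $\gammafunc(s+\tfrac12)^{n_d}\equiv 1$.

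Second, the factor $G_{X^\wedge,\twist}$. Its definition in~\eqref{eq:GXwedge} collects contributions from the orbifold points of~$X$, that is, from conjugacy classes of elliptic elements of~$\group$. When $\group$ contains no elliptic elements this set is empty, so the product defining $G_{X^\wedge}$ is empty and equals~$1$. This is the step I would check most carefully against~\eqref{eq:GXwedge}: one has to confirm that its normalization contains no residual constant or exponential-polynomial factor. If such a factor did appear, it would be of the form $e^{\text{polynomial of degree}\le 2}$ and could be absorbed into~$e^{q(s)}$.

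Putting these together, \eqref{eq:SZF_fact_top} reduces to
\[
Z_X(s)=e^{q(s)}G_\infty(s)^{-\eulertop(X)}\gammafunc\bigl(s-\tfrac12\bigr)^{n_p}\mathcal P_X(s),
\]
with $q$ of degree at most~$2$, which is the statement of Theorem~\ref{thm:BJP}.

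The only real obstacle is to make sure this reduction is not circular. If the proof of Theorem~\ref{thm:selberg-factorization} in the twisted or orbifold setting follows the same strategy as~\cite{BJP} (0-regularized relative determinants, a topological index contribution, and a growth estimate of order~$2$), then this reduction is consistent with that proof but does not rely on~\cite{BJP} as an input.
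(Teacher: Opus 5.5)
Your proposal is correct and is the same route the paper takes: specialize Theorem~\ref{thm:selberg-factorization} to the trivial twist ($\dim V=1$, $n_p$ the number of cusps), use non-elementarity to get $n_d=0$, and use the absence of elliptic elements to get $G_{X^\wedge}\equiv 1$ as an empty product. Your torsion-freeness of $\group$ is not in the printed statement, which says ``orbisurface''; it is the hypothesis the paper imposes in the sentence introducing the theorem, and it is genuinely needed, because with elliptic elements $G_{X^\wedge}$ is nontrivial (e.g.\ $G_{2,0}$ has a double zero at $s=-1$) and the displayed formula would fail.
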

\item The meromorphic continuation of the Selberg zeta function in~\eqref{eq:szfinfprod} has already been established by Fedosova and Pohl~\cite{FP_szf} for a certain class of geometrically finite hyperbolic orbisurfaces and the class of finite-dimensional representations with non-expanding cusp monodromy (which includes all finite-dimensional unitary representations).
Regarding the realm of hyperbolic orbisurfaces, the meromorphy result in~\cite{FP_szf} is conditional on the existence of a so-called strict transfer operator approach to the Selberg zeta function for the considered hyperbolic orbisurface. However, such approaches are known to exist for a growing class of hyperbolic orbisurfaces, see~\cite{Pohl_Wabnitz, Wabnitz, Pohl_Symdyn2d, Pohl_hecke_infinite, Moeller_Pohl, Mayer_thermoPSL, Mayer_thermo, Chang_Mayer_eigen}. 
In this article, we establish the meromorphic continuation of the Selberg zeta function by an alternative, more classical method, albeit restricted to twists by unitary representations. However, we emphasize again that our main result is the factorization formula.
\item The factor $\gammafunc_{X^\wedge,\twist}(s)$ in~\eqref{eq:SZF_fact_orb} (defined in~\eqref{eq:GammaXwedge}) is the contribution from the orbifold singularities (i.e., the elliptic elements in~$\group$). It compensates (exactly) the non-integer residues of the logarithmic derivative of $G_\infty(s)^{-\dim(V)\,\eulerorb(X)}$, and hence the total product in~\eqref{eq:SZF_fact_orb} is meromorphic (see Lemma~\ref{lem:barnes-cone} and Proposition~\ref{prop:euler-characteristic} for details).
\end{itemize}

\subsection*{Structure of the article}
We start by surveying, in Section~\ref{sec:prelims}, some basic facts of hyperbolic geometry and special functions that are needed for the formulation of Theorem~\ref{thm:selberg-factorization}. In Section~\ref{sec:cyclic} we establish Theorem~\ref{thm:selberg-factorization} for the special case that $\Gamma$ is a cyclic group (``model calculations''). There are four types of such groups, and in each one the Selberg zeta function and its zeros, the resonances and hence also the Weierstrass product of the resonances can be determined and evaluated explicitly. For that reason, these special cases are much easier to investigate. They will serve as building blocks for the general situation.
In Section~\ref{sec:examples} we showcase how to use Theorem~\ref{thm:selberg-factorization} and the Venkov--Zograf formula to obtain relations between hyperbolic orbisurfaces generated by two fundamental groups $\Lambda, \group$ with $\Lambda$ being a subgroup of~$\group$ of finite index.

Starting with Section~\ref{sec:spectral_manuscr} we work towards the proof of Theorem~\ref{thm:selberg-factorization}. In \cite{DFP, DFP2} we developed already a substantial amount of the necessary scattering theory for twisted hyperbolic orbisurfaces; we survey these results in Section~\ref{sec:prelims}, \ref{sec:cyclic}. In Section~\ref{sec:spectral_manuscr} we provide some required additional results.
In Section~\ref{sec:trace} we introduce two kinds of regularization of the trace of the resolvent of the Laplacian~$\Delta_{X,\twist}$, which we then use to connect the Selberg zeta function~$Z_{X,\twist}$ to the relative scattering determinant.
We take advantage of this connection in Section~\ref{sec:proof} to prove a functional equation for~$Z_{X,\twist}$, which then yields that this zeta functions admits a meromorphic continuation to all of~$\C$ and which allows us to completes the proof of Theorem~\ref{thm:selberg-factorization}.

While the structure of the proof mainly follows the one by Borthwick, Judge and Perry~\cite{BJP}, we note that
the inclusion of elliptic elements in $\group$ and the twist~$\twist$ makes the analysis more involved.
The twist has a significant influence on the contribution by parabolic elements~$p$ of~$\Gamma$ as the resolvent on an eigenspace associated to an eigenvalue
$\lambda \not = 1$ of~$\twist(p)$ is drastically different than the resolvent for the case that $\lambda = 1$.
Therefore, we have to include more model calculations (see Section~\ref{sec:cyclic}) and also the decomposition of the resolvent
in terms of the various group elements becomes more involved (see Section~\ref{sec:trace}).

\subsection*{Acknowledgements}
AP's research is funded by the Deutsche Forschungsgemeinschaft (DFG, German Research Foundation) -- project no.~441868048 (Priority Program~2026 ``Geometry at Infinity''). MD was partially funded by a Universit\"at Bremen ZF 04-A grant.

\section{Fundamental objects}\label{sec:prelims}

In this section we will present the objects used in the statement of Theorem~\ref{thm:selberg-factorization}.

\subsection{Hyperbolic plane and its isometries}
We denote the hyperbolic plane by~$\h$ and its group of orientation-preserving Riemannian isometries by~$\Isom^+(\h)$. We may and shall identify $\h$ with its upper half-plane model
\[
\h \coloneqq \{ z\in\C : \Ima z > 0 \}\,,\quad ds_z^2 = \frac{dz\,d\overline{z}}{(\Ima z)^2}\,,
\]
and $\Isom^+(\h)$ with the projective special linear group
\[
\PSL_2(\R)= \SL_2(\R)/\{\pm\id\}\,,
\]
acting by fractional linear transformations on~$\h$. More precisely, we denote an element in~$\PSL_2(\R)$ by any of its representing matrices in~$\SL_2(\R)$ but with square brackets, i.e.,
\begin{equation}\label{eq:genelem_g}
 g = \bmat{a}{b}{c}{d}  = \bmat{-a}{-b}{-c}{-d}
\end{equation}
is the element in~$\PSL_2(\R)$ represented by either of the matrices $\textmat{a}{b}{c}{d}$ and $\textmat{-a}{-b}{-c}{-d}$ in~$\SL_2(\R)$ (and these are indeed all representatives of~$g$ in~$\SL_2(\R)$). The action of~$\PSL_2(\R)$ on~$\h$ is then
\begin{equation}\label{eq:action}
 g.z = \frac{az+b}{cz+d}
\end{equation}
for $z\in\h$ and $g\in\PSL_2(\R)$ given as in~\eqref{eq:genelem_g}.

In the upper half plane model, the geodesic boundary~$\partial\h$ of~$\h$ is identified with $P^1(\R) = \R\cup\{\infty\}$. The action of~$\PSL_2(\R)$ extends continuously to~$\partial\h$; the action formula remains to be~\eqref{eq:action} subject to the convention $1/0=\infty$.

\subsection{Hyperbolic orbisurfaces and Fuchsian groups}\label{sec:fuchsian}
Given a Fuchsian group~$\group$, i.e., a discrete subgroup of~$\Isom^+(\h) = \PSL_2(\R)$, the quotient space $\group\backslash\h$ naturally carries the structure of a connected, two-dimensional real (or one-dimensional complex) orbifold with hyperbolic Riemannian metric. We refer to any such space as a \emph{hyperbolic orbisurface}. In the case that $\group$ is torsion-free, the hyperbolic orbisurface~$\group\backslash\h$ is a manifold (in the classical sense). However, if $\group$ has torsion, then $\group\backslash\h$ is a genuine orbifold. In the language of singular analysis, $\group\backslash\h$ is a manifold with conical singularities of rational angles, i.e., with angles in $\pi\Q$. We emphasize that in our investigations, we allow $\group$ to have torsion.

As it is well-known, several properties and entities of hyperbolic orbisurfaces can be characterized by entities of its fundamental Fuchsian group. For the convenience of the reader, we collect here those characterizations that we will take advantage of.

\medskip 

\begin{center}
\framebox{
\begin{minipage}{.9\textwidth}
From now on let $\group$ be a finitely generated Fuchsian group. Let $X \coloneqq \group\backslash\h$ denote the associated hyperbolic orbisurface. 
\end{minipage}
}
\end{center}

\medskip

We note that requesting $\group$ to be finitely generated is equivalent to requesting $X$ to be geometrically finite. Throughout we let
\begin{equation}\label{eq:def_pi}
 \pi\colon \h\to X=\group\backslash\h\,,\quad z\mapsto \group.z\,,
\end{equation}
denote the canonical quotient map (not to be confused with the number~$\pi$).

\subsubsection{Classification of elements in~$\group$}
The elements of~$\group$ can be classified according to their action properties on~$\h\cup\partial\h$. We call $g\in\group$, $g\not=\id$, \emph{hyperbolic} if its action fixes two points in~$\partial\h$ (equivalently, $\abs{\tr g}>2$). We call $g$ \emph{parabolic} if its action fixes a single point in~$\partial\h$ (equivalently, $\abs{\tr g}=2$), and we call $g$ \emph{elliptic} if its action fixes a single point in~$\h$ (equivalently, $\abs{\tr g}<2$). Further, we say that $g$ is \emph{primitive} if non-trivial roots of~$g$ are not contained in~$\group$, i.e., if $h^n=g$ for $h\in\group$ and $n\in\N$ implies $n=1$. Each of these four properties is stable under conjugation within~$\group$.
We denote the set of conjugacy classes of hyperbolic and primitive hyperbolic elements in~$\group$ by $[\group]_\hyp$ and $[\group]_\hypprim$, respectively. Similarly, we define $[\group]_\parab$, $[\group]_\parprim$ for parabolic elements, and $[\group]_\ellip$, $[\group]_\ellprim$ for elliptic elements.

\subsubsection{Relation between hyperbolic elements and periodic geodesics}\label{sec:hypgeod}
The elements of~$[\group]_\hyp$ are in bijection with the periodic geodesics on~$X$ (with multiplicities in the period lengths). Geodesics on~$X$ are precisely the images of geodesics on~$\h$ under the quotient map~$\pi$. We consider all geodesics on~$\h$ and all geodesics on~$X$ to be of unit speed, and we are only interested in time changes along geodesic arcs, not in absolute time parametrizations. In other words, if $\gamma,\eta$ are geodesics on~$\h$ and there exists $t_0\in\R$ such that for all $t\in\R$, $\gamma(t) = \eta(t+t_0)$ (i.e., $\gamma$ is a time-shifted variant of~$\eta$), then we identify $\gamma$ and $\eta$. However, if $g\in\group$ and the action of~$g$ on~$\gamma$ results in $g.\gamma(t) = \gamma(t+t_g)$ for some $t_g\in\R$, then we do notify that $g$ induces a time shift of~$t_g$ on~$\gamma$. (Note that $g$ induces the same time shift on~$\eta$.)

Let $[g]\in [\group]_\hyp$ be represented by the (hyperbolic) element~$g$ and let $\gamma$ be the geodesic on~$\h$ that satisfies 
\begin{align*}
\gamma(-\infty) & = \lim_{t\to\infty}\gamma(t) = \lim_{n\to\infty} g^{-n}.i
\intertext{and}
\gamma(+\infty) & = \lim_{t\to\infty}\gamma(t) = \lim_{n\to\infty} g^n.i\,,
\end{align*}
i.e., $\gamma$ is the unique geodesic from the repelling fixed point of~$g$ to the attracting fixed point of~$g$. As $g$ necessarily fixes the geodesic arc of~$\gamma$, its action on~$\gamma$ results in a time-shifted version of~$\gamma$.
With  
\[
N(g)\coloneqq \max\{\mu^2 : \text{$\mu$ eigenvalue of~$g$} \}\,,
\]
the \emph{norm} of~$g$, the time shift is 
\[
 t_g = \log N(g)\,.
\]
The associated geodesic~$\pi(\gamma)$ on~$X$ is periodic with \emph{length}
\[
 \ell(\pi(\gamma)) = t_g = \log N(g)\,.
\]
If $g$ is primitive, then this length is indeed the minimal period of~$\pi(\gamma)$, otherwise a certain (positive integer) multiple of it. If $h$ is any other representative of~$[g]$, say $h=k^{-1}gk$ with $k\in\group$, and $\eta$ is the geodesic on~$\h$ from the repelling fixed point of~$h$ to its attracting fixed point, then $k.\eta = \gamma$ and hence $\pi(\eta) = \pi(\gamma)$ and $t_h=t_g$. Thus, $[g]$ gets associated to a certain unique periodic geodesic on~$X$ with length $\log N(g)$.

Vice versa, if $\wh\gamma$ is any periodic geodesic on~$X$ with minimal period~$\ell_0$, then we pick a representing geodesic $\gamma$ on~$\h$. We find a unique primitive hyperbolic element~$g\in\group$ such that $\gamma$ connects the repelling fixed point of~$g$ to its attracting fixed point. If we further fix a multiple $\ell$ of~$\ell_0$, say $\ell = n\ell_0$ with $n\in\N$, then $g^n$ is a hyperbolic element with $\ell = \log N(g^n)$. We assign to $\wh\gamma$ and the choice of $n$ the conjugacy class $[g^n]\in [\group]_\hyp$. A different choice of representing geodesic on~$\h$ results in the same conjugacy class.

In this way, we construct a natural bijection between $[\group]_\hyp$ and the periodic geodesics on~$X$ with period lengths considered with multiplicities. 

\subsubsection{Relation between parabolic elements and cusps}\label{sec:parabcusps}
We consider a \emph{cusp end} or \emph{cusp} of~$X$ to be an infinite end of~$X$ of finite area. We let $\cusps(X)$ denote the set of cusps of~$X$. We may characterize cusp ends of~$X$ as the $\group$-orbits of those points in~$\partial\h$ where a Dirichlet fundamental domain for~$X$ touches~$\partial\h$. The latter means that the closure of the fundamental domain in $\h\cup\partial\h$ intersects~$\partial\h$ locally at the considered point of~$\partial\h$ only in this point. These are exactly the fixed points of parabolic elements in~$\group$. Thus, to $[g]\in [\group]_\parab$ we assign the cusp of~$X$ that is characterized by the $\group$-orbit of the fixed point of~$g$.  In this way, we get a bijection between $[\group]_\parprim$ and~$\cusps(X)$. This bijection extends naturally to a map with domain $[\group]_\parab$. (We do not discuss the bijective extension with multiplicities here.)

\subsubsection{Relation between elliptic elements and orbifold points}\label{sec:elliporb}
We let $\sing(X)$ denote the set of singular points (\emph{orbifold points}) of~$X$. In a similar way to the relation in Section~\ref{sec:parabcusps}, $\sing(X)$ is in bijection with $[\group]_\ellprim$. More precisely, let $[g]\in [\group]_\ellprim$. Then any (primitive elliptic) representative~$g$ of~$[g]$ in~$\group$ fixes a unique point in~$\h$, say~$z$. We assign to $[g]$ the element~$\pi(z)\in X$. Then $\pi(z)\in\sing(X)$ and the stabilizer (isotropy) group~$\Iso(\pi(z))$ of~$\pi(z)$ is isomorphic to the stabilizer group~$\Stab_\group(z) = \langle g\rangle$ of~$z$ in~$\group$. In this way, we obtain a bijection between $\sing(X)$ and~$[\group]_\ellprim$ which is compatible with the information on stabilizer groups. It extends to a map using $[\group]_\ellip$ instead of only~$[\group]_\ellprim$ with the caveat that the generators of stabilizer groups are necessarily primitive.

\medskip 

\begin{center}
\framebox{
\begin{minipage}{.9\textwidth}
From now on let $V$ be a finite-dimensional Hermitian vector space (i.e., a complex vector space with
inner product) and let $\twist \colon \group \to \Unit(V)$ be a unitary representation of~$\group$. 
\end{minipage}
}
\end{center}

\medskip

\subsection{Selberg zeta function}\label{sec:szfdef}
The \emph{Selberg zeta function} $Z_{X,\twist}$ for the pair $(X,\twist)$ is determined by the infinite product 
\begin{equation}\label{def:zeta}
    Z_{X,\twist}(s) \coloneqq \prod_{[g] \in [\group]_\hypprim}
\prod_{k=0}^\infty \det \left( 1 - \twist(g) N(g)^{-(s+k)}\right)\,,
\end{equation}
as already mentioned in the Introduction. This infinite product converges for $s\in\CC$, $\Rea s > \delta$ with $\delta$ being the Hausdorff dimension of the limit set of~$\group$. (For our purposes it is sufficient to know that it converges for $\Rea s$ sufficiently large.)
The factors in the infinite product on the right hand side of~\eqref{def:zeta} are indeed invariant under the choice of representatives for the $\group$-conjugacy classes of primitive hyperbolic elements as the discussion in Section~\ref{sec:hypgeod} immediately implies.

This zeta function was first defined by Selberg~\cite[(3.3)]{Selberg} for \emph{finite-area} hyperbolic orbisurfaces, and its convergence and many more properties were studied. We refer also to~\cite[Chapter~5.1]{Venkov} and, in particular for \emph{infinite-area} situations, to~\cite{FP_szf}. As a by-product of our results we will establish the meromorphic continuation of~$Z_{X,\twist}$ to all of~$\C$. For several setups, this is already known (and not our main result anyway) by different methods. We refer to \cite{Selberg, Venkov, FP_szf}. Furthermore, \cite[Theorem~D, Proposition~B]{FP_szf} also discusses the realm of extendability of the definition in~\eqref{def:zeta} to non-unitary representations, and establishes meromorphic continuation in certain setups.

If $\twist$ is the trivial character of~$\group$, i.e., $\twist=\mathbf{1}\colon \group\to\C^\times$, $g\mapsto 1$, then $Z_{X,\twist}=Z_{X,\mathbf{1}}$ coincides with the untwisted (classical) Selberg zeta function. We will often denote it by~$Z_X$, omitting the trivial twist in the notation.

\subsection{Resonances}\label{sec:resonances}
A substantial amount of the necessary spectral theoretic results needed here have been achieved in~\cite{DFP, DFP2}. We recollect some parts here. 

The representation~$\twist\colon \group\to \Unit(V)$ induces the Hermitian vector orbibundle
\begin{equation}\label{eq:Etwist}
E_\twist\coloneqq\h\times_\twist V\to X=\group\backslash\h\,,\quad [z,v]\mapsto [z]\,,
\end{equation}
with typical fiber~$V$. Here, $[z,v]$ denotes the element of~$E_\twist$ represented by $(z,v)\in\h\times V$, i.e., the equivalence class
\[
 [z,v] = \{ (g.z,\twist(g)v) : g\in\group\}\,.
\]
Further, $[z]$ denotes the element in~$X=\group\backslash\h$ represented by $z\in\h$. Using the canonical connection on this orbibundle, we may define the Laplacian~$\Delta_{X,\twist}$ associated to $(X,\twist)$ as an operator on the space~$C^\infty(X,E_\twist)$ of smooth sections of the orbibundle in~\eqref{eq:Etwist}. The space $C^\infty(X,E_\twist)$ is canonically isomorphic to the space $C^\infty(\h, V)_\ssec$ of smooth functions $f\colon \h\to V$ obeying the equivariance relation
\[
f(g.z) = \twist(g)f(z)
\]
for all $g\in\group$, $z\in\h$ (see \cite[Lemma~3.3]{DFP}). Under this isomorphism, the Laplacian~$\Delta_{X,\twist}$ becomes the classical hyperbolic Laplacian $\Delta_\h$, but applied to vector-valued functions. We denote by $C^\infty_c(X,E_\twist)$ the space of compactly supported functions in~$C^\infty(X,E_\twist)$. Further, we endow $X$ with the Haar measure~$\mu_X$ induced from~$\h$, and we denote by $L^2(X,E_\twist)$ the space of (equivalence classes) of square-integrable sections of the orbibundle in~\eqref{eq:Etwist} with inner product induced by the inner product of~$V$.
The operator
\[
\Delta_{X,\twist} \colon C^\infty_c(X,E_\twist)\to L^2(X,E_\twist)
\]
is essentially self-adjoint. We use $\Delta_{X,\twist}$ to denote its extension as well, thus,
\[
\LapTwist \colon D(\LapTwist) \to L^2(X, \bundle)\,,
\]
where $D(\LapTwist)$ is the unique self-adjoint domain of $\LapTwist$ which satisfies $C^\infty_c(X, \bundle) \subseteq D(\LapTwist) \subseteq L^2(X, \bundle)$. See~\cite[Lemma~3.4]{DFP}.
Its resolvent
\[
R_{X,\twist}(s) \coloneqq \bigl(\Delta_{X,\twist}-s(1-s)\bigr)^{-1}\colon L^2(X,E_\twist) \to L^2(X,E_\twist)
\]
is well-defined and holomorphic on $\{ s\in\C : \Rea s>\tfrac12\,,\ s\notin [\tfrac12,1]\}$ by~\cite[Proposition~5.1]{DFP}. Let $L^2_\cpt(X,E_\twist)$ denote the essentially compactly supported elements in~$L^2(X,E_\twist)$, and let $L^2_\loc(X,E_\twist)$ denote the space of (equivalence classes of) measurable functions that are square-integrable on all compact subsets of~$X$. As an operator
\[
R_{X,\twist}(s) \colon L^2_\cpt(X,E_\twist) \to L^2_\loc(X,E_\twist)\,,
\]
the resolvent~$R_{X,\twist}$ admits a meromorphic continuation to all of~$\C$ with all poles being of finite multiplicity~\cite[Theorem~A]{DFP}. We continue to denote this meromorphic continuation by~$R_{X,\twist}$. Its poles are the \emph{resonances}. For any $s\in\C$, we let $m_{X,\twist}(s)$ denotes its multiplicity as a resonance, and we let $\mc R_{X,\twist}$ denote the multiset of resonances.

\subsection{Weierstrass product of resonances} 
We define the \emph{Weierstrass product of the resonances} as the Hadamard product $\ProdTwist\colon\C\to\C$,
\begin{equation}\label{eq:weierstrass}
 \ProdTwist(s) \coloneqq s^{m_{X,\twist}(0)} \prod_{\substack{\mu\in \mc R_{X,\twist} \\ \mu \not= 0}} E_2\left( \frac{s}{\mu} \right)\,,
\end{equation}
of order~$2$, where
\begin{equation}\label{eq:2ndfactor}
E_2(z) \coloneqq (1-z) \exp\left( z + \frac{z^2}{2} \right)
\end{equation}
is the second elementary factor. By~\cite[Theorem~B]{DFP}, the resonance counting function (for $r>0$)
\[
N_{X,\twist}(r) \coloneqq \#\{ \mu\in \mc R_{X,\twist} : |\mu|<r \}
\]
satisfies the asymptotics
\[
N_{X,\twist}(r) = O(r^2)\qquad \text{as $r\to\infty$}\,.
\]
By well-known results in complex analysis,
the function~$\ProdTwist$ is therefore entire and of order~$2$. Its zero set coincides with the resonance set~$\mc R_{X,\twist}$, including multiplicities. See, e.g., \cite[Chapter~5]{Stein_Shakarchi}.

\subsection{Eigenvalues and multiplicities}\label{sec:EV}
For any unitary endomorphism~$A$ of~$V$, we let $\EV(A)$ denote the multiset of eigenvalues of~$A$. Further, for any~$\lambda\in\C$ we let $m_A(\lambda)$ denote the multiplicity of~$\lambda$ as an eigenvalue of~$A$. We note that $m_A(\lambda)=0$ if and only if $\lambda$ is not an eigenvalue of~$A$.
By the unitarity of~$A$, algebraic and geometric multiplicities of the
eigenvalues of~$A$ coincide, and
\[
\sum_{\lambda\in\C} m_A(\lambda) = \dim V = \#\EV(A)\,.
\]

\subsection{Degrees of singularity at ends of~\texorpdfstring{$X$}{X}}
We require two types of degrees of singularity at the ends of~$X$.
For their definitions we first note that for any $g\in\group$ and any $\lambda\in\C$, the multiplicity~$m_{\twist(g)}(\lambda)$ of $\lambda$ as an eigenvalue of~$\twist(g)$ is independent of the $\group$-conjugacy class of~$g$.

\subsubsection{Degree of singularity at cusps}
Let $c$ be a cusp of~$X$. The \emph{degree of singularity}~$\singcusp_\twist(c)$ of~$c$ with respect to~$\twist$ is the dimension of the singularity part of~$c$. To be more explicit, let $[g]$ be the element in $[\group]_\parprim$ associated to~$c$ as explained in Section~\ref{sec:parabcusps}. Then (see, e.g.,~\cite{Phillips_scattering_twist, Venkov_book})
\[
\singcusp_\twist(c) = m_{\twist(g)}(1)\,.
\]
For this reason, $m_{\twist(g)}(1)$ is commonly also called the \emph{degree of singularity} of~$g$ (or $[g]$) relative to~$\twist$.

The sum of these degrees about all cusps,
\begin{align}\label{eq:n_p}
    n_p &\coloneqq n_p(\twist) \coloneqq \sum_{c\in\cusps(X)} \singcusp_\twist(c) = \sum_{[g] \in [\group]_\parprim} m_{\twist(g)}(1)\,,
\end{align}
is the \emph{degree of singularity at cusps} of~$\group$ or~$X$ relative to~$\twist$. This number reflects the overall ``openness'' of the cusps. In the proof of Theorem~\ref{thm:selberg-factorization}  we will see that it is precisely the weight for the singularity contributions from the cusps (see Lemma~\ref{lem:decomp-phi}).

The change of notation from~$\singcusp$ to $n_p$ reflects another aspect of this degree: if $\twist$ is trivial and one-dimensional, then $n_p$ is the number of cusps of~$X$. Thus, for arbitrary~$\twist$, we may understand $n_p$ as the number of ``effective cusps'' of~$(X, \twist)$. The subscript~$p$ at~$n_p$ should indicate that we need to count cusps weighted by the influence of~$\twist$ applied to the \emph{parabolic} elements of~$\Gamma$.

\subsubsection{Degree of singularity at disk ends}
If $X$ is a \emph{parabolic cylinder}, i.e., if its fundamental group~$\group$ is conjugate within~$\PSL_2(\R)$ to the group generated by $\textbmat{1}{1}{0}{1}$, then $X$ has two ends: a cusp end and a disk end. Parabolic cylinders are the only hyperbolic orbisurfaces that have disk ends. We set
\[
n_d \coloneqq n_d(\twist) \coloneqq
\begin{cases}
n_p(\twist) & \text{if $X$ is a parabolic cylinder}
\\
0 & \text{otherwise.}
\end{cases}
\]
We call $n_d$ the \emph{degree of singularity at disk ends} of~$X$ relative to~$\twist$. This number encodes the openness degree of~$(X,\twist)$ at this kind of end.

\subsection{Euler characteristics}
We consider two types of Euler characteristics for~$X$, namely the topological and the orbifold one.

\subsubsection{Topological Euler characteristic}
Let $n$ be the number of ends of~$X$, and let $g$ be the genus of~$X$. The \emph{topological Euler characteristic} of~$X$ is well-known to be
\[
 \eulertop(X) = 2 - 2g - n\,.
\]

\subsubsection{Orbifold Euler characteristic}
If $X$ has no orbifold points, then its \emph{orbifold Euler characteristic}, $\eulerorb(X)$, coincides with the topological Euler characteristic~$\eulertop(X)$.

Let now $X=\group\backslash\h$ be any hyperbolic orbisurface. We provide two ways to calculate the orbifold Euler characteristic, $\eulerorb(X)$, of~$X$. For the first variant, which we use here as a definition, we pick a torsion-free subgroup~$\wt\group$ of~$\group$ of finite index~$[\group:\wt\group]$. Such subgroups exist by Selberg's Lemma~\cite[Lemma~8]{Selberg_lemma}.

Then $\wt X \coloneqq \wt\group\backslash\h$ is a hyperbolic surface (without orbifold points). The \emph{orbifold Euler characteristic} of~$X$ is
\[
\eulerorb(X) \coloneqq \frac{\eulerorb(\wt X)}{[\group:\wt\group]} = \frac{\eulertop(\wt X)}{[\group:\wt\group]}\,.
\]
One easily checks that this definition is indeed independent of the choice of~$\wt\group$.

The second variant of calculating the orbifold Euler characteristic of~$X$, presented in Lemma~\ref{lem:top_orb_euler} below, exhibits the relation to the topological Euler characteristic of~$X$ and the contribution of each orbifold point of~$X$. We recall from Section~\ref{sec:elliporb} that, for an orbifold point~$w\in X$ (i.e., $w\in\sing(X)$), we denote by $\Iso(w)$ its isotropy group.

\begin{lemma}\label{lem:top_orb_euler}
The orbifold Euler characteristic of~$X$ satisfies
\[
\eulerorb(X) = \eulertop(X) - \sum_{w\in\sing(X)}\left( 1 - \frac{1}{\#\Iso(w)} \right)\,.
\]
\end{lemma}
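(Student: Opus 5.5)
We prove the formula from the definition via the torsion-free finite-index subgroup $\wt\group$, using a Riemann--Hurwitz type count for the (branched) covering $\wt X\to X$. Let $d\coloneqq[\group:\wt\group]$. To avoid the orientation subtleties coming from $\pm\id$ in $\SL_2(\R)$, we work entirely in $\PSL_2(\R)$. The map $\wt X=\wt\group\backslash\h\to X=\group\backslash\h$ is a topological branched covering of degree~$d$. It is unbranched away from $\pi^{-1}(\sing(X))$, because $\wt\group$ acts freely and $\group$ acts freely off the elliptic fixed points. Since Euler characteristics with ends are awkward, we first compactify. We replace every end of $X$ (cusp or funnel, or the disk end of a parabolic cylinder) by a puncture. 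Equivalently, we work with the compact surface $\overline X$ obtained by filling in each end with a point, so that $\eulertop(X)=\eulertop(\overline X)-n$, where $n$ is the number of ends; we do the same for $\wt X$. Alternatively, we may remove small open disks around the orbifold points and truncate the ends to obtain compact surfaces with boundary, where the covering is honest.

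We take the second route, since it is cleaner. Let $X_0\subset X$ be the complement of small open disks $D_w$ around each $w\in\sing(X)$, with the disks chosen pairwise disjoint. Removing an open disk from a surface lowers the Euler characteristic by~$1$, so $\eulertop(X_0)=\eulertop(X)-\#\sing(X)$. The preimage $\wt X_0\subset\wt X$ is an honest $d$-sheeted covering of $X_0$, hence $\eulertop(\wt X_0)=d\,\eulertop(X_0)$. Multiplicativity of the Euler characteristic holds for finite covers of spaces homotopy equivalent to finite CW complexes, which applies because $X_0$ is geometrically finite and thus of finite type. It then remains to count the disks in $\wt X$ lying over $D_w$. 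Fix a lift $z\in\h$ of $w$ with stabilizer $\Stab_\group(z)=\langle g\rangle$ of order $m_w=\#\Iso(w)$. The preimage of $D_w$ in $\wt X$ corresponds to the double cosets $\wt\group\backslash\group/\langle g\rangle$. Because $\wt\group$ is torsion-free, $\langle g\rangle$ acts freely on $\wt\group\backslash\group$ (if $\wt\group h g^k=\wt\group h$ then $hg^kh^{-1}\in\wt\group$ is torsion, forcing $g^k=\id$). Hence there are exactly $d/m_w$ preimage disks, each mapping to $D_w$ as an $m_w$-fold branched cover and each being a genuine disk in $\wt X$. Gluing these disks back gives
\[
\eulertop(\wt X)=d\bigl(\eulertop(X)-\#\sing(X)\bigr)+\sum_{w\in\sing(X)}\frac{d}{m_w}.
\]
Dividing by $d$ and using the definition $\eulerorb(X)=\eulertop(\wt X)/d$ yields the claimed formula.

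The main obstacle is the careful justification of the local structure near orbifold points. Concretely, we must check that $\pi$ identifies a small disk around $w$ with $\langle g\rangle\backslash B(z,\eps)$, a cone over a disk, and that the fibre count via double cosets is correct, which is where torsion-freeness of $\wt\group$ enters. A secondary point is that the ends cause no trouble. The covering is unbranched near the ends, so they contribute to both sides compatibly, and this is already built into the multiplicativity $\eulertop(\wt X_0)=d\,\eulertop(X_0)$. Note also that the argument does not need the explicit formula $2-2g-n$, and it shows as a by-product that the right-hand side is independent of the choice of $\wt\group$.
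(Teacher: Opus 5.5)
Your proposal is correct and follows essentially the same route as the paper: you excise disjoint disks $D_w$ around the orbifold points, use multiplicativity of $\eulertop$ for the honest degree-$[\group:\wt\group]$ covering of the complement, and count $[\group:\wt\group]/\#\Iso(w)$ disk components over each $D_w$ before gluing them back. Your double-coset argument, which uses that $\wt\group$ is torsion-free, makes explicit the component count that the paper only asserts.
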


\begin{proof}
For \emph{compact} hyperbolic orbisurfaces, this statement is shown in~\cite[p.~427]{Scott83}. In our situation of a \emph{non-compact} hyperbolic orbisurface~$X$, the proof is almost identical as all orbifold points of~$X$ are contained in a compact subset of~$X$. For the convenience of the reader, we provide a short summary of the proof.

Pick a family~$(D_w)_{w\in\sing(X)}$ of bounded, open, connected, simply connected, pairwise disjoint subsets of~$X$ with smooth boundary such that for each $w\in\sing(X)$, the set~$D_w$ contains~$w$, is $\Iso(w)$-invariant (as a set) and has topological Euler characteristic~$1$. It is indeed possible; e.g., one can pick suitable Euclidean disks about representatives of the orbifolds points of~$X$ in~$\h$ and take for $(D_w)_w$ the subsets in~$X$ corresponding to these disks.

By Selberg's Lemma we find a torsion-free subgroup~$\wt\group$ of~$\group$ of finite index. We set $\wt X \coloneqq \wt\group\backslash\h$ and let
\[
\pr\colon \wt X  \to X = \group\backslash\h
\]
denote the canonical projection map. For each $w\in\sing(X)$, $\pr^{-1}(D_w)\subseteq \wt X$ is a disjoint union of $[\group:\wt\group]/\#\Iso(w)$ connected components, each of which has topological Euler characteristic~$1$. Thus
\[
\eulertop(\pr^{-1}(D_w)) = \frac{[\group:\wt\group]}{\#\Iso(w)}\,.
\]
Further,
\[
W \coloneqq X\setminus\bigcup_{w\in\sing(X)}D_w
\]
is a smooth manifold with boundary, and hence $\wt W\coloneqq \pr^{-1}(W)$ as well. As $\pr\colon \wt W \to W$ is a covering of manifolds of degree~$[\group:\wt\group]$,
\[
\eulertop(\wt W) = [\group:\wt\group]\,\eulertop(W)\,.
\]
Combining these results now allows us to calculate the orbifold Euler characteristic of~$X$ as
\begin{align*}
\eulerorb(X) & = \frac{\eulertop(\wt X)}{[\group:\wt\group]}
\\
& = \frac{1}{[\group:\wt\group]}\left(\eulertop(\wt W) + \sum_{w\in\sing(X)}\eulertop(\pr^{-1}(D_w))  \right)
\\
& = \eulertop(W) + \sum_{w\in\sing(X)} \frac{1}{\#\Iso(w)}
\\
& = \eulertop(X) - \sum_{w\in\sing(X)} \eulertop(D_w) + \sum_{w\in\sing(X)} \frac{1}{\#\Iso(w)}
\\
& = \eulertop(X) - \sum_{w\in\sing(X)}\left(1-\frac{1}{\#\Iso(w)}\right)\,.
\end{align*}
This completes the proof.
\end{proof}

\subsection{Special functions}\label{sec:special-functions}
We require some special functions. These are needed to capture the zeros and poles of the Selberg zeta function that are not Laplacian resonances. The first function, i.e., the function~$G_\infty$, is as in~\cite{BJP, Borthwick_book}, to which we refer for all claimed properties. (Note that $G_\infty^{\eulertop(X)}$ is called $Z_\infty$ in~\cite{BJP}.) The other two functions, i.e., $G_{X^\wedge,\twist}$ and $\gammafunc_{X^\wedge,\twist}$, handle the contributions from the orbifold points of~$X$. As they are not present in~\cite{BJP, Borthwick_book}, we discuss their properties in more detail.

\subsubsection{The function~\texorpdfstring{$G_\infty$}{Ginfty}}
\label{sec:Ginfty}
For $s\in\C$ we set
\begin{equation}\label{eq:def-barnes}
G_\infty(s) = \frac1{2\pi} e^{s-s^2 - \gamma(s-1)^2}\,\gammafunc(s)\,\prod_{k\in\N} E_2\left(\frac{1-s}{k}\right)^{2k}\,,
\end{equation}
where $\gamma$ is the Euler-Mascheroni constant and $E_2$ is the second elementary factor~\eqref{eq:2ndfactor}. Using the Barnes $G$-function~$G$ (see~\cite{Barnes}),
\[
G(s) = (2\pi)^{\frac{s-1}2} e^{\frac12(s-s^2-\gamma(s-1)^2)}\prod_{k\in\N}E_2\left(\frac{1-s}k\right)^{k}
\]
for $s\in\C$, the function~$G_\infty$ becomes
\[
G_\infty(s) = \frac{1}{(2\pi)^s} \Gamma(s)\, G(s)^2 = \frac{1}{(2\pi)^s} G(s) G(s+1)\,.
\]
Here, the first equality is a straightforward reformulation of~\eqref{eq:def-barnes}, and the second equality follows from a product identity for~$\Gamma(s)$ or, equivalently, the identity $G(s+1) = \Gamma(s)G(s)$ (see~\cite{Barnes}). By Hadamard's factorization theorem, $G$ is an entire function of order~$2$. Its zero set is $-\N_0$, the zero at $-m$ for $m\in\N_0$ has order $m+1$. Therefore~$G_\infty$ is entire and of order~$2$; in particular, the poles of~$\Gamma$ are indeed compensated by zeros of the infinite product in~\eqref{eq:def-barnes}. Its zero set is~$-\N_0$, and the order of $-m\in -\N_0$ as a zero is $2m+1$.

\subsubsection{The function~\texorpdfstring{$G_{X^\wedge,\twist}$}{GXwedge}}
The function~$G_{X^\wedge, \twist}$ that we are about to define is a finite product of functions, each of which provides the contribution of a single orbifold point of~$X$. We start by defining the latter functions.

For $q \in \N$, $\alpha \in \{0,\dotsc,q-1\}$ and $m\in\N_0$ we set
\begin{align}\label{eq:Nq}
    N_q(\alpha,m) & \coloneqq \# \big( (\alpha + q\Z) \cap \{-m,\dotsc,m\} \big)
    \\ 
    & \ = \# \{ k\in\Z : |k|\leq m\,,\ k\equiv \alpha\ \text{mod}\ q\} \nonumber
    \\
    & \ = 1 + \floor*{\frac{m + \alpha}{q}} + \floor*{\frac{m - \alpha}{q}}\,. \label{eq:count-lattice-points}
\end{align}
\begin{lemma}\label{lem:barnes-cone}
For each $q\in\N$, $\alpha\in\{0,\ldots, q-1\}$, the map
\[
H(s) \coloneqq G_\infty(s)^{q-1} \prod_{m=0}^{q-1} \gammafunc\left( \frac{s+m}{q} \right)^{qN_q(\alpha,m)-(2m+1)}
\]
is entire. It admits an entire $q$-th root function. It can be chosen to be positive on~$(0,\infty)$, hence it coincides with the principal $q$-th root function on~$(0,\infty)$. The zeros of any root function of~$H$ are in $-\N_0$. The order of $-n \in -\N_0$ as a zero is $(2n+1) - N_q(\alpha,n)$.
\end{lemma}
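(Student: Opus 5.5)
The plan is to compute the divisor of $H$ explicitly, show that every zero order is a nonnegative multiple of~$q$, and then take a $q$-th root by the standard Weierstrass argument.

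\textbf{Step 1: orders of the two kinds of factors.} Recall from Section~\ref{sec:Ginfty} that $G_\infty$ is entire, its zero set is $-\N_0$, and $-n$ is a zero of order $2n+1$. Hence $G_\infty^{q-1}$ contributes order $(q-1)(2n+1)$ at $s=-n$. Since $\gammafunc$ has no zeros, each factor $\gammafunc\bigl(\frac{s+m}{q}\bigr)$ is nonvanishing. Its only singularities are simple poles at $s=-m-qj$, $j\in\N_0$. Thus $H$ has neither zeros nor poles off $-\N_0$.

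\textbf{Step 2: the order of $H$ at $s=-n$.} Fix $n\in\N_0$ and write $n=m+qj$ with $m\in\{0,\dots,q-1\}$ and $j\in\N_0$. Exactly one of the gamma factors, the one with index~$m$, is singular at $-n$, and it contributes order $-(qN_q(\alpha,m)-(2m+1))$. The total order of $H$ at $-n$ is therefore
\[
(q-1)(2n+1) - qN_q(\alpha,m) + (2m+1).
\]
I claim this equals $q\bigl((2n+1)-N_q(\alpha,n)\bigr)$. Subtracting, the claim reduces to
\[
q\bigl(N_q(\alpha,n)-N_q(\alpha,m)\bigr) = 2(n-m) = 2qj.
\]
By \eqref{eq:Nq}, $N_q(\alpha,n)-N_q(\alpha,m)$ counts the $k\equiv\alpha \bmod q$ with $m<|k|\le n$. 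The positive range $m<k\le m+qj$ consists of $j$ blocks of $q$ consecutive integers, so it contains exactly $j$ such~$k$. The negative range likewise contains exactly $j$. Hence the difference is $2j$, which proves the claim.

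Since $N_q(\alpha,n)$ counts a subset of the $2n+1$ integers in $\{-n,\dots,n\}$, the quantity $(2n+1)-N_q(\alpha,n)$ is nonnegative. So $H$ is entire, and every zero has order divisible by~$q$. This bookkeeping is the only real point; the rest is standard.

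\textbf{Step 3: the $q$-th root.} Let $E$ be a Weierstrass product with zeros exactly at $-n$ of order $(2n+1)-N_q(\alpha,n)$. Then $H/E^q$ is entire and zero-free, so $H/E^q=e^{g}$ for some entire $g$. Consequently $h\coloneqq E\,e^{g/q}$ is an entire function with $h^q=H$. Its zeros lie in $-\N_0$, and the zero at $-n$ has order $(2n+1)-N_q(\alpha,n)$. Any other $q$-th root of $H$ differs from $h$ by a constant root of unity, because $\C$ minus the discrete zero set is connected. Therefore every root function has the same zero orders.

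\textbf{Step 4: positivity.} On $(0,\infty)$ both $\gammafunc$ and $G_\infty=(2\pi)^{-s}G(s)G(s+1)$ are positive, the latter by the product formula for the Barnes $G$-function. Hence $H>0$ there. The function $h$ is continuous and nonvanishing on $(0,\infty)$, and $h^q>0$ there. So $h(x)$ takes values in the discrete set of $q$-th roots of unity times $H(x)^{1/q}$, and by connectedness $h$ differs from the positive real root by one fixed root of unity~$\zeta$. Replacing $h$ by $\zeta^{-1}h$ yields an entire root that is positive on $(0,\infty)$, i.e.\ one that coincides with the principal $q$-th root there.
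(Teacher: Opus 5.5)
Your proof is correct and has the same structure as the paper's proof: you show that the order of $H$ at $-n$ is $q\bigl((2n+1)-N_q(\alpha,n)\bigr)\geq 0$, extract a $q$-th root, and normalize it using positivity of $H$ on $(0,\infty)$. The differences are only technical: you verify $N_q(\alpha,n)-N_q(\alpha,m)=2j$ by counting residue classes in blocks of $q$ consecutive integers rather than with the floor-function formula for $N_q$, and you build the root as $E\,e^{g/q}$ from a Weierstrass product rather than using the paper's criterion $\int_\gamma H'/H\,ds\in 2\pi i q\Z$ for closed paths $\gamma$. Both root constructions are standard and interchangeable here.
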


\begin{proof}
We start by showing that~$H$ is entire. As $G_\infty$ and $\gammafunc$ are meromorphic on all of~$\C$, and $\gammafunc$ does not have zeros, the map $H$ is meromorphic on~$\C$. Its potential poles are determined by the zeros and poles of~$G_\infty$ and~$\gammafunc$. In what follows we will show that $H$ does not have any poles, and hence it is entire. For later purposes, we at once also determine the location and order of potential zeros of~$H$. Also these are determined by the zeros and poles of~$G_\infty$ and~$\gammafunc$.

The function~$G_\infty$ is entire. Its zero set is $-\N_0$, and the order of $-k\in-\N_0$ as a zero is $(2k+1)$. The gamma function~$\gammafunc$ is meromorphic without zeros. Its poles are at $-\N_0$, each pole is simple. Therefore the potential poles and zeros of~$H$ are in $-\N_0$. To simplify the following argumentation, we understand poles as zeros of negative order. The order of $-n\in-\N_0$ as a zero is
\begin{equation}\label{eq:orderzero}
(q-1)(2n+1) - \sum_{\substack{m=0 \\ m-n\in -q\N_0}}^{q-1} \bigl( qN_q(\alpha,m) - (2m+1) \bigr)\,.
\end{equation}
We now calculate this value more explicitly to show that it is non-negative. Let $m\in\{0,\ldots, q-1\}$ be the unique value such that $m\equiv n \mod q$. Thus,
\[
m = n + bq
\]
for a unique $b=b(n)\in\Z$. The admissible range for $m$ combined with the positivity of~$n$ implies that $b\in -\N_0$. Then \eqref{eq:orderzero} becomes
\begin{align*}
\left( q- 1 \right)(2n+1) - qN_q(\alpha, n+bq) + 2n &+ 2bq + 1
\\
& = q(2n+1) + 2bq - qN_q(\alpha, n+bq)\,.
\end{align*}
We have
\begin{align*}
N_q(\alpha, n+bq) & = 1 + \left\lfloor \frac{n+bq+\alpha}q \right\rfloor +  \left\lfloor \frac{n+bq-\alpha}q \right\rfloor
\\
& = 1 + 2b + \left\lfloor \frac{n+\alpha}q \right\rfloor + \left\lfloor \frac{n-\alpha}q \right\rfloor  = 2b + N_q(\alpha,n)\,.
\end{align*}
Thus, the order of $-n$ as a zero of~$H$ is
\begin{equation}\label{H:orderzero}
q(2n+1) - qN_q(\alpha,n) \geq 0\,,
\end{equation}
and hence $H$ is entire.

To show that the map $H$ admits an entire $q$-th root function we take advantage of the fact that this is the case if and only if for each closed ($C^1$-)path~$\gamma$ in~$\C$, we have (see, e.g., \cite[Theorem~33.3.2]{Napkin})
\[
\int_\gamma \frac{H'}{H}(s)\, ds \in 2\pi i q \Z\,.
\]

Obviously, the function $H'/H$ is meromorphic on~$\C$ with being holomorphic on~$\C\setminus (-\N_0)$.
We now pick $-n\in-\N_0$ and let $\gamma$ be a closed path in~$\C$ with winding number~$1$ around $-n\in-\N_0$ and $0$ around any other point in $-\N_0$. By the argument principle
\begin{align*}
\int_\gamma \frac{H'}{H}(s)\,ds = 2\pi i q\bigl((2n+1) - N_q(\alpha,n)\bigr)  \in 2\pi i q\Z \,,
\end{align*}
using the result~\eqref{H:orderzero} on the order of the zeros (and poles) of~$H$.
Again using the argument principle, for any closed path~$\gamma$ in~$\C$ not enclosing any element of~$-\N_0$, the integral of $H'/H$ along~$\gamma$ vanishes. Further, the calculation for any other closed path reduces to the previous two situations. Therefore $H$ admits an entire $q$-th root function.

Any two $q$-th root functions for~$H$ differ by a multiplicative constant of the form $\exp(2 \pi i k/q)$ for some $k\in\{0,\ldots, q-1\}$, hence a $q$-th root of unity. Further, on $(0,\infty)$, the gamma function factors in the definition of~$H$  as well as $G_\infty$ are positive, and hence so is $H$. Therefore, for $s\in (0,\infty)$, any $q$-th root function of~$H$ is of the form
\[
e^{2\pi i \frac{k}{q}} |H(s)|^\frac1q
\]
for some $k\in\{0,\ldots, q-1\}$, and all such options are realized by some choice of a root function. \textit{A priori}, $k$ may depend on~$s$, but discreteness of the set of choices and continuity of root functions yield that $k = k(s)$ is constant on $(0,\infty)$. Hence we may choose $k=0$ and thereby calibrate the chosen $q$-th root function of~$H$ to be the principal $q$-th root function on~$(0,\infty)$.
\end{proof}

\begin{remdef}\label{remdef:root}
Let $q\in\N$ and $\alpha\in\{0,\ldots, q-1\}$.
\begin{enumerate}[label=$\mathrm{(\roman*)}$, ref=$\mathrm{\roman*}$]
\item For any $n\in\N_0$, we have $N_q(\alpha,n) \leq 2n + 1$. Therefore, for any $q$-th root function of~$H$ in Lemma~\ref{lem:barnes-cone}, the order of $-n\in\N_0$ as a zero is indeed non-negative.
\item Throughout we will use the $q$-th root function of~$H$ that coincides with the principal $q$-th root function on~$(0,\infty)$, which we denote $G_{q,\alpha}$. For simplicity, we write
\begin{align}\label{eq:def-barnes-cone}
    G_{q,\alpha}(s) = G_\infty(s)^{1-\frac1q} \prod_{m=0}^{q-1} \gammafunc\left( \frac{s+m}{q} \right)^{N_q(\alpha,m)- \frac{2m+1}q}
\end{align}
with the $q$-th root of complex numbers on the right hand side being understood in way consistent with this choice of root function.
\item\label{remdef:rootiii} On the cut plane $\C\setminus (-\infty,0]$, the map~$H$ admits a logarithm as the calculation of the path integrals of $H'/H$ in the proof of Lemma~\ref{lem:barnes-cone} imply. Therefore, on the cut plane,
any $q$-th root function of~$H$ can be defined via a logarithm of~$H$ (with $H$ necessarily restricted to $\C\setminus (-\infty,0]$). By an analogous argument as in the proof of Lemma~\ref{lem:barnes-cone}, we may choose the principal branch of the logarithm, which then yields the principal $q$-th root function for $H$ on~$(0,\infty)$. However, obviously, globally on all of~$\C$, the $q$-th root function of~$H$ is not deduced from a logarithm.
\item Choosing another $q$-th root function of~$H$ will merely result in a change of the precise polynomial~$q$ in Theorem~\ref{thm:selberg-factorization} but not in a qualitative change of the main result.
\end{enumerate}
\end{remdef}

For any elliptic element~$g\in\group$ we denote by $\deg(g)$ the \emph{degree} of~$g$, i.e., the minimal $n\in\N$ such that $g^n=\id$ (in $\PSL_2(\R)$). We recall from Section~\ref{sec:EV} that $\EV(\twist(g))$ denotes the multiset of eigenvalues of~$\twist(g)$. As $\twist(g)^{\deg(g)} = \twist(g^{\deg(g)}) = \twist(\id) = \id$, each eigenvalue of~$\twist(g)$ is of the form
\[
e^{2\pi i \alpha/\deg(g)}
\]
for a unique $\alpha\in\{0,\ldots, \deg(g)-1\}$. We call $\alpha$ the \emph{parameter} of this eigenvalue, and we let $\EVP(\chi(g))$ denote the multiset of the parameters of all eigenvalues of~$\twist(g)$. We note that $\deg(g)$, $\EV(\twist(g))$ and $\EVP(\twist(g))$ only depend on the $\group$-conjugacy class of~$g$, not on~$g$ itself.

With these preparations we now define~$G_{X^\wedge,\twist}\colon\C\to\C$ by
\begin{equation}\label{eq:GXwedge}
G_{X^\wedge,\twist}(s) \coloneqq \prod_{[g]\in[\group]_\ellprim} \prod_{\alpha\in \EVP(\twist(g))} G_{\deg(g),\alpha}(s)\,.
\end{equation}
We emphasize that $G_{X^\wedge,\twist}$ is entire.

\begin{remark}
\begin{enumerate}[label=$\mathrm{(\roman*)}$, ref=$\mathrm{\roman*}$]
\item For $q = 1$ and $\alpha = 0$, we have that $G_{1,0}\equiv 1$.
\item In particular, if $\group$ is torsion-free, then $G_{X^\wedge,\twist}\equiv 1$ and hence does not contribute to the divisor of the Selberg zeta function. More precisely, each factor in~\eqref{eq:GXwedge} is~$1$, and hence indeed each factor separately does not contribute to the divisor.
\item If $\twist$ is the trivial character, we have
\begin{align*}
    G_{X^\wedge}(s) \coloneqq G_{X^\wedge,\twist}(s) = \prod_{\gamma \in [\group]_\ellprim} G_{\deg(\gamma),0}(s)\,.
\end{align*}
\end{enumerate}
\end{remark}

\subsubsection{The function~\texorpdfstring{$\gammafunc_{X^\wedge,\twist}$}{GammaXwedge}}
\label{sec:defGXwedge}

For $\C\setminus(-\infty, 0]$ we define
\begin{align}\label{eq:GammaXwedge}
    \gammafunc_{X^\wedge,\twist}(s) \coloneqq
    \prod_{[g] \in [\group]_\ellprim}  \prod_{\alpha\in\EVP(\twist(g))} \prod_{m=0}^{\deg(g)-1}
    \gammafunc\left( \frac{s+m}{\deg(g)} \right)^{N_{\deg(g)}(\alpha,m) - \frac{2m+1}{\deg(g)}}\,,
\end{align}
where we pick the principal branch of the logarithm for the definition of the roots. See Remark~\ref{remdef:root}\eqref{remdef:rootiii} (and also the proof of Lemma~\ref{lem:barnes-cone}) for a justification. With this choice, the definition of $\gammafunc_{X^\wedge,\twist}$ becomes consistent with the one of~$G_{X^\wedge,\twist}$. We note that $\gammafunc_{X^\wedge,\twist}$ typically does not admit a meromorphic extension to all of~$\C$.
However, on~$\C\setminus(-\infty,0]$ a straightforward calculation shows the identity
\begin{align}\label{eq:barnes_orbifold}
    G_{\infty}(s)^{-\dim(V)\,\eulertop(X)} G_{X^\wedge,\twist}(s) &=
    G_\infty(s)^{-\dim(V)\,\eulerorb(X)} \gammafunc_{X^\wedge,\twist}(s)\,,
\end{align}
where the left hand side is meromorphic on all of~$\C$. Hence, the product of the maps on the right hand side extends meromorphically to all of~$\C$.

\section{Model cases and geometric decomposition}\label{sec:cyclic}
In this section, we will establish Theorem~\ref{thm:selberg-factorization} for all hyperbolic orbisurfaces with cyclic fundamental groups, i.e., $\group = \ang{g}$ for some $g \in \PSL_2(\R)$, and any finite-dimensional unitary representation $\twist\colon\group \to U(V)$. Due to the classification of elements in~$\PSL_2(\R)$, there are four types of such fundamental groups, namely $g$ either being the identity or elliptic or parabolic or hyperbolic, and we will ultimately consider each of these cases separately.

In the case of $g$ being the identity, the space $X=\group\backslash\h$ is just the hyperbolic plane~$\h$. For $g$ being elliptic, the orbifold~$X$ is commonly called a \emph{cone}. For $g$ being parabolic, $X$ is called a \emph{parabolic cylinder} or a \emph{cusp (end)} (see also Section~\ref{sec:parabcusps}), and for $g$ being hyperbolic, $X$ is called a \emph{hyperbolic cylinder}. We will use the discussion below as well to introduce some normalized representing orbifolds, namely the \emph{model cones}, the \emph{model cusp} and the \emph{model hyperbolic cylinders}, including rather common notation for them.

We emphasize that we could subsume the discussing of the case of the identity into the discussion of elliptic elements as the hyperbolic plane is---in a certain sense---a degenerate cone. However, for clarity and as the argument for the identity is considerably easier than for elliptic elements, we will discuss them separately.

Common to all four cases are some considerations for the resolvent of the Laplacian for~$(X,\twist)$, which we will provide first and then discuss Theorem~\ref{thm:selberg-factorization} separately for each case.

Afterwards, in Sections~\ref{sec:modelfunnel}, we introduce a further model case, namely the \emph{model funnel}, which will allow us to survey, in Section~\ref{sec:decomposition}, a geometric decomposition of hyperbolic orbisurfaces, which will be crucial for our further discussions.

\subsection{Properties of the resolvent}
The resolvent~$\ResTwist$ of the Laplacian~$\Delta_{X,\twist}$ is often given by its Schwartz kernel, which deduces from the Schwartz kernel for the resolvent of the Laplacian~$\Delta_\h$ for~$\h$ in the untwisted, scalar setting. As often done, we identify here the resolvent with its Schwartz kernel. In what follows we recollect the necessary information on the resolvents, i.e. their Schwartz kernels.

For the definition of the (Schwartz kernel of) the resolvent~$R_\h$ for $\h$ we require the hypergeometric function~${}_2F_1$ and the regularized hypergeometric function~$\FF$ deducing from it.

For $a,b\in\C$, $c\in\C\setminus(-\N_0)$ and $z\in\C$, $|z|<1$, the \emph{hypergeometric function} is given by
\[
{}_2F_1(a,b;c;z) \coloneqq \sum_{n=0}^\infty \frac{\gammafunc(a+n)\gammafunc(b+n)\gammafunc(c)}{\gammafunc(a)\gammafunc(b)\gammafunc(c+n)} \cdot \frac{z^n}{n!}
\]
and the \emph{regularized hypergeometric function} by
\begin{align*}
    \FF(a,b;c;z) & \coloneqq \frac{1}{\gammafunc(c)} \cdot {}_2F_1(a,b;c;z)
    \\
    & = \sum_{n=0}^\infty \frac{\gammafunc(a+n)\gammafunc(b+n)}{\gammafunc(a)\gammafunc(b)}\frac{1}{\gammafunc(c+n)} \cdot \frac{z^n}{n!}\,.
\end{align*}
We note that $\FF$ is indeed defined for $c\in\C$, not only $c\in\C\setminus(-\N_0)$, by the infinite sum in the last line of the previous formula.

Let $d_\h$ denote the hyperbolic metric on~$\h$, and for $z,w\in\h$ set
\[
\sigma(z,w)\coloneqq \left(\cosh\frac{d_\h(z,w)}2\right)^2\,.
\]
The \emph{resolvent}~$R_\h$ of~$\Delta_\h$ is then
\[
  R_\h(s; z,z') = \frac{\gammafunc(s)^2}{4\pi} \sigma(z,z')^{-s} \FF\bigl(s,s;2s;\sigma(z,z')^{-1}\bigr)
\]
for $s\in\C\setminus(-\N_0)$, $z,z'\in\h$, $z\not=z'$. See, e.g., \cite[Eq. (4.6)]{Borthwick_book}.
By means of the function $g_s\colon (1,\infty)\to\C$ for $s\in\C\setminus(-\N_0)$,
\begin{equation}\label{eq:def_gs}
g_s(x) \coloneqq \frac{\gammafunc(s)^2}{4\pi} x^{-s} \FF\bigl(s,s;2s;x^{-1}\bigr) =  \frac{1}{4\pi}\sum_{n=0}^\infty \frac{\gammafunc(s+n)^2}{\gammafunc(2s+n)} \cdot \frac{x^{-n-s}}{n!}\,,
\end{equation}
we may write
\[
R_\h(s;z,z') = g_s\bigl(\sigma(z,z')\bigr)\,.
\]

The \emph{resolvent}~$\ResTwist$ of~$\LapTwist$ is defined as an average of~$R_\h$ about~$\Gamma$ in the presence of~$\twist$, i.e.,
\begin{equation}\label{def:restwist}
\ResTwist(s;z,z') \coloneqq \sum_{h\in\Gamma} \twist(h)R_\h(s;z,hz')
\end{equation}
for $\Rea s \gtrsim 1$. See, e.g., \cite[Theorem~1.4.5]{Venkov_book}.
The meromorphic continuation of~$R_\h$ and~$\ResTwist$ is known, as already mentioned in Section~\ref{sec:resonances}. For the situations needed here, we refer to~\cite[Theorem~A]{DFP}.

We also require an expression for~$R_\h$ in terms of geodesic polar coordinates. To be more precise, we base the latter at~$i\in\h$.  For $\varphi\in \Sph^1 = \R/\Z$ we denote  by~$\gamma_\varphi$ the unit speed geodesic passing through $i$ at time~$0$ and enclosing an angle to $\partial_y|_i$ at $i$ of~$\varphi$. Then we may identify $\h\setminus\{i\}$ with $(0,\infty)\times \Sph^1$ via the bijection
\[
(0,\infty)\times \Sph^1 \to \h\,,\quad (r,\varphi) \mapsto \gamma_\varphi(t)\,,
\]
the \emph{geodesic polar coordinates}. For any $\mu,\nu\in\C$, we let $P_\nu^\mu$ and $\mathbf{Q}_\nu^\mu$ be the associated Legendre functions as defined in~\cite[(12.04) and (12.06)]{Olver74} and emphasize that $\mathbf{Q}^{-k}_\mu = \mathbf{Q}^k_\mu$ for $k\in\Z$. As in~\cite[Section~8.1]{Borthwick_book} we set
\begin{align}\label{eq:defuk}
        u_k(s;r,r') \coloneqq \begin{cases}
            \gammafunc(s+\abs{k}) P_{-s}^{-\abs{k}}(\cosh r) \mathbf{Q}^{k}_{s-1}(\cosh r') & \text{for $r\leq r'$}\,\\
            \gammafunc(s+\abs{k}) P_{-s}^{-\abs{k}}(\cosh r') \mathbf{Q}^{k}_{s-1}(\cosh r) & \text{for $r\geq r'$}
        \end{cases}
\end{align}
for $s\in\C$, $r,r'\in (0,\infty)$, $k\in\Z$. The set of poles of $u_k$ is contained in $-\N_0$, and $-n\in-\N_0$ is a pole of $u_k$ only if $|k|\leq n$.
By~\cite[(8.5)]{Borthwick_book} the Fourier expansion of $R_\h$ is given in geodesic polar coordinates $(r,\phi), (r',\phi')\in (0,\infty)\times \Sph^1$ by
\begin{align}\label{eq:resgp}
        R_\h(s;r,\phi,r',\phi') = \frac{1}{2\pi} \sum_{k \in \Z} e^{ik(\phi - \phi')} u_k(s;r,r')\,.
\end{align}

Further, for the estimate of the regularized trace~$\Phi_{X,\twist}$ in Lemma~\ref{lem:growth} we will need the following alternative presentation of~$R_\h$ and its asymptotics.
For $\Rea s > 0$, the resolvent~$R_\h$ admits the integral representation
\begin{align}\label{eq:resolv_H_int}
    R_\h(s; z,w) = \frac{1}{4\pi} \int_0^1 \frac{t^{s-1} (1 - t)^{s-1}}{(\sigma(z,w) - t)^s} \,dt\,.
\end{align}
By, e.g., Borthwick~\cite[(4.12)]{Borthwick_book} we have the asymptotics
\begin{align}\label{eq:resolv_H_diag}
    R_\h(s; z, w) = -\frac{1}{2\pi} \left( \log\left( \frac{d(z,w)}{2} \right) + \psi(s) + \gamma \right) + o(1)
\end{align}
as $d(z,w) \to 0$, where $\psi$ is the digamma function and $\gamma$ is the Euler constant.

\subsection{The identity}\label{sec:modelid}
In the case that $\group$ is the trivial group consisting of the identity~$\id\in\PSL_2(\R)$ only, the space~$X=\group\backslash\h$ is the upper half plane~$\h$ and the twist~$\twist$ merely renders the situation multidimensional without any significant interaction between the dimensions as $\twist \equiv \id_V$.

As $\group = \{\id\}$ does not have any hyperbolic elements (equivalently, as $X=\h$ does not have any periodic geodesics), the Selberg zeta function~$Z_{X,\twist}$ for~$(X,\twist)$ is the empty product, thus
\[
Z_{X,\twist} \equiv 1\,,
\]
which clearly is holomorphic on all of~$\C$.

For determining the resonances of~$X=\h$ in the presence of~$\twist$, we observe that
\[
R_{\h,\twist}(s;z,z') = \id_V R_\h(s;z,z')
\]
by~\eqref{def:restwist}. Thus, the resolvent of~$\Delta_{\h,\twist} = \Delta_{X,\twist}$ is just the vector-valued variant of the resolvent of~$\Delta_\h$ without interaction between the dimensions. By~\cite[§~8.1]{Borthwick_book}, the resonance set of~$\h$ is $-\N_0$ with the multiplicity of~$-n\in -\N_0$ as a resonance being $2n+1$. Therefore the resonance set of~$(X,\twist) = (\h,\twist)$ is $-\N_0$ with the multiplicity of~$-n\in -\N_0$ as a resonance being $(2n+1)\dim(V)$.

As $\group=\{\id\}$ does not have elliptic elements or, equivalently, $X=\h$ does not have any orbifold points,
\[
G_{X^\wedge,\twist} \equiv 1\,.
\]
Further, the degree of singularity at cusps as well as the degree of singularity at disk ends vanish, i.e.,
\[
n_p = 0 = n_d\,,
\]
due to the absence of these types of ends. For the topological Euler characteristic of~$X=\h$ we have
\[
\eulertop(\h) = 1\,.
\]
By Section~\ref{sec:Ginfty}, the function $G_\infty(s)^{-\dim(V)\eulertop(\h)} = G_\infty(s)^{-\dim(V)}$ is meromorphic on all of~$\C$ without any zeros. Its set of poles is $-\N_0$ with order of $-n\in-\N_0$ as a pole being $(2n+1)\dim(V)$. Thus, the product
\[
H(s)\coloneqq G_{X^\wedge,\twist}(s)\thin G_\infty(s)^{-\dim(V)\,\eulertop(X)}\cdot
    \frac{\gammafunc\bigl(s-\frac12\bigr)^{n_p}}{\gammafunc\bigl(s+\frac12\bigr)^{n_d}}
    \cdot \ProdTwist(s)
\]
defines an entire function without zeros. As $H$ and $Z_{X,\twist}$ are entire functions with identical zero sets, there exists an entire function $q$ such that
\[
Z_{X,\twist}(s) = e^{q(s)}H(s)
\]
on all of~$\C$. By Hadamard's factorization theorem, $q$ is a polynomial of degree at most~$2$ as $G_\infty$ and $\ProdTwist$ are entire of order~$2$. This proves Theorem~\ref{thm:selberg-factorization} for the hyperbolic plane (and any finite-dimensional unitary representation).

\subsection{Cones}\label{sec:modelcone}
We suppose now that $\group = \ang{g}$ with $g\in\PSL_2(\R)$ being elliptic. By applying a conjugation within~$\PSL_2(\R)$ if needed, we may and shall assume that
\begin{equation}\label{eq:gtheta}
 g = g_\theta \coloneqq \bmat{\cos(\theta)}{\sin(\theta)}{-\sin(\theta)}{\cos(\theta)}
\end{equation}
with $\theta = \pi/q$ for some $q \in \N$, $q>1$. As $\group$ does not contain hyperbolic elements or, equivalently, the \emph{model cone}
\[
C_\theta^\wedge \coloneqq \ang{g_\theta}\backslash\h \quad (=X)
\]
does not have any periodic geodesics, the Selberg zeta function~$Z_{C_\theta^\wedge,\twist}$ for~$(C_\theta^\wedge,\twist)$ is defined by the empty product and hence
\[
Z_{C_\theta^\wedge,\twist} \equiv 1\,.
\]
In what follows we determine first the set of resonances for~$C_\theta^\wedge$ in the presence of~$\twist$ and then evaluate the product of the spectral functions on the right hand side of the formula claimed in Theorem~\ref{thm:selberg-factorization}.
As $\twist(g_\theta)^q = \twist(g_\theta^q) = \twist(\id)=\id_V$,
by~\eqref{def:restwist} the resolvent of~$\Delta_{C_\theta^\wedge,\twist}$ is
\begin{align}\label{eq:rescone}
    R_{C_\theta^\wedge,\twist}(s;z,z') = \sum_{m=0}^{q-1} \twist(g_\theta)^m R_\h(s;z,g_{\theta}^m z')\,.
\end{align}
We recall $N_q$ from~\eqref{eq:count-lattice-points}.

\begin{prop}\label{prop:multcone}
The poles of $R_{C_\theta^\wedge,\twist}$ (thus, the resonances of~$(C_\theta^\wedge,\twist)$) are at $s = -n \in -\N_0$ with multiplicity
\begin{align*}
m_{C_\theta^\wedge,\twist}(-n) = \sum_{\alpha\in\EVP(\twist(g_\theta))} N_q(\alpha,n) \,.
\end{align*}
\end{prop}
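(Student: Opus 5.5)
Because $\twist(g_\theta)$ is unitary, I first decompose $V$ into its eigenspaces. Fix an orthonormal eigenbasis $v_1,\dots,v_{\dim V}$ of $\twist(g_\theta)$ with eigenvalues $e^{2\pi i\alpha_j/q}$, where $\alpha_j\in\EVP(\twist(g_\theta))$. Then~\eqref{eq:rescone} is diagonal in this basis, so the resolvent splits as a direct sum of scalar resolvents:
\[
R_{C_\theta^\wedge,\twist}(s;z,z') = \sum_j \Bigl(\sum_{m=0}^{q-1} e^{2\pi i \alpha_j m/q} R_\h(s;z,g_\theta^m z')\Bigr)\, v_j\otimes v_j^*\,.
\]
Each summand is the resolvent of a scalar cone with a character twist. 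The multiset of resonances with multiplicities is therefore the union over $j$, and it suffices to show the following for a single $\alpha\in\{0,\dots,q-1\}$: the scalar kernel $R_\alpha(s)\coloneqq\sum_m e^{2\pi i\alpha m/q}R_\h(s;\cdot,g_\theta^m\cdot)$ has poles only at $-n\in-\N_0$, and the multiplicity there is $N_q(\alpha,n)$.

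For the scalar case I use the geodesic polar coordinates centred at the fixed point $i$ of $g_\theta$, together with the Fourier expansion~\eqref{eq:resgp}. The element $g_\theta$ acts as a rotation about $i$, shifting the angle $\phi'$ by $2\theta=2\pi/q$ (up to sign, depending on the orientation convention, which affects only $\alpha\mapsto-\alpha$; this is harmless since $N_q(\alpha,n)=N_q(-\alpha \bmod q,n)$ by the symmetry of $\{-n,\dots,n\}$). Substituting into the character sum gives
\[
R_\alpha(s;r,\phi,r',\phi') = \frac{1}{2\pi}\sum_{k\in\Z} e^{ik(\phi-\phi')}u_k(s;r,r') \sum_{m=0}^{q-1} e^{2\pi i m(\alpha \pm k)/q}\,.
\]
The inner sum equals $q$ if $k\equiv \mp\alpha \bmod q$ and $0$ otherwise. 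So $R_\alpha$ is $q/(2\pi)$ times the sum of the Fourier modes $k$ in a single residue class modulo $q$. This is precisely the restriction of $R_\h$ to the corresponding Fourier-mode subspace, which matches the $\alpha$-equivariant functions on $\h$.

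Next I determine the poles mode by mode. By the stated properties of $u_k$, poles occur only in $-\N_0$, and $-n$ can be a pole of $u_k$ only when $|k|\le n$. For $\h$ itself the resonance $-n$ has multiplicity $2n+1$ (\cite[§8.1]{Borthwick_book}). Since there are exactly $2n+1$ modes with $|k|\le n$, each such $u_k$ must contribute exactly one to the multiplicity. More precisely, the residue of $u_k$ at $s=-n$ is a rank-one kernel $c\,P^{-|k|}_{n}(\cosh r)P^{-|k|}_{n}(\cosh r')$, with $c\neq0$, coming from the pole of $\gammafunc(s+|k|)$ combined with the Legendre connection formula for $\mathbf Q$. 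This is the point I expect to need the most care. I would either cite Borthwick's computation of the residues for $\h$ and observe that multiplicity is additive over the orthogonal Fourier modes, or verify the rank-one residue directly from the formula for $\mathbf Q^k_{s-1}$. Granting that, the residue of $R_\alpha$ at $-n$ has rank equal to the number of $k$ with $|k|\le n$ and $k\equiv \alpha \bmod q$ (up to sign), which is $N_q(\alpha,n)$ by~\eqref{eq:Nq}. The modes are linearly independent because they have distinct angular dependence. The poles are simple, as for $\h$, so the rank of the residue equals the multiplicity in the sense of~\cite{DFP}. For $n$ with $N_q(\alpha,n)=0$ there is no pole.

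Finally, I sum over the eigenvectors $v_j$ to obtain $m_{C_\theta^\wedge,\twist}(-n)=\sum_{\alpha\in\EVP(\twist(g_\theta))}N_q(\alpha,n)$. One bookkeeping point remains: the kernel computed on $\h$ must also be the one on the quotient. The multiplicity of the resonance on $C_\theta^\wedge$ is the rank of the residue acting on equivariant sections, and under the identification $C^\infty(X,E_\twist)\cong C^\infty(\h,V)_\ssec$ this agrees with the rank computed on $\h$ restricted to the relevant Fourier modes. The factor $q$ and the normalisation of the fundamental domain do not change ranks.
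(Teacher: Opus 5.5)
Your proposal is correct and follows essentially the same route as the paper. Both proofs diagonalize $\twist(g_\theta)$ and use the polar Fourier expansion~\eqref{eq:resgp} so that the character sum isolates the modes $k\equiv\alpha \bmod q$. Both then count the modes with $|k|\le n$, each contributing a rank-one residue via the identity $\mathbf{Q}^k_{-n-1}(z)=(-1)^n\gammafunc(n+|k|+1)P^{-|k|}_n(z)$. Your explicit treatment of the rank-one residues and their linear independence fills in what the paper delegates to Borthwick's argument in~\S8.1.
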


\begin{proof}
We take advantage of~$\eqref{eq:resgp}$ and~\eqref{eq:rescone}. As the elliptic fixed point of~$g_\theta$ is~$i$, we have
\begin{align*}
        R_{C_\theta^\wedge,\twist}(s;r,\phi,r',\phi') = \frac1{2\pi}\sum_{m=0}^{q-1} \twist(g_\theta)^m \sum_{k \in \Z} e^{ik(\phi - \phi' - 2m\theta)} u_k(s;r,r')\,.\end{align*}
We note that the factor~$2$ is caused by the fact that $g_\theta$ acts as rotation by~$2\theta$ on the tangent bundle.

As $\twist(g_\theta)^q = \id_V$, we find an eigenbasis~$(\psi_j)_{j=1}^{\dim V}$ of~$V$ such that $\twist(g_\theta) \psi_j = e^{2\pi i \alpha_j/q} \psi_j$ for suitable $\alpha_j \in \{0,\dotsc,q-1\}$. We note that $\{\alpha_1,\ldots, \alpha_q\} = \EVP(\twist(g_\theta))$ (as multisets). Thus, for each $j\in\{1,\ldots,\dim V\}$,
    \begin{align*}
        \ang{R_{C_\theta^\wedge,\twist}(s;r,\phi,r',\phi')\psi_j, \psi_j} = \frac{1}{2\pi} \sum_{k \in \Z} \sum_{m=0}^{q-1} e^{2\pi i m (\alpha_j - k) /q} e^{ik(\phi - \phi')} u_k(s;r,r')\,.
    \end{align*}
From
    \begin{align*}
        \sum_{m=0}^{q-1} e^{2\pi i m (\alpha_j - k)/q} =
        \begin{cases}
            q & \text{if $k \equiv \alpha_j\mod q$}
            \\
            0 & \text{otherwise}
        \end{cases}
    \end{align*}
    we obtain
    \begin{align}\label{eq:resconegp}
        R_{C_\theta^\wedge,\twist}(s;r,\phi,r',\phi')^{(j)} &\coloneqq \ang{R_{C_\theta^\wedge,\twist}(s;r,\phi,r',\phi')\psi_j, \psi_j} \\
        &= \frac{q}{2\pi} \sum_{k \in \Z} e^{i(qk + \alpha_j) (\phi - \phi')} u_{qk + \alpha_j}(s;r,r')\,. \nonumber
    \end{align}
Thus, the poles of~$R_{C_\theta^\wedge,\twist}^{(j)}$ are in $-\N_0$. To calculate the multiplicity of $s=-n$ as a pole, we employ the same argument as in~\cite[Section~8.1]{Borthwick_book}. Using the identity
    \begin{align*}
        \mathbf{Q}_{-n-1}^k(z) = (-1)^n \gammafunc(n+\abs{k}+1) P_n^{-\abs{k}}(z)
    \end{align*}
in~\eqref{eq:defuk} and then in~\eqref{eq:resconegp},
    we obtain that the multiplicity of $-n$ is
    \begin{align*}
        \# \{k \in \Z \setmid \abs{qk + \alpha_j} \leq n\} = N_q(\alpha_j,n)\,.
    \end{align*}
    This implies that the multiplicity of $s=-n$ as a pole of $R_{C_\theta^\wedge,\twist}$ is
    \begin{align*}
        m_{C_\theta^\wedge,\twist}(-n) = \sum_{j=1}^{\dim V} N_q(\alpha_j,n)\,.
    \end{align*}
This completes the proof.
\end{proof}

For the cone~$C_\theta^\wedge$ we have
\[
n_p = 0 = n_d
\]
and
\[
\eulertop(C_\theta^\wedge) = 1\,.
\]
By a straightforward analysis of poles and zeros we obtain (as in Section~\ref{sec:modelid}) that
\[
G_{(C_\theta^\wedge)^\wedge,\twist}(s)\thin G_\infty(s)^{-\dim(V)\,\eulertop(C_\theta^\wedge)}\cdot
    \frac{\gammafunc\bigl(s-\frac12\bigr)^{n_p}}{\gammafunc\bigl(s+\frac12\bigr)^{n_d}}
    \cdot \mathcal{P}_{C_\theta^\wedge,\twist}(s)
\]
defines an entire function (in~$s$) without zeros and hence
\begin{equation}\label{eq:thmcone}
Z_{C_\theta^\wedge,\twist}(s) = e^{q(s)}G_{(C_\theta^\wedge)^\wedge,\twist}(s)\thin G_\infty(s)^{-\dim(V)\,\eulertop(C_\theta^\wedge)}\cdot
    \frac{\gammafunc\bigl(s-\frac12\bigr)^{n_p}}{\gammafunc\bigl(s+\frac12\bigr)^{n_d}}
    \cdot \mathcal{P}_{C_\theta^\wedge,\twist}(s)
\end{equation}
for some entire function~$q$. We consider \eqref{eq:thmcone} in the variant
\[
 G_\infty(s)^{\dim(V)} = e^{q(s)}G_{(C_\theta^\wedge)^\wedge,\twist}(s)\thin \mathcal{P}_{C_\theta^\wedge,\twist}(s)
\]
using $Z_{C_\theta^\wedge,\twist}\equiv 1$. As $G_{(C_\theta^\wedge)^\wedge,\twist}$ is entire of order at most~$2$ (the dominant contribution for the order is the $G_\infty$-factor in~\eqref{eq:def-barnes-cone}), both sides define entire functions of order~$2$. By Hadamard's factorization theorem, $q$ is a polynomial of order at most~$2$. This establishes Theorem~\ref{thm:selberg-factorization} for cones.

\begin{remark}
For $\theta=\pi$ in~\eqref{eq:gtheta}, i.e. $q=1$, the element $g_\theta$ is not elliptic as it is the identity in~$\PSL_2(\R)$. Nevertheless, for $\theta=\pi$ and $\dim V = 1$, the considered situation reduces to the hyperbolic plane~$\h$ and Proposition~\ref{prop:multcone} recovers the multiplicities $m_\h(-n) = 2n+1$.
\end{remark}

\subsection{Parabolic cylinders or cusps}\label{sec:cusps}

Let $\group=\ang{g}$ with $g\in\PSL_2(\R)$ being parabolic. After conjugation within~$\PSL_2(\R)$ if necessary, we may and shall assume that
\[
g = T \coloneqq \bmat{1}{1}{0}{1}
\]
and hence $X=\group\backslash\h$ is the \emph{(model) parabolic cylinder} or \emph{model cusp}
\[
C_\infty \coloneqq \ang{T}\backslash \h\,.
\]
The Selberg zeta function~$Z_{C_\infty,\twist}$ is the empty product, hence
\[
Z_{C_\infty,\twist} \equiv 1\,.
\]
Furthermore, the singularity degrees are
\[
n_p = n_d = m_{\twist(T)}(1)\,,
\]
i.e., the multiplicity of $1$ as an eigenvalue of~$\twist(T)$, which might be nonzero. Therefore, for parabolic cylinders, both gamma factors in Theorem~\ref{thm:selberg-factorization} may have a contribution. As $\group$ does not contain elliptic elements or, equivalently, $C_\infty$ has no orbifold points,
\[
G_{C_\infty^\wedge,\twist}\equiv 1\,.
\]
Moreover,
\[
\eulertop(C_\infty) = 0\,.
\]
By~\cite[Proposition~4.17]{DFP}, the only resonance of~$C_\infty$ is at $\tfrac12$ with multiplicity~$m_{\twist(T)}(1) = n_p$. Therefore the Weierstrass product of the resonances for the parabolic cylinder, $\mathcal{P}_{C_\infty,\twist}\colon\C\to\C$, is
\[
\mathcal{P}_{C_\infty,\twist}(s) = (1-2s)^{n_p}\exp\left( 2n_p(s+s^2) \right)\,.
\]
We obtain, for all $s\in\C$,
\begin{align*}
G_{C_\infty^\wedge,\twist}(s)\, &G_\infty(s)^{-\dim(V)\,\eulertop(C_\infty)}\,\frac{\gammafunc\left( s - \frac12\right)^{n_p}}{\gammafunc\left(s + \frac12\right)^{n_p}}\,\mathcal{P}_{C_\infty,\twist}(s)
\\
&\quad = \frac{\gammafunc\left(s - \frac12\right)^{n_p}}{\left(s-\frac12\right)^{n_p}\gammafunc\left(s - \frac12\right)^{n_p}}\,\mathcal{P}_{C_\infty,\twist}(s)
\\
&\quad = (-2)^{n_p}\exp\left(2n_p(s+s^2) \right)
\\
&= e^{-q(s)} = e^{-q(s)}Z_{C_\infty,\twist}(s)
\end{align*}
with
\[
q(s) \coloneqq -2n_p(s+s^2) - i\pi n_p - n_p\log 2\,.
\]
Since $p$ is a polynomial of degree~$2$, this proves Theorem~\ref{thm:selberg-factorization} for parabolic cylinders.

\subsection{Hyperbolic cylinders}\label{sec:hypcyl}
We now suppose that $\group=\ang{g}$ with $g\in\PSL_2(\R)$ hyperbolic. Up to conjugation within~$\PSL_2(\R)$ we may and shall assume that
\[
g = g_\ell \coloneqq \bmat{e^{\ell/2}}{0}{0}{e^{-\ell/2}}
\]
for some $\ell>0$. Then $X$ is the \emph{model hyperbolic cylinder}
\[
C_\ell \coloneqq \ang{g_\ell} \backslash \h\,.
\]
The hyperbolic surface~$C_{\ell} $ has two primitive periodic geodesics, one of them related to the $\group$-conjugacy class of~$g_\ell$, the other one to the $\group$-conjugacy class of~$g_\ell^{-1}$ (see Section~\ref{sec:hypgeod}), both of length~$\ell$. Thus, for $s\in\C$, $\Rea s \gtrsim 1$, the Selberg zeta function for~$(C_\ell,\twist)$ is
\begin{align*}
Z_{C_\ell,\twist}(s) & = \prod_{k=0}^\infty \prod_{\kappa\in\{\pm1\}} \det\left( 1 - \twist(g_\ell^\kappa)e^{-(s+k)\ell}  \right)
\\
& = \prod_{\kappa\in\{\pm 1\}} \prod_{\lambda\in\EV(\twist(g_\ell))} \prod_{k=0}^\infty \left( 1 - e^{\kappa\log\lambda - (s+k)\ell} \right)\,.
\end{align*}
Here, $\log\lambda$ is any logarithm of~$\lambda$, and hence also negative eigenvalues of~$\twist(g_\ell)$ can be handled. As $\sum_{k\in\N} e^{-k\ell}$ is (absolutely) convergent, the theory of indefinite products yields that
\[
h\colon \C\to\C\,,\quad h(z)\coloneqq \prod_{k=0}^\infty \left( 1-\frac{z}{e^{k\ell}} \right)
\]
is an entire function with $\{e^{k\ell}:k\in\N_0\}$ as zero set and each zero of multiplicity~$1$. See, e.g., \cite[Theorem~2.6.5]{Boas}; the convergence exponent here is~$0$. Thus, $f\colon \C\to\C$,
\[
f(t) \coloneqq h\left(e^{-t\ell}\right)\,,
\]
is an entire function as well with zero set
\[
-\N_0 + \frac{2\pi i}{\ell}\Z\,,
\]
each zero being of multiplicity~$1$. In turn
\[
Z_{C_\ell,\twist}(s) = \prod_{\kappa\in\{\pm 1\}} \prod_{\lambda\in\EV(\twist(g_\ell))} f\left(s-\frac{\kappa}{\ell}\log\lambda\right)
\]
is indeed defined on all of~$\C$, and it is entire with the multiset
\begin{align}\label{eq:hypzeros}
    \bigcup_{\lambda \in \EV(\chi(g_\ell))} \bigcup_{\kappa \in \{\pm 1\}} \left( -\N_0 + \kappa \ell^{-1} ( \log \lambda + 2 \pi i \ZZ) \right)
\end{align}
as zero set. Let $\mathbf{1}\colon\group\to\C^\times$, $h\mapsto 1$, denote the trivial character. As
\[
Z_{C_\ell}(s)\coloneqq Z_{C_\ell,\mathbf{1}}(s) = \prod_{k=0}^\infty \left( 1 - e^{-(s+k)\ell}\right)^2
\]
we obtain
\[
\left|Z_{C_\ell,\twist}(s)\right| = \prod_{\kappa\in\{\pm 1\}} \prod_{\lambda\in\EV(\twist(g_\ell))} \left| Z_{C_\ell}\left(s-\frac{\kappa}{\ell}\log\lambda\right) \right|^{\frac12}\,.
\]
(We do the detour with the function~$f$ in the discussion above to avoid to discuss the square root of $Z_{C_\ell}$.) By~\cite[Proposition~15.11]{Borthwick_book}, $Z_{C_\ell}$ has order~$2$, and hence $Z_{C_\ell,\twist}$ has order~$2$ as well. An alternative proof of $Z_{C_\ell,\twist}$ being entire of order~$2$ and finding its zero set can be given based on twisted transfer operators by combination of~\cite{FP_szf} and~\cite{APW}, mimicking the proof of~\cite[Proposition~15.11]{Borthwick_book}.

For $(C_\ell, \twist)$ we have
\[
n_p = 0 = n_d\,,\quad G_{C_\ell^\wedge,\twist} \equiv 1\,,\quad \eulertop(C_\ell) = 0\,.
\]
By \cite[Proposition~4.8]{DFP}, the resonance set for~$(C_\ell,\twist)$ is the multiset in~\eqref{eq:hypzeros}. Therefore Hadamard's factorization theorem shows that $Z_{C_\ell,\twist}$ and $\Prd_{C_\ell,\twist}$ coincide up to a multiplicative zero-free factor, more precisely,
\[
 Z_{C_\ell,\twist}(s) = e^{q(s)} \Prd_{C_\ell,\twist}
\]
for some polynomial~$q$ of degree at most~$2$ (as both functions, $Z_{C_\ell,\twist}$ and $\Prd_{C_\ell,\twist}$, are of order~$2$). This proves Theorem~\ref{thm:selberg-factorization} for hyperbolic cylinders.

\subsection{Model funnels}\label{sec:modelfunnel}
Hyperbolic orbisurfaces may have infinite ends of finite area (called cusp ends, see Section~\ref{sec:parabcusps}) and infinite ends of infinite area. If the latter type is not a disk end, meaning, if the considered orbisurface is not a parabolic cylinder, then this end is called a \emph{funnel} or \emph{funnel end}. The \emph{model funnel} can be thought of as half of a (model) hyperbolic cylinder. More precisely, for any $\ell\in(0,\infty)$, we define the \emph{model funnel} as
\begin{align*}
    F_\ell \coloneqq C_\ell \cap \{x + iy \in \C \colon x \geq 0\}\,.
\end{align*}

\subsection{Geometric decomposition}\label{sec:decomposition}
We may and often shall decompose $X$ into a union of a compact core, a finite number of funnel and cusp ends,
\begin{align*}
    X = X_{\interior} \sqcup X_f \sqcup X_c\,.
\end{align*}
We refer to, e.g., \cite{Borthwick_book,DFP,DFP2} for details and recollect here only the information that we will need later on.

The \emph{compact core}~$X_{\interior}$ is compact, of finite area and does not have any funnels or cusps ($\interior$ stands for ``inner part''). The \emph{funnel part}~$X_f$ decomposes further into a finite number (here $n_f$) of connected components,
\[
X_f = \bigsqcup_{j=1}^{n_f} X_{f,j}\,,
\]
and each such component~$X_{f,j}$ is isometric to a model funnel~$F_{\ell_j}$ for some $\ell_j \in (0,\infty)$, $j\in\{1,\ldots, n_f\}$. Likewise, the \emph{cusp part}~$X_c$ decomposes into a finite number (here $n_c$) of connected components,
\[
X_c = \bigsqcup_{k=1}^{n_c} X_{c,k}\,,
\]
and each such component~$X_{c,k}$ is isometric to the model cusp~$C_\infty$ from Section~\ref{sec:cusps}. See \cite[Section 2.4.1]{Borthwick_book} or \cite[Section 3.2]{DFP} for more details.

\section{Two examples}\label{sec:examples}

Before starting with the proof of Theorem~\ref{thm:selberg-factorization} for arbitrary geometrically finite hyperbolic orbisurfaces and arbitrary finite-dimensional unitary representations in the subsequent section we will present in this section, with two examples, how Theorem~\ref{thm:selberg-factorization} can be used to determine the resonance set in one situation given that the resonance set in a certain suitably related situation is known. More precisely, we let $\group, \Lambda \subset \PSL_2(\R)$ be two discrete subgroups such that $X = \group \backslash \h$ and $Y = \Lambda \backslash \h$ have infinite area. Let $\eta\colon \Lambda\to \Unit(V)$ be a finite-dimensional unitary representation of~$\Lambda$. If $\Lambda \leq \group$ and $[\group : \Lambda] < \infty$, then the Venkov--Zograf formula (for infinite-area hyperbolic orbisurfaces see~\cite[Section~7]{FP_szf}) states that the Selberg zeta functions for~$(Y,\eta)$ and~$(X,\Ind_\Lambda^\Gamma\eta)$ are equal. In this situation, Theorem~\ref{thm:selberg-factorization} allows us to deduce a relationship between the sets of resonances of~$(Y,\eta)$ and~$(X,\Ind_\Lambda^\Gamma\eta)$. We now illustrate this in two examples.

\begin{example}\label{EX:elementary}
Let $\ell \in (0,\infty)$ and set
\[
h_\ell \coloneqq \bmat{e^{\ell/2}}{0}{0}{e^{-\ell/2}}\quad\text{and}\quad 
S \coloneqq \bmat{0}{1}{-1}{0}\,.
\]
Let 
\[
\Lambda \coloneqq \ang{h_\ell} \quad\text{and}\quad \group \coloneqq \ang{h_\ell, S}\,.
\]
The hyperbolic orbisurface~$Y \coloneqq \Lambda \backslash \h$ is a hyperbolic cylinder (denoted $C_\ell$ in Section~\ref{sec:hypcyl}), whereas $X \coloneqq \group \backslash \h$ is a hyperbolic orbisurface with one funnel end and two elliptic points, being twofold-covered by~$Y$.

In order to discuss the untwisted Selberg zeta functions of~$X$ and~$Y$, we consider the fundamental groups~$\Lambda$ and~$\group$.
Obviously, $\Lambda \leq \group$, more precisely $[\group : \Lambda] = 2$ and
\begin{align*}
 \group & = \{ S h_\ell^n \setmid n\in\Z\} \cup \{ h_\ell^n \setmid n\in\Z\}
 \\
 & = S \Lambda \cup \Lambda\,.
\end{align*}
The latter can also be deduced immediately from the identity
\begin{equation}\label{eq:Sconjhyp}
S h_\ell^n S = h_\ell^{-n}\,,
\end{equation}
valid for all~$n\in\Z$, within~$\PSL(2,\R)$.
Obviously, $\group$ and $\Lambda$ have the same hyperbolic elements.
However, their conjugacy classes of hyperbolic elements, and in particular their conjugacy classes of primitive hyperbolic elements, differ.
While $\Lambda$ has two conjugacy classes of primitive hyperbolic elements, namely
\[
[\Lambda]_\hypprim = \{ [h_\ell], [h_\ell^{-1}] \}\,,
\]
the group~$\group$ has only one, namely
\[
[\group]_\hypprim = \{ [h_\ell] \}\,,
\]
as follows immediately from~\eqref{eq:Sconjhyp}.
Consequently, the Selberg zeta functions of~$X$ and~$Y$ differ from each other. Indeed, for~$\Rea s \gtrsim 1$ we have
\begin{align*}
Z_X(s) & = \prod_{k=0}^\infty \left( 1 - e^{-(s+k)\ell}\right)
\intertext{and}
Z_{Y}(s) & = \prod_{k=0}^\infty \left( 1 - e^{-(s+k)\ell}\right)^2\,.
\end{align*}
Another way to see this is to observe that $Y$ has two different primitive periodic geodesics, which are given by $t \mapsto (0, t)$ and $t \mapsto (0, -t)$ in coordinates $(r,\phi)$ of $Y \cong \R \times \R / 2\pi \Z$.
These two geodesics project down to the same geodesic on $X$, which bounces between the two orbifold points.

Due to uniqueness of meromorphic continuations, 
\begin{equation}\label{eq:ZXZY}
    Z_{Y}(s) = Z_X(s)^2
\end{equation}
for all~$s\in\C$. This relation can also be deduced taking advantage of the Venkov--Zograf formula
as explained in what follows. We denote the trivial character of~$\Lambda$ by~$\mathbf{1}$. Thus, 
\[
\mathbf{1}\colon \Lambda \to \C^\times\,,\quad g\mapsto 1\,.
\]
We set 
\[
\chi\coloneqq \Ind_\Lambda^\group \mathbf{1}\,,
\]
the (unitary) representation of~$\group$ induced by~$\mathbf{1}$. The Venkov--Zograf formula states that 
\[
Z_{X,\chi}(s) = Z_{Y,\mathbf{1}}(s) = Z_Y(s)\,.
\]
Since 
\[
\chi(h_\ell) = \mat{1}{0}{0}{1}\,,
\]
we obtain 
\begin{align*}
    Z_{Y}(s) & = Z_{X,\chi(s)} = \prod_{k=0}^\infty\det\left( 1 - \chi(h_\ell) e^{-(s+k)\ell} \right) = \prod_{k=0}^\infty \left( 1 - e^{-(s+k)\ell} \right)^2
\\
& = Z_X(s)^2
\end{align*}
for all~$s\in\C$. Throughout this section we omit the trivial character from the notation.

\begin{prop}\label{prop:relationWeierstrass}
    There exists a polynomial~$q$ of degree at most~$2$ such that
    \begin{align}\label{eq:weierstrass_example}
        \mc P_{Y}(s) = e^{q(s)} \gammafunc\left(\frac{s}{2}\right)^2 \gammafunc\left(\frac{s+1}{2}\right)^{-2} \mc P_X(s)^2
    \end{align}
    on all of~$\C$.
\end{prop}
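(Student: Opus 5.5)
Apply Theorem~\ref{thm:selberg-factorization} to both $(Y,\mathbf{1})$ and $(X,\mathbf{1})$ (untwisted, $\dim V=1$), insert the two factorizations into the identity $Z_Y = Z_X^2$ from~\eqref{eq:ZXZY}, and solve for $\mc P_Y$. Neither $X$ nor $Y$ has cusps or disk ends, so $n_p=n_d=0$ in both cases and the gamma quotients disappear. For $Y=C_\ell$ we have $\eulertop(Y)=0$ and $G_{Y^\wedge}\equiv 1$. Hence $Z_Y = e^{q_Y}\mc P_Y$, which is also the model computation of Section~\ref{sec:hypcyl}.

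For $X$ I use the orbifold form~\eqref{eq:SZF_fact_orb}, since it makes the Euler characteristic cancel cleanly. The orbifold Euler characteristic is multiplicative under finite covers, so $\eulerorb(X) = \eulerorb(Y)/2 = 0$. (Check via Lemma~\ref{lem:top_orb_euler}: $X$ is a disk with one funnel end, so $\eulertop(X)=1$, and its two cone points of order $2$ give $1 - 2\cdot\tfrac12 = 0$.) The $G_\infty$ factor therefore drops out and
\[
Z_X(s) = e^{q_X(s)}\,\gammafunc_{X^\wedge}(s)\,\mc P_X(s).
\]

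Next I evaluate $\gammafunc_{X^\wedge}$ from~\eqref{eq:GammaXwedge}. There are two primitive elliptic classes, each of degree $q=2$ with trivial twist, so $\alpha=0$. From~\eqref{eq:count-lattice-points}, $N_2(0,0)=1$ and $N_2(0,1)=1$. The exponents are therefore $1-\tfrac12=\tfrac12$ for $m=0$ and $1-\tfrac32=-\tfrac12$ for $m=1$. Each elliptic class contributes $\gammafunc(s/2)^{1/2}\gammafunc((s+1)/2)^{-1/2}$, and the two classes together give
\[
\gammafunc_{X^\wedge}(s) = \gammafunc\left(\tfrac s2\right)\gammafunc\left(\tfrac{s+1}2\right)^{-1}.
\]
This is a genuine meromorphic function, so no branch issue arises.

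Squaring, $Z_X^2 = e^{2q_X}\gammafunc(s/2)^2\gammafunc((s+1)/2)^{-2}\mc P_X^2$. Setting this equal to $e^{q_Y}\mc P_Y$ gives~\eqref{eq:weierstrass_example} with $q = 2q_X - q_Y$, which is a polynomial of degree at most $2$. The identity first holds on the cut plane where~\eqref{eq:SZF_fact_orb} is literally defined. Both sides are meromorphic on $\C$, so it extends to all of $\C$ by uniqueness of meromorphic continuation.

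The only step I expect to need care is the computation of $\gammafunc_{X^\wedge}$: the bookkeeping of the exponents, confirming there are exactly two elliptic conjugacy classes (the classes of $S$ and of $Sh_\ell$), and checking that each has trivial twist parameter. If one prefers to avoid the orbifold form, the alternative is~\eqref{eq:SZF_fact_top} with $\eulertop(X)=1$ and $G_{X^\wedge}=G_{2,0}^2$. One then needs $G_{2,0}(s)^2 = G_\infty(s)\gammafunc(s/2)\gammafunc((s+1)/2)^{-1}$, which is exactly~\eqref{eq:barnes_orbifold} in this case, so both routes agree.
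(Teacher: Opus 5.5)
Your proposal is correct and follows the paper's proof: insert the factorizations of $Z_Y$ and $Z_X$ from Theorem~\ref{thm:selberg-factorization} into $Z_Y=Z_X^2$ and solve for $\mc P_Y$, using $n_p=n_d=0$ and the two order-$2$ elliptic classes $[S]$, $[Sh_\ell]$. The only cosmetic difference is the form of the factorization. The paper uses the topological form~\eqref{eq:SZF_fact_top}, where $G_{X^\wedge}=G_{2,0}^2=G_\infty(s)\,\gammafunc(\frac s2)\,\gammafunc(\frac{s+1}2)^{-1}$ cancels $G_\infty(s)^{-\eulertop(X)}=G_\infty(s)^{-1}$. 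You lead with the equivalent orbifold form~\eqref{eq:SZF_fact_orb} with $\eulerorb(X)=0$, and you already note that the two routes agree via~\eqref{eq:barnes_orbifold}.
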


\begin{proof}
We note that
\begin{align*}
\eulertop(Y) & = 0\,, & \eulertop(X) & = 1\,,
\intertext{or, equivalently,}
\eulerorb(Y) & = 0\,, & \eulerorb(X) & = 0\,.
\end{align*}
Since $\Lambda$ does not contain elliptic elements, we have $G_{Y^\wedge} \equiv 1$. Therefore, as consistent with Theorem~\ref{thm:BJP},
\begin{equation}\label{eq:ZYspec}
    Z_{Y}(s) = e^{q_Y(s)} \mc P_{Y}(s)
\end{equation}
for a suitable polynomial~$q_Y$ of degree at most~$2$. Further, the group~$\group$ contains two non-conjugate elliptic elements, namely $S$ and $S h_\ell$ with fixed points $i$ and $i e^{\ell/2}$, respectively. Both elliptic elements have order $2$. Therefore
\begin{align*}
    G_{X^\wedge}(s) &= G_{2,0}(s)^2 \\
    &= G_\infty(s)^{2\left(1-\frac12\right)} \gammafunc\left(\frac{s}2\right)^{2 \left(-\frac12 + N_2(0,0)\right) } \gammafunc\left( \frac{s+1}2 \right)^{ 2 \left( -\frac32 + N_2(0,1)\right) } \\
    &= G_\infty(s) \gammafunc\left( \frac{s}2 \right) \gammafunc\left( \frac{s+1}2 \right)^{-1}\,.
\end{align*}
Thus, by Theorem~\ref{thm:selberg-factorization}, 
\begin{equation}\label{eq:ZXspec}
Z_X(s) = e^{q_X(s)} \gammafunc\left(\frac{s}{2}\right) \gammafunc\left(\frac{s+1}{2}\right)^{-1} \mc P_X(s)
\end{equation}
for all~$s\in\C$, where $q_X$ is a suitable polynomial of degree at most~$2$. Combining~\eqref{eq:ZXZY}, \eqref{eq:ZYspec} and~\eqref{eq:ZXspec} yields
\[
\mc P_{Y}(s) = e^{q(s)} \gammafunc\left(\frac{s}{2}\right)^2 \gammafunc\left(\frac{s+1}{2}\right)^{-2} \mc P_X(s)^2
\]
on all of~$\C$ for a suitable polynomial~$q$ of degree at most~$2$.
\end{proof}

Proposition~\ref{prop:relationWeierstrass} in combination with Section~\ref{sec:hypcyl} allows us to determine the resonances of~$X$ as follows: In Section~\ref{sec:hypcyl} we saw that the resonance set of~$Y$ and hence the zero set of~$\mc P_Y$ is
\[
-\N_0 + \frac{2\pi i}{\ell}\Z\,,
\]
each resonances respectively zero with multiplicity~$1$. Using that
\[
\left(\frac{\gammafunc\left(\frac{s}2\right)}{\gammafunc\left(\frac{s+1}2\right)}\right)^2
\]
has a pole of order~$2$ at each $s\in-2\N_0$ and a zero of order~$2$ at each $s\in-2\N_0-1$, Proposition~\ref{prop:relationWeierstrass} implies that the zero set of~$\mc P_X$ and hence the resonance set of~$X$ is
\[
-2\N_0 \cup \left(-\N_0 + \frac{2\pi i}{\ell}\Z\setminus\{0\} \right)\,,
\]
where the zeros respectively resonances in~$-2\N_0$ have multiplicity~$2$ and those in $-\N_0 + \frac{2\pi i}{\ell}\Z\setminus\{0\}$ have multiplicity~$1$.

\end{example}

\begin{example}
Let
\[
 T \coloneqq \bmat{1}{4}{0}{1} \quad\text{and}\quad S \coloneqq 
\bmat{0}{1}{-1}{0}\,.
\]
Let 
\[
\group \coloneqq \ang{T, S} \quad\text{and}\quad \Lambda \coloneqq \ang{T, STS}
\]
Then $\Lambda$ is torsion-free and $[\group : \Lambda] = 2$.
\begin{figure}
    \centering
    \begin{tikzpicture}[scale=1]
    \fill[lightgray] (-2,0) rectangle (2,3.1);
    \filldraw[white,draw=black] (1,0) arc (0:180:1);
    \draw[-latex](-2.1,0) -- (2.5,0);
    \draw[-latex](0,-0.1) -- (0,3.2);
    \draw (-2,0) -- (-2,3.1);
    \draw (2,0) -- (2,3.1);
    \end{tikzpicture}
    \caption{A fundamental domain of $X = \ang{T,S} \backslash \h$.}
    \label{fig:funddom}
\end{figure}
Denote by $X \coloneqq \group \backslash \h$ and $Y \coloneqq \Lambda\backslash\h$ the corresponding hyperbolic orbisurfaces. See Figure~\ref{fig:funddom} for a fundamental domain of~$X$.
We have
\begin{align*}
    \eulerorb(Y) &= \eulertop(Y) = -1\,,\\
    \eulerorb(X) &= -\frac12\,,\quad \eulertop(X) = 0\,.
\end{align*}
This can be argued by observing that $Y$ is of genus zero and has three ends (two cusp ends and one funnel) and $X$ is topologically a cylinder.
It is also possible to derive the orbifold Euler characteristic of $X$ from Lemma~\ref{lem:top_orb_euler}, from which then also the Euler characteristic of $Y$ follows immediately.

The periodic geodesics of~$X$ and~$Y$ differ, but each 
periodic geodesic of~$Y$ projects to a periodic geodesic 
of~$X$ of same length or with half the length.

Let $\eta \colon \Lambda\to \Unit(V)$ be any finite-dimensional representation of~$\Lambda$, and let
$\twist \coloneqq \Ind_\Lambda^\group\eta$. By the Venkov--Zograf formula (see~\cite[Section~7]{FP_szf})
\[
Z_{Y,\eta} = Z_{X,\twist}\,.
\]
From Theorem~\ref{thm:selberg-factorization} we obtain
\begin{align}\label{eq:PXYidentity}
    \mc{P}_{X,\twist}(s) &= e^{q(s)} \frac{G_\infty(s)^{\dim(V)}}{G_{X^\wedge,\twist}(s)} \gammafunc\left(s-\frac12\right)^{n_p(\eta) - n_p(\twist)} \mc{P}_{Y,\eta}(s)\,,
\end{align}
where $n_p(\eta)$ and $n_p(\twist)$ is the degree of singularity at cusps for $(Y,\eta)$ and $(X,\twist)$, respectively, and $q$ is a polynomial of degree at most $2$.
We have that
\begin{align*}
    n_p(\eta) - n_p(\twist) = m_{\eta(STS)}(1)\,.
\end{align*}
To evaluate $G_{X^\wedge,\twist}$, we note that $[\Gamma]_\ellprim = \{ [S]\}$ and $\EVP(\twist(S)) = \{0,1\}$. From the explicit description of $G_{X^\wedge,\twist}$ in Section~\ref{sec:defGXwedge} we obtain, on all of~$\C$,
\begin{align*}
G_{X^\wedge,\twist}(s) & = G_{2,0}(s)\thin G_{2,1}(s)
\\
& = G_\infty(s)^{\frac12}\thin\gammafunc\left(\frac{s}2\right)^{N_2(0,0)-\frac12}\thin\gammafunc\left(\frac{s+1}2\right)^{N_2(0,1)-\frac32}
\\
& \quad\cdot G_\infty(s)^{\frac12}\thin \gammafunc\left(\frac{s}2\right)^{N_2(1,0)-\frac12}\thin \gammafunc\left(\frac{s+1}2\right)^{N_2(1,1)-\frac32}
\\
& = G_\infty(s)\gammafunc\left(\frac{s}2\right)\,.
\end{align*}
In turn,
\[
\frac{G_\infty(s)^{\dim(V)}}{G_{X^\wedge,\twist}(s)} = G_\infty(s)^{\dim V - 1}\thin \frac{1}{\gammafunc\left(\frac{s}2\right)}
\]
is entire. Its zero set is contained in $-\N_0$. The poles of $\gammafunc\left(s-\tfrac12\right)^{n_p(\eta)-n_p(\twist)}$ are contained in $-\N_0+\frac12$; this function does not have zeros. Therefore we deduce from~\eqref{eq:PXYidentity} that the resonance sets of~$(X,\twist)$ and~$(Y,\eta)$ coincide including multiplicities except for resonances at points in~ $-\tfrac12\N_0$. The discrepancy at the latter elements can also be deduced from~\eqref{eq:PXYidentity} in the same way as in Example~\ref{EX:elementary}.
\end{example}

\section{Elements of spectral theory}\label{sec:spectral_manuscr}

With this section we initiate the proof of Theorem~\ref{thm:selberg-factorization} in full generality. As we established already in Section~\ref{sec:cyclic} the validity of Theorem~\ref{thm:selberg-factorization} for cyclic groups (and any unitary representation as twist), we will restrict most of the considerations in what follows to non-cyclic groups.

We require some results from spectral theory and scattering theory for our purposes, most of which we deliver already with \cite{DFP, DFP2}. In this section we will provide some additional results that are often direct consequences of~\cite{DFP, DFP2} or follow with an argumentation similar to the one in~\cite{DFP, DFP2}.

Throughout this section let $X = \group \backslash \h$ be a geometrically finite hyperbolic orbisurface with \emph{non-cyclic} fundamental group~$\group$.

\subsection{Boundary defining functions and measures}\label{sec:boundary}
We recall from Section~\ref{sec:decomposition} that $X$ admits a decomposition into a union of a compact core, a finite number of funnel ends and a finite number of cusp ends from Section~\ref{sec:decomposition}:
\begin{align*}
    X = X_{\interior} \sqcup X_f \sqcup X_c\,,
\end{align*}
where the funnel part~$X_f$ is a disjoint union of connected components~$X_{f,j}$ for $j\in\{1,\ldots, n_f\}$ such that $X_{f,j}$ is isometric to the model funnel~$F_{\ell_j}$ for some $\ell_j \in (0,\infty)$, and the cusp part~$X_c$ is a disjoint union of connected components~$X_{c,k}$ for $k\in\{1,\ldots, n_c\}$ such that $X_{c,k}$ is isometric to the model cusp~$C_\infty$. Thus,
\[
X_f = \bigsqcup_{j=1}^{n_f} X_{f,j}\quad\text{and}\quad X_c = \bigsqcup_{k=1}^{n_c} X_{c,k}\,.
\]
We now fix, as in~\cite[Section 3.2.4]{DFP}, a funnel boundary defining function~$\rho_f\in C^\infty(X, (0,\infty))$ such that, in geodesic polar coordinates for~$X$,
\begin{align*}
    \rho_f\vert_{X_{f,j}}(r,\phi) &= \frac{1}{\cosh(r)} &&\text{for each $j\in\{1,\ldots, n_f\}$}\,,
    \intertext{and}
    \rho_f\vert_{X_c} &\equiv 1\,.
\end{align*}
We also fix a cusp boundary definition function~$\rho_c \in C^\infty(X, (0,\infty))$ such that, in geodesic polar coordinates for~$X$,
\begin{align*}
 \rho_c\vert_{X_{c,k}}(r,\phi) &= e^{-r}
 \intertext{for each $k\in\{1,\ldots, n_c\}$, and}
 \rho_c\vert_{X_f} & \equiv 1\,.
\end{align*}
Further we set
\[
\rho \coloneqq \rho_f\rho_c\,,
\]
which is a boundary defining function $\rho \in C^\infty(X, (0,\infty))$ such that, in geodesic polar coordinates for~$X$,
\begin{align*}
    \rho\vert_{X_{f,j}}(r,\phi) &= \frac{1}{\cosh(r)} &&\text{for each $j\in\{1,\ldots, n_f\}$}\,,
    \\
    \rho\vert_{X_{c,k}}(r,\phi) &= e^{-r} &&\text{for each $k\in\{1,\ldots, n_c\}$}\,.
\end{align*}
For $\eps\geq 0$ we set
\[
X_\eps \coloneqq \{x \in X \setmid \rho(x) \geq \eps\}\,.
\]
For $\eps < \tfrac12$, the space~$X_\eps$ is independent of the specific choice of boundary defining function
and we define
\begin{align*}
X_{f,j,\eps} &\coloneqq X_{f,j} \cap X_\eps && \text{for $j \in \{1,\dotsc,n_f\}$}
\intertext{and}
X_{c,k,\eps} &\coloneqq X_{c,k} \cap X_\eps && \text{for $k \in \{1,\dotsc,n_c\}$\,.}
\end{align*}
We denote by $d\sigma_\eps$ the measure on $\pa X_\eps = \{x \in X \setmid \rho(x) = \eps\}$ that is induced by restricting the Riemannian metric of~$X$ (as a Riemannian orbifold) or of~$\h$ (including inheriting it to $X_\eps$).

We denote by~$d\mu_X$ the measure on~$X$ induced by the Riemannian metric.
On the boundary at infinity, $\pa_\infty X$, of $X$, we consider the induced measure $d\mu_\partial$. See~\cite{DFP} for details.

\subsection{Resolvent}
In~\cite{DFP} we have shown that the resolvent of the Laplacian, $\ResTwist$, admits a meromorphic continuation to all of~$\C$ as an operator $L^2_\cpt \to L^2_\loc$. More precisely, we obtained an explicit representation.

To state this representation, we fix $s_0 \in \C$ with $\Rea s_0>1$, and, for $\bullet\in\{f,c\}$, we let $R_{X_\bullet,\twist}$ denote the \emph{model resolvent} for the funnel part~$X_f$ (if $\bullet =f$) respectively the cusp part~$X_c$ (if $\bullet=c$). See~\cite{DFP}. Let $d_X$ denote the metric on~$X$ induced by the metric~$d_\h$ on~$\h$. For $r\in\{0,1,2\}$, we pick smooth cut-off functions $\eta_{\bullet,r}\in C^\infty(X)$ such that
\[
\eta_{\bullet,r}(z) =
\begin{cases}
1 & \text{if $d_X(X\setminus X_\bullet,z)<r$}\,,
\\
0 & \text{if $d_X(X\setminus X_\bullet,z)>r+\tfrac12$}\,.
\end{cases}
\]
Then $n_{f,r}$ vanishes up in the funnels, whereas $n_{c,r}$ vanishes up in the cusps. See \cite[Section~5]{DFP} for more details. We define, for all $s\in\C$, the parametrices (interior, funnel part, cusp part, respectively) by
\begin{align}
M_i &\coloneqq \eta_{f,2} \eta_{c,2} \ResTwist(s_0) \eta_{f,1} \eta_{c,1}\,, \nonumber
\\
M_f(s) &\coloneqq (1 - \eta_{f,0}) R_{X_f,\twist}(s) ( 1 - \eta_{f,1})\,, \nonumber
\\
M_c(s) &\coloneqq (1 - \eta_{c,0}) R_{X_c,\twist}(s) ( 1 - \eta_{c,1})\,, \nonumber
\intertext{and we set}
M(s) &\coloneqq M_i + M_f(s) + M_c(s)\,. \label{def:M}
\end{align}
Moreover we set, again for all $s\in\C$ and with $[\cdot,\cdot]$ denoting the commutator,
\begin{align*}
L_i(s) & \coloneqq -[\LapTwist, \eta_{f,2}\eta_{c,2}] \ResTwist(s_0) \eta_{f,1}\eta_{c,1}
\\
&\hphantom{\coloneqq - \LapTwist} + (s(1-s)- s_0(1-s_0)) M_i(s_0)\,,
\\
L_f(s) &\coloneqq [\LapTwist, \eta_{f,0}] R_{X_f,\twist}(s) (1 - \eta_{f,1})\,,
\\
L_c(s) &\coloneqq [\LapTwist, \eta_{c,0}] R_{X_c,\twist}(s) (1 - \eta_{c,1})
\intertext{and}
L(s) &\coloneqq L_i(s) + L_f(s) + L_c(s)\,.
\end{align*}
With the operators~$M(s)$ and $L(s)$ we obtain the explicit representation
\begin{align*}
    \ResTwist(s) &= M(s) (\id - L(s))^{-1}
\end{align*}
on all of~$\C$.

We require the following estimate of the Hilbert--Schmidt norm for the remainder operator~$L(s)$ weighted by the total boundary defining function~$\rho$.
\begin{lemma}\label{lem:bound-Ls}
    For any $\eps > 0$,  on $\left\{s\in\C : \abs{\Rea s - \tfrac12} \leq \eps\right\}$, we have the estimate
    \begin{align*}
        \norm{L(s)\rho}_{\HS} \lesssim \ang{s}^2\,,
    \end{align*}
    where $\ang{s} = \sqrt{1+|s|^2}$ denotes the Japanese bracket of~$s$.
    \end{lemma}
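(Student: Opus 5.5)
We split $L(s)\rho = L_i(s)\rho + L_f(s)\rho + L_c(s)\rho$ and bound the Hilbert--Schmidt norm of each piece separately on the strip $\{\abs{\Rea s - \tfrac12}\leq\eps\}$, using the triangle inequality. This is the strategy of \cite[Lemma~6.2]{BJP} and \cite{Borthwick_book} in the untwisted case. The twist only changes the model resolvents $R_{X_f,\twist}$ and $R_{X_c,\twist}$. After diagonalising $\twist$ on the generator of each funnel or cusp, these become finite direct sums of scalar model resolvents with shifted Fourier modes, as in \cite{DFP}.

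\emph{Interior term.} $L_i(s)$ consists of $-[\LapTwist,\eta_{f,2}\eta_{c,2}]\ResTwist(s_0)\eta_{f,1}\eta_{c,1}$, which does not depend on $s$, plus $(s(1-s)-s_0(1-s_0))M_i$. Both operators are fixed and compactly supported: the commutator is a first-order operator with compactly supported coefficients, composed with $\ResTwist(s_0)$. Their kernels are therefore smooth off the diagonal, with at most a logarithmic singularity (respectively an integrable derivative singularity, cf.~\eqref{eq:resolv_H_diag}), on a compact set. Hence they are Hilbert--Schmidt, and multiplication by the bounded function $\rho$ preserves this. This gives $\norm{L_i(s)\rho}_{\HS}\lesssim 1+\abs{s}^2\lesssim\ang{s}^2$.

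\emph{Funnel term.} The commutator $[\LapTwist,\eta_{f,0}]$ is a first-order operator supported in a compact annulus $K$ of each funnel. The cutoff $1-\eta_{f,1}$ restricts the second variable to the region where $d_X(X\setminus X_f, z')>1$. Hence the kernel of $L_f(s)\rho$ is smooth, with both variables separated by distance at least $\tfrac12$. We write the model funnel resolvent, twisted by the eigenvalue $e^{2\pi i\beta}$ of $\twist(g_\ell)$, as a Fourier series in the angular variable. Each mode is expressed through the hypergeometric/Legendre solutions of the radial ODE, as in \cite[Section~8.1 and Chapter~6]{Borthwick_book}. In the second variable the kernel then decays like $\rho^{s}$. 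Together with the extra factor $\rho$ and the area density $\rho^{-1}$, the $L^2$ norm in $z'$ behaves like $\int\rho^{2\Rea s+2}\rho^{-2}\,d\rho$, which is finite since $\Rea s\geq\tfrac12-\eps>-\tfrac12$ once $\eps<1$. For larger $\eps$ the argument is still fine because the strip is bounded in $\Rea s$; one only uses that $\Rea s$ stays bounded. To control the $s$-dependence, we use uniform bounds on the radial solutions for $\abs{\Rea s-\tfrac12}\leq\eps$ of the form $\abs{\text{mode}_k}\lesssim\ang{s}\cdot(\text{decay in }k)$, with the decay exponential in $\abs{k}$ when $\abs{k}\gg\abs{s}$. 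The derivative in the commutator contributes one further factor $\ang{s}$. Summing over modes then yields at most $\ang{s}^2$.

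\emph{Cusp term.} This is the main obstacle. For cusps the twist genuinely matters. On eigenvectors of $\twist(T)$ with eigenvalue $1$, the zero mode of $R_{X_c,\twist}$ is the explicit one-dimensional kernel $(2s-1)^{-1}(y y')^{s}\min(y,y')^{1-2s}$ type expression. On eigenvectors with eigenvalue $e^{2\pi i\beta}\neq1$, and for all nonzero modes, the modes are Bessel kernels $\sqrt{yy'}\,I_{s-1/2}(2\pi\abs{k+\beta}y_<)K_{s-1/2}(2\pi\abs{k+\beta}y_>)$; these decay exponentially in $y'$ for $k+\beta\ne0$. Since $\rho_c=e^{-r}\sim y^{-1}$, the zero mode contributes $\int^\infty y^{2\Rea s}y^{-2}y^{-2}\,dy<\infty$ for $\Rea s<\tfrac32$, and the factor $(2s-1)^{-1}$ together with its $y$-derivative is bounded by $\ang{s}$ away from $s=\tfrac12$. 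Near $s=\tfrac12$ one writes the zero mode as $\sqrt{yy'}$ times a bounded function of $(s-\tfrac12)\log(y_>/y_<)$, which removes the apparent pole. For the Bessel modes we use uniform estimates for $I_\nu K_\nu$ with $\Rea\nu$ bounded, as in \cite[Lemma~6.2]{BJP}. These give a bound of order $\ang{s}$ per derivative, with summable decay in $k+\beta$ uniformly in $\beta$ on the finite set of eigenvalue parameters. Altogether $\norm{L_c(s)\rho}_{\HS}\lesssim\ang{s}^2$, and summing the three pieces proves the lemma.
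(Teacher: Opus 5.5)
The step that fails is in your cusp paragraph, at $s=\tfrac12$. On the $1$-eigenspace of $\twist(T)$ the zero mode of the model cusp resolvent is $\frac{1}{2s-1}\,y_<^{s}y_>^{1-s}$. You have the exponents $s$ and $1-s$ interchanged: the outgoing behaviour in the cusp is $(y')^{1-s}=(\rho_c')^{s-1}$, so the relevant integrability condition is $\Rea s>-\tfrac12$, not $\Rea s<\tfrac32$. Writing
\[
\frac{1}{2s-1}\,y_<^{s}y_>^{1-s}=\sqrt{yy'}\;\frac{e^{-(s-\frac12)\log(y_>/y_<)}}{2s-1}
\]
shows that this is not a bounded function of $(s-\tfrac12)\log(y_>/y_<)$. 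It has the genuine residue $\tfrac12\sqrt{yy'}$ at $s=\tfrac12$, which is precisely the resonance of the parabolic cylinder at $\tfrac12$ of multiplicity $m_{\twist(T)}(1)$ (Section~\ref{sec:cusps}). This residue survives in $L_c(s)\rho$. First, $[\Delta,\eta_{c,0}]\sqrt{y}\not\equiv0$, since otherwise $\eta_{c,0}\sqrt{y}$ would be a $\tfrac14$-eigenfunction vanishing on an open set. Second, the weighted right factor $\sqrt{y'}\,\rho_c'\sim(y')^{-1/2}$ is square integrable against $dy'/(y')^2$. So $L_c(s)\rho$ has a nonzero rank-one Hilbert--Schmidt residue at $\tfrac12$, and $\norm{L_c(s)\rho}_{\HS}$ blows up like $\abs{2s-1}^{-1}$ whenever $n_p>0$; no rewriting removes this. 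What can be proved is the bound on the strip with a neighbourhood of $\tfrac12$ removed. That is also all that is used, in the proof of Lemma~\ref{lem:growth}, where $\abs{s-\tfrac12}>1$. There $\abs{2s-1}^{-1}$ is bounded and your zero-mode estimate goes through.

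Your remark that larger $\eps$ is harmless is also false. For $\Rea s\leq-\tfrac12$ the integral $\int_0\rho^{2\Rea s}\,d\rho$ diverges, so $L_f(s)\rho$ (and likewise $L_c(s)\rho$) is not Hilbert--Schmidt at all, however bounded the strip is. You need $\eps<1$, which is all that is ever used. Further, the $\ang{s}$-bookkeeping over Fourier modes does not close as written. Unless your ``decay in $k$'' is square-summable uniformly in $s$ also for $\abs{k}\lesssim\abs{s}$, the $\sim\abs{s}$ modes in that range each have size $\ang{s}$, pick up another $\ang{s}$ from the derivative, and together give $\ang{s}^{5/2}$ rather than $\ang{s}^2$. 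This is where the paper proceeds differently: it never differentiates Legendre or Bessel functions. It splits the commutator into $(\Delta_X\eta_{f,0})R$ and $\ang{\nabla_X\eta_{f,0},\nabla R}$. It bounds the first via the Hilbert--Schmidt estimate for $\psi_0R_{X_\bullet,\twist}(s)(1-\eta_{\bullet,2})\rho$ taken from \cite[Lemmas~6.12 and~6.8]{DFP}. It reduces the gradient term to undifferentiated ones by integrating by parts against $\psi_0^2$ (the kernel solves the eigenvalue equation off the diagonal) together with Young's inequality. Finally, in the interior term an ``integrable derivative singularity'' would not suffice, since $\abs{\nabla\log d}^2\sim d^{-2}$ is not integrable in dimension two. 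The commutator term is Hilbert--Schmidt for a different reason: $\nabla(\eta_{f,2}\eta_{c,2})$ is supported at distance at least $\tfrac12$ from $\supp(\eta_{f,1}\eta_{c,1})$, so its kernel is smooth.
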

\begin{proof}
    The operator~$L_i(s)$ is polynomial of degree~$2$ in~$s$ and has smooth compactly support integral kernel. Hence it is Hilbert--Schmidt.

    To treat the funnel term~$L_f(s)$ and the cusp term~$L_c(s)$, we adopt the argumentation from~\cite[Lemma 3]{Zworski89} (see also \cite{DFP}).
For all $s\in\C$ we have
\begin{align}\label{eq:stubest}
[\LapTwist, \eta_{f,0}] R_{X_f,\twist}(s) (1-\eta_{f,2})\rho &=
(\Delta_X \eta_{f,0}) R_{X_f,\twist}(s) (1-\eta_{f,2})\rho \\
&\phantom{==} + \ang{\nabla_X \eta_{f,0}, \nabla_{X,\twist} R_{X_f,\twist}(s) (1-\eta_{f,2})\rho}\,. \nonumber
\end{align}
To bound the first term on the right hand side of~\eqref{eq:stubest} we note that for any $\psi_0 \in C^\infty_c(X)$ with $\supp \psi_0 \cap \supp (1 - \eta_{f,2}) = \emptyset$ we obtain the bound
    \begin{align}\label{eq:hs-bound}
        \norm{\psi_0 R_{X_f,\twist}(s) (1-\eta_{f,2})\rho}_{\HS} \lesssim \ang{s}^2
    \end{align}
    on $\{s\in\C : \abs{\Rea s - \tfrac12} < \eps\}$ analogously to~\cite[Lemma 6.12]{DFP}.

To handle the second term on the right hand side of~\eqref{eq:stubest}, we set
\[
R(s) \coloneqq R_{X_f,\twist}(s) (1-\eta_{f,2})\rho\,.
\]
We pick an orthonormal basis $\{e_j:j=1,\ldots, \dim V\}$ of $V$ such that $R(s) e_j = R_j(s) e_j$, we choose a function $\psi_0 \in C^\infty_c(X)$ such that the support of every component of $\nabla_X \eta_{f,0}$ is contained in the set $\{\psi_0 = 1\}$, and we set
\[
d\mu_{X\times X}(z,z') \coloneqq d\mu_X(z)\,d\mu_X(z') \qquad\text{for all $z,z'\in X$}\,.
\]
Using
    \begin{align}\label{eq:estfunnel}
        \ang{\nabla_X \eta_{f,0}, \nabla_{X,\twist} R_{X_f,\twist}(s) (1-\eta_{f,2})\rho}
        &= \sum_{j=1}^{\dim V} \ang{\nabla_X \tilde\psi_0, \nabla_X R_j(s)} e_j
    \end{align}
we can estimate the $L^2$-norm of the left hand side of~\eqref{eq:estfunnel} by
    \begin{align*}
        &\norm*{ \ang{\nabla_X \eta_{f,0}, \nabla_{X,\twist} R_{X_f,\twist}(s) (1-\eta_{f,2})\rho} }_{\HS}^2 \\
        &\qquad= \int_{X\times X} \abs*{ \ang{\nabla_X \eta_{f,0}(z), \nabla_{X,\twist} R(s;z,z')} }_2^2 \,d\mu_{X\times X}(z,z') \\
        &\qquad\leq \sum_{j=1}^{\dim V} \int_{X\times X} \abs*{ \ang{\nabla_X \eta_{f,0}(z), \nabla_X R_j(s;z,z')} }^2 \,d\mu_{X\times X}(z,z') \\
        &\qquad\leq C \sum_{j=1}^{\dim V} \int_{X\times X} \abs*{ \psi_0(z) \nabla_X R_j(s;z,z')}^2 \,d\mu_{X\times X}(z,z')
    \end{align*}
for a suitable constant $C>0$.
Integration by parts now yields
    \begin{align}\label{eq:estfunnel2}
        &\int_{X\times X} \abs*{ \psi_0(z) \nabla_X R_j(s,z,z')}^2 \,d\mu_{X}(z)\,d\mu_{X}(z')
        \\
        &\qquad = - \int_{X\times X} \psi_0(z)^2 R_j(s;z,z') \Delta_X R_j(s;z,z')\,d\mu_{X \times X}(z,z') \nonumber
        \\
        &\qquad\phantom{=}\ - 2 \int_{X\times X} \ang{\nabla_X \psi_0(z), \nabla_X R_j(s;z,z')} \psi_0(z) R_j(s;z,z') \,d\mu_{X\times X}(z,z')\,. \nonumber
    \end{align}
    We note that $R_j(s)$ solves the equation $(\Delta_X - s(1-s))R_j(s) = \Pi_j$,
    where $\Pi_j$ is the projection onto the $1$-dimensional subspace of $V$ spanned by $e_j$.
    The second term of~\eqref{eq:estfunnel2} can be estimated by
    \begin{align*}
        &\abs*{\int_{X\times X} \ang{\nabla_X \psi_0(z), \nabla_X R_j(s;z,z')}\psi_0(z) R_j(s;z,z') \,d\mu_{X \times X}(z,z') } \\
        &\leq \int_{X\times X} \abs{ \nabla_X \psi_0(z) } \abs{ \nabla_X R_j(s;z,z') } \abs{\psi_0(z)} \abs{R_j(s;z,z')} \,d\mu_{X\times X}(z,z') \\
        &\leq \frac{1}{2\eta} \int_{X\times X} \abs{ R_j(s;z,z') \nabla_X \psi_0(z) }^2 \,d\mu_{X\times X}(z,z') \\
        &\phantom{\leq} + \frac{\eta}{2} \int_{X\times X} \abs{ \psi_0(z) \nabla_X R_j(s;z,z') }^2 \,d\mu_{X\times X}(z,z')
    \end{align*}
    for any $\eta > 0$ by Young's inequality. Taking $\eta = 2$, we conclude
    \begin{align*}
        &\int_{X\times X} \abs*{ \psi_0(z) \nabla_X R_j(s;z,z')}^2 \,d\mu_{X\times X}(z,z') \\
        &\leq C \int_{X\times X} \abs{ R_j(s;z,z') \nabla_X \psi_0(z) }^2 \,d\mu_{X\times X}(z,z')\,.
    \end{align*}
        Using~\cite[Lemma 6.8]{DFP} we obtain a similar result for the cusp situation. This yields the claimed estimate.
\end{proof}

\subsection{Resonance at \texorpdfstring{$s = 1/2$}{s = 1/2}}
\begin{lemma}\label{lem:resonance-onehalf}
    For $s \in \C$ near $\tfrac12$ the resolvent~$\ResTwist$ of~$\LapTwist$ satisfies
    \begin{align}\label{eq:resolvent-onehalf}
        \ResTwist(s) = \frac{1}{2s-1} \sum_{k=1}^{m_{X,\twist}\left(\tfrac12\right)} \phi_k(s) \ang{\phi_k(s), \cdot} + H(s)\,,
    \end{align}
    where $H$ is holomorphic near $\tfrac12$
    and, for each $k\in\{1,\ldots, m_{X,\twist}\left(\tfrac12\right)\}$,
    \begin{align}\label{eq:phik_asymptotics}
        \phi_k(s) \in \rho_f^s\rho_c^{s-1}C^\infty(X,\bundle)
    \end{align}
    and satisfies
    \begin{align}\label{eq:phik_equation}
        \left(\LapTwist - \frac14\right)\phi_k\left(\frac12\right) = 0\,.
    \end{align}
    The family $\{\phi_k(s):k =1,\ldots,m_{X,\twist}\left(\tfrac12\right)\}$ is linearly independent.
\end{lemma}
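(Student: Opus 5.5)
This follows the standard argument for the resonance at $\tfrac12$ in the untwisted case (Borthwick's book, the structure of the resolvent at $s=\tfrac12$, and the analogous statement in \cite{BJP}), transferred to the twisted setting via the explicit parametrix $\ResTwist(s) = M(s)(\id - L(s))^{-1}$.

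The first step is to show that the pole at $\tfrac12$ is at most simple in the variable $s$. I will use the spectral parameter $\lambda = s(1-s)$; near $s=\tfrac12$ we have $\lambda - \tfrac14 = -(s-\tfrac12)^2$. Write the resolvent as a function of $\zeta \coloneqq s-\tfrac12$. Self-adjointness of $\LapTwist$ gives the bound $\norm{\ResTwist(s)}_{L^2\to L^2} \le |\Ima(s(1-s))|^{-1}$ off the real axis. For $s = \tfrac12+\zeta$ with $\Rea\zeta>0$ this is of order $|\zeta|^{-2}$, which only rules out poles of order higher than~$2$. To get a simple pole I will use the following weighted version. By~\cite{DFP} (the cusp model resolvent and the funnel model resolvent), $\rho^{N}\ResTwist(s)\rho^{N}$ is meromorphic near $\tfrac12$. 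I then use the standard structural fact that the Laurent coefficients of order $\ge 2$ in $\zeta$ correspond to $L^2$-eigenfunctions at $\lambda=\tfrac14$.
\begin{itemize}
\item There are no such eigenfunctions, since $\tfrac14$ is not an $L^2$-eigenvalue when the area is infinite. In the twisted case with cusps this needs the absence of embedded eigenvalues at the threshold; I expect to import it from \cite{DFP,DFP2}.
\item Combined with the order-$2$ bound from self-adjointness, this forces the $\zeta^{-2}$ coefficient to vanish.
\end{itemize}
If that route is too delicate, I will instead use the model structure: the cusp model resolvent on the singular (eigenvalue~$1$) part of $\twist$ has the explicit form $\frac{1}{2s-1}\rho_c^{s-1}\cdots$, as visible from the $C_\infty$ computation (one simple resonance at $\tfrac12$ of multiplicity $n_p$). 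The funnel model resolvent is holomorphic at $\tfrac12$. Then $M(s)$ has at most a simple pole, and $(\id-L(s))^{-1}$ is analyzed by analytic Fredholm theory.

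The second step identifies the residue. Write the residue operator $A = \operatorname{Res}_{s=1/2}\ResTwist(s)$ (times the normalizing factor $2$). Its range is finite-dimensional, of dimension $m_{X,\twist}(\tfrac12)$, by the definition of multiplicity. Its kernel is symmetric under $(z,z')\mapsto(z',z)$ with conjugation, as follows from the symmetry $\ResTwist(s)^* = \ResTwist(\bar s)$ continued meromorphically, together with $1-s$ versus $s$ at the self-dual point. Applying $\LapTwist-s(1-s)$ to the Laurent expansion and using $(\LapTwist - s(1-s))\ResTwist(s)=\id$ shows $(\LapTwist-\tfrac14)A = 0$. Diagonalizing the finite-rank symmetric operator gives $A = \sum_k \phi_k\ang{\phi_k,\cdot}$ with $\phi_k$ linearly independent, which is \eqref{eq:phik_equation}. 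This uses a Hermitian form argument: the residue is nonnegative, since $-(2s-1)\ResTwist(s)$ for real $s\downarrow\tfrac12$ tends to a positive operator in the spectral-measure sense.

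The third step makes the $\phi_k$ depend on $s$ with the asymptotics \eqref{eq:phik_asymptotics}. I will extend each $\phi_k$ as $\phi_k(s)$ by taking the polar part of $M(s)(\id-L(s))^{-1}$ applied to fixed compactly supported test sections. The mapping properties of the model resolvents from \cite{DFP} put $M(s)$ into $\rho_f^s\rho_c^{s-1}C^\infty$ on the singular cusp channel, giving $\phi_k(s)\in\rho_f^s\rho_c^{s-1}C^\infty(X,\bundle)$. The difference between $\phi_k(s)\ang{\phi_k(s),\cdot}$ and $\phi_k\ang{\phi_k,\cdot}$ is $O(s-\tfrac12)$ and is absorbed into the holomorphic $H(s)$.

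I expect the main obstacle to be the first step: excluding a double pole, and hence $L^2$-eigenfunctions at $\tfrac14$, in the presence of cusps with twisted monodromy. This is where I will rely most heavily on the threshold analysis of \cite{DFP2}.
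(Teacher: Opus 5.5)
Your proposal is correct in outline but takes a genuinely different route from the paper. The paper does not derive the pole structure at all. It imports from \cite[Lemma~5.9]{DFP2} the expansion \eqref{eq:resolvent-onehalf} with $s$-independent, linearly independent $\phi_k\in\rho_f^{1/2}\rho_c^{-1/2}C^\infty(X,\bundle)$ satisfying \eqref{eq:phik_equation}. Its only new step is the rescaling $\phi_k(s)\coloneqq\rho^{s-1/2}\phi_k$, which lies in $\rho_f^s\rho_c^{s-1}C^\infty(X,\bundle)$ because $\rho=\rho_f\rho_c$. It then notes that $\phi_k-\phi_k(s)=(2s-1)h_k(s)$ with $h_k$ holomorphic, so the discrepancy is absorbed into $H(s)$.

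You instead essentially reprove the cited lemma:
\begin{itemize}
\item simplicity of the pole from the spectral theorem plus absence of an $L^2$-eigenvalue at $\tfrac14$, which holds since a funnel is present;
\item the form $\sum_k\phi_k\ang{\phi_k,\cdot}$ from symmetry, positivity and finite rank of the residue $A$, diagonalized via the Hermitian form $\ang{Af,g}$ on $C_c^\infty$ because the $\phi_k$ are not in $L^2$;
\item the asymptotics from the mapping properties of $M(s)(\id-L(s))^{-1}$.
\end{itemize}
This is more self-contained and explains why the pole is simple and the residue positive. It still leans on \cite{DFP,DFP2} for the threshold analysis and the mapping properties, which is exactly what the paper outsources wholesale.

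Three details need repair.
\begin{enumerate}
\item The sign in the positivity argument is wrong. For real $s>\tfrac12$ near $\tfrac12$, $\lambda-s(1-s)>0$ for all $\lambda\ge\tfrac14$. Hence $(2s-1)\ResTwist(s)$ itself is nonnegative, up to an $O(2s-1)$ term from the finitely many eigenvalues below $\tfrac14$, and it tends to $A$, so $A\ge0$. As you wrote it, with $-(2s-1)\ResTwist(s)$, you would conclude $A\le 0$.
\item Your third step does not produce an $s$-dependent family. The polar part applied to fixed test sections is independent of $s$, so it only yields $\phi_k=\phi_k(\tfrac12)\in\rho_f^{1/2}\rho_c^{-1/2}C^\infty$, which is not in $\rho_f^s\rho_c^{s-1}C^\infty$ for $s\ne\tfrac12$. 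Use the paper's $\rho^{s-1/2}\phi_k$, or, in the spirit of your construction, $(2s-1)\ResTwist(s)f_k$ itself (not its polar part), with $f_k\in C_c^\infty$ chosen so that $Af_k=\phi_k$. With either choice your final absorption step goes through verbatim.
\item Your fallback route via analytic Fredholm theory would not by itself bound the order of the pole. The spectral-theorem argument is the one to keep.
\end{enumerate}
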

\begin{proof}
    \cite[Lemma 5.9]{DFP2} guarantees the existence of $\phi_k$, $k\in\{1,\ldots, m_{X,\twist}\left(\tfrac12\right)\}$, such that $\phi_k \in \rho_f^{\tfrac12} \rho_c^{-\tfrac12}C^\infty(X, \bundle)$ and that satisfy \eqref{eq:resolvent-onehalf} and \eqref{eq:phik_equation} independent of~$s$.
    Setting $\phi_k(s) \coloneqq \rho^{s-\tfrac12} \phi_k$ for $k\in \{1,\ldots, m_{X,\twist}\left(\tfrac12\right)\}$, we have $\phi_k(s) \in \rho_f^s \rho_c^{s-1}C^\infty(X, \bundle)$. Moreover, $\phi_k(\tfrac12) = \phi_k$ solves \eqref{eq:phik_equation}.
    Since $(1 - \rho^{s-\tfrac12}) \phi_k$ is holomorphic near $s = \tfrac12$ and vanishes at $s = \tfrac12$, we find a function~$h_k$ that is holomorphic near $\tfrac12$ and satisfies
    \[
    (2s - 1) h_k(s) = (1 - \rho^{s-\tfrac12}) \phi_k = \phi_k - \phi_k(s)
    \]
    near $s = \tfrac12$. From this, the statement of the lemma follows immediately.
\end{proof}

For $s\in\C$, $s\notin \mc R_{X,\twist}\cup \tfrac12\Z$, we let
\[
S_{X,\twist}\colon C^\infty(\partial_\infty X, E_\twist\vert_{\partial_\infty X}) \to C^\infty(\partial_\infty X, E_\twist\vert_{\partial_\infty X})
\]
denote the scattering matrix for~$(X,\twist)$. It is defined analogously to the untwisted case by means of the twisted Poisson operator. In particular, for each $\psi\in C^\infty(\partial_\infty, E_\twist\vert_{\partial_\infty X})$ we find $u\in C^\infty(X,E_\twist)$ such that $u$ is an eigenfunction of~$\LapTwist$ with eigenvalue $s(1-s)$ and
\[
(2s-1)u \sim \rho_f^{1-s}\rho_c^{-s}\psi + \rho_f^s\rho_c^{s-1}S_{X,\twist}(s)\psi \quad\text{as $\rho_f\rho_c\to0$}\,.
\]
We refer to~\cite[Section~5]{DFP2} for details.

\begin{lemma}
For $k\in\{1,\ldots, m_{X,\twist}\left(\tfrac12\right)\}$ and $s\in\C$ let $\phi_k(s)$ be as in Lemma~\ref{lem:resonance-onehalf} and set $\phi_k^\#(s) \coloneqq \rho_f^{-s} \rho_c^{1-s} \phi_k(s)|_{\pa_\infty X}$. Then
    \begin{align}\label{eq:smatrix-eigenfunctions}
        S_{X,\twist}\left(\tfrac12\right) = -\id + \sum_{k=1}^{m_{X,\twist}\left(\tfrac12\right)} \phi_k^\#\left(\tfrac12\right) \ang{\phi_k^\#\left(\tfrac12\right), \cdot}\,.
    \end{align}
\end{lemma}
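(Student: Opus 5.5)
We follow the untwisted argument of \cite[Section~7]{Borthwick_book} (cf.\ also \cite{BJP}), adapted to the orbibundle~$\bundle$. The idea is to read off the scattering matrix at $s=\tfrac12$ from the Laurent expansion~\eqref{eq:resolvent-onehalf} of the resolvent, using that the scattering matrix is the boundary limit of the resolvent kernel. Concretely, we use the twisted analogue of the standard identity (see \cite[Section~5]{DFP2})
\begin{equation*}
(2s-1)\,S_{X,\twist}(s) = (2s-1)^2\,\bigl[\rho_f^{-s}\rho_c^{1-s}\,\rho_f'^{-s}\rho_c'^{1-s}\,\ResTwist(s)\bigr]\big|_{\partial_\infty X\times\partial_\infty X} \;-\; \text{(diagonal term)},
\end{equation*}
valid for $s$ off the exceptional set. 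Equivalently, we work with the Poisson operator $E_{X,\twist}(s)$, which is the restriction of $(2s-1)\rho'^{-s}\ResTwist(s)$ in the second variable to the boundary. Its leading boundary term in the first variable defines $S_{X,\twist}(s)$ through the asymptotics $(2s-1)u \sim \rho_f^{1-s}\rho_c^{-s}\psi + \rho_f^s\rho_c^{s-1}S_{X,\twist}(s)\psi$.

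\textbf{Step 1.} For each boundary datum~$\psi$, we write $u(s)=E_{X,\twist}(s)\psi$ as $u(s) = (2s-1)^{-1}\bigl(\chi\rho_f^{1-s}\rho_c^{-s}\psi + \dots\bigr) - \ResTwist(s)F(s)$. Here $F(s)=(\LapTwist-s(1-s))(\chi\rho_f^{1-s}\rho_c^{-s}\psi)$ is a suitable error term, obtained from the model solutions in funnels and cusps.

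\textbf{Step 2.} We insert~\eqref{eq:resolvent-onehalf}. The holomorphic part~$H(s)$ contributes a term that is regular at $\tfrac12$. The polar part contributes
\[
\frac{1}{2s-1}\sum_k \phi_k(s)\,\ang{\phi_k(s),F(s)}.
\]
Green's formula on~$X_\eps$, letting $\eps\to0$, together with~\eqref{eq:phik_equation}, converts $\ang{\phi_k(\tfrac12),F(\tfrac12)}$ into a boundary pairing proportional to $\ang{\phi_k^\#(\tfrac12),\psi}$. The normalization is fixed by the asymptotics~\eqref{eq:phik_asymptotics}.

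\textbf{Step 3.} Multiplying by $(2s-1)$ and comparing coefficients of $\rho_f^s\rho_c^{s-1}$ at $s=\tfrac12$ yields two contributions. The incoming and outgoing exponents coincide there, which produces the $-\id$ (in the untwisted case this is the familiar $S(\tfrac12)=-\id$ on the regular part). The residue term produces $\sum_k\phi_k^\#(\tfrac12)\ang{\phi_k^\#(\tfrac12),\cdot}$.

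An alternative, cleaner route uses the facts that $S_{X,\twist}$ is unitary on the critical line and satisfies $S_{X,\twist}(s)S_{X,\twist}(1-s)=\id$. These show that $S_{X,\twist}(\tfrac12)$ is a self-adjoint involution. It then suffices to prove two things: first, that $S_{X,\twist}(\tfrac12)\phi_k^\#=\phi_k^\#$ (this uses $u=\phi_k(\tfrac12)$, whose asymptotics show a purely ``symmetric'' expansion); second, that $S_{X,\twist}(\tfrac12)=-\id$ on the orthogonal complement, by the limit argument above. The main obstacle is this last step. We must justify that the continuation of $S_{X,\twist}$ to $s=\tfrac12$, which is excluded from $\tfrac12\Z$ in the definition, exists and is given by the limit. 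We also need the orthonormality normalization $\ang{\phi_k^\#,\phi_j^\#}=\delta_{kj}$ up to the choice of basis in Lemma~\ref{lem:resonance-onehalf}. This normalization is where the precise constant (no extra factor~$2$) must be checked against the cusp contributions, whose exponents $\rho_c^{s-1}$ differ from the funnel ones.
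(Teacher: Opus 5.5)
Your overall idea, reading $S_{X,\twist}(\tfrac12)$ off the Laurent expansion~\eqref{eq:resolvent-onehalf} through boundary asymptotics, is the right one. The paper does not redo this analysis. It quotes \cite[Remark~5.10]{DFP2} and only observes that $\phi_k^\#(\tfrac12)$ is the $\phi_k^\#$ of \cite{DFP2}, since $\phi_k(\tfrac12)=\phi_k$. As a self-contained argument, however, your sketch has a gap: the mechanism in Steps~2--3 fails exactly at the degenerate point.

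\textbf{The pairing in Step~2 vanishes.} At $s=\tfrac12$, both $\phi_k(\tfrac12)$ and $v=\chi\rho_f^{1/2}\rho_c^{-1/2}\psi$ have the same leading exponent and no logarithmic term. So in Green's formula on $X_\eps$ the boundary integrand $\bar\phi_k\,\pa_\nu v-v\,\pa_\nu\bar\phi_k$ cancels at leading order. Together with~\eqref{eq:phik_equation} this gives $\ang{\phi_k(\tfrac12),F(\tfrac12)}=0$, not a multiple of $\ang{\phi_k^\#(\tfrac12),\psi}$. Consequently $\frac{1}{2s-1}\sum_k\phi_k(s)\ang{\phi_k(s),F(s)}$ has no pole. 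Its value at $\tfrac12$ involves $s$-derivatives, including the $O(s-\tfrac12)$ defect $(\LapTwist-s(1-s))\phi_k(s)\neq0$ of the family $\phi_k(s)=\rho^{s-1/2}\phi_k$. The term $H(\tfrac12)F(\tfrac12)$, which you set aside as ``regular'', also contributes to the $\rho^{1/2}$-coefficient. None of this is computed, so the split into ``$-\id$ from coinciding exponents'' plus ``rank-$m$ term from the residue'' is not established.

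\textbf{A cleaner route.} Skip the parametrix and restrict~\eqref{eq:resolvent-onehalf} in the second variable:
$E_{X,\twist}(s)=\bigl[(\rho_f')^{-s}(\rho_c')^{1-s}\ResTwist(s)\bigr]|_{\pa_\infty X}$. By definition, $\rho_f^{-s}\rho_c^{1-s}\phi_k(s)|_{\pa_\infty X}=\phi_k^\#(s)$. Two things must be checked: that the boundary restriction of $H(s)$ stays holomorphic at $\tfrac12$, and that the asymptotic expansion is locally uniform in $s$. Given these, $(2s-1)E_{X,\twist}(s)\psi\to\sum_k\phi_k(\tfrac12)\ang{\phi_k^\#(\tfrac12),\psi}$. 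Comparing $\rho^{1/2}$-coefficients with $(2s-1)E_{X,\twist}(s)\psi\sim\rho_f^{1-s}\rho_c^{-s}\psi+\rho_f^{s}\rho_c^{s-1}S_{X,\twist}(s)\psi$ as $s\to\tfrac12$ gives
$\psi+S_{X,\twist}(\tfrac12)\psi=\sum_k\phi_k^\#(\tfrac12)\ang{\phi_k^\#(\tfrac12),\psi}$.

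\textbf{Factor-2 error in your alternative route.} Suppose $S_{X,\twist}(\tfrac12)$ is a self-adjoint involution that fixes each $\phi_k^\#$ and equals $-\id$ on their orthogonal complement. Then $S_{X,\twist}(\tfrac12)=-\id+2P$, with $P$ the orthogonal projection onto $\operatorname{span}\{\phi_k^\#\}$. So~\eqref{eq:smatrix-eigenfunctions} requires $\ang{\phi_j^\#(\tfrac12),\phi_k^\#(\tfrac12)}=2\delta_{jk}$. With the orthonormality you propose, the right-hand side of~\eqref{eq:smatrix-eigenfunctions} would send every $\phi_k^\#$ to $0$. That contradicts both $S_{X,\twist}(\tfrac12)\phi_k^\#=\phi_k^\#$ and $S_{X,\twist}(\tfrac12)^2=\id$.

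Nor is this Gram matrix a normalization you can choose. The residue $\res_{s=1/2}\ResTwist(s)=\tfrac12\sum_k\phi_k(\tfrac12)\ang{\phi_k(\tfrac12),\cdot}$ fixes the $\phi_k(\tfrac12)$ up to a unitary change of basis, which only conjugates the Gram matrix. So $\ang{\phi_j^\#,\phi_k^\#}=2\delta_{jk}$ must be derived by the same limiting argument as above. The same holds for $S_{X,\twist}(\tfrac12)\phi_k^\#=\phi_k^\#$, which cannot be read off from the expansion of $\phi_k(\tfrac12)$ alone because incoming and outgoing exponents coincide at $\tfrac12$. The involution property by itself only gives $S_{X,\twist}(\tfrac12)=-\id+2P_+$ for some unidentified projection $P_+$.
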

\begin{proof}
    This follows directly from \cite[Remark 5.10]{DFP2} and the fact that $\phi_k^\#\left(\tfrac12\right)$ agrees with $\phi_k^\#$ as defined in \cite{DFP2}.
\end{proof}

\section{Regularized traces}\label{sec:trace}

The resolvent of the Laplacian is not trace class if $X$ is not compact. Nevertheless we can define a regularized trace
of the resolvent operator.
In this section we will calculate this regularized trace in two different ways to relate it to the logarithmic derivative of the Selberg zeta function on the one hand and to the scattering determinant on the other hand.

More precisely, we will introduce two types of regularized traces of the resolvent: an algebraic
regularization and a scattering theoretic regularization, which are closely related to each other.
The algebraic regularization can be calculated in terms of the Selberg zeta
function and special functions.
The scattering theoretic regularized trace will be expressed in terms of the logarithmic derivative of the scattering determinant
and the regularized trace of the model funnel ends.

Throughout this section let $\group$ be a finitely generated, non-finite Fuchsian group and let $X=\group\backslash\h$ denote the associated hyperbolic orbisurface.

\subsection{Zero-trace}\label{sec:zero-trace}

Let $f \in C^\infty(X)$ be a function polyhomogeneous in~$\rho$. Then we define the
\emph{0-integral} of $f$ by
\begin{align*}
    \zInt{X} f\,d\mu_X \coloneqq \FP_{\eps \to 0} \int_{X_\eps} f \, d\mu_X\,,
\end{align*}
where $\FP$ is the \emph{Hadamard finite part}.
The \emph{zero-volume}~$\zVol$ of~$X$ is given by
\begin{align*}
    \zVol(X) \coloneqq \zInt{X} \, d\mu_X\,.
\end{align*}
For $A \in C^\infty(X \times X)$ we set
\begin{align*}
    \zTr(A) \coloneqq \zInt{X} A(z,z) \, d\mu_X(z)\,,
\end{align*}
the \emph{zero-trace} of~$A$, provided that $A(z,z)$ admits a polyhomogeneous expansion in~$\rho$.

The following proposition provides a relation between the zero-volume~$\zVol(X)$ of~$X$, its topological Euler characteristic~$\eulertop(X)$ and the isotropy groups at the orbifold points (conical singularities) of~$X$.

\begin{prop}\label{prop:euler-characteristic}
    Let $z_1,\ldots, z_m$ be the conical singularities of $X = \group \backslash \h$. (Here, $m\in\N_0$. If $X$ has no orbifold points, then $m=0$.) For each $j\in\{1,\ldots, m\}$ let $q_j$ be the order of the isotropy group at~$z_j$.
    Then
    \begin{align}\label{eq:zvol_euler}
        \zVol(X) = -2\pi \left( \eulertop(X) - \sum_{j=1}^m \left(1 - \frac1{q_j}\right)\right)\,.
    \end{align}
\end{prop}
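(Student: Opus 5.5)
By Lemma~\ref{lem:top_orb_euler}, the right hand side of~\eqref{eq:zvol_euler} equals $-2\pi\,\eulerorb(X)$. So the statement is equivalent to $\zVol(X) = -2\pi\,\eulerorb(X)$, and this is the form I will prove. The plan is to reduce to the manifold case by passing to a finite cover. Concretely, by Selberg's Lemma I pick a torsion-free subgroup $\wt\group\le\group$ of finite index $d\coloneqq[\group:\wt\group]$, and set $\wt X\coloneqq\wt\group\backslash\h$. The known Gauss--Bonnet formula for the renormalized area of geometrically finite hyperbolic surfaces without orbifold points (Guillop\'e--Zworski, see~\cite{Borthwick_book}) gives $\zVol(\wt X) = -2\pi\,\eulertop(\wt X)$. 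By definition of the orbifold Euler characteristic, $-2\pi\,\eulerorb(X) = \zVol(\wt X)/d$. It therefore remains to show
\[
\zVol(\wt X) = d\cdot\zVol(X).
\]

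For this I use the covering map $\pr\colon\wt X\to X$. It is a local isometry away from the finitely many orbifold points, and these form a set of measure zero. Hence $\int_{\pr^{-1}(K)} f\circ\pr\,d\mu_{\wt X} = d\int_K f\,d\mu_X$ for every compact $K\subseteq X$.

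The delicate point is that the $0$-volume depends on the choice of boundary defining function through the cut-offs $X_\eps$. I therefore need to check that $\wt\rho\coloneqq\rho\circ\pr$ is an admissible boundary defining function on $\wt X$ of the type fixed in Section~\ref{sec:boundary}, or at least one giving the same finite part. I argue this end by end:
\begin{itemize}
\item A funnel $F_\ell$ of $X$ lifts to finitely many funnels $F_{k\ell}$ of $\wt X$, the covering being $k$-fold in the angular variable. The radial coordinate $r$ is preserved, so $1/\cosh r$ pulls back to $1/\cosh r$.
\item A cusp lifts to finitely many cusps, again with the radial coordinate preserved, so $e^{-r}$ pulls back to $e^{-r}$.
\end{itemize}
Consequently $\wt X_\eps = \pr^{-1}(X_\eps)$ for small $\eps$. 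This gives $\int_{\wt X_\eps}d\mu_{\wt X} = d\int_{X_\eps}d\mu_X$ for every $\eps$, and taking Hadamard finite parts yields the claimed identity.

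If one prefers not to invoke the surface result as a black box, the same strategy works directly. I would compute $\int_{X_\eps}d\mu_X$ via Gauss--Bonnet for the compact orbifold with boundary $X_\eps$: curvature $-1$, cone points contributing $2\pi(1-1/q_j)$, and the geodesic curvature of the boundary curves. The boundary terms of the model funnel and model cusp can be computed explicitly. In the funnel, the circle at radius $r$ has length $\ell\cosh r$ and geodesic curvature $\tanh r$. In the cusp, the horocycle has curvature $1$, and its length cancels the area of the cusp piece exactly. Their finite parts as $\eps\to0$ vanish: the funnel contributes $\ell\sinh r$ minus a divergent term of the form $c/\eps$ with no constant term.

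I expect the main obstacle to be this bookkeeping of the boundary-defining-function dependence of the finite part, i.e.\ verifying that $\rho\circ\pr$ has exactly the prescribed model form on every lifted end. Everything else is formal.
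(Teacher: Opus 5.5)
Your proposal is correct and is essentially the paper's proof. You reduce to $\zVol(X)=-2\pi\,\eulerorb(X)$ via Lemma~\ref{lem:top_orb_euler}, pass to a torsion-free subgroup $\wt\group$ of finite index by Selberg's Lemma, and combine $\zVol(\wt X)=-2\pi\,\eulertop(\wt X)$ for surfaces with $\zVol(\wt X)=[\group:\wt\group]\,\zVol(X)$. The paper merely asserts this last scaling identity, so your end-by-end check that $\rho\circ\pr$ has the model form on the lifted funnels is a useful justification rather than a deviation; for the cusps the choice of cut-off is immaterial anyway, since their area is finite.
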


\begin{proof}
    Selberg's Lemma guarantees the existence of a subgroup~$\tilde{\group}$ of~$\group$ of finite index such that $\tilde{X} \coloneqq \tilde{\group} \backslash \h$ has no elliptic elements. Let $q\coloneqq [\group:\tilde{\group}]$ and set $\tilde X \coloneqq \tilde\group\backslash\h$.
    By \cite[Lemma~10.3]{Borthwick_book},
    \begin{align}\label{eq:gaussbonnet-mfd}
        \zVol(\tilde{X}) = -2\pi \eulertop(\tilde{X})\,.
    \end{align}
    As
    \[
    \zVol(X) = \frac1q\zVol(\tilde{X})
    \]
we obtain
    \begin{align*}
        \zVol(X) = -\frac{2\pi}{q} \eulertop(\tilde{X})\,.
    \end{align*}
    Together with Lemma~\ref{lem:top_orb_euler} this proves the claim.
\end{proof}

\subsection{Regularizing the trace of the resolvent}
We provide two regularizations of the kernel of the resolvent at the
diagonal.
The first regularization, the \emph{algebraic regularization}, is natural from the perspective of
averaging the resolvent $R_\h$ over the group elements:
\begin{align*}
    \varphi_{X,\twist}(s) &\in C^\infty(X)\,,\\
    \varphi_{X,\twist}(s,z) &= (2s-1) \tr_{\bundle}\left(\ResTwist(s;z,w) -
R_\h(s;z,w)\right)\big|_{w=z}\,.
\end{align*}
Here, we implicitly lift $\ResTwist(s;z,w)$ to $\h \times \h$, subtract
$R_\h$ there and then project back to $X \times X$. Moreover we use the canonical isomorphism
\[
(\bundle \boxtimes \bundle')|_{\diag
\subset X \times X} \cong \bundle \otimes \bundle'\,,
\]
where $\bundle \boxtimes \bundle'$ denotes the exterior tensor product of~$\bundle$ and~$\bundle'$. Further we let
\[
\tr_{\bundle} \colon C^\infty(X, \bundle \otimes \bundle') \to C^\infty(X)
\]
denote the usual trace operator. From the resolvent construction~\cite[Theorem~4.1]{DFP2} we obtain
\begin{align*}
    \varphi_{X,\twist}(s)|_{X_f} \in \rho^{2s}C^\infty(X_f)
\end{align*}
and
\begin{align*}
    \varphi_{X,\twist}(s)|_{X_c} \in \rho^{2s-2}C^\infty(X_c) + \rho^{-1} C^\infty(X_c)\,,
\end{align*}
where the first term comes from the term $Q(s)$ in \cite[Theorem~4.1]{DFP2} and the second comes from $0$-th Fourier modes in the model resolvent.
This implies that
\begin{align*}
    \int_{X_\eps} \varphi_{X,\twist}(s;z) \,d\mu_X(z) \sim
    a(s) + b_0(s) \log(\eps) + \sum_{m=1}^\infty c_m(s) \eps^{2s-1+m} + O(\eps)
\end{align*}
as $\eps \to 0$, for suitable functions~$a$, $b_0$, and $c_m$, $m\in\N$.
For $s\in\C$, $s \notin \ResSet_{X,\twist} \cup \tfrac12\Z$, we define
\begin{align}\label{eq:defPhi}
    \Phi_{X,\twist}(s) &\coloneqq \zInt{X} \varphi_{X,\twist}(s,z) d\mu_X(z) = a(s)\,,
\end{align}
the \emph{algebraically regularized resolvent trace}.

The second regularization, the \emph{scattering theoretic regularization}, uses the fact that $\LapTwist - s(1-s)$ is invariant under $s \mapsto 1-s$.
We define
\begin{align}\label{eq:def_Upsilon}
    \Upsilon_{X,\twist}(s) \coloneqq (2s-1) \zTr \left( \ResTwist(s) - \ResTwist(1-s)\right),
\end{align}
the \emph{scattering theoretically regularized resolvent trace}, where the $0$-trace is taken on the vector bundle $\bundle \to X$.

Analogously to the result in~\cite{BJP} for the untwisted situation, these two regularizations are closely
related, as indicated by the following lemma.

\begin{lemma}\label{lem:res-trace}
    For \[s \not \in \tfrac12\Z \cup \ResSet_{X,\twist} \cup (1 - \ResSet_{X,\twist})\] we have
    \begin{equation}\label{eq:Phi_Ups}
        \begin{aligned}
            \Phi_{X,\twist}(s) + \Phi_{X,\twist}(1-s) &= \Upsilon_{X,\twist}(s)
            - \left(s-\frac12\right)\cot(\pi s) \thin \dim(V)\thin\zVol(X)\,.
        \end{aligned}
    \end{equation}
\end{lemma}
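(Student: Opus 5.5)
We follow the strategy of \cite{BJP} (the untwisted, torsion-free case), adapted to the twist. The plan is to write the difference $\ResTwist(s)-\ResTwist(1-s)$ on the diagonal as a sum of a part coming from the lift of $R_\h$ and a part coming from $\varphi_{X,\twist}$. Concretely, for $z\in X$ we decompose
\[
(2s-1)\tr_{\bundle}\bigl(\ResTwist(s;z,z)-\ResTwist(1-s;z,z)\bigr)
= \varphi_{X,\twist}(s,z) + \varphi_{X,\twist}(1-s,z) + (2s-1)\dim(V)\bigl(R_\h(s;z,w)-R_\h(1-s;z,w)\bigr)\big|_{w=z}.
\]
The sign in the second term is correct because the prefactor $2(1-s)-1=-(2s-1)$ of $\varphi_{X,\twist}(1-s)$ turns $-(2s-1)\ResTwist(1-s)$ into $+\varphi_{X,\twist}(1-s)$. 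Note that the difference $R_\h(s)-R_\h(1-s)$ is smooth across the diagonal: the logarithmic singularity cancels by \eqref{eq:resolv_H_diag}. The trace $\tr_{\bundle}$ of the identity part of the lifted free resolvent contributes the factor $\dim(V)$, since the identity element acts by $\id_V$.

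The second step is to evaluate the free term. By \eqref{eq:resolv_H_diag}, as $w\to z$,
\[
R_\h(s;z,w)-R_\h(1-s;z,w)\to -\tfrac{1}{2\pi}\bigl(\psi(s)-\psi(1-s)\bigr)=\tfrac12\cot(\pi s),
\]
using the reflection formula $\psi(1-s)-\psi(s)=\pi\cot(\pi s)$. This value is a constant independent of $z$. Hence the free term contributes $(2s-1)\tfrac12\cot(\pi s)\dim(V)=(s-\tfrac12)\cot(\pi s)\dim(V)$ pointwise.

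Third, we apply the $0$-integral to both sides. The left-hand side gives $\Upsilon_{X,\twist}(s)$ by \eqref{eq:def_Upsilon}. The two $\varphi$-terms give $\Phi_{X,\twist}(s)+\Phi_{X,\twist}(1-s)$ by \eqref{eq:defPhi}, and the constant term gives $(s-\tfrac12)\cot(\pi s)\dim(V)\zVol(X)$. Rearranging these three contributions yields \eqref{eq:Phi_Ups}.

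The main obstacle is justifying that the $0$-integral is additive across this decomposition. Additivity requires each piece to be separately polyhomogeneous in $\rho$, so that its finite part is well defined. For $\varphi_{X,\twist}(s)$ and $\varphi_{X,\twist}(1-s)$ this follows from the expansions stated before \eqref{eq:defPhi}, restricted to $s\notin\tfrac12\Z$ and away from resonances and their reflections. Under these restrictions the exponents $2s-1+m$ and $1-2s+m$ are distinct and nonzero, and the $\log\eps$ terms are discarded consistently by $\FP$. The constant function is trivially polyhomogeneous. One must also check that the $\rho^{-1}$ cusp terms coming from the zero Fourier modes, which appear in both $\varphi_{X,\twist}(s)$ and $\varphi_{X,\twist}(1-s)$, do not produce cross terms. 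This holds because the finite part is linear on the space of polyhomogeneous functions, so additivity follows once each summand lies in that space. Finally, the orbifold points cause no issue: they lie in the compact core, where all the kernels involved are smooth after subtracting $R_\h$ (the elliptic terms $\twist(h)R_\h(s;z,hz)$ are only logarithmically singular at fixed points, hence integrable).
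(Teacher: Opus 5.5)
Your proposal is correct and follows the same route as the paper. The paper's proof just invokes the argument of \cite{BJP}: split $\ResTwist(s)-\ResTwist(1-s)$ into the two $\varphi_{X,\twist}$-pieces plus the free difference $R_\h(s)-R_\h(1-s)$, whose diagonal value is $\tfrac12\cot(\pi s)$, and then take the $0$-integral; the twist only multiplies the constant by $\dim(V)$, because the identity term is $\id_V R_\h$. Your remarks on linearity of the finite part and on the integrable logarithmic singularities of $\varphi_{X,\twist}$ at orbifold points supply details the paper leaves implicit.
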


\begin{proof}
    The proof is the same as in \cite{BJP} with the modification that the
constant is multiplied by $n = \dim(V)$ since
    \begin{align*}
        \zTr \left(\id_V R_\h(s) - \id_V R_\h(1-s)\right) = \dim(V) \cdot \zTr \left(R_\h(s) - R_\h(1-s)\right)\,.
    \end{align*}
\end{proof}

The following lemma provides analytic and spectral properties of~$\Phi_{X,\twist}$, in particular its relation to the discrete spectrum, $\sigma_d(\LapTwist)$, of~$\LapTwist$.

\begin{lemma}\label{lem:Phi_cont}
    \begin{enumerate}[label=$\mathrm{(\roman*)}$, ref=$\mathrm{\roman*}$]
    \item The algebraic regularized resolvent trace~$\Phi_{X,\twist}$ from~\eqref{eq:defPhi} admits a meromorphic continuation to all of~$\C$.
    \item Let $s_0\in \left\{s\in\C : \Rea s > \tfrac12\right\}$. Then $s_0$ is a pole of $\Phi_{X,\twist}$ if and only if $s_0(1-s_0) \in \sigma_d(\LapTwist)$ and
    the pole is simple and its residue is $m_{X,\twist}(s_0)$.
    \item On the critical strip $\left\{ \Rea s = \tfrac12 \right\}$, the function~$\Phi_{X,\twist}$ has a single pole at $s_0 = \tfrac12$. It is simple with residue $m_{X,\twist}\left(\tfrac12\right) - \tfrac12 n_p$.
    \end{enumerate}
\end{lemma}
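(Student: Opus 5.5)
We will follow the strategy of \cite[Lemma~3.3 and Proposition~3.5]{BJP} and deduce all three statements from the structure of the resolvent near the diagonal. The starting point is the definition of $\varphi_{X,\twist}(s,z)$: lifting to $\h\times\h$, we have $\ResTwist(s;z,w) - R_\h(s;z,w) = \sum_{h\neq\id}\twist(h)R_\h(s;z,hw)$ for $\Rea s$ large. The meromorphic continuation of $\ResTwist(s)$ as an operator $L^2_\cpt\to L^2_\loc$ from~\cite[Theorem~A]{DFP} and the parametrix $\ResTwist(s) = M(s)(\id-L(s))^{-1}$ show that this difference has a smooth kernel near the diagonal, with meromorphic dependence on~$s$. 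The parametrix also gives the polyhomogeneous structure in~$\rho$ described after the definition of $\varphi_{X,\twist}$, with coefficients meromorphic in~$s$. From this we obtain (i): taking the finite part, $a(s)$ is meromorphic. The excluded set $\tfrac12\Z$ only appears where exponents $2s-1+m$ collide with $0$ or with the log term. We handle it by noting that the finite part of a meromorphic family is meromorphic away from a discrete set. We then check that at such points the coefficients combine to a meromorphic function, which is the standard argument in~\cite{BJP}.

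For (ii), fix $\Rea s_0>\tfrac12$. The only poles of $\ResTwist$ there are $s_0$ with $s_0(1-s_0)\in\sigma_d(\LapTwist)$. Near such $s_0$ we write
\[
\ResTwist(s) = \frac{\Pi_{s_0}}{s_0(1-s_0)-s(1-s)} + \text{holomorphic},
\]
where $\Pi_{s_0}$ is the orthogonal projection onto the $L^2$-eigenspace, of dimension $m_{X,\twist}(s_0)$. Since $s(1-s)-s_0(1-s_0) = (s_0-s)(s+s_0-1)$, multiplying by $(2s-1)$ gives the singular part $\frac{2s-1}{(s-s_0)(2s_0-1)}\Pi_{s_0}$ to leading order. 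This is a simple pole with residue $\Pi_{s_0}$. The eigenfunctions are $L^2$, and in fact decay like $\rho_f^{s_0}\rho_c^{s_0-1}$ by~\cite{DFP2}. Hence their diagonal kernel is integrable, and the $0$-integral is an honest integral. Therefore $\operatorname{res}_{s_0}\Phi_{X,\twist} = \tr\Pi_{s_0} = m_{X,\twist}(s_0)$. The subtracted $R_\h$ is holomorphic for $\Rea s>0$, so it contributes nothing.

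For (iii), on $\Rea s=\tfrac12$ with $s\neq\tfrac12$ there are no resonances; this is the absence of embedded resonances from~\cite{DFP2}. Hence $\Phi_{X,\twist}$ is holomorphic there, and it remains to analyse $s=\tfrac12$. By Lemma~\ref{lem:resonance-onehalf}, the pole of $(2s-1)\ResTwist(s)$ at $\tfrac12$ cancels. This leaves the finite-rank term $\sum_k\phi_k(s)\ang{\phi_k(s),\cdot}$, so $\Phi_{X,\twist}$ acquires a singularity only through the $0$-integral. We need
\[
\zInt{X}\abs{\phi_k(s)}^2\,d\mu_X ,
\]
where $\abs{\phi_k(s)}^2\sim\rho_f^{2s}\rho_c^{2s-2}$. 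In a funnel this is integrable near $s=\tfrac12$, since $\rho_f^{2s}\rho_f^{-2}$ paired with $d\mu\sim\rho_f^{-2}\,d\rho_f\,d\phi$ gives no pole at $s=\tfrac12$ for the $L^2$-type part. The funnel boundary terms contribute a simple pole, from $\FP\int\eps^{2s-1}$-type terms which yield $1/(2s-1)$. Together with the $1/(2s-1)$ already present, this produces residue $1$ per $k$; this is where the count $m_{X,\twist}(\tfrac12)$ comes from. In the cusps, $\rho_c^{2s-2}\,d\mu$ with $d\mu\sim\rho_c\,d\rho_c$ gives $\FP_{\eps\to0}\int_\eps\rho_c^{2s-1}\,d\rho_c/\rho_c$. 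This contributes $-\frac{1}{2s-1}$ times the cusp coefficient. Separately, the $\rho^{-1}C^\infty(X_c)$ term from the zero Fourier modes of the model cusp resolvent involves exactly the $m_{\twist(g)}(1)$-dimensional singular parts. We compute this term explicitly using~\cite[Proposition~4.17]{DFP}: the model cusp has resonance at $\tfrac12$ of multiplicity $n_p$, and its regularized trace has residue $-\tfrac12 n_p$ at $s=\tfrac12$. Combining these gives residue $m_{X,\twist}(\tfrac12)-\tfrac12 n_p$.

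The main obstacle is this last bookkeeping at $s=\tfrac12$. The cusp zero modes do not decay, so their finite-part contributions must be matched carefully against the cusp asymptotics of the $\phi_k$. To keep this manageable, we will reduce to the model cusp $C_\infty$ by comparing $\varphi_{X,\twist}$ with the model cusp resolvent on $X_c$, using the cutoffs $\eta_{c,r}$. The difference then has only funnel-type, integrable behaviour, and the residue $-\tfrac12$ per singular direction can be read off from the explicit model resolvent.
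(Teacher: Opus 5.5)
Your treatment of (i) and (ii) follows the paper's proof. For (i) you use meromorphy of the finite-part coefficient $a(s)$. For (ii), near an $L^2$-eigenvalue the residue of $(2s-1)\ResTwist(s)$ is the eigenprojection, whose diagonal is honestly integrable. The gap is in (iii). Both ingredients of your residue count at $s=\tfrac12$ are wrong, and the correct value $m_{X,\twist}(\tfrac12)-\tfrac12 n_p$ comes out only because two errors compensate.

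\textbf{First error: the term $\sum_k\abs{\phi_k(s)}^2$.} In a funnel, $\abs{\phi_k(s)}^2\sim\rho_f^{2s}\abs{\phi_k^\#}^2$ against $d\mu_X\sim\rho_f^{-2}\,d\rho_f\,d\phi$ gives $\rho_f^{2s-2}\,d\rho_f$. This is \emph{not} integrable at $s=\tfrac12$, and this leading term is itself the source of the pole, with residue $\tfrac12\norm{\phi_k^\#(\tfrac12)}^2$. In a cusp, where $d\mu_X\sim d\rho_c\,dx$, the term $\rho_c^{2s-2}\,d\rho_c$ likewise gives residue $+\tfrac12$ times the cusp coefficient, not $-\frac{1}{2s-1}$ times it. There is no further factor $1/(2s-1)$ ``already present'': the prefactor $2s-1$ in $\varphi_{X,\twist}$ has already cancelled the resolvent pole, and an extra factor would give a double pole. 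The $\phi_k$ are resonant states normalized only through the residue of the resolvent, so nothing in your argument shows that these residues add up to $m_{X,\twist}(\tfrac12)$. The missing input is the identity \eqref{eq:smatrix-eigenfunctions}, $S_{X,\twist}(\tfrac12)=-\id+P$ with $P=\sum_k\phi_k^\#\ang{\phi_k^\#,\cdot}$, together with $S_{X,\twist}(\tfrac12)^2=\id$, which follows from $S_{X,\twist}(s)S_{X,\twist}(1-s)=\id$. These force $P^2=2P$, so $\tfrac12 P$ is a projection of rank $m_{X,\twist}(\tfrac12)$, and hence $\tfrac12\sum_k\norm{\phi_k^\#}^2=\tfrac12\Tr\bigl(S_{X,\twist}(\tfrac12)+\id\bigr)=m_{X,\twist}(\tfrac12)$. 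Here the norms include the cusp components of $\phi_k^\#$, so the funnels alone do not produce $m_{X,\twist}(\tfrac12)$.

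\textbf{Second error: the source of $-\tfrac12 n_p$ is misidentified.} The regularized trace of the model cusp has residue $+\tfrac12 n_p$ at $s=\tfrac12$, not $-\tfrac12 n_p$. This is visible in Lemma~\ref{lem:phi-cusp}, where $\Phi_{C_\infty}$ contains $\frac{1}{2s-1}$, and it is what (iii) predicts for $C_\infty$ itself, since $m_{C_\infty,\twist}(\tfrac12)=n_p$ by the very fact you cite. Moreover, that pole comes from the region $y\to0$ of $C_\infty$, which is not part of a cusp end of $X$. The model term cut off to $X_{c,j}$ has diagonal of type $\rho_c^{-1}$ and only feeds the $\log\eps$ coefficient.

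The $-\tfrac12 n_p$ instead comes from the cusp--cusp part of the remainder, $(2s-1)Q(s;z,z)=\rho_c^{2s-2}S^{\cc}_{jj}(s)+O(\rho_c^{2s-1})$. Its finite-part integral has residue $\tfrac12\Tr S^{\cc}_{jj}(\tfrac12)$, and the $-\id$ in $S^{\cc}_{jj}(\tfrac12)$ contributes $-\tfrac12$ per singular direction. In your splitting $\varphi_{X,\twist}=\sum_k\abs{\phi_k(s)}^2+(2s-1)H(s)$, this means $(2s-1)H(s;z,z)$ contains $\rho_c^{-1}-\rho_c^{2s-2}$ on the singular cusp directions. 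This vanishes pointwise at $s=\tfrac12$. Yet its finite-part integral over $\{\rho_c<c\}$ is $\log c-\frac{c^{2s-1}}{2s-1}$ (per unit width), with residue $-\tfrac12$; this is exactly the exponent collision you mention in (i).

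Hence two of your claims are false: that the singularity arises only through $\zInt{X}\abs{\phi_k(s)}^2\,d\mu_X$, and that the difference with the model cusp resolvent is integrable. The residue has to be assembled end by end as $\tfrac12\Tr\bigl(S_{X,\twist}(\tfrac12)+\id\bigr)-\tfrac12 n_p$, as in the paper.
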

\begin{proof}
    By definition, $\Phi_{X,\twist}(s) = a(s)$ for $s \not \in \ResTwist \cup \tfrac12(1 - \N_0)$. Since $\ResTwist$ is meromorphic, it follows that
    $a$ is meromorphic as well, which shows that $\Phi_{X,\twist}$ admits a meromorphic continuation to $s \in \C$.

    Recall that for $\Rea s > \tfrac12$,
    \[
    \Phi_{X,\twist}(s) = \zInt{X} \varphi(s;z)\,d\mu_X(z)\,,
    \]
    where
    \begin{align*}
        \varphi_{X,\twist}(s;z) = (2s-1) \tr_{\bundle}\left( \ResTwist(s;z,w) - \id_V R_\h(s;z,w)\right)|_{w = z}\,.
    \end{align*}
    We consider $\varphi_{X,\twist}(s)$ here as a function on~$X = \group \backslash \h$.
    In view of the structure of the resolvent as shown in~\cite[Theorem~4.1]{DFP2} we note that the subtraction of $R_\h(s;z,w)$ only affects the interior term and the model terms,
    which are singular on the diagonal.

    The model terms are holomorphic, hence poles come from the remainder $Q(s)$ (see \cite[Theorem~4.1]{DFP2}) and are only possible if $s(1-s)$ is an eigenvalue of $\LapTwist$.
    Let $s_0(1 - s_0) \in \sigma_d(\LapTwist)$. From the structure of the
    resolvent near a resonance \cite[Section~4.1]{DFP2} (see also the proof of \cite[Proposition 5.13]{DFP2}) we obtain
    \begin{align*}
        \varphi_{X,\twist}(s,z) = \frac{1}{s - s_0}
        \sum_{k=1}^{m_{X,\twist}(s_0)} \ang{\phi_k(z), \phi_k(z)} + (2s-1)
        \tilde{\varphi}(s,z)\,,
    \end{align*}
    where $\{\phi_k\}$ is an orthonormal basis of the eigenspace of $s_0(1-s_0)$
    and $\tilde{\varphi}(s,z)$ is holomorphic in $s$ near $s = s_0$.
    By the resolvent structure we have \[\tilde{\varphi}(s,z)|_{X_f} \in \rho_f^{2s}C^\infty(\overline{X}_f)\] and hence
    \begin{align*}
        h(s) \coloneqq \zInt{X} \tilde{\varphi}(s,z)\,d\mu_X(z)
    \end{align*}
    is holomorphic near $s = s_0$. Here, $\overline{X}_f$ denotes the geodesic compactification of~$X_f$ (see~\cite[Section~3.3]{DFP} or~\cite[Section~3.1]{DFP2} for details).
    We obtain that
    \begin{align*}
        \Phi_{X,\twist}(s) = \frac{m_{X,\twist}(s_0)}{s - s_0} + (2s-1)h(s)\,.
    \end{align*}
    Similarly, the set
    \[
    \left\{s_0\in\C\setminus\left\{\tfrac12\right\}: \Rea s_0 = \tfrac12\right\}
    \]
    is free of resonances by
    \cite[Corollary 6.7]{DFP}, and the zero-integral of the remainder term for points in this set is
    holomorphic.

    Let $s \in \C$ be sufficiently close to~$\tfrac12$. By Lemma~\ref{lem:resonance-onehalf},
    \begin{align*}
        \varphi_{X,\twist}(s,z) = \sum_{k=1}^{m_{X,\twist}\left(\tfrac12\right)} \ang{\phi_k(s,z), \phi_k(s,z)} + (2s-1) H(s,z)\,,
    \end{align*}
    where the functions
    $\phi_k(s, \cdot) \in \rho_f^s \rho_c^{s-1} C^\infty(\overline{X})$ for $k\in\{1,\ldots,m_{X,\twist}\left(\tfrac12\right)\}$  and the function $H(s,\cdot) \in \rho_f^{2s}\rho_c^{2s-2} C^\infty(\overline{X})$ are defined on the geodesic compactification of~$X$. The functions $\psi_k(s,\cdot)$ linearly independent, are holomorphic in $s$ and satisfy
    \begin{align*}
        \left(\LapTwist - \frac14\right)\phi_k\left(\frac12\right) &= 0\,.
    \end{align*}
    The function $H(s,\cdot) \in \rho_f^{2s}\rho_c^{2s-2} C^\infty(\overline{X})$ is holomorphic near $s = \tfrac12$.
    We conclude that
    \begin{align*}
        \Phi_{X,\twist}(s,z)
        &= \sum_{j = 0}^{n_f} \FP_{\eps\to 0} \int_{X_{f,j, \eps}} \varphi_{X,\twist}(s;z) \,d\mu_X(z)\\
        &\phantom{=} + \sum_{j = 0}^{n_c} \FP_{\eps\to 0} \int_{X_{c,j,\eps}} \varphi_{X,\twist}(s;z) \,d\mu_X(z)\\
        &\phantom{=} + \int_{X_{\interior}} \varphi_{X,\twist}(s;z) \,d\mu_X(z)\,.
    \end{align*}
    As the interior term does not contribute to the residue, we only have to consider the contributions of the funnel ends and cusp ends.

    We first consider the contribution of the funnel ends.
    Setting
    \[
    \phi_k^\# \coloneqq \rho_f^{-\frac12}\rho_c^{\frac12}\phi_k\vert_{\partial_\infty X}
    \]
    for $k\in\left\{1,\ldots, m_{X,\twist}\left(\tfrac12\right)\right\}$, we obtain, for $s \not = \tfrac12$ sufficiently close to~$\tfrac12$,
    \begin{align*}
        \int_{X_{f,j, \eps}} \abs{\phi_k(s,z)}^2 \,d\mu_X(z) &=
        \frac{\eps^{2s-1}}{2s-1} \int_{\pa_\infty X_{f,j}} \abs{\phi_k^\#(s, \theta)}^2 \,d\mu_\pa(\theta) + h_k(s,\eps)\,,
          \end{align*}
    where $h_k$ is a suitable functions that is holomorphic in~$s$ near $\tfrac12$.

    Therefore the residue of $\FP_{\eps\to 0} \int_{X_{f,j,\eps}} \varphi_{X,\twist}(s;z) \,d\mu_X(z)$ at $s = \tfrac12$ is given by
    \begin{align*}
        \frac{1}{2} \int_{\pa_\infty X_{f,j}} \abs{\phi_k^\#(\tfrac12,z)}^2 \,d\mu_{\pa X}(z) &= \frac{1}{2} \norm{\phi_k^\#\left(\tfrac12\right)}^2_{L^2(\pa_\infty X_{f,j}, \bundle)} \\
        &= \frac{1}{2} \Tr\left( S_{j j}^{\ff}\left(\tfrac12\right) + \id \right)\,,
    \end{align*}
    where we used \eqref{eq:smatrix-eigenfunctions} and where $S_{j j}^{\ff}$ is the funnel-funnel component for $X_{f,j}$ of the scattering matrix. See \cite{DFP2} for details.

    For understanding the contribution of the cusp ends let $z \in X_{c,j}$.
    We use the decomposition $Q(s) = (2s-1)^{-1} \tilde{Q}(s) + Q_\hol(s)$ for the remainder term, where $Q^\#$ and $Q_\hol$ are holomorphic near $s = \tfrac12$
    and $\tilde{Q}(s) \in (\rho_c \rho_c')^{s-1} C^\infty(\overline{X}_{c,j})$.

    By the decomposition of the resolvent we have
    \begin{align*}
        \varphi_{X,\twist}(s;z) &= (2s-1) \tr_{\bundle} \left( M_c(s;z,w) - R_{\h}(s;z,w) + Q(s) \right)|_{w=z}\\
        &= (1 - \eta_{c,j,0}) \varphi_{C_\infty}(s;z) n_p + (2s-1)\tr_{\bundle} Q(s;z,z)\,.
    \end{align*}
    By \cite[(56)]{DFP2}
    \begin{align*}
        (2s-1) Q(s;z,z) &= \rho^{2s-2} S_{jj}^{\cc}(s) + O(\rho^{2s-1})\,,
    \end{align*}
    where $S_{jj}^{\cc}$ is the cusp-cusp component for~$X_{c,j}$ of the scattering matrix.
    Therefore
    \begin{align*}
        \int_{X_{c,j, \eps}} (2s-1) Q(s;z,z) \,d\mu_X(z) = \frac{\eps^{2s-1}}{2s-1} S_{jj}^{\cc}(s) + h_j(s, \eps)\,,
    \end{align*}
    where $h_j$ is holomorphic in $s$ near $s = \tfrac12$. Thus,
    \begin{align*}
        \zInt{X_{c,j}} (2s-1) Q(s;z,z) \,d\mu_X(z) = \frac{1}{2} S_{jj}^{\cc}\left(\frac12\right)\,.
    \end{align*}
    By the same reasoning as in~\cite[(10.52)]{Borthwick_book} we see that the term $(1 - \eta_{c,j,0}) \varphi_{C_\infty}(s;z)$ does not contribute to the $0$-integral.
    Thus
    \begin{align*}
        \res_{s=\tfrac12} \Phi_{X,\twist}(s) &= \frac{1}{2} \Tr\left( S_{X,\twist}\left(\frac12\right) + \id \right) - \frac{n_p}{2} \\
        &= m_{X,\twist}\left(\frac12\right) - \frac{n_p}{2}\,.
    \end{align*}
    This completes the proof.
\end{proof}

\subsection{Relation between Selberg zeta function and regularized resolvent for some model cases}
\label{sec:relationmodels}

In the subsequent section we will establish a relation between the Selberg zeta function~$Z_{X,\twist}$ for~$(X,\twist)$ and the scattering theoretically regularized resolvent~$\Upsilon_{X,\twist}$ from~\eqref{eq:def_Upsilon}. As auxiliary results we will discuss in this section such a relation for some model cases.

\subsubsection{Hyperbolic cylinder}\label{sec:relhypcyl}
We now investigate how the Selberg zeta function is related to the algebraically regularized resolvent trace for the hyperbolic cylinder. Let $\ell>0$ and recall the hyperbolic cylinder~$C_\ell$ and the generating element~$g_\ell$ for its fundamental group from Section~\ref{sec:hypcyl}.
Recall further from Section~\ref{sec:hypcyl} that the Selberg zeta function for~$C_\ell$ and the twist $\twist\colon\ang{g_\ell}\to\Unit(V)$ is given by
\begin{align*}
    Z_{C_\ell,\twist}(s) = \prod_{\kappa \in \{\pm 1\}}\prod_{j=0}^\infty \det\left(1 - e^{-(s+j)\ell} \twist(g_\ell^\kappa)\right)
\end{align*}
for $\Rea s > \tfrac12$.
The algebraically regularized trace of the resolvent is given by
\begin{align*}
    \Phi_{C_\ell,\twist}(s) = (2s-1)\zInt{C_\ell} \sum_{k\in \Z\setminus\{0\}} \tr\twist(g_\ell^k) R_\h(s;z,e^{|k|\ell}z) d\mu_{C_\ell}(z).
\end{align*}
The following proposition extends a result from~\cite{Patterson_zeta}.
\begin{prop}\label{prop:Cell-zeta-deriv}
    The Selberg zeta function and the algebraically regularized resolvent trace for~$(C_\ell,\twist)$ satisfy the relation
    \begin{align*}
        \frac{Z_{C_\ell,\twist}'}{Z_{C_\ell,\twist}} = \Phi_{C_\ell,\twist}\,.
    \end{align*}
\end{prop}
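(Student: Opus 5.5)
The plan is to compute both sides explicitly for $\Rea s$ large and compare them; the identity then propagates to all of $\C$ by uniqueness of meromorphic continuation.

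\emph{Left-hand side.} Take the logarithmic derivative of the product formula. Expanding $\log\det(1-A)=-\sum_{n\ge1}\tr(A^n)/n$, with $A=\twist(g_\ell^{\kappa})e^{-(s+j)\ell}$, gives
\[
\frac{Z_{C_\ell,\twist}'}{Z_{C_\ell,\twist}}(s)=\sum_{\kappa\in\{\pm1\}}\sum_{n\ge1}\ell\,\tr\twist(g_\ell^{\kappa n})\sum_{j\ge0}e^{-(s+j)n\ell}
=\sum_{k\in\Z\setminus\{0\}}\frac{\ell\,\tr\twist(g_\ell^k)\,e^{-s|k|\ell}}{1-e^{-|k|\ell}}.
\]
This series converges absolutely for $\Rea s>0$, since $\abs{\tr\twist(g)}\le\dim V$.

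\emph{Right-hand side.} For each $k\neq0$, compute the contribution of the $k$-th term to the $0$-integral over the fundamental domain $\{1\le|z|<e^{\ell}\}$, using the coordinates $z=e^{t}e^{i\theta}$ with $\theta\in(0,\pi)$. In these coordinates $\sigma(z,e^{|k|\ell}z)$ depends only on $\theta$, namely $\sigma=1+\sinh^2(|k|\ell/2)/\sin^2\theta$. The $t$-integral therefore contributes a factor $\ell$, and the $0$-regularization only enters through the $\theta$-integral near $\theta\in\{0,\pi\}$, i.e.\ in the funnel ends. There $R_\h(s;\cdot)\sim c\,\sigma^{-s}$ behaves like $(\sin\theta)^{2s}$ against the measure $d\theta/\sin^2\theta$; for $\Rea s>\tfrac12$ this is integrable, so the finite part is the actual integral. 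Hence
\[
\Phi_{C_\ell,\twist}(s)=(2s-1)\,\ell\sum_{k\neq0}\tr\twist(g_\ell^k)\int_0^\pi g_s\Bigl(1+\frac{\sinh^2(|k|\ell/2)}{\sin^2\theta}\Bigr)\frac{d\theta}{\sin^2\theta}.
\]
It remains to evaluate this one-dimensional integral. Substitute $u=\cot\theta$, so that $d\theta/\sin^2\theta=du$ and $\sigma=1+a^2(1+u^2)$ with $a=\sinh(|k|\ell/2)$. The task is then to show
\[
(2s-1)\int_{\R}g_s\bigl(1+a^2(1+u^2)\bigr)\,du=\frac{e^{-s|k|\ell}}{1-e^{-|k|\ell}}.
\]
This is precisely the scalar computation of Patterson~\cite{Patterson_zeta} (see also Borthwick~\cite[Ch.~10]{Borthwick_book} for the untwisted cylinder). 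The twist does not interact with it, since it only contributes the scalar weight $\tr\twist(g_\ell^k)$ in front of each term.

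\emph{Evaluating the integral.} I would prove the displayed identity directly from the integral representation~\eqref{eq:resolv_H_int}. Interchanging the $t$- and $u$-integrals and using $\int_\R (A+a^2u^2)^{-s}\,du=\sqrt{\pi}\,\gammafunc(s-\tfrac12)\,a^{-1}A^{\frac12-s}/\gammafunc(s)$ with $A=1+a^2-t=\cosh^2(|k|\ell/2)-t$, one is left with a Beta-type integral in $t$. With $\cosh^2(|k|\ell/2)=(x+1)^2/(4x)$ and $x=e^{|k|\ell}$, this should reduce to $x^{-s}/(1-x^{-1})$ after a hypergeometric or Legendre-function identity.

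\emph{Main obstacle.} The hardest step is this final closed-form evaluation. The alternative I would try first is to avoid the explicit special-function evaluation. The function $F(\sigma_0)=\int_\R g_s(\sigma_0+a^2u^2)\,du$, viewed as a function of $d=|k|\ell$, satisfies a one-dimensional ODE inherited from $(\Delta_\h-s(1-s))g_s=0$. Its solutions with the correct decay are multiples of $e^{-sd}/(1-e^{-d})$, up to the factor $(2s-1)$, and the normalisation can then be fixed by matching the asymptotics as $d\to\infty$ using the leading term of~\eqref{eq:def_gs}.

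Finally, the interchange of the $k$-sum with the $0$-integral has to be justified. This follows from absolute convergence for $\Rea s$ large, because $g_s(\sigma)=O(\sigma^{-\Rea s})$. Both sides are meromorphic, so the identity on a half-plane extends to all of $\C$.
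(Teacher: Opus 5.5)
Your proposal is correct and follows essentially the paper's route: for $\Rea s>\tfrac12$ both sides are reduced to $\ell\sum_{k\neq0}\tr\twist(g_\ell^k)\,e^{-s|k|\ell}/(1-e^{-|k|\ell})$, the orbital integral is taken from Patterson's scalar computation, and the identity is then extended by meromorphy. The only difference is direction: the paper applies Jacobi's formula to the left side and expands this series by the geometric series to match, whereas you expand $\log\det$ on the left side.

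The closed-form step you left open does go through, with $\lambda=|k|\ell$. The remaining $t$-integral equals $\cosh(\lambda/2)^{1-2s}\frac{\gammafunc(s)^2}{\gammafunc(2s)}\,{}_2F_1\bigl(s-\tfrac12,s;2s;\cosh(\lambda/2)^{-2}\bigr)$. The identity ${}_2F_1(a,a+\tfrac12;2a+1;x)=\bigl(2/(1+\sqrt{1-x})\bigr)^{2a}$ gives ${}_2F_1=(1+e^{-\lambda})^{2s-1}$. The duplication formula then yields exactly $e^{-s\lambda}/\bigl((2s-1)(1-e^{-\lambda})\bigr)$ for the $u$-integral.
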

\begin{proof}
Since the functions on both sides of the claimed equality are meromorphic, it suffices to prove the equality on~$\{s\in\C : \Rea s > \tfrac12\}$. To that end let $s\in\C$ with $\Rea s >\tfrac12$ and set $A\coloneqq \twist(g_\ell)$.

    The logarithmic derivative of the Selberg zeta function~$Z_{X,\twist}$ can be calculated by Jacobi's formula as
    \begin{equation}
        \label{eq:calculate-ZCell}
        \begin{aligned}
        \frac{Z_{C_\ell,\twist}'(s)}{Z_{C_\ell,\twist}(s)} &= \sum_{\kappa \in \{\pm 1\}}
            \sum_{j=0}^\infty \partial_s \log \det \left(1 - e^{-(s+j)\ell} A^\kappa\right)\\
        &= \ell \sum_{\kappa \in \{\pm 1\}}\sum_{j=0}^\infty \tr\left( (1 - e^{-(s+j)\ell}A^{\kappa})^{-1} \, e^{-(s+j)\ell}A^\kappa \right).
        \end{aligned}
    \end{equation}
    Analogously to the reasoning in~\cite{Patterson_zeta} (see also \cite[p. 227]{Borthwick_book}) it follows that
    \begin{align*}
        \Phi_{C_\ell,\twist}(s) = \ell \sum_{k\in\Z\setminus\{0\}} \tr(A^k) \frac{e^{-s|k| \ell}}{1 - e^{-|k|\ell}}\,.
    \end{align*}
    Applying the geometric series twice yields
    \begin{align*}
        \Phi_{C_\ell,\twist}(s) = \ell\sum_{\kappa \in \{\pm 1\}}\sum_{j=0}^\infty \tr\left( (1 - e^{-(s+j)\ell}A^\kappa)^{-1} \, e^{-(s+j)\ell}A^\kappa \right).
    \end{align*}
    Combining this with \eqref{eq:calculate-ZCell} yields the claimed equality.
\end{proof}

We recall from Section~\ref{sec:hypcyl} that $Z_{C_\ell,\twist}$ is entire and its set of zeros is
\begin{align*}
    \bigcup_{\lambda \in \EV(\chi(h_\ell))} \bigcup_{\kappa \in \{\pm 1\}} \left( -\N_0 + \kappa \ell^{-1} ( \log \lambda + 2 \pi i \ZZ) \right)\,.
\end{align*}

\subsubsection{Funnel}
We now consider the model funnel~$F_\ell$ for any $\ell>0$, i.e.,
\begin{align*}
    F_\ell = C_\ell \cap \{x + iy \in \C \colon x \geq 0\}\,.
\end{align*}
See Section~\ref{sec:modelfunnel}. We let $\twist$ be a unitary representation of the fundamental group~$\group$ of~$C_\ell$. By Section~\ref{sec:hypcyl} $\group$ is generated by
\[
g_\ell = \bmat{e^{\ell/2}}{0}{0}{e^{-\ell/2}}\,.
\]
Set
\[
A \coloneqq \twist(g_\ell)\,.
\]
Restricting the twisted Laplacian $\Delta_{C_\ell,\twist}$ on $C_\ell$ to $F_\ell$ and imposing Dirichlet boundary conditions on $x = 0$ gives a self-adjoint operator $\Delta_{F_\ell,\twist}$.
We denote its resolvent by
\begin{align*}
    R_{F_\ell,\twist}(s) = (\Delta_{F_\ell,\twist} - s(1-s))^{-1}\,.
\end{align*}
For its Schwartz kernel we have
\begin{align*}
    R_{F_\ell,\twist}(s;z,w) = R_{C_\ell, \twist}(s;z,w) - R_{C_\ell, \twist}(s;z,-\overline{w}).
\end{align*}
We refer to \cite[Section 4.7]{DFP} for a detailed discussion of the resolvent for the funnel.

We define the Selberg zeta function for $(F_\ell,\twist)$ by
\begin{align}\label{eq:def-zeta-funnel}
    Z_{F_\ell,\twist}(s) = e^{-sn\frac{\ell}4} \prod_{\kappa \in \{\pm 1\}} \prod_{j=0}^\infty \det\left( 1 - e^{-(s+2j + 1)\ell} A^\kappa\right)\,,
\end{align}
which is an entire function on $\C$ as the Selberg zeta function for~$(C_\ell,\twist)$ is entire. See Section~\ref{sec:hypcyl}.
\begin{prop}\label{prop:funnel-zeta-deriv}
    The Selberg zeta function and the algebraically regularized resolvent trace for~$(F_\ell,\twist)$ obey the relation
    \begin{align*}
        \frac{Z_{F_\ell,\twist}'}{Z_{F_\ell,\twist}} = \Phi_{F_\ell,\twist}\,.
    \end{align*}
\end{prop}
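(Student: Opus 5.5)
We follow the pattern of the proof of Proposition~\ref{prop:Cell-zeta-deriv}, i.e., the method of images. Both sides are meromorphic, so it suffices to prove the identity on a half-plane $\Rea s > \tfrac12$ (or even $\Rea s$ large) and then appeal to uniqueness of meromorphic continuation.

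\textbf{Step 1: the logarithmic derivative of $Z_{F_\ell,\twist}$.} Jacobi's formula applied to~\eqref{eq:def-zeta-funnel}, exactly as in~\eqref{eq:calculate-ZCell}, gives
\[
\frac{Z_{F_\ell,\twist}'(s)}{Z_{F_\ell,\twist}(s)} = -\frac{n\ell}{4} + \ell\sum_{\kappa\in\{\pm1\}}\sum_{j=0}^\infty \tr\Bigl((1-e^{-(s+2j+1)\ell}A^\kappa)^{-1}e^{-(s+2j+1)\ell}A^\kappa\Bigr),
\]
with $n=\dim V$. Expanding in geometric series, the double sum becomes
\[
\ell\sum_{k\neq0}\tr(A^k)\,\frac{e^{-(s+1)|k|\ell}}{1-e^{-2|k|\ell}}.
\]

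\textbf{Step 2: split $\Phi_{F_\ell,\twist}$ using the kernel formula.} From
\[
R_{F_\ell,\twist}(s;z,w)=R_{C_\ell,\twist}(s;z,w)-R_{C_\ell,\twist}(s;z,-\overline w)
\]
and the expansion~\eqref{def:restwist} over $\ang{g_\ell}$, the function $\varphi_{F_\ell,\twist}$ splits into two parts.
\begin{enumerate}[label=$\mathrm{(\alph*)}$]
\item The terms $k\neq0$, $\tr(A^k)R_\h(s;z,e^{k\ell}z)$, integrated over the half-cylinder $F_\ell$. Their $0$-integral over $F_\ell$ is exactly half of the cylinder contribution, by the symmetry $z\mapsto-\overline z$. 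This gives $\tfrac{\ell}{2}\sum_{k\ne0}\tr(A^k)\,e^{-s|k|\ell}/(1-e^{-|k|\ell})$.
\item The reflected terms $-\tr(A^k)R_\h(s;z,-e^{k\ell}\overline z)$ for all $k\in\Z$, including $k=0$.
\end{enumerate}

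\textbf{Step 3: the reflected terms (main obstacle).} We evaluate these in Fermi coordinates about the geodesic $x=0$. Here $d(z,-\overline z)$ depends only on the distance~$r$ to the boundary curve, and composing with $g_\ell^k$ produces a pair $(r,k\ell)$. Following the computation of~\cite{Borthwick_book} for the untwisted funnel, each $k$ should contribute an explicit term involving $e^{-s|k|\ell}$ together with factors $(1+e^{-|k|\ell})^{-1}$ or similar. The $k=0$ term requires a genuine finite-part regularization as $r\to\infty$, and should yield the constant $-n\ell/4$; this is why the exponential prefactor $e^{-sn\ell/4}$ appears in~\eqref{eq:def-zeta-funnel}. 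The twist enters only through $\tr(A^k)$, so the untwisted argument carries over term by term.

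\textbf{Step 4: combine.} Adding (a) and (b), and using
\[
\frac{1}{1-x}-\frac{1}{1+x}=\frac{2x}{1-x^2}\quad\text{with } x=e^{-|k|\ell},
\]
should reproduce $e^{-(s+1)|k|\ell}/(1-e^{-2|k|\ell})$ term by term, together with the constant from $k=0$. This matches Step~1.

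The delicate points are the finite-part treatment of the $k=0$ reflection term and checking the $\tfrac12$ normalization of the cylinder part. For both, I would verify the computation against the trivial-twist case with $\dim V=1$, where the result is known.
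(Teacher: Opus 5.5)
Your proposal is correct and follows the same route as the paper, which likewise cites \cite{BJP} for the method of images and simply records the result: half of the cylinder sum from the unreflected terms, minus the reflected terms, combined via $\frac{1}{1-x}-\frac{1}{1+x}=\frac{2x}{1-x^2}$. One small correction to your Step~3: for $\Rea s>\frac12$ every reflected integral, including $k=0$, converges absolutely, and Fermi coordinates give uniformly in $k$ that $(2s-1)\int_{F_\ell}R_\h(s;z,-e^{k\ell}\overline z)\,d\mu_{F_\ell}(z)=\frac{\ell}{2}\,\frac{e^{-s|k|\ell}}{1+e^{-|k|\ell}}$; the constant $-\frac{n\ell}{4}$ is therefore just the $k=0$ case of this formula, and no finite-part regularization is needed.
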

\begin{proof}
    The proof is analogous from the proof of~\cite[Proposition~2.3]{BJP} with the modifications of Proposition~\ref{prop:Cell-zeta-deriv}. We have that
    \begin{align*}
        \Phi_{F_\ell,\twist}(s) &= -\frac{\ell}{4}\tr(\id_V)
        + \frac{\ell}2 \sum_{k\in\Z \setminus\{0\}} \tr(A^k) \left( \frac{e^{-s|k|\ell}}{1 - e^{-|k|\ell}} - \frac{e^{-s|k|\ell}}{1 + e^{-|k|\ell}}\right)\\
        &= -\frac{\ell n}{4} + \ell \sum_{\kappa \in \{\pm 1\}}\sum_{j=0}^\infty \partial_s \log \det \left(1 - e^{-(s+2j+1)\ell} A^\kappa\right)\,.
    \end{align*}
\end{proof}

Using the fact that the Euler characteristic of a funnel is zero, we obtain from the same argument as in the proof of Lemma~\ref{lem:res-trace} the following relationship between $\Upsilon_{F_\ell,\twist}$ and $Z_{F_\ell,\twist}$.

\begin{cor}\label{cor:upsilon-funnel}
    The scattering theoretically regularized trace of the resolvent and the Selberg zeta function for~$(F_\ell,\twist)$ obey the relation
    \begin{align}\label{eq:upsilon-funnel}
        \Upsilon_{F_\ell,\twist}(s) = \frac{d}{ds} \log \left( \frac{Z_{F_\ell,\twist}(s)}{Z_{F_\ell,\twist}(1-s)}\right)
    \end{align}
    for all~$s\in\C$.
\end{cor}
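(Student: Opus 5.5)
We prove Corollary~\ref{cor:upsilon-funnel} by combining Proposition~\ref{prop:funnel-zeta-deriv} with the funnel analogue of Lemma~\ref{lem:res-trace}.

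\textbf{Step 1: the analogue of Lemma~\ref{lem:res-trace} for $F_\ell$.} Following the argument of Lemma~\ref{lem:res-trace} (as in \cite{BJP}), we derive
\[
\Phi_{F_\ell,\twist}(s) + \Phi_{F_\ell,\twist}(1-s) = \Upsilon_{F_\ell,\twist}(s) - \left(s-\tfrac12\right)\cot(\pi s)\,\dim(V)\,\zVol(F_\ell)\,.
\]
Subtracting $(2s-1)\zTr(R_\h(s)-R_\h(1-s))\,\id_V$ from both sides of the definition of $\Upsilon_{F_\ell,\twist}$ turns the $0$-traces of the funnel resolvents into the algebraic regularizations $\Phi_{F_\ell,\twist}(s)$ and $\Phi_{F_\ell,\twist}(1-s)$. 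The leftover term is the diagonal value of $(2s-1)(R_\h(s)-R_\h(1-s))$, which is the constant $-(s-\tfrac12)\cot(\pi s)/(2\pi)$ (this follows from \eqref{eq:resolv_H_diag} and the reflection formula for $\psi$). The trace over $V$ multiplies it by $\dim(V)$, and the $0$-integral contributes $\zVol(F_\ell)$.

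\textbf{Step 2: the volume term vanishes.} We show $\zVol(F_\ell)=0$. In the funnel coordinates $(r,\phi)$, with $\phi$ of period $\ell$, the area form is $\cosh r\,dr\,d\phi$. The region $\rho\ge\eps$ has volume $\ell\sinh(\operatorname{arccosh}(1/\eps))$. Expanding in $\eps$ gives $\ell\eps^{-1}+O(\eps)$, which has no $\eps^0$ term, so the finite part is zero. This is the precise content of the paper's remark that ``the Euler characteristic of a funnel is zero''.

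\textbf{Step 3: conclusion.} By Proposition~\ref{prop:funnel-zeta-deriv},
\[
\Upsilon_{F_\ell,\twist}(s) = \Phi_{F_\ell,\twist}(s)+\Phi_{F_\ell,\twist}(1-s) = \frac{Z_{F_\ell,\twist}'}{Z_{F_\ell,\twist}}(s) + \frac{Z_{F_\ell,\twist}'}{Z_{F_\ell,\twist}}(1-s)\,.
\]
The chain rule gives $\frac{d}{ds}\log Z_{F_\ell,\twist}(1-s) = -\frac{Z_{F_\ell,\twist}'}{Z_{F_\ell,\twist}}(1-s)$. Hence the right-hand side equals $\frac{d}{ds}\log\bigl(Z_{F_\ell,\twist}(s)/Z_{F_\ell,\twist}(1-s)\bigr)$, which is \eqref{eq:upsilon-funnel}. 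The identity first holds away from $\tfrac12\Z$ and the zeros of $Z_{F_\ell,\twist}(s)$ and $Z_{F_\ell,\twist}(1-s)$. It then extends to all $s$ as an identity of meromorphic functions by uniqueness of continuation.

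\textbf{Main obstacle.} The hardest step is Step~1, because $F_\ell$ carries a Dirichlet boundary at $x=0$ that the surfaces in Lemma~\ref{lem:res-trace} do not have. One has to check that the reflected term $R_{C_\ell,\twist}(s;z,-\overline{w})$ is already included in both $\Phi_{F_\ell,\twist}$ and $\Upsilon_{F_\ell,\twist}$. This works because the only term subtracted in $\varphi$ is the identity contribution $R_\h$; every other term, including the reflections, is smooth on the diagonal and appears identically in both regularizations.
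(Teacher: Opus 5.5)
Your proof is correct and follows the paper's argument: you run the computation of Lemma~\ref{lem:res-trace} on $F_\ell$, use that $\zVol(F_\ell)=0$ (which you verify directly rather than citing the vanishing Euler characteristic), and then apply Proposition~\ref{prop:funnel-zeta-deriv}. There are two harmless inaccuracies, neither of which affects the conclusion:
\begin{itemize}
\item The diagonal value of $(2s-1)\bigl(R_\h(s)-R_\h(1-s)\bigr)$ is $(s-\tfrac12)\cot(\pi s)$, not $-(s-\tfrac12)\cot(\pi s)/(2\pi)$. Your displayed identity in Step~1 already uses the correct constant, and the term drops out anyway because $\zVol(F_\ell)=0$.
\item The reflected term $R_\h(s;z,-\overline{z})$ is not smooth up to the central geodesic $x=0$. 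It is only logarithmically singular there, which is still integrable, and its $s\leftrightarrow 1-s$ difference is smooth.
\end{itemize}
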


By the method of images, we immediately obtain that the resolvent for the funnel end with Dirichlet boundary conditions on the central geodesic extends meromorphically to $\CC$. Recall the funnel boundary defining function~$\rho_f$ from Section~\ref{sec:boundary}. We let $\Sph^1_\ell$ denote the boundary at infinity of~$F_\ell$ and let $\iota\colon \Sph^1_\ell \to \overline{F_{\ell}}$ be the associated embedding.
The scattering matrix for~$(F_\ell,\twist)$ is the map
\[
S_{F_{f,\ell}, \twist}(s)\colon C^\infty(\Sph^1_\ell, \iota^*\bundle) \to C^\infty(\Sph^1_\ell, \iota^*\bundle)\,,
\]
defined by
\begin{align*}
    S_{F_{f,\ell}, \twist}(s;\theta,\theta') = \lim_{\rho_f,\rho_f' \to 0} (\rho_f \rho_f')^{-s} R_{F_\ell, \twist}(s; \rho_f, \theta, \rho_f', \theta')\,.
\end{align*}

By the same arguments as in the case of the hyperbolic cylinder, we can deduce a factorization of the zeta function in terms of the Weierstrass product of the resonances for the model funnel.

\begin{prop}[Selberg's trace formula for the model funnel]\label{prop:selberg-funnel}
    There exists an entire function $q$ such that
    \begin{align*}
        Z_{F_\ell,\twist}(s) = e^{q(s)} \mc P_{F_\ell,\twist}(s)
    \end{align*}
    on all of~$\C$.
\end{prop}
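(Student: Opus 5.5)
The plan is to follow the hyperbolic cylinder case in Section~\ref{sec:hypcyl}. First I show that $Z_{F_\ell,\twist}$ is entire of order at most~$2$. Then I show that its zero multiset equals the resonance multiset $\mc R_{F_\ell,\twist}$. Hadamard's factorization theorem then gives $Z_{F_\ell,\twist}=e^{q}\mc P_{F_\ell,\twist}$, with $q$ entire, indeed a polynomial of degree at most~$2$.

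\emph{Step 1: zeros of $Z_{F_\ell,\twist}$.} Write \eqref{eq:def-zeta-funnel} through the eigenvalues $\lambda\in\EV(A)$ of the unitary matrix $A$, as in Section~\ref{sec:hypcyl}. Using $h(z)=\prod_{k\ge0}(1-z e^{-k\ell})$, the factor for fixed $\lambda$ and $\kappa$ is
\[
\prod_{j\ge0}\bigl(1-e^{\kappa\log\lambda-(s+2j+1)\ell}\bigr)
=\wt h\bigl(e^{-(s+1)\ell+\kappa\log\lambda}\bigr),
\qquad \wt h(z)\coloneqq\prod_{j\ge0}(1-ze^{-2j\ell}).
\]
Hence $Z_{F_\ell,\twist}$ is entire, with simple zeros at
\[
s\in -1-2\N_0+\kappa\ell^{-1}(\log\lambda+2\pi i\Z),
\]
counted over all $\lambda\in\EV(A)$ and $\kappa\in\{\pm1\}$. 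The prefactor $e^{-sn\ell/4}$ has no zeros.

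\emph{Step 1, order.} The same comparison with $Z_{C_{2\ell}}$ as in Section~\ref{sec:hypcyl} bounds $\log|Z_{F_\ell,\twist}(s)|$ by a finite sum of terms $\tfrac12\log|Z_{C_{2\ell}}(s+1-\tfrac{\kappa}{\ell}\log\lambda)|$, up to a linear term. Then \cite[Proposition~15.11]{Borthwick_book} shows that the order is at most~$2$. Alternatively, the zero counting function is visibly $O(r^2)$, and a direct estimate of the product gives growth $e^{C|s|^2}$.

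\emph{Step 2: resonances of $(F_\ell,\twist)$.} By the method of images, $R_{F_\ell,\twist}(s;z,w)=R_{C_\ell,\twist}(s;z,w)-R_{C_\ell,\twist}(s;z,-\overline w)$. I diagonalize $A$ and use the Fourier decomposition of $R_{C_\ell,\twist}$ in the angular variable, as in \cite[Proposition~4.8]{DFP}. For eigenvalue $\lambda=e^{i\beta}$, the modes of $C_\ell$ have frequency shifted by $\beta/\ell$. The Dirichlet condition at $x=0$ selects, for each mode, the antisymmetric combination in the radial variable, i.e. in the Fermi coordinate $r$ with $\cosh r$. The resonances of $C_\ell$ are $-\N_0+\kappa\ell^{-1}(\log\lambda+2\pi i\Z)$. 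In the untwisted case (\cite[Section~5.3]{Borthwick_book}), the reflection kills exactly the even-index ones, leaving $-1-2\N_0+\ldots$ with multiplicity one each. I would redo that computation mode by mode, with the frequency $k$ replaced by $k+\beta/(2\pi)$. This only shifts the imaginary parts, which yields exactly the zero set of Step~1 with multiplicities.

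\emph{Step 3: conclusion.} Since $Z_{F_\ell,\twist}$ and $\mc P_{F_\ell,\twist}$ are entire with the same zeros (including multiplicities), their quotient is entire and zero-free, so it equals $e^{q}$ with $q$ entire. Both functions have order at most~$2$, so Hadamard gives $\deg q\le2$, though only entirety is claimed.

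The main obstacle is Step~2, the exact identification of resonance multiplicities for the Dirichlet funnel with a twist. The delicate part is checking that no cancellation or extra pole appears at the points where the $\kappa=\pm1$ families coincide, namely when $\lambda=\pm1$ or the imaginary part is $0$. I would handle it via the explicit hypergeometric expression of the funnel resolvent modes from \cite[Section~4.7]{DFP}, checking the residue ranks directly at those coincident points.
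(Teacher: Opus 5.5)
Your proposal is correct and follows the paper's argument. The paper only says the hyperbolic-cylinder reasoning carries over: $Z_{F_\ell,\twist}$ is entire of order $2$, its zero multiset equals the twisted funnel resonance set (obtained via the method of images), and Hadamard's theorem finishes the proof. Your Step~2, including the care at coincident points when $\lambda=\pm1$, spells out what the paper leaves implicit. One harmless slip: in the order estimate the comparison should be with $Z_{C_{2\ell}}\bigl(\tfrac{s+1}{2}-\tfrac{\kappa}{2\ell}\log\lambda\bigr)$ rather than $Z_{C_{2\ell}}\bigl(s+1-\tfrac{\kappa}{\ell}\log\lambda\bigr)$. This is an affine change of variable and does not affect the order.
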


\subsubsection{Parabolic cylinder}
We recall the model parabolic cylinder (model cusp) $C_\infty = \ang{T} \backslash \h$ from Section~\ref{sec:cusps}. It has no periodic orbits, hence it does not contribute directly to the Selberg zeta function.
However,  parabolic elements directly contribute to the resolvent trace. Thus, to connect the resolvent trace and the Selberg zeta function for~$(C_\infty,\twist)$, we need to calculate these contributions.

Let $\twist\colon \ang{T}\to\Unit(V)$, and set $B\coloneqq \twist(T)$, $n\coloneqq \dim V$. Let $(\lambda_j)_{j=1}^n$ be the (algebraic) eigenvalues of $B$ (with multiplicities) and let  $n_0$ be the number of eigenvalues that are equal to $1$ (counted with algebraic multiplicity).

\begin{lemma}\label{lem:phi-cusp}
     On all of~$\C$ we have
    \begin{align*}
        \Phi_{C_\infty,\twist}(s) = n_0 \Phi_{C_\infty}(s) - \frac12 \sum_{\kappa \in \{\pm 1\}}\sum_{\lambda_j \not = 1} \log(1 - \lambda_j^\kappa),
    \end{align*}
    where
    \begin{align*}
        \Phi_{C_\infty}(s) = -\log(2) - \frac{\gammafunc'\left(s+\tfrac12\right)}{\gammafunc\left(s+\tfrac12\right)} + \frac{1}{2s-1}
    \end{align*}
    is the algebraically regularized resolvent trace of the scalar, untwisted Laplacian on the model cusp~$C_\infty$.
\end{lemma}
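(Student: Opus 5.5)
We diagonalize $B=\twist(T)$ in an orthonormal eigenbasis $(e_j)_{j=1}^n$ of $V$, with $Be_j=\lambda_j e_j$ and $|\lambda_j|=1$. By \eqref{def:restwist} the twisted resolvent $R_{C_\infty,\twist}$ preserves each line $\C e_j$. Hence $\varphi_{C_\infty,\twist}$, and therefore $\Phi_{C_\infty,\twist}$, splits into a sum of $n$ scalar contributions, one for each character $T\mapsto\lambda_j$ of $\ang{T}$. Writing $z=x+iy$ and using the fundamental domain $\{0\le x<1\}$, the $j$-th contribution is, for $\Rea s>1$,
\[
(2s-1)\,\zInt{C_\infty}\sum_{k\in\Z\setminus\{0\}}\lambda_j^k\,R_\h(s;z,z+k)\,d\mu(z)\,,
\qquad R_\h(s;z,z+k)=g_s\Bigl(1+\tfrac{k^2}{4y^2}\Bigr)\,,
\]
where $\sigma(z,z+k)=1+k^2/(4y^2)$. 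For the $n_0$ indices with $\lambda_j=1$ this is exactly the scalar untwisted quantity, and we take $\Phi_{C_\infty}$ from the untwisted computation as in \cite[(10.52)]{Borthwick_book} (cf.\ \cite{BJP}). Those indices contribute $n_0\Phi_{C_\infty}(s)$. It remains to treat an eigenvalue $\lambda=\lambda_j\neq1$.

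For fixed $k\ne0$ we compute the $y$-integral over the strip with $d\mu=dx\,dy/y^2$. The substitution $t=|k|/(2y)$ gives
\[
\int_0^\infty g_s\Bigl(1+\tfrac{k^2}{4y^2}\Bigr)\frac{dy}{y^2}=\frac{2}{|k|}\int_0^\infty g_s(1+t^2)\,dt\,.
\]
This integral converges absolutely: the singularity at $t=0$ is only logarithmic by \eqref{eq:resolv_H_diag}, and $g_s(1+t^2)=O(t^{-2\Rea s})$ as $t\to\infty$ by \eqref{eq:def_gs}. To evaluate the constant, we use the zeroth Fourier mode of the free resolvent, namely $\int_\R R_\h(s;iy,x+iy)\,dx=y/(2s-1)$, which is the model cusp zero mode at coinciding heights. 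The substitution $x=2yt$ in that identity turns it into $4y\int_0^\infty g_s(1+t^2)\,dt=y/(2s-1)$, hence $\int_0^\infty g_s(1+t^2)\,dt=\frac1{4(2s-1)}$. The $\lambda$-contribution then becomes
\[
(2s-1)\sum_{k\ne0}\lambda^k\frac{2}{|k|}\cdot\frac{1}{4(2s-1)}=\frac12\sum_{k\ne0}\frac{\lambda^k}{|k|}=-\frac12\sum_{\kappa\in\{\pm1\}}\log(1-\lambda^\kappa)\,,
\]
which is independent of $s$ and therefore trivially extends to all of $\C$. Summing over $j$ yields the claimed formula.

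The main obstacle is justifying the exchange of the $k$-sum with the $0$-integral. The series $\sum_k\lambda^k/|k|$ converges only conditionally, and the untwisted analogue ($\lambda=1$) diverges; that divergence is exactly what the finite-part regularization absorbs in $\Phi_{C_\infty}$. I would first truncate to $C_{\infty,\eps}$, which is a finite range of $y$, so that the integrals are absolutely convergent. I would then apply summation by parts in $k$. The partial sums $\sum_{0<k\le K}\lambda^k$ are bounded by $2/|1-\lambda|$. Moreover, $k\mapsto\int_{\text{trunc}}g_s(1+k^2/4y^2)\,dy/y^2$ is monotone in $k$ with controlled differences, which follows from the series \eqref{eq:def_gs} for $\Rea s$ large. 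This summation-by-parts argument should show two things: that no $\log\eps$ or $\eps^{2s-1+m}$ terms arise for $\lambda\ne1$, and that the $\eps\to0$ limit of the truncated expression equals the termwise-computed value. The identity then holds for $\Rea s$ large and extends to all of $\C$ by meromorphic continuation (Lemma~\ref{lem:Phi_cont}).
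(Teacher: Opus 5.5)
Your proposal is correct and follows the same route as the paper's proof: split by the eigenvalues of $\twist(T)$, take the $\lambda=1$ part from the untwisted computation, substitute $u=k/(2y)$, and use $\sum_{k\ge1}\lambda^{\pm k}/k=-\log(1-\lambda^{\pm1})$. The one procedural difference is the order: the paper substitutes inside the truncated integral, so the cutoff becomes $k\le 2u/\eps$, and passing to the limit then only needs the uniform boundedness of $\sum_{k\le N}\lambda^{k}/k$ and dominated convergence, which is lighter than your summation by parts and avoids your monotonicity claim (meaningful only for real $s$).

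Your constant $\int_0^\infty g_s(1+t^2)\,dt=\frac{1}{4(2s-1)}$, obtained from the zero Fourier mode, is correct. At $s=1$, $g_1(1+t^2)=\frac{1}{4\pi}\log(1+t^{-2})$ integrates to $\frac14$, and this constant is exactly what produces the factor $\frac12$ in the statement. The paper's proof instead asserts $2(2s-1)\int_0^\infty g_s(1+u^2)\,du=1$, and consequently ends with the formula without that $\frac12$, contradicting its own statement; your version is the consistent one.
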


\begin{proof}
We use the argumentation from~\cite{Patterson_zeta}. We have that
    \begin{align*}
        \Phi_{C_\infty,\twist}(s) &= (2s - 1)\zInt{C_\infty} \sum_{k\not = 0} \tr(B^k) R_\h(s;z,z+k)\, d\mu_{C_\infty}(z)\\
        &= (2s-1) \sum_\pm \FP_{\eps\to 0} \int_0^{\eps^{-1}} \int_0^1 \sum_{k=1}^\infty \tr(B^{\pm k}) g_s\left(1+\frac{k^2}{4y^2}\right) \, \frac{dx\,dy}{y^2}\,.
    \end{align*}
    Substituting $u = k/(2y)$ yields
    \begin{align*}
        \Phi_{C_\infty,\twist}(s) = 2(2s-1) \sum_{\pm} \FP_{\eps\to 0}\int_0^\infty \tr\left(\sum_{1\leq k\leq 2u/\eps} \frac{B^{\pm k}}k\right) g_s(1+u^2) du
    \end{align*}
    Since $\tr B^{\pm k} = \sum_{j=1}^n \lambda_j^{\pm k}$, we need to evaluate
    \begin{align*}
        \FP_{N \to \infty}\sum_{k=1}^N \frac{\lambda_j^{\pm k}}k\,.
    \end{align*}
    There are two cases: if $\lambda_j = 1$, we can use the exact same reasoning as in \cite{Borthwick_book}.
    Further, if $\lambda_j \in \{ z \in \C \colon |z|=1, z\not = 1\}$, then the series converges to
    \begin{align*}
        \sum_{k=1}^\infty \frac{\lambda_j^{\pm k}}k = -\log\left(1-\lambda_j^{\pm 1}\right)\,,
    \end{align*}
    where $\log$ denotes the principal branch of the complex logarithm.
    Together with
    \[
    2(2s-1) \int_0^\infty g_s(1 + u^2) du = 1
    \]
    this implies that the contribution of the eigenvalue $\lambda_j \not = 1$ is given by
    $-\log(1 - \lambda_j^{\pm 1})$.

    Summing up the trace, we obtain
    \begin{align*}
        \Phi_{C_\infty,\twist}(s) = n_0 \Phi_{C_\infty}(s) - \sum_{\kappa \in \{\pm 1\}}\sum_{\lambda_j \not = 1} \log(1 - \lambda_j^\kappa)\,.
    \end{align*}
    This completes the proof.
\end{proof}

\subsubsection{Cone}

Recall from Section~\ref{sec:modelcone} the model cone~$C_\theta^\wedge$ whose fundamental group~$\group$ is generated by
\begin{equation*}
 g = g_\theta \coloneqq \bmat{\cos(\theta)}{\sin(\theta)}{-\sin(\theta)}{\cos(\theta)}
\end{equation*}
with $\theta = \pi/q$ for some $q \in \N$, $q>1$. To investigate the algebraically regularized resolvent trace~$\Phi_{C_\theta^\wedge,\twist}$ of~$(C_\theta^\wedge,\twist)$ we require some preparations. We let $\psi$ denote the digamma function:
\[
\psi = \frac{\gammafunc'}{\gammafunc}\,.
\]

\begin{lemma}\label{lem:fischer}
    Let $s \in \C$ with $\Rea s > \tfrac12$ and $\vartheta = \tfrac{p}{q}\pi$ with $q \in \N$ and $p \in \{1,\dotsc,q-1\}$. Then
    \begin{align*}
        \int_0^1 & \left(1-x \cos(\vartheta)^2\right)^{-\frac12} x^{-\frac32} g_s\left(\frac1x\right) dx
        \\ & = \frac{1}{2s-1} \frac{i}{2\pi q} \sum_{j=0}^{q-1} \left( e^{i\vartheta(2j+1)} - e^{-i\vartheta(2j+1)} \right) \psi\left( \frac{s+j}q \right) \,.
    \end{align*}
\end{lemma}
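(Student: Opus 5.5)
We will prove the identity first for real $s$ large; the general case $\Rea s>\tfrac12$ then follows by analytic continuation. Indeed, the left hand side is holomorphic in $s$ there, since $g_s(1/x)=O(x^{\Rea s})$ as $x\to0$. The right hand side is meromorphic, and its apparent pole at $s=\tfrac12$ has to cancel, as will be visible from the final integral representation below. The strategy is to rewrite both sides as Mellin-type integrals $\int_0^1 y^{s-1}(\cdots)\,dy$ and to compare the integrands.

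\emph{Right hand side.} Since $\vartheta=\tfrac pq\pi$ with $1\le p\le q-1$, we have $e^{2i\vartheta}\neq1$ and $e^{2i\vartheta q}=1$. Hence $\sum_{j=0}^{q-1}\sin(\vartheta(2j+1))=0$, and $k\mapsto\sin(\vartheta(2k+1))$ is $q$-periodic. We insert the series $\psi(w)=-\gamma+\sum_{m\ge0}\bigl(\tfrac1{m+1}-\tfrac1{m+w}\bigr)$ with $w=(s+j)/q$. The constant terms then drop out by the vanishing of the sum of the weights. Using the periodicity and reindexing $k=j+qm$, the right hand side becomes
\[
\frac{1}{\pi(2s-1)}\sum_{k=0}^\infty \frac{\sin\bigl((2k+1)\vartheta\bigr)}{s+k}\,,
\]
understood as a limit of partial sums over full periods. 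We then write $\tfrac1{s+k}=\int_0^1 y^{s+k-1}\,dy$ and sum the geometric series under the integral, which is justified by Abel summation for $y<1$. This gives
\[
\frac{1}{\pi(2s-1)}\int_0^1 y^{s-1}\,\frac{(1+y)\sin\vartheta}{1-2y\cos(2\vartheta)+y^2}\,dy\,.
\]

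\emph{Left hand side.} We interpret the integrand geometrically. For a rotation by $2\vartheta$ about $i$, a point at distance $r$ from $i$ satisfies $\sigma(z,g z)=1+\sinh^2 r\,\sin^2\vartheta$. We substitute $x=1/\sigma$, i.e. $x^{-1}-1=\sinh^2 r\,\sin^2\vartheta$. A short computation should turn $(1-x\cos^2\vartheta)^{-1/2}x^{-3/2}\,dx$ into a constant multiple (in $\vartheta$) of $\sinh r\,dr$ or a close variant. This identifies the left hand side with $\int_0^\infty R_\h(s;d_\vartheta(r))\sinh r\,dr$ up to an explicit factor, where $\cosh d_\vartheta(r)=1+2\sinh^2 r\,\sin^2\vartheta$. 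Next we use $R_\h=\tfrac1{2\pi}\mathbf Q_{s-1}(\cosh d)$ together with the Mehler-type representation
\[
\mathbf Q_{s-1}(\cosh d)=\frac1{\sqrt2}\int_d^\infty \frac{e^{-(s-\frac12)u}}{\sqrt{\cosh u-\cosh d}}\,du\,.
\]
We interchange the $r$- and $u$-integrations, which is legitimate for $s$ large by positivity. This yields $\int_0^\infty e^{-(s-\frac12)u}K_\vartheta(u)\,du$, where $K_\vartheta(u)=\int_{\{r:\,d_\vartheta(r)<u\}}(\cosh u-\cosh d_\vartheta(r))^{-1/2}\sinh r\,dr$ (times constants). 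This kernel is elementary: in the variable $t=\sinh^2 r$ it is an integral of $(a-bt)^{-1/2}(1+t)^{-1/2}$ type, hence expressible through an arctangent or arcsine. We then integrate by parts once in $u$, which produces the factor $\tfrac1{2s-1}$ and replaces $K_\vartheta$ by $K_\vartheta'$. Finally we set $y=e^{-u}$.

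\emph{Comparison and main obstacle.} It remains to check that $K_\vartheta'(u)$ corresponds, after the substitution $y=e^{-u}$, exactly to $\tfrac1\pi\,\frac{(1+y)\sin\vartheta}{1-2y\cos 2\vartheta+y^2}$ times the appropriate power of $y$. We also have to verify that the boundary terms of the integration by parts vanish, at $u=0$ by the behaviour of $K_\vartheta$ and at $u=\infty$ by the exponential decay for $s$ large. We expect this explicit evaluation of $K_\vartheta$ to be the main obstacle. Note that the rational function is $\tfrac{d}{du}$ of an arctangent of the form $\arctan\bigl(\tfrac{\tanh(u/2)}{\tan\vartheta}\bigr)$ up to constants, which indicates the target $K_\vartheta$ should be of this form. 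The concrete plan is to compute $K_\vartheta$, differentiate, and compare. If the constants do not match, we fall back on expanding $(1-x\cos^2\vartheta)^{-1/2}$ binomially and the series \eqref{eq:def_gs} termwise, and matching the coefficients of both sides as power series in $\cos^2\vartheta$ via Beta integrals; this is more laborious but mechanical.
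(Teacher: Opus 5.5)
Your proposal is correct, and it takes a genuinely different route from the paper. The paper does not evaluate the integral itself. It quotes Fischer's Lemma~2.3.2, which turns the left-hand side into $\frac{1}{2s-1}\frac{i}{2\pi}\sum_{m\ge 0}\frac{e^{i\vartheta(2m+1)}-e^{-i\vartheta(2m+1)}}{s+m}$. It then regroups this conditionally convergent series via $\sum_m e^{\mp 2im\vartheta}(s+m)^{-z}=q^{-z}\sum_{k=0}^{q-1}e^{\mp 2ik\vartheta}\zeta\bigl(z,\frac{s+k}{q}\bigr)$ (Hurwitz zeta) and takes a finite part at $z=1$.

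Your use of the series for $\psi$, with the constants dropping out because $\sum_j\sin((2j+1)\vartheta)=0$, is the elementary counterpart of that regrouping step. Your orbital-integral computation replaces the citation, and it closes with exactly the right constants, so you will not need the fallback. Concretely, $(1-x\cos^2\vartheta)^{-1/2}x^{-3/2}\,dx=-2\sin\vartheta\,\sinh r\,dr$. With $c=\cosh r$ the kernel is $K_\vartheta(u)=\frac{1}{\sqrt2\,\sin\vartheta}\arctan\bigl(\frac{\sinh(u/2)}{\sin\vartheta}\bigr)$, not the $\tanh/\tan$ arctangent you guessed, whose derivative differs by the non-constant factor $\cosh(u/2)/\cos\vartheta$. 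Hence the left-hand side equals $\frac{1}{2\pi}\int_0^\infty e^{-(s-\frac12)u}\arctan\bigl(\frac{\sinh(u/2)}{\sin\vartheta}\bigr)\,du$. One integration by parts (the boundary terms vanish since $K_\vartheta(0)=0$ and $K_\vartheta$ is bounded) followed by $y=e^{-u}$ gives exactly $\frac{1}{\pi(2s-1)}\int_0^1 y^{s-1}\frac{(1+y)\sin\vartheta}{1-2y\cos 2\vartheta+y^2}\,dy$.

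The paper's route is shorter. Yours is self-contained and also pins down the sign. The left-hand side and your integral are both positive for real $s>\frac12$. By contrast, the series displayed in the paper's proof equals $-\frac{1}{\pi(2s-1)}\sum_m\frac{\sin((2m+1)\vartheta)}{s+m}<0$. So that quoted intermediate formula carries the opposite sign to the statement, which is correct.

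Two of your side remarks need adjusting, though neither affects the argument.

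\emph{The pole at $s=\frac12$.} The pole of the right-hand side at $s=\frac12$ does not cancel: both sides behave like $\frac{1}{2(2s-1)}$ as $s\to\frac12^+$, since the integrand is $\sim\frac14 x^{s-3/2}$ near $x=0$. This is harmless, because the right-hand side is already holomorphic on $\Rea s>\frac12$, and that is all your continuation argument uses.

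\emph{Normalization of $\mathbf Q_{s-1}$.} Your formulas $R_\h=\frac{1}{2\pi}\mathbf Q_{s-1}(\cosh d)$ and the Mehler integral hold for the unnormalized $Q_{s-1}=\gammafunc(s)\,\mathbf Q_{s-1}$, not for Olver's $\mathbf Q_{s-1}$ used in the paper. The two slips cancel in the product, so the computation stands.
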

\begin{proof}
    By~\cite[Lemma 2.3.2]{Fischer},
    \begin{gather*}
        \int_0^1 \left(1-x \cos(\vartheta)^2\right)^{-\tfrac12} x^{-\tfrac32} g_s\left(\frac1x\right) dx =
        \frac{1}{2s-1} \frac{i}{2\pi} \sum_{m=0}^{\infty} \frac{ e^{i\vartheta(2m+1)} - e^{-i\vartheta(2m+1)} }{s+m}  \,.
    \end{gather*}
    Since the series does not converge absolutely, we cannot directly rearrange the order of summation.
    We write the conditionally convergent series as a Lerch zeta function,
    \begin{align*}
        \sum_{m=0}^\infty \frac{e^{-2 im\vartheta}}{s+m} = \FP_{z \to 1} \sum_{m=0}^\infty \frac{e^{-2im\vartheta}}{(s+m)^z}\,.
    \end{align*}
    Then we have that
    \begin{align*}
        \sum_{m=0}^\infty \frac{e^{-2 i m\vartheta}}{(s+m)^z} &= \sum_{n=0}^\infty \sum_{k=0}^{q-1} \frac{e^{-2i k\vartheta}}{(s+k+nq)^z}
    \end{align*}
    Since the sum converges absolutely, we may interchange the summation and obtain
    \begin{align*}
        \sum_{m=0}^\infty \frac{e^{-2 i m\vartheta}}{(s+m)^z} &= \sum_{k=0}^{q-1} e^{-2i k\vartheta} \sum_{n=0}^\infty (s+k+nq)^{-z} \\
        &= q^{-z} \sum_{k=0}^{q-1} e^{-2i k\vartheta} \zeta\left( z, \frac{s+k}q \right)\,,
    \end{align*}
    where $\zeta$ denotes the Hurwitz zeta function. From this we can calculate the finite part using \cite[1.10.(9)]{ErdelyiI} as
    \begin{align*}
        \FP_{z \to 1} \sum_{m=0}^\infty \frac{e^{-2im\vartheta}}{(s+m)^z} = - \frac{1}{q} \sum_{k=0}^{q-1} e^{-2i k\vartheta} \psi\left(\frac{s+k}q\right)\,.
    \end{align*}
\end{proof}

\begin{lemma}\label{lem:exp-sum}
    Let $m \in \Z$ and $q \in \N$. Then
    \begin{align*}
        \sum_{k=1}^{q-1} \frac{e^{\pm 2\pi i k m /q}}{1 - e^{\mp 2\pi i k/q}} = \frac12(q-1) - m + q \floor*{\frac{m}{q}}\,.
    \end{align*}
\end{lemma}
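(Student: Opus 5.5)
The plan is to reduce the identity to a single value of $m$ and then step $m$ up one unit at a time, using a first-order recurrence. Set $\omega\coloneqq e^{2\pi i/q}$ and write
\[
S_\pm(m)\coloneqq \sum_{k=1}^{q-1} \frac{\omega^{\pm km}}{1-\omega^{\mp k}}\,.
\]
Since $\omega^{-1}=\overline{\omega}$, we have $S_-(m)=\overline{S_+(m)}$. The right hand side $\tfrac12(q-1)-m+q\floor*{\tfrac mq}$ is real. Hence it suffices to treat the upper sign and then take complex conjugates. (For $q=1$ both sides vanish, so we may assume $q\ge 2$.)

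\textbf{Periodicity.} Both sides are $q$-periodic in $m$. For the left hand side this is clear from $\omega^q=1$. For the right hand side, write $m=aq+r$ with $r\in\{0,\ldots,q-1\}$. Then $\floor{m/q}=a$, so the right hand side equals $\tfrac12(q-1)-r$. It therefore suffices to prove
\[
S_+(r)=\tfrac12(q-1)-r \qquad\text{for } r\in\{0,\ldots,q-1\}\,.
\]

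\textbf{Base case $r=0$.} Here $S_+(0)=\sum_{k=1}^{q-1}(1-\omega^{-k})^{-1}$. The substitution $k\mapsto q-k$ permutes the index set and sends $\omega^{-k}$ to $\overline{\omega^{-k}}$, so $S_+(0)$ is real. For any $\zeta$ with $|\zeta|=1$ and $\zeta\neq1$ we have
\[
\frac{1}{1-\zeta}+\frac{1}{1-\overline\zeta}=\frac{2-\zeta-\overline\zeta}{|1-\zeta|^2}=1\,.
\]
Adding the sum to its reindexed copy therefore gives $2S_+(0)=q-1$, i.e.\ $S_+(0)=\tfrac12(q-1)$.

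\textbf{Recurrence.} Using $\omega^k-1=\omega^k(1-\omega^{-k})$, we get
\[
S_+(m+1)-S_+(m)=\sum_{k=1}^{q-1}\frac{\omega^{km}(\omega^k-1)}{1-\omega^{-k}}=\sum_{k=1}^{q-1}\omega^{k(m+1)}\,.
\]
By the orthogonality of roots of unity (the same identity used in the proof of Proposition~\ref{prop:multcone}), this equals $-1$ if $q\nmid m+1$ and $q-1$ if $q\mid m+1$. For $r\in\{0,\ldots,q-2\}$ the first case applies, and induction from the base case gives $S_+(r)=\tfrac12(q-1)-r$. The step from $r=q-1$ to $q\equiv 0$ is the second case, which raises the value by $q-1$ and so returns it to $\tfrac12(q-1)$; this is consistent with periodicity. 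Conjugating yields the lower sign.

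\textbf{Main obstacle.} The only delicate points are bookkeeping: checking that the numerator and denominator signs in the recurrence match, and checking the base case. I expect no real difficulty beyond these.
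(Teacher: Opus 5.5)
Your proof is correct and complete, but it takes a genuinely different route from the paper. The paper handles an arbitrary residue class in a single computation using an Abel-type regularization. It expands $\frac{1}{1-e^{\mp 2\pi i k/q}}=\lim_{t\to 1}\sum_{N\ge 0}t^N e^{\mp 2\pi i kN/q}$ and groups $N=\nu+nq$ by its residue $\nu$ modulo $q$. For fixed $t<1$, orthogonality of roots of unity then collapses the finite sum over $k$ to $\frac{1}{1-t^q}\bigl((q-1)t^{\tilde m}-\sum_{\nu\ne\tilde m}t^\nu\bigr)$, where $\tilde m$ is the residue of $m$. Finally, l'H\^opital's rule at $t=1$ gives $\frac1q\sum_{\nu=0}^{q-1}\nu-\tilde m$.

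You avoid any limit. The factorization $\omega^k-1=\omega^k(1-\omega^{-k})$ cancels the denominator in the first difference, so the whole difficulty sits in the single value $S_+(0)$. Your pairing $\frac{1}{1-\zeta}+\frac{1}{1-\overline\zeta}=1$ settles that value.

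Your route gives a purely finite, algebraic proof, and it explains the shape of the right-hand side. That side is the unique function with value $\frac12(q-1)$ at $m=0$ whose increment from $m$ to $m+1$ is $-1$, except that it is $q-1$ when $q\mid m+1$. So your periodicity reduction is optional: two-sided induction on $m\in\Z$ from $m=0$ suffices.

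The paper's route gives the closed form directly, without induction. It also treats both signs at once, because the character sum over $k$ does not depend on the sign. You instead need a separate conjugation step, which is valid because the right-hand side is real.
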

\begin{proof}
    Taking advantage of the geometric series, we obtain
    \begin{align*}
        \frac{e^{\pm 2\pi i k m /q}}{1 - e^{\mp 2\pi i k/q}} &= \lim_{t\to 1} \sum_{n=0}^\infty \sum_{\nu = 0}^{q-1} t^{\nu + nq} e^{\pm 2\pi i k (m - \nu - nq)/q} \\
        &= \lim_{t\to 1} \sum_{\nu=0}^{q-1} \frac{t^\nu}{1 - t^q} e^{\pm 2\pi i k (m - \nu)/q}\,.
    \end{align*}
    We note that
    \begin{align*}
        \sum_{k=1}^{q-1} e^{\pm 2\pi i k (m - \nu)/q} =
        \begin{cases}
            q-1, & m \equiv \nu \mod q\,, \\
            -1, & \text{otherwise}\,.
        \end{cases}
    \end{align*}
    Let $\tilde{m} \in \{0,\dotsc,q-1\}$ such that $m \equiv \tilde{m} \mod q$.
    Using l'H\^opital's rule, we obtain
    \begin{align*}
        \sum_{k=1}^{q-1} \frac{e^{2\pi ik m/q}}{1 - e^{-2\pi ik /q}} &= \lim_{t\to 1} \sum_{\nu=0}^{q-1} \frac{t^\nu}{1 - t^q} \sum_{k=1}^{q-1} e^{2\pi i k (m - \nu)/q}\\
        &= \lim_{t\to 1} \sum_{\substack{\nu=0\\\nu \not = \tilde{m}}}^{q-1} \frac{-t^\nu}{1 - t^q} + \lim_{t \to 1} \frac{(q-1) t^{\tilde{m}}}{1-t^q}\\
        &= \sum_{\substack{\nu=0\\\nu \not = \tilde{m}}}^{q-1} \frac{\nu}{q} - \left(1-\frac{1}{q}\right)\tilde{m} \\
        &= \sum_{\nu = 0}^{q-1} \frac{\nu}{q} - \tilde{m} \\
        &= \frac12(q-1) - \tilde{m} \,.
    \end{align*}
    Since \[\tilde{m} = m - q\left\lfloor\frac{m}{q}\right\rfloor\] this finishes the proof.
\end{proof}

With these preparations we can now determine the algebraically regularized resolvent trace $\Phi_{C_\theta^\wedge,\twist}$. Recall that $\theta = \pi/q$.
\begin{prop}\label{prop:phi-cone}
    For $s \in \C$ with $\Rea s > \tfrac12$, the algebraically regularized trace of the resolvent, $\Phi_{C_\theta^\wedge,\twist}$, is
    \begin{align*}
        \Phi_{C_\theta^\wedge,\twist}(s) = \frac{d}{ds} \log \left( \prod_{m=0}^{q-1} \gammafunc\left( \frac{s+m}q\right)^{\dim(V) \frac{2m+1}{q} -\alpha_m} \right)\,,
    \end{align*}
    where
    \begin{align*}
        \alpha_m &\coloneqq \sum_{\alpha\in\EVP\left( \twist(g_\theta) \right) } N_q(\alpha,m) \,.
    \end{align*}
\end{prop}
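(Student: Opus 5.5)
The plan is to compute $\Phi_{C_\theta^\wedge,\twist}(s)$ directly from its definition. We split by the eigenspaces of $\twist(g_\theta)$ and reduce to the scalar integral of Lemma~\ref{lem:fischer}. By~\eqref{eq:rescone}, the subtraction of $R_\h$ removes the $m=0$ term, so
\[
\varphi_{C_\theta^\wedge,\twist}(s,z) = (2s-1)\sum_{m=1}^{q-1}\tr\twist(g_\theta)^m\, R_\h(s;z,g_\theta^m z)\,.
\]
Each summand decays exponentially toward the boundary, since $d(z,g_\theta^mz)\to\infty$ as $r\to\infty$. Hence for $\Rea s>\tfrac12$ the $0$-integral is an honest integral over a fundamental domain, which is a sector of angle $2\theta$; equivalently it is $\tfrac1q$ times the integral over $\h$. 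Choose an eigenbasis with $\twist(g_\theta)\psi_j = e^{2\pi i\alpha_j/q}\psi_j$. Then $\tr\twist(g_\theta)^m = \sum_j e^{2\pi i m\alpha_j/q}$, and it suffices to treat one parameter $\alpha$ at a time.

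For a fixed $m\in\{1,\dots,q-1\}$, write $z$ in geodesic polar coordinates $(r,\phi)$ about $i$. Then $g_\theta^m$ is rotation by $2m\theta$, and $\sigma(z,g_\theta^m z)=\cosh^2(d/2)$ with $\sinh(d/2)=\sinh r\,|\sin(m\theta)|$, so $\sigma = 1+\sinh^2 r\,\sin^2(m\theta)$. Integrate over $\h$ with $d\mu=\sinh r\,dr\,d\phi$ (the $\phi$-integral gives $2\pi$). Substituting $x = 1/\sigma$ should bring the radial integral exactly into the form $\int_0^1(1-x\cos^2\vartheta)^{-1/2}x^{-3/2}g_s(1/x)\,dx$ with $\vartheta = m\theta = m\pi/q$, up to an explicit factor such as $1/|\sin\vartheta|$. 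Lemma~\ref{lem:fischer} then gives each rotation term as $\frac{1}{2s-1}$ times a combination of $\psi\bigl(\tfrac{s+j}{q}\bigr)$ with coefficients $e^{\pm i\vartheta(2j+1)}$ divided by $\sin\vartheta$. The factor $(2s-1)$ cancels.

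Next we resum over $m$. Writing $\frac{e^{i\vartheta(2j+1)}-e^{-i\vartheta(2j+1)}}{2i\sin\vartheta}$ with $\zeta = e^{2\pi i m/q}$, it becomes a geometric-type expression $\zeta^{j}\cdot\frac{\zeta^{1/2}\cdots}{\cdots}$, which can be recast as $\frac{\zeta^{j+1}}{\zeta-1}$-type terms, i.e.\ terms of the form $\frac{e^{2\pi i k m'/q}}{1-e^{-2\pi i k/q}}$. Summing against $e^{2\pi i m\alpha/q}$ over $m=1,\dots,q-1$ is exactly Lemma~\ref{lem:exp-sum}, applied with $m' = j+\alpha+1$ and $m'=j-\alpha$ or similar. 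This produces floor expressions $\tfrac12(q-1) - m' + q\lfloor m'/q\rfloor$. Combining the two signs, the coefficient of $\psi\bigl(\tfrac{s+j}{q}\bigr)$ for parameter $\alpha$ should reduce, using \eqref{eq:count-lattice-points}, to $\frac{2j+1}{q} - N_q(\alpha,j)$, times the chain-rule factor $\tfrac1q$. This matches $\frac{d}{ds}\log\gammafunc\bigl(\tfrac{s+j}{q}\bigr)^{(2j+1)/q - N_q(\alpha,j)}$. Summing over $\alpha\in\EVP(\twist(g_\theta))$ gives the exponent $\dim(V)\frac{2j+1}{q}-\alpha_j$.

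The main obstacle is this bookkeeping step. We must keep track of the normalization of the $1/q$ fundamental-domain factor, the sign conventions in $e^{\pm i\vartheta}$, and the two floor terms $\lfloor (j+\alpha)/q\rfloor$ and $\lfloor (j-\alpha)/q\rfloor$, which must assemble precisely into $N_q(\alpha,j)-1$ plus constants. Any additive constants independent of $j$ must be shown to vanish, using $\sum_j\psi$-coefficient identities. A useful check is the case $\alpha=0$ with trivial twist: by Proposition~\ref{prop:multcone} the total must be consistent with the resonance count, and the exponent must vanish identically when $q=1$.
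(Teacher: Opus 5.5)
Your proposal is correct and follows the paper's proof. You subtract the identity term, use $\int_\h = q\int_{C_\theta^\wedge}$, reduce each rotation term to the integral in Lemma~\ref{lem:fischer}, and resum over the rotation index with Lemma~\ref{lem:exp-sum}; your geodesic-polar computation with $\sigma = 1+\sinh^2 r\,\sin^2\vartheta$, $u=\cosh r$ and $x=1/\sigma$ gives exactly the paper's prefactor $\pi/\sin\vartheta$, and is tidier than the paper's upper-half-plane substitutions. In the last step the integer parameters in Lemma~\ref{lem:exp-sum} are $j+\alpha$ (upper sign) and $j-\alpha$ (lower sign), not $j+\alpha+1$; the two outputs add up to exactly $qN_q(\alpha,j)-(2j+1)$ by~\eqref{eq:count-lattice-points}, so there are no stray constants to eliminate.
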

\begin{proof}
    The proof is analogous to one in~\cite{Fischer}. In what follows we provide the details.
    We note that
    \begin{align*}
        \sigma(z,g_\theta z) = \frac{\sin(\theta)^2(1+x^2+y^2) + 4\cos(\theta)^2 y^2}{4y^2},
    \end{align*}
    where $z = x + iy$.
    For $t > 1$ we have
    \begin{align*}
        J(t) &\coloneqq\int\limits_{t-\sqrt{t^2 -1}}^{t+\sqrt{t^2-1}} \frac{dy}{y\sqrt{t^2 - 1 -(y-t)^2}}\\
        &= - \cot\left( \frac{1-ty}{\sqrt{-y^2 +2ty -1}}\right) \Big|_{y=t-\sqrt{t^2-1}}^{t+\sqrt{t^2-1}}\\
        &= \pi.
    \end{align*}
    Let $\vartheta \coloneqq k \pi/q = k \theta$.
    With the substitution
    \[
        (\tilde x, \tilde y) = \left( (1 + x^2 + y^2)\sin\left(\frac\theta y\right), y\right)
    \]
    we obtain
    \begin{align*}
        \int_\h R(s; z, g_\vartheta z) d\mu_\h(z) &= \int_0^\infty \int_{-\infty}^\infty g_s\left( \frac{(\sin\vartheta)^2(1+x^2+y^2) + 4(\cos\vartheta)^2 y^2}{4y^2} \right) \frac{dx \, dy}{y^2}\\
        &= (\sin\vartheta)^{-1} \int_0^\infty g_s\left(\frac{\tilde{x}^2 + 4 (\cos\vartheta)^2}4\right) J\left(\frac{\tilde{x}}{2\sin(\vartheta)}\right) d\tilde{x}\\
        &= \pi (\sin\vartheta)^{-1} \int_{2\sin(\vartheta)}^\infty g_s\left(\frac{\tilde{x}^2 + 4 (\cos\vartheta)^2}4\right) d\tilde{x}\,,
    \end{align*}
    where $g_s$ is given by \eqref{eq:def_gs}.
    Using now the substitution
    \[
    t = \frac{4}{\tilde{x}^2 + 4(\cos\vartheta)^2}
    \]
    Lemma~\ref{lem:fischer} yields
    \begin{align*}
        \int_\h R(s; & z, g_\vartheta z)  d\mu_\h(z)
        \\
        &= \frac{\pi}{\sin(\vartheta)} \int_0^1 t^{-\frac32} \left(1 - t(\cos\vartheta)^2 \right)^{-\frac12} g_s\left(\frac1t\right) dt
        \\
        &= \frac{1}{2s-1} \frac{i}{2 q \sin(\vartheta)}
        \sum_{m=0}^{q-1} \left( e^{i\vartheta(2m+1)} - e^{-i\vartheta(2m+1)} \right) \psi\left( \frac{s+m}q \right)\, .
    \end{align*}
    Hence,
    \begin{align*}
        (2s-1) \int_\h & R(s;z,g_\theta^k z) d\mu_\h(z) \\ & = \frac{i}{2 q \sin( k \pi/q)} \sum_{m=0}^{q-1} \left( e^{i\pi k(2m+1)/q} - e^{-i\pi k(2m+1)/ q}\right) \psi\left( \frac{s+m}q \right)\, .
    \end{align*}
    Using that \[\int_\h R(s; z, g_\theta^k z) d\mu_\h(z) = q \int_{C_\theta^\wedge} R(s; z, g_\theta^k z) \,d\mu_{C_\theta^\wedge}(z)\,,\] we obtain
    \begin{align*}
        \Phi_{C_\theta^\wedge,\twist}(s) &= \sum_{k=1}^{q-1} \frac{i}{2 q^2 \sin(\pi k /q )} \Tr\twist(g_\theta^k) \\
        &\phantom{= \sum_{k=1}} \cdot \sum_{m=0}^{q-1} \left( e^{i\pi k(2m+1)/q} - e^{-i\pi k(2m+1)/q}\right) \psi\left( \frac{s+m}q \right)\, .
    \end{align*}
    We now use that
    \[
    \frac1i 2\sin\left(k\pi/q\right) = e^{-i\pi k/q} - e^{i\pi k/q}
    \]
    to conclude
    \begin{align*}
        \frac{i}{2\sin(\pi k/q)} & \Tr\twist(g_\theta^k) \left( e^{i\pi k(2m+1)/q} - e^{-i\pi k (2m + 1)/q} \right) \\
        &= - \sum_{\alpha\in\EVP(\twist(g_\theta))} \frac{e^{2\pi i k (m + \alpha)/q}}{1 - e^{-2\pi i k/q}} + \frac{e^{-2\pi i k (m - \alpha)/q}}{1 - e^{2\pi i k/q}}\,.
    \end{align*}
    Consequently we have
    \begin{align*}
        \Phi_{C_\theta^\wedge,\twist}(s) = -\frac{1}{q^2}\sum_{m=0}^{q-1} \sum_{\alpha\in\EVP(\twist(g_\theta))} \sum_{k=1}^{q-1} \left( \frac{e^{2\pi i k (m + \alpha)/q}}{1 - e^{-2\pi i k/q}} + \frac{e^{-2\pi i k (m - \alpha)/q}}{1 - e^{2\pi i k/q}} \right) \psi\left( \frac{s+m}q \right)\, .
    \end{align*}
    Using Lemma~\ref{lem:exp-sum}, we can simplify the parenthesis to
    \begin{align*}
        \sum_{k=1}^{q-1} &\left( \frac{e^{2\pi i k (m + \alpha)/q}}{1 - e^{-2\pi i k/q}} + \frac{e^{-2\pi i k (m - \alpha)/q}}{1 - e^{2\pi i k/q}} \right) \\
        &= (q-1) -2m + q\left( \floor*{\frac{m+\alpha}{q}} + \floor*{\frac{m-\alpha}{q}} \right) \\
        &= -(2m + 1) + q\left( 1 + \floor*{\frac{m+\alpha}{q}} + \floor*{\frac{m-\alpha}{q}} \right) \\
        &= -(2m + 1) + q N_q(\alpha,m) \,.
    \end{align*}
    Therefore,
    \begin{align*}
        \Phi_{C_\theta^\wedge,\twist}(s)
        &= \sum_{m=0}^{q-1} \left( \dim(V) \frac{2m+1}{q} - \sum_{ \alpha\in\EVP(\twist(g_\theta)) } N_q(\alpha,m)\right) \frac1q\psi\left( \frac{s+m}q\right)\,.
    \end{align*}
    Since \[\frac1q\psi\left( \frac{s+m}q\right) = \frac{d}{ds} \log \gammafunc\left( \frac{s+m}q\right)\,,\]
    we conclude that
    \begin{align*}
        \Phi_{C_\theta^\wedge,\twist}(s)
        &= \frac{d}{ds} \log \left(\prod_{m=0}^{q-1} \gammafunc\left( \frac{s+m}q\right)^{\dim(V)\,(2m+1)/q - \alpha_m} \right) \,,
    \end{align*}
    where \[\alpha_m \coloneqq \sum_{\alpha\in\EVP(\twist(g_\theta))} N_q(\alpha,m)\,.\]
    This completes the proof.
\end{proof}

\subsection{Relation to the Selberg zeta function}\label{sec:trace-zeta}
In this section we will discuss the relation between the logarithmic derivative of the Selberg zeta function~$Z_{X,\twist}$ for~$(X,\twist)$ and the scattering theoretically regularized
resolvent~$\Upsilon_{X,\twist}$ from~\eqref{eq:def_Upsilon}. In particular we will prove that the logarithmic derivative of~$Z_{X,\twist}$ admits a meromorphic continuation to all of~$\C$, which we will denote by~$Z_{X,\twist}'/Z_{X,\twist}$, thus continuing to use the notation on the domain of its meromorphic continuation.

\begin{prop}\label{prop:upsilon-zeta}
    The logarithmic derivative of the Selberg zeta function~$Z_{X,\twist}$ and the scattering theoretically regularized trace of the resolvent~$\Upsilon_{X,\twist}$ satisfy, on all of~$\C$,
    \begin{align*}
        \Upsilon_{X,\twist}(s)
        &= \frac{Z_{X,\twist}'(s)}{Z_{X,\twist}(s)} + \frac{Z_{X,\twist}'(1-s)}{Z_{X,\twist}(1-s)} \\
        &\phantom{=} + \frac{d}{ds} \log \left( e^{p(s)} \frac{\gammafunc\left(\frac12-s\right)^{n_p}}{\gammafunc\left(s-\frac12\right)^{n_p}}
        \frac{G_{X^\wedge,\twist}(1-s)}{G_{X^\wedge,\twist}(s)} \frac{G_\infty(1-s)^{-\dim(V)\,\eulertop(X)}}{G_\infty(s)^{-\dim(V)\,\eulertop(X)}} \right)
    \end{align*}
    for some polynomial~$p$ of degree at most~$1$.
\end{prop}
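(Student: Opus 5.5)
Following~\cite{BJP}, I combine Lemma~\ref{lem:res-trace} with an explicit computation of $\Phi_{X,\twist}$ on $\Rea s>1$ via the group decomposition of~$\ResTwist$. For $\Rea s$ large I insert the series~\eqref{def:restwist} into the definition of $\varphi_{X,\twist}$ and split $\group\setminus\{\id\}$ by conjugacy classes into hyperbolic, parabolic and elliptic elements. This gives
\[
\Phi_{X,\twist}(s)=\Phi_{\hyp}(s)+\Phi_{\parab}(s)+\Phi_{\ellip}(s)\,.
\]
Unfolding each class over its centralizer reduces the contributions to the model cases treated above.

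\emph{Hyperbolic classes.} Each primitive class $[g]$ contributes exactly the quantity computed in Proposition~\ref{prop:Cell-zeta-deriv}, but only with the powers $g^k$, $k\geq1$, since $g^{-1}$ gives a distinct class in~$\group$. By the computation in~\eqref{eq:calculate-ZCell}, the sum over $[\group]_\hypprim$ equals $Z_{X,\twist}'/Z_{X,\twist}$. I then need that the $0$-integral of this absolutely convergent sum may be exchanged with the summation; this is where the finite part regularization enters.

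\emph{Parabolic classes.} Each primitive parabolic class contributes as in Lemma~\ref{lem:phi-cusp}, namely $m_{\twist(g)}(1)\Phi_{C_\infty}(s)$ plus a constant. Summing over classes gives
\[
n_p\Bigl(-\psi\bigl(s+\tfrac12\bigr)+\tfrac1{2s-1}\Bigr)+\text{const}\,.
\]
Since $\psi(s+\tfrac12)-\tfrac1{s-1/2}=\psi(s-\tfrac12)$, this is $-n_p\,\frac{d}{ds}\log\Gamma(s-\tfrac12)$ plus a constant.

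\emph{Elliptic classes.} Each primitive elliptic class of order $q$ contributes as in Proposition~\ref{prop:phi-cone}, i.e.\ $-\frac{d}{ds}\log$ of $\prod_{\alpha}\prod_m\Gamma\bigl(\tfrac{s+m}{q}\bigr)^{N_q(\alpha,m)-\frac{2m+1}q}$. Summing over classes gives $-\frac{d}{ds}\log\Gamma_{X^\wedge,\twist}$.

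Adding up, for $\Rea s>1$,
\[
\Phi_{X,\twist}(s)=\frac{Z'_{X,\twist}}{Z_{X,\twist}}(s)-n_p\psi\bigl(s-\tfrac12\bigr)-\frac{\Gamma_{X^\wedge,\twist}'}{\Gamma_{X^\wedge,\twist}}(s)+c\,.
\]
Lemma~\ref{lem:Phi_cont} gives the meromorphic continuation of the left side, hence of $Z'_{X,\twist}/Z_{X,\twist}$ to all of~$\C$.

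Next I substitute this identity at $s$ and at $1-s$ into~\eqref{eq:Phi_Ups}. The term $(s-\tfrac12)\cot(\pi s)\dim(V)\zVol(X)$ is converted using Proposition~\ref{prop:euler-characteristic}, which gives $\zVol(X)=-2\pi\eulerorb(X)$, together with the Barnes identity from~\cite{Borthwick_book},
\[
\frac{d}{ds}\log\frac{G_\infty(s)}{G_\infty(1-s)}=-2\pi\bigl(s-\tfrac12\bigr)\cot(\pi s)+\text{linear}\,.
\]
This yields $\frac{d}{ds}\log\bigl(G_\infty(1-s)/G_\infty(s)\bigr)^{-\dim(V)\eulerorb(X)}$ up to a polynomial of degree at most~$1$. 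Identity~\eqref{eq:barnes_orbifold} then trades $\Gamma_{X^\wedge,\twist}$ together with $G_\infty^{-\dim(V)\eulerorb(X)}$ for $G_{X^\wedge,\twist}\,G_\infty^{-\dim(V)\eulertop(X)}$, at $s$ and at $1-s$. The gamma terms combine into $\frac{d}{ds}\log\bigl(\Gamma(\tfrac12-s)/\Gamma(s-\tfrac12)\bigr)^{n_p}$, and the constants and linear terms are absorbed into~$p'$. This gives the claim on $\Rea s>1$, and by meromorphy on all of~$\C$.

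The main obstacle is the first step: justifying the class-by-class decomposition of the finite-part integral. The hyperbolic, parabolic and elliptic sums have to be integrated over $X_\eps$ and their finite parts taken term by term. The delicate pieces are the parabolic terms, whose non-convergent behaviour in cusps produces the $\log\eps$ terms, and the mixing of fundamental domains near funnels. I would handle this as in~\cite{BJP,Borthwick_book}: use a fundamental domain adapted to the decomposition $X_{\interior}\sqcup X_f\sqcup X_c$, and bound hyperbolic sums by the convergent series from~\eqref{def:zeta}. The twist adds only bounded factors $|\tr\twist(g)|\le\dim V$, and the eigenvalue-$\neq1$ parabolic parts converge by Lemma~\ref{lem:phi-cusp}.
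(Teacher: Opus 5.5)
Your proposal follows the paper's proof. You decompose $\Phi_{X,\twist}$ class by class into the model traces of Proposition~\ref{prop:Cell-zeta-deriv}, Lemma~\ref{lem:phi-cusp} and Proposition~\ref{prop:phi-cone} (this is Lemma~\ref{lem:decomp-phi}), insert this into Lemma~\ref{lem:res-trace}, and convert the $\zVol$ term via Proposition~\ref{prop:euler-characteristic} and~\eqref{eq:barnes}. The only difference is bookkeeping. You keep $\zVol(X)=-2\pi\eulerorb(X)$ whole and then invoke~\eqref{eq:barnes_orbifold}. The paper instead splits $\zVol(X)$ into its $\eulertop(X)$ part and its $\sum_j(1-1/q_j)$ part, and assembles each $G_{q,\alpha}$ directly in Lemma~\ref{lem:ups-bullet}. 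These are the same computation.

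Two corrections are needed.

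First, in the parabolic step you treated $\tfrac{1}{2s-1}$ as if it were $\tfrac{1}{s-1/2}$. In fact
\[
-\psi\bigl(s+\tfrac12\bigr)+\tfrac{1}{2s-1}=-\psi\bigl(s-\tfrac12\bigr)-\tfrac{1}{2s-1}\,.
\]
You can see the discrepancy from the residues at $s=\tfrac12$, which are $\tfrac12$ for the left side and $1$ for $-\psi(s-\tfrac12)$. So your displayed formula for $\Phi_{X,\twist}$ on $\Rea s>1$ is missing the non-constant term $-n_p/(2s-1)$. This does not affect the proposition, because $\tfrac{1}{2s-1}$ is odd under $s\mapsto 1-s$ and drops out of $\Phi_{X,\twist}(s)+\Phi_{X,\twist}(1-s)$. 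The uncorrected formula would, however, give the wrong residue of $Z_{X,\twist}'/Z_{X,\twist}$ at $s=\tfrac12$, namely $m_{X,\twist}(\tfrac12)-\tfrac32 n_p$ instead of $m_{X,\twist}(\tfrac12)-n_p$ (compare Proposition~\ref{prop:logZeta-poles}).

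Second,~\eqref{eq:barnes} holds exactly, since $\frac{d}{ds}\log G_\infty(s)=(2s-1)(\psi(s)-1)$. Your hedge ``$+\text{linear}$'' is therefore unnecessary. Had a genuinely linear term appeared there, absorbing it into $p'$ would make $p$ quadratic, contradicting the degree bound you are proving.

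Finally, $g^{-1}$ need not lie in a different conjugacy class from $g$; see Example~\ref{EX:elementary}. Unfolding over the centralizer $\langle g\rangle$ still yields exactly the powers $k\geq 1$ for each class, so your hyperbolic step stands.
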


We outsource some intermediate results for the proof of Proposition~\ref{prop:upsilon-zeta} into the subsequent lemmas.
For $[g] \in [\group]_\ellip$ and $m\in\N_0$ we set
\begin{align*}
    \alpha_g(m) \coloneqq \sum_{\alpha\in\EVP(\twist(g))} N_{\deg(g)}(\alpha,m)\,.
\end{align*}
Let $\delta$ be the Hausdorff dimension of the limit set of~$\group$. Further let $R_\ellprim$ be a set of representatives for~$[\group]_\ellprim$, and likewise $R_\hypprim$ a set of representatives for~$[\group]_\hypprim$, and $R_\parprim$ a set of representatives for~$[\group]_\parprim$.

\begin{lemma}\label{lem:decomp-phi}
    For $\Rea s > \delta$ we have
    \begin{align*}
        \Phi_{X,\twist}(s) &= \frac{Z_{X,\twist}'(s)}{Z_{X,\twist}(s)} + n_p \Phi_{C_\infty}(s) - I_p \\
        &\phantom{=} + \frac{d}{ds} \log
        \prod_{[g] \in [\group]_\ellprim} \prod_{m=0}^{\deg(g)-1} \gammafunc\left(\frac{s+m}{\deg(g)}\right)^{\frac{2m+1}{\deg(g)}\dim(V) - \alpha_g(m)}\,,
    \end{align*}
    where $I_p$ is a constant only depending on the eigenvalues associated to the parabolic elements in $\group$.
    In particular, the logarithmic derivative of $Z_{X,\twist}$ admits a meromorphic continuation to all of~$\C$.
\end{lemma}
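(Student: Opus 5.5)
The plan is to unfold $\varphi_{X,\twist}$ as a sum over $\group$ and regroup it by conjugacy classes, in the spirit of~\cite{BJP, Patterson_zeta}. For $\Rea s>\delta$ the series~\eqref{def:restwist} converges absolutely and locally uniformly off the diagonal. Hence, lifting to a fundamental domain $\mathcal F$,
\[
\varphi_{X,\twist}(s,z) = (2s-1)\sum_{h\in\group\setminus\{\id\}} \tr\twist(h)\, R_\h(s;z,hz)\,.
\]
The non-trivial elements split into hyperbolic, parabolic and elliptic ones. For each class I will use the standard unfolding
\[
\sum_{h\in[g]}\int_{\mathcal F} f(z,hz)\,d\mu = \int_{\langle g_0\rangle\backslash\h} f(z,gz)\,d\mu\,,
\]
where $g_0$ is the primitive root generating the centralizer of~$g$. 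This turns the $0$-integral over~$X$ into a sum, over $[\group]_\hypprim$, $[\group]_\parprim$ and $[\group]_\ellprim$, of the $0$-integrals that define $\Phi$ for the cyclic model cases $C_\ell$, $C_\infty$ and $C_\theta^\wedge$ (with the identity term removed). One point needs care here: the $0$-integral (finite part in $\rho$) must commute with the unfolding. For hyperbolic and elliptic classes the contributions are absolutely integrable for $\Rea s>\delta$, so no regularization is needed and this is harmless. For parabolic classes I plan to argue as in~\cite[Ch.~10]{Borthwick_book}: the cutoff $X_\eps$ corresponds, in each cusp, to the cutoff $y\le\eps^{-1}$ in the model cusp up to an error that vanishes in the finite part.

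Next I will identify each group of terms using the model computations of Section~\ref{sec:relationmodels}.
\begin{itemize}
\item For a primitive hyperbolic $g$, the powers $g^k$, $k\ne0$, give, by the computation in Proposition~\ref{prop:Cell-zeta-deriv}, the term $\ell\sum_{k\ge1}\tr\twist(g^k)e^{-sk\ell}/(1-e^{-k\ell})$, where only positive powers occur because $[g]$ and $[g^{-1}]$ are distinct classes. Summed over $[\group]_\hypprim$ and expanded via Jacobi's formula, this is exactly $Z_{X,\twist}'/Z_{X,\twist}$ in the domain of absolute convergence.
\item For each primitive parabolic class, Lemma~\ref{lem:phi-cusp} gives $m_{\twist(g)}(1)\Phi_{C_\infty}(s)$ plus a constant built from $\log(1-\lambda^{\pm1})$ over the eigenvalues $\lambda\ne1$. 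Summing over classes gives $n_p\Phi_{C_\infty}(s)-I_p$ by~\eqref{eq:n_p}.
\item For each primitive elliptic class, Proposition~\ref{prop:phi-cone}, applied with $\twist$ restricted to $\langle g\rangle$ after conjugating $g$ to $g_\theta$, yields precisely the logarithmic derivative of the stated gamma product.
\end{itemize}

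Finally, I will use the resulting identity for the meromorphic continuation. By Lemma~\ref{lem:Phi_cont} the function $\Phi_{X,\twist}$ is meromorphic on~$\C$, and $\Phi_{C_\infty}$ and the gamma terms are explicitly meromorphic. Solving the identity for $Z'_{X,\twist}/Z_{X,\twist}$ therefore continues it meromorphically to~$\C$.

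The main obstacle I expect is the interchange of the regularized integral with the sum over classes. In particular, I have to check that the finite-part prescription on $X_\eps$ matches the one used in the model cusp and cone computations, including a consistent treatment of the elliptic elements whose fixed points are at finite distance.
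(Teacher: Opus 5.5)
Your proposal is correct and follows the paper's proof. Both split $\group\setminus\{\id\}$ into primitive hyperbolic, parabolic and elliptic classes, unfold each class to the cyclic quotient $\ang{g}\backslash\h$, and apply Proposition~\ref{prop:Cell-zeta-deriv}, Lemma~\ref{lem:phi-cusp} and Proposition~\ref{prop:phi-cone}, with the meromorphic continuation coming from Lemma~\ref{lem:Phi_cont}; you are in fact more careful than the paper about commuting the finite part with the unfolding. One small correction: keeping only $k\ge1$ for each $[g]\in[\group]_\hypprim$ is right, but the reason is that the classes $[g^k]$ with $k\ge1$ and $[g]\in[\group]_\hypprim$ enumerate $[\group]_\hyp$ without repetition, not that $[g]\neq[g^{-1}]$, which can fail (in Example~\ref{EX:elementary}, $S h_\ell S = h_\ell^{-1}$).
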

\begin{proof}
    We recall that the integral kernel of the
    resolvent of the twisted Laplacian~$\LapTwist$ is
    \[
     \ResTwist(s;z,w) = \sum_{g\in\group} \twist(g) R_\h(s;z,g.w)\,.
    \]
    We have
    \begin{align*}
        \ResTwist(s;z,w)  &= \id_V R_\h(s;z,w) \\
        & + \sum_{[g]\in [\group]_\hypprim} \sum_{h \in \ang{g}\backslash\group} \sum_{k\in\N}
        \twist(h^{-1}g^k h) R_\h(s;z,h^{-1}g^k h  w)
        \\
        &  + \sum_{[g]\in [\group]_\parprim} \sum_{h \in
        \ang{g}\backslash\group} \sum_{k \in\N} \twist(h^{-1}g^kh)
        R_\h(s;z,h^{-1}g^k h  w)\\
        &  + \sum_{[g]\in [\group]_\ellprim} \sum_{h \in
        \ang{g}\backslash\group} \sum_{k = 1}^{\deg(g) - 1}
        \twist(h^{-1}g^kh) R_\h(s;z,h^{-1}g^k h  w)\,.
    \end{align*}
    We note that $\tr \twist(h^{-1} g^k h) = \tr \twist(g^k)$ and
    \begin{align*}
        \sum_{h \in \ang{g}\backslash\group} \zInt{X} R_\h(s;z,h^{-1}g^k h  z) \,d\mu_X(z)
        = \zInt{\ang{g} \backslash \h} R_\h(s;z,g^k z)\,d\mu_X(z)\,.
    \end{align*}
    We obtain that
    \begin{align}\label{eq:Phi-primitive}
        \Phi_{X,\twist}(s) = \sum_{g \in R_\hypprim} \Phi_{\ang{g}\backslash \h,\twist|_{\ang{g}}}(s)
        + \sum_{g \in R_\parprim} \Phi_{\ang{g}\backslash \h,\twist|_{\ang{g}}}(s)
        + \sum_{g \in R_\ellprim} \Phi_{\ang{g}\backslash \h,\twist|_{\ang{g}}}(s)\,.
    \end{align}
    By Proposition~\ref{prop:Cell-zeta-deriv} we have that for $g \in R_\hypprim$,
    \begin{align*}
        \Phi_{\ang{g}\backslash \h,\twist|_{\ang{g}}}(s) = \frac{Z_{\ang{g}\backslash \h,\twist|_{\ang{g}}}'(s)}{Z_{\ang{g}\backslash \h,\twist|_{\ang{g}}}(s)}\,.
    \end{align*}
    For $\Rea s \gtrsim 0$, we have that
    \begin{align*}
        Z_{X,\twist}(s) &= \prod_{[g] \in [\group]_\hypprim} \prod_{k=0}^\infty \det\left( 1 - \twist(g) N(g)^{-(s+k)} \right) \\
        &= \prod_{g \in R_\hypprim} Z_{\ang{g}\backslash \h,\twist|_{\ang{g}}}(s)
    \end{align*}
    and therefore, we obtain
    \begin{align}\label{eq:phi-hypprim}
        \sum_{g \in R_\hypprim} \Phi_{\ang{g}\backslash \h,\twist|_{\ang{g}}}(s) = \frac{Z_{X,\twist}'(s)}{Z_{X,\twist}(s)}\,.
    \end{align}
    By Lemma~\ref{lem:phi-cusp}, we have that
    \begin{align}\label{eq:phi-parprim}
        \sum_{g \in R_\parprim} \Phi_{\ang{g}\backslash \h,\twist|_{\ang{g}}}(s) = n_p \Phi_{C_\infty}(s) - I_p
    \end{align}
for some constant~$I_p$ depending only on the eigenvalues of~$\twist$ at parabolic elements in~$\group$.
    Proposition~\ref{prop:phi-cone} shows that for $[g]\in [\group]_\ellprim$,
    \begin{align}\label{eq:phi-ellprim}
        \Phi_{\ang{g}\backslash\h, \twist|_{\ang{g}}}(s)
        = \frac{d}{ds} \log \prod_{\alpha\in\EVP(\twist(g))} \prod_{m=0}^{q-1} \gammafunc\left( \frac{s+m}q \right)^{(2m+1)/q - N_q(\alpha,m)}
    \end{align}
    where $q \coloneqq \deg(g)$.
\end{proof}

To keep track of all the different contributions to $\Upsilon_{X,\twist}$, we define the following quantities on~$\C$:
\begin{align*}
    \Upsilon_{X,\twist}^{\hyp}(s) &\coloneqq \sum_{g \in R_{\hypprim}} \Phi_{\ang{g}\backslash \h, \twist|_{\ang{g}}}(s) + \Phi_{\ang{g}\backslash \h, \twist|_{\ang{g}}}(1-s)\,,
    \\
    \Upsilon_{X,\twist}^{\parab}(s) &\coloneqq \sum_{g \in R_{\parprim}} \Phi_{\ang{g}\backslash \h, \twist|_{\ang{g}}}(s) + \Phi_{\ang{g}\backslash \h, \twist|_{\ang{g}}}(1-s)\,,
    \\
    \Upsilon_{X,\twist}^{\ellip}(s) &\coloneqq (2s-1) \pi \cot(\pi s) \dim(V) \sum_{w\in\sing(X)} \left(1 - \frac1{\#\Iso(w)}\right)
    \\
    &\ \phantom{=} + \sum_{g \in R_{\ellprim}} \Phi_{\ang{g}\backslash \h, \twist|_{\ang{g}}}(s) + \Phi_{\ang{g}\backslash \h, \twist|_{\ang{g}}}(1-s)\,,
    \\
    \Upsilon_{X,\twist}^{\interior}(s) &\coloneqq -(2s -1) \pi \cot(\pi s) \dim(V)\,\eulertop(X)\,.
\end{align*}
\begin{lemma}\label{lem:decomp-ups}
    We have, for all $s\in\C$,
    \begin{align*}
        \Upsilon_{X,\twist}(s) = \Upsilon_{X,\twist}^{\hyp}(s) + \Upsilon_{X,\twist}^{\parab}(s) + \Upsilon_{X,\twist}^{\ellip}(s) + \Upsilon_{X,\twist}^{\interior}(s)\,.
    \end{align*}
\end{lemma}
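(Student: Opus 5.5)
The plan is to start from Lemma~\ref{lem:res-trace}, which for $s \notin \tfrac12\Z \cup \ResSet_{X,\twist} \cup (1-\ResSet_{X,\twist})$ gives
\[
\Upsilon_{X,\twist}(s) = \Phi_{X,\twist}(s) + \Phi_{X,\twist}(1-s) + \left(s-\tfrac12\right)\cot(\pi s)\,\dim(V)\,\zVol(X)\,.
\]
We then substitute the group-theoretic decomposition \eqref{eq:Phi-primitive} from the proof of Lemma~\ref{lem:decomp-phi} for both $\Phi_{X,\twist}(s)$ and $\Phi_{X,\twist}(1-s)$. This requires a continuation argument. Identity~\eqref{eq:Phi-primitive} is first established for $\Rea s > \delta$, where the sum over~$\group$ converges absolutely and the 0-integral may be interchanged with the sums over conjugacy classes. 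Each summand, however, is a meromorphic function on all of~$\C$:
\begin{itemize}
\item for hyperbolic classes, by Proposition~\ref{prop:Cell-zeta-deriv} and the fact that $Z_{C_\ell,\twist}$ is entire;
\item for parabolic classes, by Lemma~\ref{lem:phi-cusp};
\item for elliptic classes, by Proposition~\ref{prop:phi-cone}, whose right-hand side is a finite combination of digamma functions.
\end{itemize}
There are only finitely many elliptic and parabolic classes, and the hyperbolic sum equals $Z_{X,\twist}'/Z_{X,\twist}$, which continues meromorphically by Lemma~\ref{lem:decomp-phi}. Hence the right-hand side of~\eqref{eq:Phi-primitive} continues meromorphically. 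By uniqueness of continuation, \eqref{eq:Phi-primitive} holds on all of~$\C$ once the hyperbolic sum is read as its continuation. Evaluating at $s$ and at $1-s$ and adding, the hyperbolic contributions give exactly $\Upsilon^{\hyp}_{X,\twist}$, the parabolic ones give $\Upsilon^{\parab}_{X,\twist}$, and the elliptic ones give the $\Phi$-part of $\Upsilon^{\ellip}_{X,\twist}$.

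It remains to distribute the $\zVol$ term. By Proposition~\ref{prop:euler-characteristic},
\[
\left(s-\tfrac12\right)\cot(\pi s)\,\dim(V)\,\zVol(X) = -(2s-1)\pi\cot(\pi s)\,\dim(V)\Bigl(\eulertop(X) - \sum_{w\in\sing(X)}\bigl(1-\tfrac{1}{\#\Iso(w)}\bigr)\Bigr).
\]
Indeed, $(s-\tfrac12)\cdot(-2\pi) = -(2s-1)\pi$. The $\eulertop$-part is precisely $\Upsilon^{\interior}_{X,\twist}$. The sum over orbifold points contributes $+(2s-1)\pi\cot(\pi s)\dim(V)\sum_w(1-1/\#\Iso(w))$, which is exactly the first summand in the definition of $\Upsilon^{\ellip}_{X,\twist}$. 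Here we use the bijection $\sing(X)\leftrightarrow[\group]_\ellprim$ from Section~\ref{sec:elliporb}, under which $\#\Iso(w)=\deg(g)$; strictly, this identification is not needed, since the definition already sums over $\sing(X)$. Adding all terms yields the claimed identity off the exceptional discrete set. Since both sides are meromorphic on~$\C$, the identity holds on all of~$\C$.

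The main obstacle is the continuation step. The decomposition~\eqref{eq:Phi-primitive} is only justified where the Poincaré-type series converges, whereas $\Phi_{X,\twist}(1-s)$ is needed for $\Rea(1-s)$ large. This is handled as described: each piece is continued separately, and the identity is then propagated by uniqueness of meromorphic continuation. Since $\Upsilon_{X,\twist}$ is defined via $\zTr$ directly for all admissible $s$, no further regularization issue arises. A secondary check is that the $0$-integral commutes with the finite sums over parabolic and elliptic classes. This holds because the finite-part regularization is linear, and because each parabolic piece has the polyhomogeneous expansion used in Lemma~\ref{lem:phi-cusp}.
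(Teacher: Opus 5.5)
Your proposal is correct and takes the same route as the paper: it combines \eqref{eq:Phi_Ups} with \eqref{eq:Phi-primitive}, evaluated at $s$ and at $1-s$, and splits the zero-volume term via Proposition~\ref{prop:euler-characteristic}, where your signs and constants check out. Your continuation argument, which reads the hyperbolic sum in $\Upsilon^{\hyp}_{X,\twist}$ as the meromorphic continuation of $Z_{X,\twist}'/Z_{X,\twist}$, usefully spells out a step that the paper's one-line proof leaves implicit.
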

\begin{proof}
    This follows directly from \eqref{eq:Phi_Ups} and \eqref{eq:Phi-primitive} together with the calculation of the zero-volume in Proposition~\ref{prop:euler-characteristic}.
\end{proof}

The next step is to relate the different summands of $\Upsilon_{X,\twist}$ to the Selberg zeta function and the factors in the divisor formula in Theorem~\ref{thm:selberg-factorization}, as provided by the following lemma.

\begin{lemma}\label{lem:ups-bullet}
    On all of~$\C$ we have
    \begin{align*}
        \Upsilon_{X,\twist}^{\hyp}(s) &= \frac{Z_{X,\twist}'(s)}{Z_{X,\twist}(s)} + \frac{Z_{X,\twist}'(1-s)}{Z_{X,\twist}(1-s)} \,,\\
        \Upsilon_{X,\twist}^{\parab}(s) &= -\frac{d}{ds} \log \left( e^{\log(4)sn_p - 2sI_p} \frac{\gammafunc\left(s-\frac12\right)^{n_p}}{\gammafunc\left(\frac12-s\right)^{n_p}} \right)\,,\\
        \Upsilon_{X,\twist}^{\interior}(s) &= \frac{d}{ds} \log \left( \frac{G_\infty(1-s)}{G_\infty(s)} \right)^{-\dim(V)\,\eulertop(X)}\,, \\
        \Upsilon_{X,\twist}^{\ellip}(s) &= \frac{d}{ds} \log \frac{G_{X^\wedge,\twist}(1-s)}{G_{X^\wedge,\twist}(s)}\,,
    \end{align*}
    where $I_p$ is the constant from Lemma~\ref{lem:decomp-phi}.
\end{lemma}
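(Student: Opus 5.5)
The plan is to verify the four identities one at a time. Each reduces to a model computation from Section~\ref{sec:relationmodels}, together with elementary identities for the special functions. Every left-hand side is meromorphic on~$\C$: $\Upsilon^\hyp$ is meromorphic by Lemma~\ref{lem:decomp-phi}, and the other three are explicit. Hence it suffices to prove each identity on a region where all objects are defined, such as a vertical strip minus a discrete set, and then to extend by uniqueness of meromorphic continuation.

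\emph{Hyperbolic part.} By \eqref{eq:phi-hypprim}, $\sum_{g\in R_\hypprim}\Phi_{\ang{g}\backslash\h,\twist|_{\ang{g}}}(s)=Z_{X,\twist}'(s)/Z_{X,\twist}(s)$ for $\Rea s>\delta$. By Lemma~\ref{lem:decomp-phi} this sum continues meromorphically to all of~$\C$. Evaluating the continued identity at $s$ and at $1-s$ and adding the two gives the first formula.

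\emph{Parabolic part.} By \eqref{eq:phi-parprim}, the sum equals $n_p\Phi_{C_\infty}(s)-I_p$. We add the same expression at $1-s$, inserting the formula for $\Phi_{C_\infty}$ from Lemma~\ref{lem:phi-cusp}. The terms $\frac1{2s-1}+\frac1{1-2s}$ cancel, which leaves
\[
n_p\bigl(-2\log 2-\psi(s+\tfrac12)-\psi(\tfrac32-s)\bigr)-2I_p .
\]
Two identities turn this into the claimed form: $\psi(s+\tfrac12)=\psi(s-\tfrac12)+\frac1{s-1/2}$ and $\psi(\tfrac32-s)=\psi(\tfrac12-s)+\frac1{1/2-s}$. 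The rational terms cancel, giving $-n_p\bigl(\psi(s-\tfrac12)+\psi(\tfrac12-s)\bigr)-2n_p\log 2-2I_p$. This is exactly $-\frac{d}{ds}\log\bigl(e^{\log(4)sn_p-2sI_p}\gammafunc(s-\tfrac12)^{n_p}/\gammafunc(\tfrac12-s)^{n_p}\bigr)$.

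\emph{Interior part.} This part needs the standard functional relation for $G_\infty$, as in~\cite{BJP,Borthwick_book}:
\[
\frac{d}{ds}\log\frac{G_\infty(1-s)}{G_\infty(s)}=(2s-1)\pi\cot(\pi s),
\]
which follows from $\frac{d}{ds}\log\bigl(G(1-s)G(2-s)/(G(s)G(s+1))\bigr)$ and the reflection formulas for the Barnes $G$-function. Multiplying by $-\dim(V)\eulertop(X)$ gives $\Upsilon^\interior$ directly.

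\emph{Elliptic part.} This part is the main obstacle, since it is bookkeeping with the roots in~\eqref{eq:def-barnes-cone}. We first write $\sum_{w\in\sing(X)}(1-1/\#\Iso(w))=\sum_{[g]\in[\group]_\ellprim}(1-1/\deg(g))$. The claim then splits into one identity for each $[g]$ and each $\alpha\in\EVP(\twist(g))$. With $q=\deg(g)$, and noting that the $\dim(V)$ factor is exactly $\#\EVP(\twist(g))$, this identity reads
\begin{align*}
&(1-\tfrac1q)(2s-1)\pi\cot(\pi s)+\sum_{m=0}^{q-1}\Bigl(\tfrac{2m+1}{q}-N_q(\alpha,m)\Bigr)\tfrac1q\Bigl(\psi\bigl(\tfrac{s+m}q\bigr)+\psi\bigl(\tfrac{1-s+m}q\bigr)\Bigr)\\
&\quad=\frac{d}{ds}\log\frac{G_{q,\alpha}(1-s)}{G_{q,\alpha}(s)} .
\end{align*}
Here the left side uses \eqref{eq:phi-ellprim} at $s$ and at $1-s$, and the sign of the $1-s$ term comes from the chain rule. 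The $\cot$ term matches the factor $G_\infty^{1-1/q}$ by the interior computation. Note that logarithmic derivatives of the multivalued root are well defined, so no branch issues arise.

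What remains is the $\gammafunc$-factor, where one must check
\[
\frac{d}{ds}\log\frac{\gammafunc((1-s+m)/q)}{\gammafunc((s+m)/q)}=-\tfrac1q\Bigl(\psi\bigl(\tfrac{1-s+m}q\bigr)+\psi\bigl(\tfrac{s+m}q\bigr)\Bigr).
\]
This shows the exponents $N_q(\alpha,m)-\frac{2m+1}q$ in $G_{q,\alpha}$ produce precisely the needed terms with the correct sign. I expect the sign conventions to be the delicate point, namely exponent $N_q-\frac{2m+1}q$ versus $\frac{2m+1}q-N_q$, and I would double-check them against Example~\ref{EX:elementary}.
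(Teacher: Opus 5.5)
Your argument is the paper's. You split into the model contributions \eqref{eq:phi-hypprim}, \eqref{eq:phi-parprim}, \eqref{eq:phi-ellprim} and use \eqref{eq:barnes} for the interior and elliptic pieces. For the cusp you rederive \cite[Corollary~2.5]{BJP} from Lemma~\ref{lem:phi-cusp} via $\psi(z+1)=\psi(z)+1/z$ instead of citing it, which amounts to the same thing. Your elliptic identity is correct with the signs exactly as you wrote them. Read through the ratio $G_{q,\alpha}(1-s)/G_{q,\alpha}(s)$, the exponent $N_q(\alpha,m)-\frac{2m+1}{q}$ produces $+\bigl(\frac{2m+1}{q}-N_q(\alpha,m)\bigr)\frac1q\bigl(\psi(\frac{s+m}{q})+\psi(\frac{1-s+m}{q})\bigr)$. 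So the check against Example~\ref{EX:elementary} is unnecessary.

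The one false step is the final ``this is exactly'' in the parabolic part. Your computation correctly gives
\[
\Upsilon^{\parab}_{X,\twist}(s) = -n_p\Bigl(\psi\bigl(s-\tfrac12\bigr)+\psi\bigl(\tfrac12-s\bigr)\Bigr) - n_p\log 4 - 2I_p\,.
\]
Differentiating the exponent $\log(4)sn_p-2sI_p$ and applying the outer minus sign gives the constant $-n_p\log 4+2I_p$ instead. The two sides therefore differ by $4I_p$, and $I_p$ need not vanish. By Lemma~\ref{lem:phi-cusp}, $I_p$ is built from the terms $\log(1-\lambda)+\log(1-\bar\lambda)=2\log\lvert 1-\lambda\rvert$ over eigenvalues $\lambda\neq1$ of $\twist$ at primitive parabolic elements. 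For $\dim V=1$ with $\twist(p)\in\{\pm1\}$ at each primitive parabolic $p$, this is a nonzero multiple of $\log 2$ as soon as some $\twist(p)=-1$.

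So what you have proved is the identity with $e^{\log(4)sn_p+2sI_p}$. The paper's one-line proof via \cite[Corollary~2.5]{BJP} and \eqref{eq:phi-parprim} also yields this version; the sign in the displayed statement is a slip. You should state the corrected formula rather than assert agreement. The slip is harmless downstream, since it only changes the linear polynomial $p$ in Proposition~\ref{prop:upsilon-zeta}, which becomes $-s(n_p\log 4+2I_p)$.

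A smaller point concerns the interior part. Since $G_\infty(s)=(2\pi)^{-s}G(s)G(s+1)$, the ratio $G_\infty(1-s)/G_\infty(s)$ carries the factor $(2\pi)^{2s-1}$. Its logarithmic derivative $2\log(2\pi)$ is needed to cancel the $-\log(2\pi)$ produced by each of your two uses of $\frac{d}{dz}\log\frac{G(1-z)}{G(1+z)}=\pi z\cot(\pi z)-\log(2\pi)$. Your sketch omits this factor and would be off by $2\log(2\pi)$. Citing \eqref{eq:barnes}, as the paper does, settles it.
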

\begin{proof}
    The equality for~$\Upsilon_{X,\twist}^\hyp$ follows directly from \eqref{eq:phi-hypprim}.
    By \cite[Corollary~2.5]{BJP} we have for the untwisted case
    \begin{align*}
        \Phi_{C_\infty}(s) + \Phi_{C_\infty}(1-s)
        &= - \frac{d}{ds} \log\left( 4^s \frac{\gammafunc\left(s-\frac12\right)}{\gammafunc\left(\frac12-s\right)}\right)\,.
    \end{align*}
    Together with \eqref{eq:phi-parprim} this gives the claimed expression for~$\Upsilon_{X,\twist}^{\parab}$.
    For the expressions for~$\Upsilon_{X,\twist}^{\interior}$ and~$\Upsilon_{X,\twist}^{\ellip}$ we first note that
    the logarithmic derivative of $G_\infty(s)/G_\infty(1-s)$ is given by
\begin{align}\label{eq:barnes}
    \frac{d}{ds} \log \frac{G_\infty(s)}{G_\infty(1-s)} = -(2s-1) \pi \cot(\pi s)\,.
\end{align}
    See, e.g., \cite[(4.7)]{BJP}. This yields the formula for~$\Upsilon_{X,\twist}^{\interior}$. For the expression for~$\Upsilon_{X,\twist}^{\ellip}$ we combine \eqref{eq:phi-ellprim} and \eqref{eq:barnes} to obtain, for $[g] \in [\group]_\ellprim$ with $q\coloneqq\deg(g)$,
    \begin{align*}
        (2s-1) &\pi \cot(\pi s)\thin \dim(V)\thin\left(1 - \frac1q\right) + \Phi_{\ang{g}\backslash\h, \twist|_{\ang{g}}}(s) + \Phi_{\ang{g}\backslash\h, \twist|_{\ang{g}}}(1-s)
        \\
        & = \frac{d}{ds} \log \prod_{\alpha\in\EVP(\twist(g))} \left( \frac{G_\infty(s)^{-1+\frac1q}}{G_\infty(1-s)^{-1+\frac1q}} \prod_{m=0}^{q-1} \frac{\gammafunc\left( \frac{s+m}q \right)^{\frac{2m+1}q - N_q(\alpha,m)}}{ \gammafunc\left( \frac{1-s+m}q \right)^{\frac{2m+1}q - N_q(\alpha,m)}}\right)
        \\
        & = \frac{d}{ds} \log \prod_{\alpha\in\EVP(\twist(g))} \frac{G_{q,\alpha}(1-s)}{G_{q,\alpha}(s)}\,.
    \end{align*}
    Hence
    \begin{align*}
        \Upsilon_{X,\twist}^{\ellip}(s) &= \frac{d}{ds} \log \prod_{g \in R_\ellprim} \prod_{\alpha\in\EVP(\twist(g))} \frac{G_{\deg(g),\alpha}(1-s)}{G_{\deg(g),\alpha}(s)}
        \\
        &= \frac{d}{ds} \log \frac{G_{X^\wedge,\twist}(1-s)}{G_{X^\wedge,\twist}(s)}\,.
    \end{align*}
    This completes the proof.
\end{proof}

With these preparations we can now complete the proof of Proposition~\ref{prop:upsilon-zeta}.

\begin{proof}[Proof of Proposition~\ref{prop:upsilon-zeta}]
    Lemma~\ref{lem:decomp-ups} and Lemma~\ref{lem:ups-bullet} imply
    \begin{align*}
        \Upsilon_{X,\twist}(s) &= \Upsilon_{X,\twist}^{\hyp}(s) + \Upsilon_{X,\twist}^{\parab}(s) + \Upsilon_{X,\twist}^{\ellip}(s) + \Upsilon_{X,\twist}^{\interior}(s) \\
        &= \frac{Z_{X,\twist}'(s)}{Z_{X,\twist}(s)} + \frac{Z_{X,\twist}'(1-s)}{Z_{X,\twist}(1-s)} - \frac{d}{ds} \log \left( e^{\log(4)sn_p -2sI_p} \frac{\gammafunc\left(s-\frac12\right)^{n_p}}{\gammafunc\left(\frac12-s\right)^{n_p}} \right)
        \\
        &\phantom{=} + \frac{d}{ds} \log \frac{G_{X^\wedge,\twist}(1-s)}{G_{X^\wedge,\twist}(s)} + \frac{d}{ds} \log \left( \frac{G_\infty(1-s)}{G_\infty(s)} \right)^{-\dim(V)\,\eulertop(X)}
        \\
        &= \frac{Z_{X,\twist}'(s)}{Z_{X,\twist}(s)} + \frac{Z_{X,\twist}'(1-s)}{Z_{X,\twist}(1-s)}
        \\
        &\phantom{=} + \frac{d}{ds} \log \bigg( e^{p(s)} \frac{\gammafunc\left(\frac12-s\right)^{n_p}}{\gammafunc\left(s-\frac12\right)^{n_p}}
        \frac{G_{X^\wedge,\twist}(1-s)}{G_{X^\wedge,\twist}(s)} \frac{G_\infty(1-s)^{-\dim(V)\,\eulertop(X)}}{G_\infty(s)^{-\dim(V)\,\eulertop(X)}} \bigg)
    \end{align*}
    with $p(s)\coloneqq s(I_p-n_p\log 4)$ being a polynomial of degree at most~$1$.
\end{proof}

\subsection{Relation to the relative scattering determinant}
In this section we will study the relation between the scattering theoretically regularized resolvent trace~$\Upsilon_{X,\twist}$ for~$(X,\twist)$ and the scattering matrix~$S_{X,\twist}$, more precisely between their relative variants. For any $s\in\C$ we set, as in~\cite[Section~5.5]{DFP2},
\[
S_{X_{f,c},\twist}(s) \coloneqq \mat{S_{X_f,\twist}(s)}{0}{0}{-1}
\]
and define the \emph{relative scattering matrix} by
\[
S_{X,\twist}^{\rel}(s) \coloneqq S_{X_{f,c},\twist}(s)^{-1}S_{X,\twist}(s)
\]
and the \emph{relative scattering determinant} by
\[
\tau_{X,\twist}(s) \coloneqq \det S_{X,\twist}^{\rel}(s)\,.
\]
We refer to~\cite[Section~5.5]{DFP2} for justification and details.
Further, by Corollary~\ref{cor:upsilon-funnel},
\begin{align}\label{eq:upsilon-Xf}
    \Upsilon_{X_f,\twist}(s) = \frac{d}{ds} \log \left( \frac{Z_{X_f,\twist}(s)}{Z_{X_f,\twist}(1-s)}\right)\,,
\end{align}
where the Selberg zeta function of $X_f$ is defined as the product of the Selberg zeta
functions of the connected components
and each connected component is isometrically isomorphic to a model funnel. In this section we will establish the relation between~$\Upsilon_{X,\twist}$, $\Upsilon_{X_f,\twist}$ and $\tau_{X,\twist}$ as stated in the following proposition.

\begin{prop}\label{prop:upsilon-tau}
    For $s\in\C$ with $\Rea s = \tfrac12$ but $s \not = \tfrac12$ we have
    \begin{align}\label{eq:upsilon-tau}
        \Upsilon_{X,\twist}(s) - \Upsilon_{X_f,\twist}(s) = - \frac{\tau_{X,\twist}'(s)}{\tau_{X,\twist}(s)}\,.
    \end{align}
\end{prop}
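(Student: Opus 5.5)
This is the twisted analogue of the Birman--Krein type formula in~\cite[Section~5]{BJP} (cf.\ also \cite[Theorem~10.14]{Borthwick_book}), and I will follow that argument, working on $\Rea s=\tfrac12$, $s\neq\tfrac12$, where no resonances occur by \cite[Corollary~6.7]{DFP}.

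\emph{Step 1 (difference of resolvents).} For such $s$, express $\ResTwist(s)-\ResTwist(1-s)$ through the twisted Poisson operator $E_{X,\twist}(s)$ as
\[
\ResTwist(s)-\ResTwist(1-s)=(1-2s)^{-1}E_{X,\twist}(s)\,E_{X,\twist}(1-s)^{t},
\]
using the scattering results of \cite[Section~5]{DFP2}. The same identity holds on each model funnel $F_{\ell_j}$ with $E_{F_{\ell_j},\twist}$. The model cusp requires separate treatment, since the relative scattering matrix replaces the cusp block by $-1$. The point of normalizing by $S_{X_{f,c},\twist}$ is precisely to cancel these model contributions.

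\emph{Step 2 (Maass--Selberg relation).} Compute the $0$-trace of this kernel over $X_\eps$ by Green's formula. The integrand $\tr E(s;z,\cdot)E(1-s;z,\cdot)$ is built from generalized eigenfunctions with eigenvalue $s(1-s)$. Differentiating the eigenvalue equation in $s$ and applying Green's identity turns $\int_{X_\eps}$ of the diagonal into a boundary integral over $\partial X_\eps$. That boundary integral is evaluated using the asymptotics
\[
(2s-1)u\sim \rho_f^{1-s}\rho_c^{-s}\psi+\rho_f^s\rho_c^{s-1}S_{X,\twist}(s)\psi .
\]
Collecting terms should give
\[
\Upsilon_{X,\twist}(s) = -\Tr\bigl(S_{X,\twist}(s)^{-1}S'_{X,\twist}(s)\bigr) + (\text{terms oscillating in }\eps) + c(s)\log\eps + (\text{model-end terms}),
\]
plus terms that do not survive the finite part. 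The same computation on $X_f$ gives $\Upsilon_{X_f,\twist}$ with $S_{X_f,\twist}$ in place of $S_{X,\twist}$. In the cusps, the $\log\eps$ and $\eps^{\pm(2s-1)}$ terms come from the zero Fourier modes on the $n_p$-dimensional singular part, exactly as in the untwisted case. The nonsingular eigenvalues $\lambda\neq1$ of $\twist(p)$ produce only exponentially decaying contributions and drop out.

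\emph{Step 3 (subtraction and finite parts).} Subtract the funnel computation. Because $S_{X,\twist}(s)-S_{X_{f,c},\twist}(s)$ is smoothing (\cite[Section~5.5]{DFP2}), the difference $S^{\rel}_{X,\twist}(s)-\id$ is trace class. Hence
\[
\Tr\bigl(S^{-1}S'\bigr)-\Tr\bigl(S_{X_{f,c}}^{-1}S'_{X_{f,c}}\bigr)=\frac{\tau'_{X,\twist}}{\tau_{X,\twist}}(s),
\]
by Jacobi's formula for Fredholm determinants. Taking Hadamard finite parts removes the $\log\eps$ and the oscillating $\eps^{\pm(2s-1)}$ terms; here one uses $\Rea s=\tfrac12$, $s\neq\tfrac12$, so that $\eps^{2s-1}$ has no finite part. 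This yields \eqref{eq:upsilon-tau}.

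\emph{Main obstacle.} I expect the hardest step to be Step 2: justifying the trace-class and regularity claims needed to exchange the $0$-trace with the Poisson-operator factorization, and correctly bookkeeping the boundary terms. I would first control the remainder via the decomposition $\ResTwist=M(\id-L)^{-1}$ together with Lemma~\ref{lem:bound-Ls}. I would then treat the cusp singular eigenspaces separately from the nonsingular ones, so that the cusp block reduces to the constant $-1$ in $S_{X_{f,c},\twist}$, which has zero logarithmic derivative.
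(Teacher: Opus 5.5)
Your outline is correct and follows the paper's argument: the Poisson-operator factorization of $\ResTwist(s)-\ResTwist(1-s)$ from \cite[Proposition~5.1]{DFP2}, and Green's formula with an $s$-derivative to turn the $0$-trace into boundary integrals over $\partial X_\eps$. Then come the funnel--funnel, funnel--cusp, cusp--funnel and cusp--cusp Poisson asymptotics, dropping the $\log\eps$ and $\eps^{\pm(2s-1)}$ terms in the finite part, and identifying what remains with $-\tau_{X,\twist}'/\tau_{X,\twist}$ via $S_{X,\twist}(s)S_{X,\twist}(1-s)=\id$. Two small corrections: with the normalization $(2s-1)E_{X,\twist}(s)\psi\sim\rho_f^{1-s}\rho_c^{-s}\psi+\dots$ that you quote, the factor in Step~1 should be $(1-2s)$, not $(1-2s)^{-1}$. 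Also, $\Tr(S^{-1}S')$ is not defined on its own in the funnels, so the model-funnel subtraction has to be done at the level of the boundary integrands, where the pure $\delta(\phi-\phi')$ terms cancel, as the paper does, rather than by subtracting two separately taken traces.
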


As preparation for the proof of Proposition~\ref{prop:upsilon-tau} we recall some notation from~\cite{DFP2} regarding components of the Poisson operator~$E_{X,\twist}$. Let $s\in\C$ not being a resonance. For $j\in\{1,\ldots,n_f\}$ we define
\[
E_{X,\twist}^{f,j}\colon C^\infty(\partial_\infty X_{f,j}, E_\twist\vert_{\partial_\infty X_{f,j}}) \to C^\infty(X,E_\twist)
\]
by
\[
E_{X,\twist}^{f,j}(s,z,\theta')\coloneqq (\rho_f')^{-s}R_{X,\twist}(s;z,z')\vert_{X\times\partial_\infty X_{f,j}}\,.
\]
For $j\in \{1,\ldots, n_c\}$ we define
\[
E_{X,\twist}^{c,j}(s) \colon C^\infty(\partial_\infty X_{c,j}, E_\twist\vert_{\partial_\infty X_{c,j}}) \to C^\infty(X,E_\twist)
\]
by
\[
E_{X,\twist}^{c,j}(s,z,\theta') \coloneqq (\rho_c')^{1-s}R_{X,\twist}(s;z,z')\vert_{X\times\partial_\infty X_{c,j}}\,.
\]
For $\alpha, \beta \in \{c,f\}$, $j \in \{1,\dotsc,n_\alpha\}$ and $k \in \{1,\dotsc,n_\beta\}$ we set
\begin{align*}
    I^{\alpha\beta}_{jk}(s,\eps) & \coloneqq
        \int_{\pa X_{\alpha,j,\eps}} \int_{\pa_\infty X_{\beta,k}}
        \Big( \pa_s E_{X,\twist}^{\beta,k}(s;z,\phi')
        \pa_\nu E_{X,\twist}^{\beta,k}(1-s;z,\phi')^T \\
    &\phantom{= (2s - 1)} - \pa_s \pa_\nu E_{X,\twist}^{\beta,k}(s;z,\phi')
        E_{X,\twist}^{\beta,k}(1-s;z,\phi')^T \Big) \, d\mu_\pa(\phi') d\sigma_\eps(z)\,,
\end{align*}
and for $j \in \{1,\dotsc,n_f\}$ we define
\begin{align*}
    I_{f,j}(s,\eps) & \coloneqq
        \int_{\pa X_{f,j,\eps}} \int_{\pa_\infty X_{f,j}}
        \Big( \pa_s E_{X_{f,j},\twist}(s;z,\phi') \pa_\nu E_{X_{f,j},\twist}(1-s;z,\phi')^T \\
    &\phantom{= (2s-1)} - \pa_s \pa_\nu E_{X_{f,j},\twist}(s;z,\phi')
        E_{X_{f,j},\twist}(1-s;z,\phi')^T \Big) \, d\mu_\pa(\phi') d\sigma_\eps(z)\,,
\end{align*}
where $E_{X_{f,j},\twist}$ denotes the Poisson operator for~$(X_{f,j},\twist)$.
Moreover we set
\begin{align*}
    I_{X,\twist}(s,\eps) & \coloneqq \sum_{i,j=1}^{n_f} I^{\ff}_{ij}(s,\eps)
    + \sum_{i=1}^{n_f} \sum_{k=1}^{n_c} \left( I_{ik}^{\fc}(s,\eps) + I_{ki}^{\cf}(s,\eps) \right)
    + \sum_{k,l=1}^{n_c} I_{kl}^{\cc}(s,\eps)
\end{align*}
and
\begin{align*}
    I_{X_f,\twist}(s,\eps) &\coloneqq \sum_{j=1}^{n_f} I_{f,j}(s,\eps)\,.
\end{align*}

With the following lemma we show that we may calculate $\Upsilon_{X,\twist}$ in terms of $I_{X,\twist}$ and $I_{X_f,\twist}$.

\begin{lemma}\label{lem:Upsilon_FP}
    We have that
    \begin{align*}
        \Upsilon_{X,\twist}(s) &= (2s - 1) \FP_{\eps \to 0} \tr_{\bundle} I_{X,\twist}(s, \eps)
        \intertext{and}
        \Upsilon_{X_f,\twist}(s) &= (2s - 1) \FP_{\eps \to 0} \tr_{\bundle} I_{X_f,\twist}(s, \eps)\,.
    \end{align*}
\end{lemma}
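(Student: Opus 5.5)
The plan is to follow the Maass–Selberg / Green's formula argument of Borthwick–Judge–Perry~\cite{BJP} (see also \cite[Chapter~10]{Borthwick_book}), adapted to the twisted bundle~$\bundle$. The starting point is the identity
\[
\ResTwist(s) - \ResTwist(1-s) = (1-2s)\, E_{X,\twist}(s)\, E_{X,\twist}(1-s)^T\,,
\]
valid for $s$ away from resonances and $\tfrac12\Z$. It follows from the structure of the resolvent in \cite[Theorem~4.1]{DFP2} and the definition of the Poisson operator as a boundary restriction of~$\ResTwist$. It splits into a sum over the funnel components and cusp components of~$\pa_\infty X$, one term for each $E^{\beta,k}_{X,\twist}$. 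Taking the fibrewise trace on the diagonal and integrating over~$X_\eps$, the factor $(2s-1)$ in~\eqref{eq:def_Upsilon} combines with the $(1-2s)$ above. This expresses $\int_{X_\eps}\tr_{\bundle}(\ResTwist(s;z,z)-\ResTwist(1-s;z,z))\,d\mu_X$ as $\int_{X_\eps}\int_{\pa_\infty X}\tr\bigl(E(s)E(1-s)^T\bigr)$, up to the sign and $(2s-1)$ prefactors.

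Next I convert this bulk integral into a boundary integral. Both $E(s;\cdot,\phi')$ and $E(1-s;\cdot,\phi')$ solve $(\LapTwist - s(1-s))u=0$ componentwise. Differentiating the equation for $E(s)$ in $s$ gives
\[
(\LapTwist - s(1-s))\,\pa_s E(s) = (1-2s)E(s)\,.
\]
Pairing with $E(1-s)^T$ and applying Green's formula on the compact orbifold-with-boundary~$X_\eps$ gives
\[
(1-2s)\int_{X_\eps} E(s)E(1-s)^T = \int_{\pa X_\eps}\bigl(\pa_s E(s)\,\pa_\nu E(1-s)^T - \pa_s\pa_\nu E(s)\,E(1-s)^T\bigr)\,d\sigma_\eps\,.
\]
The twist causes no trouble: the connection is flat and unitary, so $\LapTwist$ is formally self-adjoint with respect to the transpose pairing of sections. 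The orbifold points lie in the interior, so Green's formula holds on the orbifold (or on a finite cover via Selberg's lemma, dividing by the index). Summing over the boundary pieces $\pa X_{\alpha,j,\eps}$ and the boundary components $\pa_\infty X_{\beta,k}$ gives exactly $I_{X,\twist}(s,\eps)$. Tracking the factor $(2s-1)$ and then taking $\FP_{\eps\to0}$ yields the first identity, since both sides have polyhomogeneous expansions in~$\eps$ and the finite part commutes with this exact identity.

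For the funnel model $X_f$ I run the same argument with $R_{X_{f,j},\twist}$, the Dirichlet resolvent on each model funnel, and its Poisson operator $E_{X_{f,j},\twist}$. The Dirichlet condition on the central geodesic kills the inner boundary term in Green's formula, so only $\pa X_{f,j,\eps}$ contributes. Summing over $j$ gives $I_{X_f,\twist}$.

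The main obstacle I expect is justifying the resolvent-difference identity and the finite-part bookkeeping in the cusps. There the kernel has the extra $\rho^{-1}C^\infty$ term coming from the zero Fourier modes on the $1$-eigenspace of $\twist$ at the parabolic elements, while the nontrivial-eigenvalue modes decay. I would handle this by decomposing $V$ into eigenspaces of the cusp monodromy, as in Lemma~\ref{lem:phi-cusp}. I would then check that on each eigenspace the model-resolvent contributions to $\ResTwist(s)-\ResTwist(1-s)$ are captured exactly by the $E^{c,k}$ terms, with the normalization $(\rho_c')^{1-s}$, so that no extra $\eps$-independent term survives the finite part.
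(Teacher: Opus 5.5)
Your proposal is correct and is essentially the paper's argument. The paper also starts from $\ResTwist(s)-\ResTwist(1-s)=(1-2s)E_{X,\twist}(s)E_{X,\twist}(1-s)^T$, quoted directly from \cite[Proposition~5.1]{DFP2} so the obstacle you anticipate is already settled, and then applies Green's formula on $X_\eps$; the only cosmetic difference is that it obtains $\pa_s E_{X,\twist}(s)$ via the difference quotient $a^{-1}(E_{X,\twist}(s+a)-E_{X,\twist}(s))$, $a\to0$, rather than differentiating the eigenvalue equation, and your remark that the Dirichlet condition kills the central-geodesic term is exactly what justifies the paper's ``analogously'' for $X_f$ (while the cusp finite-part bookkeeping you worry about is not needed here, since the Green identity is exact for each $\eps$).
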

\begin{proof}
    This proof is an easy adaptation of the beginning of the proof of \cite[Proposition~10.4]{Borthwick_book}.
    We give a short summary.

    Recall from \cite[Proposition~5.1]{DFP2} that
    \begin{align*}
        \ResTwist(s) - \ResTwist(1-s) &= (1-2s) E_{X,\twist}(s) E_{X,\twist}(1-s)^T\,,
    \end{align*}
    which implies that
    \begin{align*}
        \Upsilon_{X,\twist}(s) &= - (2s-1)^2 \zTr\left( E_{X,\twist}(s) E_{X,\twist}(1 - s)^T \right) \,.
    \end{align*}
    For $a \geq 0$ we obtain
    \begin{equation} \label{eq:Poisson_deriv}
    \begin{aligned}
        E_{X,\twist}(s&+a) \left( \Delta_{X,\twist} E_{X,\twist}(1-s)\right)^T 
        - \left( \Delta_{X,\twist} E_{X,\twist}(s+a) \right) E_{X,\twist}(1-s)^T \\
        &= \left( a(2s-1) + a^2\right) E_{X,\twist}(s+a) E_{X,\twist}(1-s)^T\,.
    \end{aligned}
    \end{equation}
    Using Green's formula we obtain
    \begin{align*}
        - (2&s-1) \int_{\pa X_\eps} \int_{\pa X} E_{X,\twist}(s;z,\theta') E_{X,\twist}(1-s;z,\theta')^T\, d\mu_{\pa_\infty X}(\theta') \,d\sigma_\eps(z) \\
        &= \lim_{a \to 0} \frac{1}{a} \int_{\pa X_\eps} \int_{\pa X} 
        \bigg( E_{X,\twist}(s+a;z,\theta') \pa_\nu E_{X,\twist}(1-s;z,\theta')^T \\
        &\phantom{ = \lim_{a \to 0} \frac{1}{a} } - \pa_\nu E_{X,\twist}(s+a;z,\theta') E_{X,\twist}(1-s;z,\theta')^T \bigg)\, d\mu_{\pa X}(\theta') \, d\sigma_\eps(z)\,.
    \end{align*}
    Applying the same argument for $a = 0$ in \eqref{eq:Poisson_deriv} we achieve that
    \begin{align*}
        0 &= \int_{\pa X_\eps} \int_{\pa X} \bigg( E_{X,\twist}(s;z,\theta') \pa_\nu E_{X,\twist}(1-s;z,\theta')^T \\
        &\phantom{ = \int_{\pa X_\eps} \int_{\pa X}} - \pa_\nu E_{X,\twist}(s;z,\theta') E_{X,\twist}(1-s;z,\theta')^T \bigg)\, d\mu_{\pa X}(\theta') \, d\sigma_\eps(z)\,.
    \end{align*}
    Therefore we can evaluate the limit of $a \to 0$ as
    \begin{align*}
        - (2&s-1) \int_{\pa X_\eps} \int_{\pa X} E_{X,\twist}(s;z,\theta') E_{X,\twist}(1-s;z,\theta')^T\, d\mu_{\pa_\infty X}(\theta') \,d\sigma_\eps(z) \\
        &= \int_{\pa X_\eps} \int_{\pa X} \bigg( \pa_s E_{X,\twist}(s;z,\theta') \pa_\nu E_{X,\twist}(1-s;z,\theta')^T \\
        &\phantom{ = \int_{\pa X_\eps} \int_{\pa X}} - \pa_s \pa_\nu E_{X,\twist}(s;z,\theta') E_{X,\twist}(1-s;z,\theta')^T \bigg)\, d\mu_{\pa X}(\theta') \, d\sigma_\eps(z) \\
        &= I_{X,\twist}(s; \eps)\,,
    \end{align*}
    and consequently we obtain
    \begin{align*}
        \Upsilon_{X,\twist}(s) &= - (2s-1)^2 \zTr\left( E_{X,\twist}(s) E_{X,\twist}(1 - s)^T \right) \\
        &= (2s- 1) \FP_{\eps \to 0} \tr_{\bundle} I_{X,\twist}(s; \eps)\,.
    \end{align*}
    Analogously we deduce that
    \begin{align*}
        \Upsilon_{X_f,\twist}(s) &= (2s - 1) \FP_{\eps \to 0} \tr_{\bundle} I_{X_f,\twist}(s, \eps)\,.
    \end{align*}
\end{proof}
\begin{proof}[Proof of Proposition~\ref{prop:upsilon-tau}]
     By Lemma~\ref{lem:Upsilon_FP}
    \begin{align*}
        \Upsilon_{X,\twist}(s) - \Upsilon_{X_f,\twist}(s) = (2s-1) \FP_{\eps \to 0} \tr_{\bundle} \left( I_{X,\twist}(s, \eps) - I_{X_f,\twist}(s, \eps)\right)\,.
    \end{align*}
    We use the splitting
    \begin{align}\label{eq:sum_Upsilon}
        \Upsilon_{X,\twist}(s) - \Upsilon_{X_f,\twist}(s) &=
        \Upsilon_{X,\twist}^{\ff}(s) +
        \Upsilon_{X,\twist}^{\cf}(s) +
        \Upsilon_{X,\twist}^{\fc}(s) +
        \Upsilon_{X,\twist}^{\cc}(s)
    \end{align}
    where
    \begin{align*}
        \Upsilon_{X,\twist}^{\ff}(s) &\coloneqq (2s-1) \FP_{\eps \to 0} \tr_{\bundle} \left( I_{X,\twist}^{\ff}(s, \eps) - I_{X_f,\twist}(s, \eps)\right)\,,\\
        \Upsilon_{X,\twist}^{\cf}(s) &\coloneqq (2s-1) \FP_{\eps \to 0} \tr_{\bundle} I_{X,\twist}^{\cf}(s, \eps)\,,\\
        \Upsilon_{X,\twist}^{\fc}(s) &\coloneqq (2s-1) \FP_{\eps \to 0} \tr_{\bundle} I_{X,\twist}^{\fc}(s, \eps)\,,\\
        \Upsilon_{X,\twist}^{\cc}(s) &\coloneqq (2s-1) \FP_{\eps \to 0} \tr_{\bundle} I_{X,\twist}^{\cc}(s, \eps)\,.
    \end{align*}
    We calculate each term in \eqref{eq:sum_Upsilon} separately.

    \medskip

    \noindent
    \textbf{Funnel-funnel term.}
    Let $i,j \in \{1, \dotsc, n_f\}$. The measure $d\sigma_\eps$ restricted to $X_{f,j,\eps}$ is given by
    \begin{align}\label{eq:dsigma}
        d\sigma_\eps = \frac1\eps \frac{\ell_j \cdot d\phi}{2\pi}\,,
    \end{align}
    where $\ell_j \in (0,\infty)$ is the length of the periodic geodesic associated to the funnel end~$X_{f,k}$.
    Moreover, $\pa_\nu = -\rho \pa_\rho + O(\rho^2)$.

    For $z \in X_{f,j,\eps}$ and $\eps \in (0,\tfrac12)$ we have
    \begin{align*}
        E_{X,\twist}^{f,i}(s;z,\phi') &= \frac{\rho^{1-s}}{2s-1} \delta_{ij} \delta(\phi - \phi') + \frac{\rho^s}{2s-1} S_{X,\twist}(s;\phi,\phi')^{\ff}_{ij} + O\left(\rho^{\frac32}\right)\,,
    \end{align*}
    where $S_{X,\twist}(s;\phi,\phi')^{\ff}_{ij}$ denotes the funnel-funnel contribution at $(s;\phi,\phi')$ of~$(X,_{f,j},\twist)$ to $(X_{f,i},\twist)$.
    We need to take into account the additional terms coming from~$I^f_j(s,\eps)$.
    The model Poisson operator is given by
    \begin{align*}
        E_{X_{f,j},\twist}(s;z,\phi') &= \frac{\rho^{1-s}}{2s-1} \delta_{ij} \delta(\phi - \phi') + \frac{\rho^s}{2s-1} \delta_{ij} S_{X_{f,j},\twist}(s;\phi,\phi') + O\left(\rho^{\frac32}\right)\,.
    \end{align*}
    We first note that the remainder terms do not contribute to the finite part. Hence we may replace $E_{X,\twist}^{f,i}(s;z,\phi')$ by
    \begin{align*}
        \tilde{E}_{X,\twist}^{f,i}(s;z,\phi') &= \frac{\rho^{1-s}}{2s-1} \delta_{ij} \delta(\phi - \phi') + \frac{\rho^s}{2s-1} S_{X,\twist}(s;\phi,\phi')^{\ff}_{ij}\,,
    \end{align*}
    and we may replace $E_{X_{f,i},\twist}(s;z,\phi')$ by
    \begin{align*}
        \tilde{E}_{X_{f,i},\twist}(s;z,\phi') &= \frac{\rho^{1-s}}{2s-1} \delta_{ij} \delta(\phi - \phi') + \frac{\rho^s}{2s-1} \delta_{ij} S_{X_{f,j},\twist}(s;\phi,\phi')\,.
    \end{align*}
    From this and \eqref{eq:dsigma} we conclude that the funnel-funnel contribution of the difference $I_{X,\twist}(s;\eps) - I_{X_f,\twist}(s;\eps)$ is of the form
    \begin{align*}
        I_{ij}^{\ff}(s;\eps) - \delta_{ij} I_{f,j}(s;\eps) = \log(\eps) \left( \dotsc \right) + \tilde{a} + \eps^{1-2s} \tilde{b} + \eps^{2s-1} \tilde{c}\,.
    \end{align*}
    The $\log(\eps)$ term does not contribute to the finite part by definition.
    Since the finite part $\FP_{\eps\to 0} \tr_{\bundle} (I_{X,\twist}(s;\eps) - I_{X_f,\twist}(s;\eps))$ is defined in the sense of distributions, we can use integration by parts away from $s = \tfrac12$
    to obtain that the terms $\tilde{b}$ and $\tilde{c}$ do not contribute to the finite part.
    Ignoring $\log(\rho)$ terms, we calculate
    \begin{align*}
        \pa_s \tilde{E}_{X,\twist}^{f,i}(s;z,\phi') &= (2s-1)^{-2} \Big( -2 \rho^{1-s} \delta_{ij} \delta(\phi-\phi') \\
        &\qquad \phantom{(2s-1)^{-2} \Big(} - 2\rho^s S_{X,\twist}(s;\phi,\phi')^{\ff}_{ij} \\
        &\qquad \phantom{(2s-1)^{-2} \Big(} + (2s-1) \rho^s \pa_s S_{X,\twist}(s;\phi,\phi')^{\ff}_{ij} \Big) + \dotsc \\
        \pa_s \pa_\nu \tilde{E}_{X,\twist}^{f,i}(s;z,\phi') &= (2s-1)^{-2} \Big( 2(1-s) \rho^{1-s} \delta_{ij} \delta(\phi-\phi') \\
        &\qquad \phantom{(2s-1)^{-2} \Big(} + 2s \rho^s S_{X,\twist}(s;\phi,\phi')^{\ff}_{ij} \\
        &\qquad \phantom{(2s-1)^{-2} \Big(} - (2s-1) s \rho^s \pa_s S_{X,\twist}(s;\phi,\phi')^{\ff}_{ij} \Big) + \dotsc \\
        \left(\tilde{E}_{X,\twist}^{f,i}\right)^T(1-s;z,\phi') &= (2s-1)^{-1} \Big( - \rho^{s} \delta_{ij} \delta(\phi-\phi') \\
        &\qquad \phantom{(2s-1)^{-1} \Big(} - \rho^{1-s} S_{X,\twist}^T(1-s;\phi,\phi')^{\ff}_{ij}\Big) \\
        \pa_\nu \left(\tilde{E}_{X,\twist}^{f,i}\right)^T(1-s;z,\phi') &= (2s-1)^{-1} \Big( s \rho^{s} \delta_{ij} \delta(\phi-\phi') \\
        &\qquad \phantom{(2s-1)^{-2} \Big(} + (1-s)\rho^{1-s} S_{X,\twist}^T(1-s;\phi,\phi')^{\ff}_{ij}\Big) + \dotsc
    \end{align*}
    and similarly for $\tilde{E}_{X_{f,j},\twist}(s;z,\phi')$.
    Note that terms involving only $\delta(\phi - \phi')$ cancel out with a term of the same form coming from the model funnel. Hence we obtain that
    \begin{align*}
        I_{ij}^{\ff}(s;\eps) - \delta_{ij} I_{f,j}(s;\eps)
        &= \int_0^{2\pi} \int_{0}^{2\pi} \left( a_{ij}^{\ff}(s;\phi,\phi') - \delta_{ij} a_j^f(s;\phi,\phi') \right) \frac{\ell_i \cdot d\phi'}{2\pi} \frac{\ell_j \cdot d\phi}{2\pi} \\
        &\phantom{=}\ + \log(\eps) \int_0^{2\pi} \int_{0}^{2\pi} \dotsc \frac{\ell_i \cdot d\phi'}{2\pi} \frac{\ell_j \cdot d\phi}{2\pi} \\
        &\phantom{=}\ + \eps^{1-2s} \int_0^{2\pi} \int_{0}^{2\pi} \dotsc \frac{\ell_i \cdot d\phi'}{2\pi} \frac{\ell_j \cdot d\phi}{2\pi} \\
        &\phantom{=}\ + \eps^{2s-1} \int_0^{2\pi} \int_{0}^{2\pi} \dotsc \frac{\ell_i \cdot d\phi'}{2\pi} \frac{\ell_j \cdot d\phi}{2\pi}
        + O(\eps \log(\eps)) \,,
    \end{align*}
    where
    \begin{align*}
        a_{ij}^{\ff}(s;\phi,\phi') &= (2s-1)^{-3} \big( - 2 S_{X,\twist}(s;\phi,\phi')^{\ff}_{ij} (1-s) S_{X,\twist}^T(1-s;\phi,\phi')^{\ff}_{ij} \\
        &\phantom{= (2s-1)^{-3} \big( } + (2s-1) \pa_s S_{X,\twist}(s;\phi,\phi')^{\ff}_{ij} (1-s) S_{X,\twist}^T(1-s;\phi,\phi')^{\ff}_{ij} \\
        &\phantom{= (2s-1)^{-3} \big( } + 2s S_{X,\twist}(s;\phi,\phi')^{\ff}_{ij} S_{X,\twist}^T(1-s;\phi,\phi')^{\ff}_{ij} \\
        &\phantom{= (2s-1)^{-3} \big( } - (2s-1) s \pa_s S_{X,\twist}(s;\phi,\phi')^{\ff}_{ij} S_{X,\twist}^T(1-s;\phi,\phi')^{\ff}_{ij} \;\big) \\
        &= 2(2s-1)^{-2} S_{X,\twist}(s;\phi,\phi')^{\ff}_{ij} S_{X,\twist}^T(1-s;\phi,\phi')^{\ff}_{ij} \\
        &\phantom{=}\ - (2s-1)^{-1} \pa_s S_{X,\twist}(s;\phi,\phi')^{\ff}_{ij} S_{X,\twist}^T(1-s;\phi,\phi')^{\ff}_{ij}\,.
    \end{align*}
    and
    \begin{align*}
        a_j^f(s;\phi,\phi')
        &= 2(2s-1)^{-2} S_{X_{f,j},\twist}(s;\phi,\phi') S_{X_{f,j},\twist}^T(1-s;\phi,\phi') \\
        &\phantom{=}\ - (2s-1)^{-1} \pa_s S_{X_{f,j},\twist}(s;\phi,\phi') S_{X_{f,j},\twist}^T(1-s;\phi,\phi')\,.
    \end{align*}
    Therefore, we obtain that
    \begin{align*}
        \Upsilon_{X,\twist}^{\ff}(s) &= (2s-1) \FP_{\eps \to 0} \left( I_{ij}^{\ff}(s;\eps) - \delta_{ij} I_{f,j}(s;\eps) \right) \\
        &=- \tr_{\bundle} \Big( \pa_s S_{X,\twist}^{\ff}(s) S_{X,\twist}^{\ff}(1-s)^T - \pa_s S_{X_f,\twist}(s) S_{X_f,\twist}(1-s)^T \Big) \\
        &\phantom{=}\ + \frac{2}{2s-1} \tr_{\bundle} \Big( S_{X,\twist}^{\ff}(s) S_{X,\twist}^{\ff}(1-s)^T - \id_{\pa_\infty X_f} \Big) \,,
    \end{align*}
    where we used that $S_{X_f,\twist}(s) S_{X_f,\twist}(1-s) = \id_{\pa_\infty X_f}$ and $S_{X_f,\twist}(s)^T = S_{X_f,\twist}(s)$.

    \medskip
    \noindent
    \textbf{Funnel-cusp term.}
    The Poisson operator is given by
    \begin{align*}
        E_{X,\twist}^{c,k}(s; z, \phi') &= \frac{\rho^{s}}{2s-1} S_{X,\twist}^{\fc}(s;\phi,\phi') + O(\rho^{3/2})\,,
    \end{align*}
    where $z = (\rho,\phi) \in X_{c,\eps}$.
    Analogously to the funnel-funnel case we obtain that
    \begin{align*}
        \Upsilon_{X,\twist}^{\fc}(s) &= (2s-1) \FP_{\eps \to 0} \sum_{j,k} I_{jk}^{\fc}(s;\eps) \\
        &=- \tr_{\bundle} \Big( \pa_s S_{X,\twist}^{\fc}(s) S_{X,\twist}^{\cf}(1-s)^T \Big) \\
        &\phantom{=}\ + \frac{2}{2s-1} \tr_{\bundle} \Big( S_{X,\twist}^{\fc}(s) S_{X,\twist}^{\cf}(1-s)^T \Big) \,.
    \end{align*}

    \medskip

    \noindent
    \textbf{Cusp-funnel term.}
    For the cusp-funnel terms, the Poisson operator is given by
    \begin{align*}
        E_{X,\twist}^{f,k}(s; z, \phi') &= \frac{\rho^{s-1}}{2s-1} S_{X,\twist}^{\cf}(s;\phi,\phi') + O(\rho^{\tfrac12})\,,
    \end{align*}
    where $z = (\rho,\phi) \in X_{c,\eps}$.
    Analogously to the previous two cases, we obtain that
    \begin{align*}
        \Upsilon_{X,\twist}^{\cf}(s) &= (2s-1) \FP_{\eps \to 0} \sum_{j,k} I_{jk}^{\cf}(s;\eps) \\
        &=- \tr_{\bundle} \Big( \pa_s S_{X,\twist}^{\cf}(s) S_{X,\twist}^{\fc}(1-s)^T \Big) \\
        &\phantom{=}\ + \frac{2}{2s-1} \tr_{\bundle} \Big( S_{X,\twist}^{\cf}(s) S_{X,\twist}^{\fc}(1-s)^T \Big) \,.
    \end{align*}
    
    \medskip

    \noindent
    \textbf{Cusp-cusp term.}
    For $j \in \{1,\dotsc, n_c\}$ we let $p_j$ be a primitive parabolic elements associated to the cusp end~$X_{c,j}$, and we set $m_j\coloneqq m_{\twist(p_j)}(1)$.

    For the cusp-cusp terms, the Poisson operator takes the form
    \begin{align*}
        E_{X,\twist}^{c,k}(s;z,\phi') &= (2s-1)^{-1} \left( \delta_{j,k} \id_{\C^{m_j}} \rho^{-s} + \rho^{s-1} S^{\cc}_{jk}(s) \right) + O(\rho^{\infty})\,,
    \end{align*}
    where $z = (\rho,\phi) \in X_{c,j}$ and $\phi' \in \pa_\infty X_{c,k}$. We note that the scattering matrix~$S^{\cc}_{jk}(s)$ is a $m_j \times m_k$ matrix.

    Straightforward calculations yield
    \begin{align*}
        \pa_s E_{X,\twist}^{c,k}(s;z,\phi') &= (2s-1)^{-2} \Big( -2\delta_{j,k} \id_{\C^{m_j}} \rho^{-s} + -2\rho^{s-1} S^{\cc}_{jk}(s) \\
        &\phantom{= (2s-1)^{-2} \Big( } + (2s-1) \rho^{s-1} \pa_s S^{\cc}_{jk}(s) \Big) + \dotsc\,,\\
        \pa_s \pa_\nu E_{X,\twist}^{c,k}(s;z,\phi') &= (2s-1)^{-2} \Big( 2s \delta_{j,k} \id_{\C^{m_j}} \rho^{-s} + 2(s-1)\rho^{s-1} S^{\cc}_{jk}(s) \\
        &\phantom{= (2s-1)^{-2} \Big( } - (2s-1) (s-1) \rho^{s-1} \pa_s S^{\cc}_{jk}(s) \Big) + \dotsc\,,\\
        E_{X,\twist}^{c,k}(1-s;z,\phi')^T &= (2s-1)^{-1} \left( -\delta_{j,k} \id_{\C^{m_j}} \rho^{s-1} -\rho^{-s} S^{\cc}_{jk}(1-s)^T \right) \\
        &\phantom{=}\ + O(\rho^{\infty})\,,\\
        \pa_\nu E_{X,\twist}^{c,k}(1-s;z,\phi')^T &= (2s-1)^{-1} \left( (s-1)\delta_{j,k} \id_{\C^{m_j}} \rho^{s-1} -s\rho^{-s} S^{\cc}_{jk}(1-s)^T \right) \\
        &\phantom{=}\ + O(\rho^{\infty})\,.
    \end{align*}
    By the same argument as for the funnel-funnel case, products of $\rho^{s-1}$ with itself and products of $\rho^{-s}$ with itself do not contribute to $\Upsilon_{X,\twist}^{\cc}(s)$. Therefore we arrive at
    \begin{align*}
        \Upsilon_{X,\twist}^{\cc}(s) &= (2s-1) \FP_{\eps \to 0} \sum_{j,k} I_{jk}^{\cc}(s;\eps) \\
        &=- \tr_{\bundle} \Big( \pa_s S_{X,\twist}^{\cc}(s) S_{X,\twist}^{\cc}(1-s)^T \Big) \\
        &\phantom{=}\ + \frac{2}{2s-1} \tr_{\bundle} \Big( S_{X,\twist}^{\cc}(s) S_{X,\twist}^{\cc}(1-s)^T - \id_{\pa_\infty X_c} \Big) \,.
    \end{align*}

    \medskip

    \noindent
    \textbf{Putting everything together.}
    We have that
    \begin{align*}
        S_{X,\twist}(1-s) S_{X,\twist}(s) = \id
    \end{align*}
    and therefore
    \begin{align*}
        0 &= \tr_{\bundle} \left( S_{X,\twist}^{\ff}(s) S_{X,\twist}^{\ff}(1-s)^T - \id_{\pa_\infty X_f} + S_{X,\twist}^{\fc}(s) S_{X,\twist}^{\cf}(1-s)^T \right) \,,\\
        &\phantom{=}\ + \tr_{\bundle} \left( S_{X,\twist}^{\cc}(s) S_{X,\twist}^{\cc}(1-s)^T - \id_{\pa_\infty X_c} + S_{X,\twist}^{\cf}(s) S_{X,\twist}^{\fc}(1-s)^T \right) \,.
    \end{align*}
    This implies that
    \begin{align*}
        \Upsilon_{X,\twist}(s) - \Upsilon_{X_f,\twist}(s) &=
        \Upsilon_{X,\twist}^{\ff}(s) +
        \Upsilon_{X,\twist}^{\cf}(s) +
        \Upsilon_{X,\twist}^{\fc}(s) +
        \Upsilon_{X,\twist}^{\cc}(s) \\
        &=- \tr_{\bundle} \Big( \pa_s S_{X,\twist}^{\ff}(s) S_{X,\twist}^{\ff}(1-s)^T \\
        &\phantom{= - \tr_{\bundle} \Big(}- \pa_s S_{X_f,\twist}(s) S_{X_f,\twist}(1-s)^T \Big) \\
        &\phantom{=}\ - \tr_{\bundle} \Big( \pa_s S_{X,\twist}^{\fc}(s) S_{X,\twist}^{\cf}(1-s)^T \Big) \\
        &\phantom{=}\ - \tr_{\bundle} \Big( \pa_s S_{X,\twist}^{\cf}(s) S_{X,\twist}^{\fc}(1-s)^T \Big) \\
        &\phantom{=}\ - \tr_{\bundle} \Big( \pa_s S_{X,\twist}^{\cc}(s) S_{X,\twist}^{\cc}(1-s)^T \Big) \,.
    \end{align*}
    It is now straightforward to check that this is $- (\tau_{X,\twist}'/\tau_{X,\twist})(s)$, which yields
    the equality \eqref{eq:upsilon-tau}.
\end{proof}

\section{Proof of the main theorem}\label{sec:proof}

In this section we will first establish the meromorphy of the Selberg zeta function and determine its set of poles. Then we will discuss the relationship between the Selberg zeta function and the scattering determinant. After some additional considerations needed for proving the claimed properties of the function~$p$ in the statement of Theorem~\ref{thm:selberg-factorization}, we will finally present a proof of Theorem~\ref{thm:selberg-factorization}.

As we established Theorem~\ref{thm:selberg-factorization} already in Section~\ref{sec:cyclic} for the case that $\group$ is cyclic, we will suppose throughout this section that $\group$ is non-cyclic. As before, we let $X=\group\backslash\h$ denote the associated hyperbolic orbisurface. We recall further that we have shown already with~\eqref{eq:barnes_orbifold} that \eqref{eq:SZF_fact_orb} and \eqref{eq:SZF_fact_top} in Theorem~\ref{thm:selberg-factorization} are equal.

Throughout this section we will use for brevity (albeit a slight abuse of notion)
\begin{align*}
    \frac{d}{ds} \log f(s) = \frac{f'(s)}{f(s)}\,,
\end{align*}
where the right hand side is well-defined irrespective of whether $\log f(s)$ is well-defined or not in the classical sense.

We start by determining the
poles of the logarithmic derivative of the Selberg zeta function with a proof based on
Lemma~\ref{lem:decomp-phi}.

\begin{prop}\label{prop:logZeta-poles}
    The logarithmic derivative~$Z_{X,\twist}'/Z_{X,\twist}$ of the Selberg zeta function~$Z_{X,\twist}$ is meromorphic on $\{s\in\C : \Rea s > \tfrac12\}$. Its poles in this domain are at $s = s_0$ whenever $s_0(1- s_0) \in \sigma_d(\LapTwist)$. Each such pole is simple; its residue is given by the multiplicity of the eigenvalue $s_0(1-s_0)$ of $\LapTwist$.
    Further, the only pole of~$Z_{X,\twist}'/Z_{X,\twist}$ on $\{s\in\C : \Rea s = \tfrac12\}$ is at $s = \tfrac12$. It is simple and its residue is $m_{X,\twist}(\tfrac12) - n_p$.
\end{prop}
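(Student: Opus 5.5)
The plan is to solve the identity of Lemma~\ref{lem:decomp-phi} for the Selberg term:
\begin{align*}
\frac{Z_{X,\twist}'(s)}{Z_{X,\twist}(s)} = \Phi_{X,\twist}(s) - n_p\Phi_{C_\infty}(s) + I_p - \frac{d}{ds}\log \prod_{[g]\in[\group]_\ellprim}\prod_{m=0}^{\deg(g)-1}\gammafunc\left(\frac{s+m}{\deg(g)}\right)^{\frac{2m+1}{\deg(g)}\dim(V)-\alpha_g(m)}.
\end{align*}
This identity first holds for $\Rea s>\delta$. Since every term on the right-hand side is meromorphic on~$\C$ (by Lemma~\ref{lem:Phi_cont}(i), by the explicit formula in Lemma~\ref{lem:phi-cusp}, and because the last term is a finite sum of digamma functions), it provides the meromorphic continuation. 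I will then read off the poles in $\{\Rea s\ge\tfrac12\}$ term by term.

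First, the elliptic contribution is a finite linear combination of $\frac1{q}\psi\bigl(\frac{s+m}{q}\bigr)$ with $m\in\{0,\ldots,q-1\}$. Its poles therefore lie in $-\N_0$, so it is holomorphic on $\{\Rea s\ge\tfrac12\}$. The constant $I_p$ plays no role.

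Second, by Lemma~\ref{lem:phi-cusp},
\[
\Phi_{C_\infty}(s) = -\log 2 - \psi\bigl(s+\tfrac12\bigr) + \frac{1}{2s-1}.
\]
The digamma term has poles only at $-\tfrac12-\N_0$. Hence on $\{\Rea s\ge\tfrac12\}$ the only pole of $n_p\Phi_{C_\infty}$ is at $s=\tfrac12$. It is simple with residue $\tfrac{n_p}{2}$.

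Third, Lemma~\ref{lem:Phi_cont}(ii) and~(iii) give the poles of $\Phi_{X,\twist}$. On $\{\Rea s>\tfrac12\}$ these are exactly the points $s_0$ with $s_0(1-s_0)\in\sigma_d(\LapTwist)$. Each is simple with residue $m_{X,\twist}(s_0)$, and this equals the multiplicity of the eigenvalue $s_0(1-s_0)$. That identification follows because the residue of the resolvent there is the orthogonal projection onto the eigenspace, as used in the proof of Lemma~\ref{lem:Phi_cont}. On the line $\{\Rea s=\tfrac12\}$, the only pole of $\Phi_{X,\twist}$ is at $\tfrac12$, with residue $m_{X,\twist}(\tfrac12)-\tfrac{n_p}{2}$.

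Combining the three pieces gives the claim. In the open half-plane only $\Phi_{X,\twist}$ contributes poles. At $s=\tfrac12$ the residue is
\[
m_{X,\twist}\bigl(\tfrac12\bigr)-\tfrac{n_p}{2}-\tfrac{n_p}{2} = m_{X,\twist}\bigl(\tfrac12\bigr)-n_p.
\]

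The main subtlety is to ensure that Lemma~\ref{lem:decomp-phi} genuinely holds as an identity of meromorphic functions. The decomposition~\eqref{eq:Phi-primitive} into primitive classes is only justified for $\Rea s>\delta$, since this is where the sums over $\group$ converge absolutely and may be interchanged with the $0$-integral. Uniqueness of meromorphic continuation then transfers the identity to all of~$\C$, so the argument is sound. A second point to check is that the cusp and parabolic contributions to the residue at $\tfrac12$ are not double counted. The cusp-end contribution inside $\Phi_{X,\twist}$ was already accounted for in Lemma~\ref{lem:Phi_cont}(iii), which is the source of its $-\tfrac{n_p}{2}$ term. The subtraction of $n_p\Phi_{C_\infty}$ in Lemma~\ref{lem:decomp-phi} comes instead from the parabolic group elements, so the two contributions are separate and must be added as above.
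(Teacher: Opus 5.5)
Your proposal is correct and follows the paper's proof essentially line by line. You rearrange Lemma~\ref{lem:decomp-phi} for $Z_{X,\twist}'/Z_{X,\twist}$, note that $I_p$ and the elliptic digamma terms are holomorphic on $\{\Rea s>0\}$, and combine Lemma~\ref{lem:Phi_cont} with the residue $\tfrac12$ of $\Phi_{C_\infty}$ at $s=\tfrac12$ from Lemma~\ref{lem:phi-cusp} to obtain $\bigl(m_{X,\twist}(\tfrac12)-\tfrac{n_p}{2}\bigr)-\tfrac{n_p}{2}$; your closing worry about double counting is unnecessary, since both lemmas describe the same function $\Phi_{X,\twist}$ and the residue is plain arithmetic.
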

\begin{proof}
    By Lemma~\ref{lem:decomp-phi},
    \begin{align*}
        \frac{Z_{X,\twist}'(s)}{Z_{X,\twist}(s)} = \Phi_{X,\twist}(s) - n_p \Phi_{C_\infty}(s) + H(s)\,,
    \end{align*}
    where $H$ is holomorphic on~$\{\Rea s > 0\}$.
    This implies in particular that $\pa_s \log Z_{X,\twist}$ is meromorphic on this domain.

    On $\{\Rea s > \tfrac12\}$, the function~$\Phi_{C_\infty}$ is holomorphic. Therefore it
    suffices to consider $\Phi_{X,\twist}$. The residues of $\Phi_{X,\twist}$ on $\{\Rea s \geq \tfrac12\}$ are provided by Lemma~\ref{lem:Phi_cont}.
    The residue of $\Phi_{X,\twist}$ at $s = \tfrac12$ is
    \begin{align*}
        \res_{s=\frac12} \Phi_{X,\twist}(s) &= m_{X,\twist}\left(\frac12\right) - \frac{n_p}{2}\,.
    \end{align*}
    Further, $\Phi_{C_\infty}$ has a pole at $s = \tfrac12$ of order~$1$ with residue~$\tfrac12$ by Lemma~\ref{lem:phi-cusp}. From this it follows that
    \begin{align*}
        \res_{s=\frac12} \frac{Z_{X,\twist}'}{Z_{X,\twist}}(s) = m_{X,\twist}\left(\frac12\right) - n_p\,.
    \end{align*}
    This completes the proof.
\end{proof}

With the following proposition we provide a relation between the scattering determinant and the Selberg zeta function.

\begin{prop}\label{prop:det-selberg}
    The Selberg zeta function $Z_{X,\twist}$ admits a meromorphic continuation to all of~$\C$. It satisfies the identity
    \begin{align*}
        \tau_{X,\twist}(s) &= e^{p(s)}
        \frac{G_\infty(s)^{-\dim(V)\, \eulertop(X)}}{G_\infty(1-s)^{-\dim(V)\, \eulertop(X)}}
        \frac{G_{X^\wedge,\twist}(s)}{G_{X^\wedge,\twist}(1-s)}
        \frac{Z_{X,\twist} (1-s)}{Z_{X,\twist}(s)}\\
        &\phantom{=} \cdot \frac{Z_{X_f,\twist}(s)}{Z_{X_f,\twist}(1-s)}
        \frac{\gammafunc\left(s-\frac12\right)^{n_p}}{\gammafunc\left(\frac12-s\right)^{n_p}}\,,
    \end{align*}
    on all of~$\C$. Here, $p(s) = -s\thin n_p\log4 + c$ for some $c \in \C$.
\end{prop}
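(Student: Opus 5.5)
The plan is to combine Proposition~\ref{prop:upsilon-zeta} with Proposition~\ref{prop:upsilon-tau} and the funnel identity~\eqref{eq:upsilon-Xf}. This gives a differential identity on the critical line, which I then integrate and continue meromorphically. The continuation of $Z_{X,\twist}$ itself is handled at the end through its residues.

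\textbf{Step 1: the differential identity.} On $\{\Rea s=\tfrac12\}\setminus\{\tfrac12\}$, Proposition~\ref{prop:upsilon-tau} gives $-\tau_{X,\twist}'/\tau_{X,\twist} = \Upsilon_{X,\twist}-\Upsilon_{X_f,\twist}$. Inserting Proposition~\ref{prop:upsilon-zeta} for $\Upsilon_{X,\twist}$ and \eqref{eq:upsilon-Xf} for $\Upsilon_{X_f,\twist}$ yields
\[
\frac{d}{ds}\log\tau_{X,\twist}(s) = \frac{d}{ds}\log\Bigl( e^{-p_0(s)}\frac{G_\infty(s)^{-\dim(V)\eulertop(X)}}{G_\infty(1-s)^{-\dim(V)\eulertop(X)}}\frac{G_{X^\wedge,\twist}(s)}{G_{X^\wedge,\twist}(1-s)}\frac{\Gamma(s-\frac12)^{n_p}}{\Gamma(\frac12-s)^{n_p}}\frac{Z_{X_f,\twist}(s)}{Z_{X_f,\twist}(1-s)}\Bigr) - \frac{Z_{X,\twist}'(s)}{Z_{X,\twist}(s)}-\frac{Z_{X,\twist}'(1-s)}{Z_{X,\twist}(1-s)}.
\]
Here $p_0(s)=s(I_p-n_p\log 4)$ is the linear polynomial from the proof of Proposition~\ref{prop:upsilon-zeta}. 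Note that $-\frac{Z'(1-s)}{Z(1-s)}$ is $\frac{d}{ds}\log Z(1-s)$. By Lemma~\ref{lem:decomp-phi}, $Z_{X,\twist}'/Z_{X,\twist}$ is meromorphic on $\C$. Since $\tau_{X,\twist}$, the $G$-factors, the gamma factors and $Z_{X_f,\twist}$ are meromorphic on $\C$, the identity of meromorphic functions extends from the critical line to all of $\C$ by uniqueness of continuation.

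\textbf{Step 2: continuation of $Z_{X,\twist}$.} The identity in Step 1 expresses $Z_{X,\twist}'(s)/Z_{X,\twist}(s)-Z_{X,\twist}'(1-s)/Z_{X,\twist}(1-s)$ through logarithmic derivatives of meromorphic functions. To continue $Z_{X,\twist}$ itself, I show that all residues of $Z_{X,\twist}'/Z_{X,\twist}$ are integers.
\begin{itemize}
\item On $\Rea s\ge\tfrac12$ this is Proposition~\ref{prop:logZeta-poles}.
\item For $\Rea s<\tfrac12$, write $Z'(s)/Z(s) = -Z'(1-s)/Z(1-s) + \frac{d}{ds}\log(\text{meromorphic})$. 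The first term has integer residues because $\Rea(1-s)>\tfrac12$, and so does the logarithmic derivative of a meromorphic function.
\end{itemize}
The factors $G_{X^\wedge,\twist}$ are entire by Lemma~\ref{lem:barnes-cone}, so they cause no non-integer residues. Hence $\exp\int Z'/Z$ defines a meromorphic function on $\C$ that agrees with the Euler product where it converges.

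\textbf{Step 3: integrating.} Once the residues are integers, both sides of the Step 1 identity are logarithmic derivatives of meromorphic functions. Integration therefore gives the stated identity up to a multiplicative constant $e^{c}$.

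\textbf{Step 4: the polynomial.} The exponential factor is $e^{-p_0(s)+c}$. The statement, however, claims $p(s)=-s\,n_p\log4+c$, so the linear $I_p$-contribution $-sI_p$ must be shown to be absent. I expect this to be the main obstacle. To resolve it, I would recompute $I_p$ from Lemma~\ref{lem:phi-cusp}, where
\[
I_p = \tfrac12\sum_{\kappa\in\{\pm1\}}\sum_{\lambda\ne1}\log(1-\lambda^\kappa).
\]
Pairing $\lambda$ with $\bar\lambda=\lambda^{-1}$ and using $\log(1-\lambda)+\log(1-\bar\lambda)=2\log|1-\lambda|$, this is real, so it does not simply vanish. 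The more likely resolution is that the parabolic constant cancels in $\Phi(s)+\Phi(1-s)$. The constant term $-I_p$ appears once in $\Phi(s)$ and once in $\Phi(1-s)$, giving $-2I_p$. This is a genuine constant, and its antiderivative is exactly the linear term $-2sI_p$ that appears in $\Upsilon^{\parab}_{X,\twist}$.

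So I would check two things. First, whether the cusp scattering normalization inside $\tau_{X,\twist}$ (the $-1$ block of $S_{X_{f,c},\twist}$ on the non-singular parabolic eigenspaces) absorbs this term. Second, failing that, whether $-2sI_p$ cancels against a matching contribution from the cusp-cusp term in Proposition~\ref{prop:upsilon-tau}. If neither cancellation occurs, I would state $p(s)=-s(n_p\log 4-2I_p)+c$, which is still linear and is all that the later use needs.
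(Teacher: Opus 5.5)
Your Steps 1--3 are the paper's proof. It combines Proposition~\ref{prop:upsilon-tau}, \eqref{eq:upsilon-Xf} and Proposition~\ref{prop:upsilon-zeta} on the critical line and extends the resulting identity to $\C$ by the identity theorem, using Lemma~\ref{lem:decomp-phi} for meromorphy of $Z_{X,\twist}'/Z_{X,\twist}$. It then gets integer residues on $\{\Rea s\ge\frac12\}$ from Proposition~\ref{prop:logZeta-poles} and on the rest of $\C$ from the functional equation, and integrates. One small slip: in Step~2 the combination that appears is $Z_{X,\twist}'(s)/Z_{X,\twist}(s)+Z_{X,\twist}'(1-s)/Z_{X,\twist}(1-s)$, with $Z_{X,\twist}'$ evaluated at $1-s$. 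Your bullet already uses the correct relation.

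Your Step~4 concern is legitimate, and the paper does not address it: after integrating, it simply asserts that this ``establishes the claimed equation''. However, the resolutions you propose cannot work.
\begin{itemize}
\item Once Propositions~\ref{prop:upsilon-zeta} and~\ref{prop:upsilon-tau} are granted, Step~3 determines $\tau_{X,\twist}$ up to a multiplicative constant. The exponent is therefore forced to be $c-p_0(s)$, and there is no further term left for anything to cancel against.
\item The scattering side cannot see $I_p$. The cusp channels of $S_{X,\twist}$ and $S_{X_{f,c},\twist}$ (the $-1$ block and the $m_j\times m_k$ blocks $S^{\cc}_{jk}$) are the eigenvalue-$1$ eigenspaces of $\twist(p_j)$, not the non-singular ones. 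By contrast, $I_p$ comes exactly from the eigenvalues $\lambda\neq1$, which carry no continuous spectrum.
\item Your fallback formula disagrees with your own Step~1. There $c-p_0(s)=s(n_p\log 4-I_p)+c$, so even the $n_p\log4$ term enters with the sign opposite to the statement.
\end{itemize}

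The mismatch is bookkeeping in the paper's cusp constants, not something a further argument can repair. For instance, $\Upsilon^{\parab}_{X,\twist}=n_p\bigl(\Phi_{C_\infty}(s)+\Phi_{C_\infty}(1-s)\bigr)-2I_p$ actually yields the factor $e^{sn_p\log4+2sI_p}$ in Lemma~\ref{lem:ups-bullet}, not $e^{sn_p\log4-2sI_p}$. The slope written for $p$ in the proof of Proposition~\ref{prop:upsilon-zeta} does not match that lemma either.

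What the argument honestly establishes is that $p$ is a polynomial of degree at most one, with slope read off from $-p_0$. That is all the proof of Theorem~\ref{thm:selberg-factorization} uses.
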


\begin{proof}
    Let $s \in \C$ with $\Rea s = \tfrac12$ and $s \not = \tfrac12$.
    By Proposition~\ref{prop:upsilon-tau} and \eqref{eq:upsilon-Xf},
    \begin{align*}
        \Upsilon_{X,\twist}(s) &= \frac{d}{ds} \log\left( \frac{1}{\tau_{X,\twist}(s)} \frac{Z_{X_f,\twist}(s)}{Z_{X_f,\twist}(1-s)} \right)\,.
    \end{align*}
    By Proposition~\ref{prop:upsilon-zeta},
    \begin{align*}
        \Upsilon_{X,\twist}(s)
        &= \frac{Z_{X,\twist}'(s)}{Z_{X,\twist}(s)} + \frac{Z_{X,\twist}'(1-s)}{Z_{X,\twist}(1-s)} \\
        &\phantom{=} + \frac{d}{ds} \log \left( e^{p(s)} \frac{\gammafunc\left(\frac12-s\right)^{n_p}}{\gammafunc\left(s-\frac12\right)^{n_p}}
        \frac{G_{X^\wedge,\twist}(1-s)}{G_{X^\wedge,\twist}(s)} \frac{G_\infty(1-s)^{-\dim(V)\,\eulertop(X)}}{G_\infty(s)^{-\dim(V)\,\eulertop(X)}} \right)\,.
    \end{align*}
    Thus, with
    \begin{align*}
        f(s) & \coloneqq
        e^{p(s)} \tau_{X,\twist}(s) \frac{Z_{X_f,\twist}(1-s)}{Z_{X_f,\twist}(s) } \frac{\gammafunc\left(\frac12-s\right)^{n_p}}{\gammafunc\left(s-\frac12\right)^{n_p}}
         \\
         &\phantom{=}\quad \cdot
        \frac{G_{X^\wedge,\twist}(1-s)}{G_{X^\wedge,\twist}(s)} \frac{G_\infty(1-s)^{-\dim(V)\, \eulertop(X)}}{G_\infty(s)^{-\dim(V)\, \eulertop(X)}}
    \end{align*}
    we have
    \begin{equation}\label{eq:func_eq-zeta}
        \frac{Z_{X,\twist}'(s)}{Z_{X,\twist}(s)} + \frac{Z_{X,\twist}'(1-s)}{Z_{X,\twist}(1-s)} = - \frac{d}{ds} \log f(s)\,.
    \end{equation}
    As $f$ is meromorphic on~$\C$, its logarithmic derivative is meromorphic on all of~$\C$ as well, and every pole is simple with integer residue.
    Since the logarithmic derivative of $Z_{X,\twist}$ is meromorphic by Lemma~\ref{lem:decomp-phi}, the identity theorem of complex analysis implies that \eqref{eq:func_eq-zeta} is valid on all of~$\C$ as an equality of meromorphic functions.

    By Proposition~\ref{prop:logZeta-poles}, each pole of~$Z_{X,\twist}'/Z_{X,\twist}$ in $\{\Rea s \geq \tfrac12\}$ is simple and has an integer residue. Combination with~\eqref{eq:func_eq-zeta} yields that this holds on all of~$\C$.
    Therefore, $Z_{X,\twist}'/Z_{X,\twist}$ can be integrated to a meromorphic function $Z_{X,\twist}$ on all of~$\C$. Moreover, this establishes the  claimed equation between $Z_{X,\twist}$ and $\tau_{X,\twist}$.

    We emphasize that the polynomial in Proposition~\ref{prop:det-selberg} is not the same as in Proposition~\ref{prop:upsilon-zeta}
    since the integration yields a factor $e^{c}$ for some $c \in \C$.
\end{proof}

To establish that the function~$p$ in Theorem~\ref{thm:selberg-factorization} is indeed a polynomial of degree at most~$2$, we will require the bound on the regularized trace of the resolvent shown in Lemma~\ref{lem:growth}. To that end let
\begin{align*}
    \mathcal{B} \coloneqq B_{1}\left(\tfrac12\right) \cup \bigcup_{\zeta \in \ResSet_{X,\twist}} B_{\ang{\zeta}^{-(2+\delta)}}(\zeta)\,,
\end{align*}
where $B_r(z)$ denotes the open Euclidean ball in~$\C$ of radius~$r$ with center~$z$.

\begin{lemma}\label{lem:growth}
    For any $\kappa > 0$ and $s \in \C$ with $\abs{\Rea s - \tfrac12} \leq \eps$ and $s \not \in \mathcal{B}$, we have
    \begin{align*}
        \log \abs{\Phi_{X,\twist}(s)} = O(\ang{s}^{2 + \kappa})\,.
    \end{align*}
\end{lemma}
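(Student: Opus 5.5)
The plan is to follow the strategy of Borthwick, Judge and Perry and of \cite[Lemma~10.6]{Borthwick_book}: write $\Phi_{X,\twist}(s)$ via the resolvent parametrix and control it by Hilbert--Schmidt and determinant estimates. Concretely, I use the representation $\ResTwist(s) = M(s)(\id - L(s))^{-1}$. From it, $\varphi_{X,\twist}(s,\cdot)$ splits into two parts. The first is the contribution of $M(s)$ minus $R_\h(s)$, made of model terms whose $0$-integrals are explicit: Lemma~\ref{lem:phi-cusp}, the funnel computation behind Proposition~\ref{prop:funnel-zeta-deriv}, and Proposition~\ref{prop:phi-cone}. The second is the remainder $M(s)L(s)(\id-L(s))^{-1}$. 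The explicit model contributions are combinations of digamma functions, $\cot$ terms and logarithmic derivatives of entire functions of order $2$ such as $Z_{F_\ell,\twist}$. On the strip $\abs{\Rea s-\tfrac12}\le\eps$, away from $B_1(\tfrac12)$ and from the poles, they grow at most polynomially, and this is far better than the claim.

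For the remainder, I insert the weight $\rho$ and write $L(s) = (L(s)\rho)\rho^{-1}$. By Lemma~\ref{lem:bound-Ls}, $\norm{L(s)\rho}_{\HS}\lesssim\ang{s}^2$ on the strip. I then reduce to a trace-class operator: take $K(s)\coloneqq (L(s)\rho)^{k}$ suitably conjugated, or more simply $L(s)^3$, which is trace class after weighting. This gives a Fredholm determinant $D(s)\coloneqq\det(\id - L(s)^3)$ in a weighted space, with $\log\abs{D(s)}\lesssim \norm{L\rho}_{\HS}^2\lesssim\ang{s}^4$. To get the exponent $2+\kappa$ instead, I would use the refined estimate via singular values, exactly as in \cite[Lemma~9.3]{Borthwick_book}: $\log\abs{D(s)}=O(\ang{s}^{2+\kappa})$ on the strip. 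The poles of $(\id-L(s))^{-1}$ are contained in the resonance set, so $D$ vanishes at most there with multiplicity $\le$ that of the resonances.

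Next I express the remainder's $0$-integral in terms of $D$. It is a finite combination of $\tr\bigl(\rho^{\pm}(\dots)(\id-L(s)^3)^{-1}\bigr)$ whose numerators are bounded polynomially in $\ang{s}$. I bound $\norm{(\id-L(s)^3)^{-1}}$ by $\exp(C\ang{s}^{2+\kappa})/\abs{D(s)}$, using the standard inequality $\norm{(\id-A)^{-1}}\le \det(\id+\abs{A})/\abs{\det(\id-A)}$. Finally, I need a lower bound for $\abs{D(s)}$ off $\mathcal{B}$. Write $D = e^{g}\prod E_2(s/\zeta)$ over the zeros, which lie in the resonance set; this factorization is available since $D$ has finite order on appropriate regions, or alternatively via Carathéodory/Cartan minimum-modulus estimates on discs. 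The resonance count $N_{X,\twist}(r)=O(r^2)$ from \cite[Theorem~B]{DFP} and the excised discs of radius $\ang{\zeta}^{-(2+\delta)}$ then give the Cartan-type lemma $\log\abs{D(s)}\ge -C\ang{s}^{2+\kappa}$ for $s\notin\mathcal{B}$. Combining these, $\abs{\Phi_{X,\twist}(s)}\le \exp(C\ang{s}^{2+\kappa})$, which is the claim.

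The main obstacle is this last minimum-modulus step. $D$ is only controlled on a strip, not globally, so the Cartan/Hadamard lower bound must be applied on discs of radius comparable to $\ang{s}$ centered in the strip. For that I need an upper bound for $\log\abs{D}$ on such discs, and it is not given by Lemma~\ref{lem:bound-Ls}, which is restricted to the strip. I would first try to extend the Hilbert--Schmidt estimate of Lemma~\ref{lem:bound-Ls} to a wider vertical strip $\abs{\Rea s-\tfrac12}\le C$, using the same model-resolvent bounds from \cite{DFP}. I would then apply the Cartan lemma on discs of fixed radius centred on the line $\Rea s=\tfrac12$, which suffices because the count of zeros in such discs is $O(\ang{s}^{2})$.
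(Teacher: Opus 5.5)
Your architecture matches the paper's. You split off the model contributions, which are polynomially bounded, and control the remainder through $(\id-L(s))^{-1}$ together with $\norm{L(s)\rho}_{\HS}\lesssim\ang{s}^2$ from Lemma~\ref{lem:bound-Ls}. The difference is the bound on the inverse. The paper does not derive it: it imports $\norm{(\id-\eta_3L(s))^{-1}}\lesssim e^{C\ang{s}^{2+\eps}}$ for $s\notin\mathcal{B}$ from \cite[(85)]{DFP2}, which rests on \cite[Lemma~6.1]{DFP}.

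Your attempt to prove that bound from strip information has a genuine gap, and it is not the one you identify. Every minimum-modulus argument on a disc (Cartan, Borel--Carath\'eodory, Hadamard) needs a lower bound for $\abs{D}$ at some reference point of the disc. Upper bounds for $\log\abs{D}$ on a vertical strip, however wide, together with a count of zeros, give nothing. For example, $f(s)=e^{-s^4}$ has no zeros and satisfies $\log\abs{f}\le C$ on every vertical strip, yet $\log\abs{f(\tfrac12+it)}=-t^4+O(t^2)$. So widening the strip and running Cartan on fixed-radius discs centred on $\Rea s=\tfrac12$ cannot yield $\log\abs{D(s)}\ge -C\ang{s}^{2+\kappa}$. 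The zero count only controls the polynomial part of $D$, not its zero-free factor. What is missing is a point near each disc where $\abs{D}$ is bounded below, plus an upper bound for $\log\abs{D}$ on a disc around that point containing $s$. With the paper's parametrix (base point $s_0$ fixed), such a point is available essentially only near $s_0$ with $\Rea s_0\gg1$, where $L(s_0)$ is small. The discs must then have radius comparable to $\ang{s}$, so you need growth control of the determinant on essentially all of $\C$. That is the finite-order analysis behind $N_{X,\twist}(r)=O(r^2)$ in \cite{DFP}, which the cited estimate packages.

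There is also a concrete error in your choice of determinant. Since $\id-L^3=(\id-L)(\id-e^{2\pi i/3}L)(\id-e^{-2\pi i/3}L)$, $\det(\id-L(s)^3)$ vanishes whenever $L(s)$ has an eigenvalue in $\{1,e^{2\pi i/3},e^{-2\pi i/3}\}$. Only the eigenvalue $1$ is tied to poles of $(\id-L(s))^{-1}$. Hence the zeros of your $D$ are not contained in $\ResSet_{X,\twist}$, and a lower bound for $\abs{D}$ off $\mathcal{B}$ (which excises discs around resonances only) fails in general. Near such spurious zeros your inequality $\norm{(\id-L^3)^{-1}}\le\det(\id+\abs{L^3})/\abs{D}$ blows up, even though $\Phi_{X,\twist}$ is regular there.

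Since Lemma~\ref{lem:bound-Ls} is a Hilbert--Schmidt bound, the natural object is the regularized determinant $\det_2(\id-\rho^{-1}L(s)\rho)$. Its zeros are exactly where $\id-L(s)$ fails to be invertible on the weighted space; you would still have to show these points are resonances, or else enlarge the exceptional set. Combine it with Carleman's inequality $\norm{(\id-A)^{-1}}\le\abs{\det_2(\id-A)}^{-1}\exp\bigl(\tfrac12(\norm{A}_{\HS}^2+1)\bigr)$. The simplest repair of the whole step is to quote \cite[(85)]{DFP2}, as the paper does; after that your outline goes through.
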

\begin{proof}
    We recall from \cite[Proof of Theorem 4.1]{DFP2} that the resolvent can be written as $\ResTwist(s) = M(s) (\id + K(s))$ with $M$ from~\eqref{def:M} and $K$ suitable. Further, we pick $s_0 \in \C$ with sufficiently large real part.
    We have
    \begin{align*}
        \frac{1}{2s-1} \left( \Phi_{X,\twist}(s) - \Phi_{X,\twist}(s_0) \right)
        &= \zTr\left(\ResTwist(s) - \ResTwist(s_0)\right) \\
        &\phantom{=}\quad - \zTr \left(R_\h(s) - R_\h(s_0)\right) \\
        &= \zTr( M(s) - M(s_0) )\\
        &\phantom{=}\quad + \zTr\left( (M(s) - M(s_0)) K(s)\right) \\
        &\phantom{=}\quad + \zTr\left( M(s_0) (K(s) - K(s_0))\right) \\
        &\phantom{=}\quad + a(s,s_0) \dim(V) \zVol(X)\,,
    \end{align*}
    where $a(s,s_0) = (2\pi)^{-1} \left( \psi(s) - \psi(s_0) \right)$ by \eqref{eq:resolv_H_diag}.
    We have to find bounds on the three traces.

    \medskip

    \paragraph{\textbf{The first term}}
    Using $M(s) = M_i + M_f(s) + M_c(s)$ (see \cite[Section 4]{DFP2}) we obtain that
    \begin{align*}
        M(s) - M(s_0) = M_f(s) - M_f(s_0) + M_c(s) - M_c(s_0)\,.
    \end{align*}
    We consider first the funnel terms. To that end we write
    \begin{align*}
        M_f(s) - M_f(s_0) = (1 - \eta_{f,0}) \left( R_{X_f,\twist}(s) - R_{X_f,\twist}(s_0) \right) (1 - \eta_{f,1})\,.
    \end{align*}
    Let $R_f(s,s_0) \coloneqq R_{X_f,\twist}(s) - R_{X_f,\twist}(s_0)$. Then
    \begin{align*}
        \zTr\left(M_f(s) - M_f(s_0)\right)
        &= \zInt{X_f} R_f(s,s_0,z,z) \left(1 - \eta_{f,1}(z)\right) \,d\mu_X(z) \\
        &= \zInt{X_f} R_f(s,s_0,z,z) \,d\mu_X(z) \\
        &\phantom{=}\quad - \zInt{X_f} R_f(s,s_0,z,z) \eta_{f,1}(z)\,d\mu_X(z)\\
        &= \zInt{X_f} R_f(s,s_0,z,z) \,d\mu_X(z) \\
        &\phantom{=}\quad - \zInt{X_f} \eta_{f,2}(z) R_f(s,s_0,z,z) \eta_{f,1}(z)\,d\mu_X(z)\,.
    \end{align*}
    Since $\zInt{X_f} R_\h(s;z,z) d\mu_X(z) = 0$, we obtain that
    \begin{align*}
        \zTr\bigl(M_f(s) & - M_f(s_0)\bigr)
        \\
        &= \frac{1}{2s-1} \left(\Phi_{X_f,\twist}(s) - \Phi_{X_f,\twist}(s_0)\right)
           - \Tr\left( \eta_{f,2} R_f(s,s_0) \eta_{f,1} \right)\,.
    \end{align*}
    From Proposition~\ref{prop:funnel-zeta-deriv} we obtain a bound in the strip $\abs{\Rea s - \tfrac12} \leq \eps$.
    Using an integral representation of the hypergeometric function, \eqref{eq:resolv_H_int}, we obtain for $\Rea s > 0$ that
    the model cylinder resolvent can be written as
    \begin{align*}
        R_{C_\ell,\twist}(s;z,w) = \frac{1}{4\pi} \sum_{k\in \Z} \int_0^1 \twist(g_\ell)^k \frac{ (t (1-t))^{s-1}}{(\sigma(z,e^{k\ell}w) - t)^s} \,dt\,.
    \end{align*}
    We can hence apply the argument of the untwisted case (see Borthwick~\cite[p. 233]{Borthwick_book}) together with the unitarity of $\twist(g_\ell)$ to obtain the bound
    \begin{align*}
        \abs*{\Tr \left( \eta_{f,2} R_f(s,s_0) \eta_{f,1} \right)} \lesssim \ang{s}
    \end{align*}
    for $\abs{\Rea s - \tfrac12} \leq \eps$ and $\abs{s - \tfrac12} \geq 1$.

    Analogously, setting $R_c(s,s_0) \coloneqq R_{X_c,\twist}(s) - R_{X_c,\twist}(s_0)$, we obtain for the cusp end
    \begin{align*}
        \zTr\bigl(M_c(s) & - M_c(s_0)\bigr)
        \\
        &= \frac{1}{2s-1} \left(\Phi_{X_c,\twist}(s) - \Phi_{X_c,\twist}(s_0)\right)  - \Tr\left( \eta_{c,2} R_c(s,s_0) \eta_{c,1} \right)\,.
    \end{align*}
    The first two summands can be estimated using Lemma~\ref{lem:phi-cusp}.
    Using the asymptotic expansion of the Bessel functions (see \cite[10.40.6]{NIST:DLMF}),
    we see that, for fixed $s$ and $s_0$ and uniformly on compact subsets of~$\h$ for~$y$,
    \begin{align*}
        u_{\kappa}(s;y,y) - u_{\kappa}(s_0;y,y) = O(\kappa^{-2})
    \end{align*}
    as $\kappa \to +\infty$.
    Thus, the Fourier series of $R_{X_c,\twist}(s;z,z) - R_{X_c,\twist}(s_0;z,z)$ converges and
    hence we can use the estimates of Bessel functions to obtain the bound
    \begin{align*}
        \abs*{\Tr \left( \eta_{c,2} R_c(s,s_0) \eta_{c,1} \right)} \lesssim \ang{s}
    \end{align*}
    for $\abs{\Rea s - \tfrac12} \leq \eps$ and $\abs{s - \tfrac12} \geq 1$.

    Combining the model funnel and the model cusp terms gives
    \begin{align*}
        \abs*{\zTr \left( M(s) - M(s_0) \right)} \lesssim \ang{s}
    \end{align*}
    for $\abs{\Rea s - \tfrac12} \leq \eps$ and $\abs{s - \tfrac12} \geq 1$.

    \medskip

    \paragraph{\textbf{The second term}}
    For the term $\zTr\left( (M(s) - M(s_0)) K(s)\right)$
    we use \cite[(37)]{DFP2} and $\eta_3 L(s) = L(s)$ to see that
    \begin{align}\label{eq:KasL}
        K(s) = \eta_3 (\id - \eta_3 L(s))^{-1} L(s)\,.
    \end{align}
    It follows that
    \begin{align*}
        (M(s) - M(s_0)) K(s)= (M(s) - M(s_0)) \eta_3 (\id - \eta_3 L(s))^{-1} L(s)\,.
    \end{align*}
    Therefore
    \begin{align*}
        \rho^{-1} (M(s) - M(s_0)) \eta_3
    \end{align*}
    is trace-class for $\Rea s > -\tfrac12$ (recall that $\rho$ is the boundary defining function). Analogously as for the first term, we obtain the estimate
    \begin{align*}
        \norm{ \rho^{-1} (M(s) - M(s_0)) \eta_3}_{\Tr} \lesssim \ang{s}
    \end{align*}
    for $\abs{\Rea s - \tfrac12} \leq \eps$ and $\abs{s - \tfrac12} \geq 1$.
    The operator norm of $(\id - \eta_3 L(s))^{-1}$ was estimated in \cite[(85)]{DFP2} as
    \begin{align}\label{eq:Linverse-bound}
        \norm{ (\id - \eta_3 L(s))^{-1} } \lesssim e^{C \ang{s}^{2+\eps}}
    \end{align}
    for $s \not \in \mathcal{B}$, using \cite[Lemma~6.1]{DFP}.
    Taking advantage of the control of the operator norm by the Hilbert--Schmidt norm, we obtain from Lemma~\ref{lem:bound-Ls} that
    \begin{align*}
        \norm{ L(s) \rho} \lesssim \ang{s}^2\,.
    \end{align*}
    Combining these estimate, we arrive at
    \begin{align*}
        \abs{ \zTr&\left( (M(s) - M(s_0)) K(s)\right) } \\
        &\leq \norm{ \rho^{-1} (M(s) - M(s_0)) K(s) \rho}_{\Tr} \\
        &\leq \norm{ \rho^{-1} (M(s) - M(s_0)) \eta_3 \rho}_{\Tr} \, \norm{(\id - \eta_3 L(s))^{-1} } \, \norm{ L(s) \rho} \\
        &\lesssim e^{C\ang{s}^{2 + \eps}}\,.
    \end{align*}

    \medskip

    \paragraph{\textbf{The third term}}
    As for the second term, we again conjugate by $\rho$ to convert the zero-trace into an actual trace.
    From the model calculations in \cite{DFP} we see that
    \begin{align*}
        \rho^{-1} M(s_0) \eta_3
    \end{align*}
    has integrable kernel for $\Rea s_0 > \tfrac32$ and hence is a Hilbert--Schmidt operator.
    We remark that integrating the model terms amounts to \[\int_0^1 \rho^{2s_0-2} \frac{d\rho}{\rho}\] for the funnel
        and to \[\int_1^\infty y^{2s_0 -2} \frac{dy}{y^2}\] for the cusp.
    We have to show that
    \begin{align*}
        (\id - \eta_3 L(s))^{-1} L(s) \rho - (\id - \eta_3 L(s_0))^{-1} L(s_0) \rho
    \end{align*}
    is Hilbert--Schmidt as well and estimate its norm.
    The second summand poses no issues
    since we can estimate $L(s_0) \rho$ in the same way as $\rho^{-1} M(s_0) \eta_3$, and $(\id - \eta_3 L(s_0))^{-1}$ is bounded.
    By Lemma~\ref{lem:bound-Ls},
    \begin{align*}
        \norm{L(s) \rho}_{\HS} \lesssim \ang{s}^2
    \end{align*}
    for $\abs{\Rea s - \tfrac12} \leq \eps$ and $\abs{s - \tfrac12} > 1$. For $(\id - \eta_3 L(s))^{-1}$ we again use \eqref{eq:Linverse-bound}.
\end{proof}

With these preparations we can now prove the main theorem.

\begin{proof}[Proof of Theorem~\ref{thm:selberg-factorization}]
Because Theorem~\ref{thm:selberg-factorization} is shown in Section~\ref{sec:cyclic} for all cyclic~$\group$, we now suppose that $\group$ is non-cyclic. We further require that $X=\group\backslash\h$ has at least one funnel. Therefore $n_d=0$ and $X$ is of infinite area. Let $\twist \colon \group \to \Unit(V)$ be any finite-dimensional unitary representation of~$\group$.

Combining \cite[Theorem A]{DFP2} and Proposition~\ref{prop:det-selberg} yields
    \begin{equation}\label{eq:reflection}
        \begin{aligned}
            \frac{Z_{X,\twist}(s)}{Z_{X,\twist}(1-s)} &= e^{q(s)}
            \frac{G_\infty(s)^{- \dim(V)\, \eulertop(X)}}{G_\infty(1-s)^{-\dim(V)\, \eulertop(X)}}
            \frac{G_{X^\wedge,\twist}(s)}{G_{X^\wedge,\twist}(1 - s)} \\
            &\phantom{=}\quad \cdot \frac{\mc P_{X,\twist}(s)}{\mc P_{X,\twist}(1-s)}
            \frac{\gammafunc\left(s-\frac12\right)^{n_p}}{\gammafunc\left(\frac12-s\right)^{n_p}}
        \end{aligned}
    \end{equation}
    for some polynomial $q$ of degree 4.
    Note that all contributions from the funnels cancel due to factorization of
    the Selberg zeta function for the model funnel as provided by
    Proposition~\ref{prop:selberg-funnel}.

    We now mimick the reasoning of~\cite{BJP} and set
    \begin{align*}
        F(s) \coloneqq \frac{Z_{X,\twist}(s)}
        {G_\infty(s)^{-\dim(V)\, \eulertop(X)} G_{X^\wedge,\twist}(s)\ProdTwist(s) \gammafunc\left(s-\frac12\right)^{n_p} }\,.
    \end{align*}
    We note that for $\Rea s \geq \tfrac12$ the function $G_\infty(s)^{-\dim(V)\,\eulertop(X)}$
    is holomorphic with no zeros, $\gammafunc\left(s-\tfrac12\right)^{n_p}$ has a single pole, it is of
    order $n_p$ at $s = \tfrac12$, and $\ProdTwist(s)$ has poles of order $m_{X,\twist}(s)$ at
    $s(1-s) \in \sigma_d(\LapTwist)$.
    Using Proposition \ref{prop:logZeta-poles} we obtain that $F$ has
    no zeros or poles for $\Rea s \geq \tfrac12$.
    Reformulating \eqref{eq:reflection} in terms of $F(s)$ and $F(1-s)$, we observe
    the reflection formula $F(s) = e^{q(s)} F(1-s)$. This allows us to conclude that $F$ is an
    entire function without zeros. Therefore
    \begin{align}\label{eq:factorization-proof}
        Z_{X,\twist}(s) = e^{p(s)} G_\infty(s)^{-\dim(V)\,\eulertop(X)} G_{X^\wedge,\twist}(s)
        \gammafunc\left(s-\frac12\right)^{n_p} \cdot \ProdTwist(s)\,,
    \end{align}
    where $p$ is an entire function.

    It remains to show that $p$ is a
    polynomial of degree at most~$2$. To that end we use that $Z_{X,\twist}$ is bounded on $\{\Rea s > \delta\}$ ($\delta$ being the Hausdorff dimension of the limit set of~$\group$). Thus, there is $C>0$ such that
   \begin{align*}
        \abs{Z_{X,\twist}(s)} < C
    \end{align*}
    for all $s\in\C$ with $\Rea s > \delta$.
    Using that $G_\infty$, $G_{X^\wedge,\twist}$ and $\ProdTwist$ are of order $2$ (see Section~\ref{sec:special-functions}) and $1/\gammafunc$ is entire and of order $1$, we conclude that
    \begin{align*}
        \abs{p(s)} \leq C_\kappa \abs{s}^{2 + \kappa}\quad \text{for $\Rea(s) > \delta$}
    \end{align*}
    for some $\kappa > 0$.
    From the functional equation~\eqref{eq:reflection} we obtain the bound
    \begin{align*}
        \abs{p(s)} \leq C_\kappa \abs{s}^{4 + \kappa}\quad \text{for $\Rea(s) < 1-\delta$}\,.
    \end{align*}
    For $\Rea(s) \in [1-\delta,\delta]$ with $s \not \in \ResSet_{X,\twist}$, \eqref{eq:factorization-proof} and Lemma~\ref{lem:decomp-phi} imply that
    \begin{align*}
        p'(s) = \Phi_{X,\twist}(s) + \dim(V) \eulerorb(X) \frac{G_\infty'(s)}{G_\infty(s)} + f(s)\,,
    \end{align*}
    where
    $f$ is the logarithmic derivative of an entire function of order $2$.
    Since $G_\infty$ is entire of order $2$, Lemma~\ref{lem:growth} yields a bound for $p'$ that can be integrated to
    \begin{align*}
        \abs{p(s)} = O\left(e^{C\ang{s}^{2+\kappa}}\right)
    \end{align*}
    for $s \not \in \mathcal{B}$ and $\abs{\Rea s - \tfrac12} \leq \eps$.
    Since $p$ is entire, the bound is valid for all $s \in \C$ with $\abs{\Rea s - \tfrac12} \leq \eps$.
    From the Phragm\'en--Lindelöf theorem we obtain that
    \begin{align*}
        \abs{p(s)} \leq C_\kappa \abs{s}^{4 + \kappa}
    \end{align*}
    for all $s \in \C$. This shows that $p$ is a polynomial of degree at most $4$.

    In summary, we have
    \begin{align*}
        p(s) &= \log Z_{X,\twist}(s) + \dim(V)\, \eulertop(X) \log G_\infty(s) \\
        &\phantom{=}\ - \log G_{X,\twist}(s) - \log\ProdTwist(s) - n_p \log\gammafunc\left(s-\frac12\right)\,.
    \end{align*}
    For $\Rea s \to \infty$, the term $\log Z_{X,\twist}(s)$ decays exponentially and all other summands are bounded by $\ang{s}^{2+\eps}$
    for all $\eps > 0$. Therefore $p$ is a polynomial of degree at most~$2$.
\end{proof}

\printbibliography

@article{APW,
	AUTHOR = "A. Adam and A. Pohl and A. Wei{\ss}e",
	TITLE = "Zero is a resonance of every {S}chottky surface",
	NOTE = "arXiv:1808.09239",
	year = {2018}
}

@article{Barnes,
 author = {Barnes, E.},
 title = {The theory of the {{\(G\)}}-function.},
 fjournal = {The Quarterly Journal of Pure and Applied Mathematics},
 journal = {Quart. J.},
 volume = {31},
 pages = {264--314},
 year = {1900},
 language = {English},
 zbMATH = {2668382},
 JFM = {30.0389.02}
}

@book{Boas,
 author = {Boas, R.},
 title = {Entire functions},
 fseries = {Pure and Applied Mathematics (Academic Press)},
 series = {Pure Appl. Math., Academic Press},
 issn = {0079-8169},
 volume = {5},
 year = {1954},
 _publisher = {Academic Press, New York, NY},
 _language = {English},
 _keywords = {30-01,30-02,30D30},
 _zbMATH = {3093575},
 _Zbl = {0058.30201}
}

@Book{Borthwick_book,
    Author = {D. {Borthwick}},
    Title = {{Spectral theory of infinite-area hyperbolic surfaces}},
    Edition = {2nd edition},
    Pages = {xiii + 463},
    Year = {2016},
    Publisher = {Birkh\"auser/Springer},
}

@article {BJP,
    AUTHOR = {Borthwick, D. and Judge, C. and Perry, P.},
     TITLE = {Selberg's zeta function and the spectral geometry of
              geometrically finite hyperbolic surfaces},
   JOURNAL = {Comment. Math. Helv.},
  FJOURNAL = {Commentarii Mathematici Helvetici. A Journal of the Swiss
              Mathematical Society},
    VOLUME = {80},
      YEAR = {2005},
    NUMBER = {3},
     PAGES = {483--515},
}

@InCollection{Chang_Mayer_eigen,
	AUTHOR = {Chang, C. and Mayer, D.},
	TITLE = "Eigenfunctions of the transfer operators and the period functions for modular groups",
	_BOOKTITLE = "Dynamical, spectral, and arithmetic zeta functions ({S}an {A}ntonio, {TX}, 1999)",
	SERIES = "Contemp. Math.",
	VOLUME = "290",
	PAGES = "1--40",
	PUBLISHER = "Amer. Math. Soc.",
	ADDRESS = "Providence, RI",
	YEAR = "2001",
	_MRCLASS = "11F67 (11M36 37C30 37D40)",
	_MRNUMBER = "MR1868466 (2003h:11056)"
}

@misc{Napkin,
 author = {Chen, Evan},
 title = {An infinitely large napkin},
 note={https://github.com/vEnhance/napkin}
}

@misc{NIST:DLMF,
         key = "{\relax DLMF}",
       title = "{\it NIST Digital Library of Mathematical Functions}",
howpublished = "\url{https://dlmf.nist.gov/}, Release 1.2.7 of 2026-06-15",
         url = "https://dlmf.nist.gov/",
        note = "F.~W.~J. Olver, A.~B. {Olde Daalhuis}, D.~W. Lozier, B.~I. Schneider,
                R.~F. Boisvert, C.~W. Clark, B.~R. Miller, B.~V. Saunders,
                H.~S. Cohl, and M.~A. McClain, eds."
}

@article {DFP,
    AUTHOR = {Doll, M. and Fedosova, K. and Pohl, A.},
     TITLE = {Counting resonances on hyperbolic surfaces with unitary
              twists},
   JOURNAL = {Comm. Anal. Geom.},
  FJOURNAL = {Communications in Analysis and Geometry},
    VOLUME = {32},
      YEAR = {2024},
    NUMBER = {10},
     PAGES = {2805--2887},
      ISSN = {1019-8385,1944-9992},
   MRCLASS = {30F15 (20H10 30F35 57K20)},
  MRNUMBER = {4844965},
       _DOI = {10.4310/cag.241231033317},
       _URL = {https://doi.org/10.4310/cag.241231033317},
}

@article {DFP2,
    AUTHOR = {Doll, M. and Fedosova, K. and Pohl, A.},
     TITLE = {Scattering theory with unitary twists},
   JOURNAL = {J. Anal. Math.},
  FJOURNAL = {Journal d'Analyse Math\'ematique},
    VOLUME = {153},
      YEAR = {2024},
    NUMBER = {1},
     PAGES = {111--167},
      ISSN = {0021-7670,1565-8538},
   MRCLASS = {58J50},
  MRNUMBER = {4803436},
       _DOI = {10.1007/s11854-023-0313-0},
       _URL = {https://doi.org/10.1007/s11854-023-0313-0},
}

@book{ErdelyiI,
    AUTHOR = {Erd\'{e}lyi, A. and Magnus, W. and Oberhettinger, F.
              and Tricomi, F.},
     TITLE = {Higher transcendental functions. {V}ol. {I}},
      _NOTE = {Based on notes left by Harry Bateman,
              With a preface by Mina Rees,
              With a foreword by E. C. Watson,
              Reprint of the 1953 original},
 PUBLISHER = {Robert E. Krieger Publishing Co., Inc., Melbourne, Fla.},
      YEAR = {1981},
     PAGES = {xiii+302},
      ISBN = {0-89874-069-X},
}

@Article{FP_szf,
 Author = {Fedosova, K. and Pohl, A.},
 Title = {Meromorphic continuation of {Selberg} zeta functions with twists having non-expanding cusp monodromy},
 FJournal = {Selecta Mathematica. New Series},
 Journal = {Sel. Math., New Ser.},
 ISSN = {1022-1824},
 Volume = {26},
 Number = {1},
 Pages = {55},
 Note = {Id/No 9},
 Year = {2020},
 _Language = {English},
 _DOI = {10.1007/s00029-019-0534-3},
 _Keywords = {11M36,37C30,37D35},
 _zbMATH = {7161023},
 _Zbl = {1453.11118}
}

@Misc{Fischer,
    Author = {J. {Fischer}},
    Title = {{An approach to the Selberg trace formula via the Selberg zeta-function}},
    ISBN = {3-540-15208-3},
    Year = {1987},
    _Language = {English},
    HowPublished = {{Springer Lecture Notes in Mathematics, vol. 1253}},
    _MSC2010 = {11F72 58J50 11F12 11-02 11M36},
    _Zbl = {0618.10029}
}

@InCollection{Guillope,
    Author = {L. {Guillop\'e}},
    Title = {{Fonctions z\^eta de Selberg et surfaces de g\'eom\'etrie finie}},
    BookTitle = {{Zeta functions in geometry}},
    ISBN = {4-314-10078-8/hbk},
    Pages = {33--70},
    Year = {1992},
    Publisher = {Tokyo: Kinokuniya Company Ltd.},
    _Language = {French},
    _MSC2010 = {58J50 11F72},
    _Zbl = {0794.58044}
}

@article {Mayer_thermo,
    AUTHOR = {Mayer, D.},
     TITLE = {On the thermodynamic formalism for the {G}auss map},
   JOURNAL = {Comm. Math. Phys.},
  FJOURNAL = {Communications in Mathematical Physics},
    VOLUME = {130},
      YEAR = {1990},
    NUMBER = {2},
     PAGES = {311--333},
      ISSN = {0010-3616},
     CODEN = {CMPHAY},
   _MRCLASS = {58F19 (47A35 47B10 47B38 58F20)},
  _MRNUMBER = {MR1059321 (91g:58216)},
_MRREVIEWER = {Viviane Baladi},
       _URL = {http://projecteuclid.org/getRecord?id=euclid.cmp/1104200514},
}

@article {Mayer_thermoPSL,
     AUTHOR = {Mayer, D.},
     TITLE = {The thermodynamic formalism approach to {S}elberg's zeta
              function for {${\rm PSL}(2,{\bf Z})$}},
   JOURNAL = {Bull. Amer. Math. Soc. (N.S.)},
  FJOURNAL = {American Mathematical Society. Bulletin. New Series},
    VOLUME = {25},
      YEAR = {1991},
    NUMBER = {1},
     PAGES = {55--60},
      ISSN = {0273-0979},
     CODEN = {BAMOAD},
   MRCLASS = {58F20 (11F72 11M26 58G25)},
  _MRNUMBER = {MR1080004 (91j:58130)},
_MRREVIEWER = {Viviane Baladi},
       _DOI = {10.1090/S0273-0979-1991-16023-4},
       _URL = {http://dx.doi.org/10.1090/S0273-0979-1991-16023-4},
}

@article{Moeller_Pohl,
	AUTHOR = "M. M{\"o}ller and A. Pohl",
	TITLE = "Period functions for {H}ecke triangle groups, and the {S}elberg zeta function as a {F}redholm determinant",
	journal = "Ergodic Theory Dynam. Systems",
	Volume = "33",
	NUMBER = {1},
	PAGES = {247--283},
	YEAR = "2013"
}

@book{Olver74,
    AUTHOR = {Olver, F.},
     TITLE = {Asymptotics and special functions},
    SERIES = {AKP Classics},
      NOTE = {Reprint of the 1974 original.},
 PUBLISHER = {A K Peters, Ltd., Wellesley, MA},
      YEAR = {1997},
     PAGES = {xviii+572},
      ISBN = {1-56881-069-5},
   _MRCLASS = {41-02 (33Cxx 41A60 65D20)},
  _MRNUMBER = {1429619},
}

@incollection{Patterson_zeta,
    AUTHOR = {Patterson, S. J.},
     TITLE = {The {S}elberg zeta-function of a {K}leinian group},
 BOOKTITLE = {Number theory, trace formulas and discrete groups ({O}slo,
              1987)},
     PAGES = {409--441},
 PUBLISHER = {Academic Press, Boston, MA},
      YEAR = {1989},
}

@article {Phillips_perturb_twist,
    AUTHOR = {Phillips, R.},
     TITLE = {Perturbation theory for twisted automorphic functions},
   JOURNAL = {Geom. Funct. Anal.},
  FJOURNAL = {Geometric and Functional Analysis},
    VOLUME = {7},
      YEAR = {1997},
    NUMBER = {1},
     PAGES = {120--144},
      ISSN = {1016-443X},
   _MRCLASS = {11F72 (58G25)},
  _MRNUMBER = {1437475},
_MRREVIEWER = {Christopher M. Judge},
       _DOI = {10.1007/PL00001614},
       _URL = {https://doi.org/10.1007/PL00001614},
}

@article {Phillips_scattering_twist,
    AUTHOR = {Phillips, R.},
     TITLE = {Scattering theory for twisted automorphic functions},
   JOURNAL = {Trans. Amer. Math. Soc.},
  FJOURNAL = {Transactions of the American Mathematical Society},
    VOLUME = {350},
      YEAR = {1998},
    NUMBER = {7},
     PAGES = {2753--2778},
      ISSN = {0002-9947},
   _MRCLASS = {11F72 (35L05 35P25 47A40 58G25)},
  _MRNUMBER = {1466954},
_MRREVIEWER = {Jeffrey Stopple},
       _DOI = {10.1090/S0002-9947-98-02164-3},
       _URL = {https://doi.org/10.1090/S0002-9947-98-02164-3},
}

@Article{ Pohl_Symdyn2d,
	author = "A. Pohl",
	title = "Symbolic dynamics for the geodesic flow on two-dimensional hyperbolic good orbifolds",
	journal = "Discrete Contin. Dyn. Syst., Ser. A",
	year = "2014",
	volume = "34",
	number = "5",
	pages = "2173--2241",
}

@Article{Pohl_hecke_infinite,
 Author = {Pohl, A.},
 Title = {A thermodynamic formalism approach to the {Selberg} zeta function for {Hecke} triangle surfaces of infinite area},
 FJournal = {Communications in Mathematical Physics},
 Journal = {Commun. Math. Phys.},
 ISSN = {0010-3616},
 Volume = {337},
 Number = {1},
 Pages = {103--126},
 Year = {2015},
 _Language = {English},
 _DOI = {10.1007/s00220-015-2304-1},
 _Keywords = {37C30,37B10,11M36,53D25},
 _zbMATH = {6424646},
 _Zbl = {1348.37042}
}

@article{Pohl_Wabnitz,
 author = {Pohl, Anke and Wabnitz, Paul},
 title = {Selberg zeta functions, cuspidal accelerations, and existence of strict transfer operator approaches},
 fseries = {Memoirs of the American Mathematical Society},
 series = {Mem. Am. Math. Soc.},
 issn = {0065-9266},
 volume = {1616},
 _isbn = {978-1-4704-7867-4; 978-1-4704-8611-2},
 year = {2026},
 publisher = {Providence, RI: American Mathematical Society (AMS)},
 _language = {English},
 _doi = {10.1090/memo/1616},
 _keywords = {11-02,11M36,37C30,37D35,37D40,58J51},
 _zbMATH = {8175029}
}

@article {Scott83,
    AUTHOR = {Scott, P.},
     TITLE = {The geometries of {$3$}-manifolds},
   JOURNAL = {Bull. London Math. Soc.},
  FJOURNAL = {The Bulletin of the London Mathematical Society},
    VOLUME = {15},
      YEAR = {1983},
    NUMBER = {5},
     PAGES = {401--487},
      ISSN = {0024-6093},
}

@Article{Selberg,
    Author = {A. {Selberg}},
    Title = {{Harmonic analysis and discontinuous groups in weakly symmetric Riemannian spaces with applications to Dirichlet series}},
    FJournal = {{The Journal of the Indian Mathematical Society. New Series}},
    Journal = {{J. Indian Math. Soc., New Ser.}},
    ISSN = {0019-5839},
    Volume = {20},
    Pages = {47--87},
    Year = {1956},
    _Publisher = {Indian Mathematical Society c/o Department of Mathematics, University of Pune, Pune, Maharashtra, India},
    _Language = {English},
    _Zbl = {0072.08201}
}

@incollection {Selberg_lemma,
    AUTHOR = {Selberg, Atle},
     TITLE = {On discontinuous groups in higher-dimensional symmetric
              spaces},
 BOOKTITLE = {Contributions to function theory (internat. {C}olloq.
              {F}unction {T}heory, {B}ombay, 1960)},
     PAGES = {147--164},
 PUBLISHER = {Tata Institute of Fundamental Research, Bombay},
      YEAR = {1960},
   MRCLASS = {20.65 (22.70)},
  MRNUMBER = {0130324},
MRREVIEWER = {E. Grosswald},
}

@Book{Stein_Shakarchi,
 Author = {Stein, E.  and Shakarchi, R.},
 Title = {Complex analysis},
 FSeries = {Princeton Lectures in Analysis},
 Series = {Princeton Lect. Anal.},
 Volume = {2},
 ISBN = {0-691-11385-8},
 Year = {2003},
 Publisher = {Princeton, NJ: Princeton University Press},
 Language = {English},
 zbMATH = {1889799},
 Zbl = {1020.30001}
}

@article {Venkov,
    AUTHOR = {Venkov, A.},
     TITLE = {On {D}irichlet series that are associated with the defining
              equations and continued fractions in the theory of automorphic
              functions},
      NOTE = {Analytic number theory, mathematical analysis and their
              applications},
   JOURNAL = {Trudy Mat. Inst. Steklov.},
  FJOURNAL = {Akademiya Nauk Soyuza Sovetskikh Sotsialisticheskikh
              Respublik. Trudy Matematicheskogo Instituta imeni V. A.
              Steklova},
    VOLUME = {158},
      YEAR = {1981},
     PAGES = {31--44, 228},
      ISSN = {0371-9685},
   _MRCLASS = {10D24 (10D15)},
  _MRNUMBER = {662832},
_MRREVIEWER = {J. Szmidt},
}

@article {Venkov_book,
    AUTHOR = {Venkov, A.},
     TITLE = {Spectral theory of automorphic functions},
      NOTE = {A translation of Trudy Mat. Inst. Steklov.~153 (1981)},
   JOURNAL = {Proc. Steklov Inst. Math.},
  FJOURNAL = {Proceedings of the Steklov Institute of Mathematics},
      YEAR = {1982},
    NUMBER = {4 (153)},
     PAGES = {ix+163 pp.},
}

@misc{Wabnitz,
	AUTHOR = {P. Wabnitz},
	TITLE = {Strict transfer operator approaches for non-compact hyperbolic orbisurfaces},
	NOTE = {arXiv:2209.06601},
    YEAR = {2022},
}

@article {Zworski89,
    AUTHOR = {Zworski, M.},
     TITLE = {Sharp polynomial bounds on the number of scattering poles},
   JOURNAL = {Duke Math. J.},
  FJOURNAL = {Duke Mathematical Journal},
    VOLUME = {59},
      YEAR = {1989},
    NUMBER = {2},
     PAGES = {311--323},
      ISSN = {0012-7094},
   _MRCLASS = {35P25 (35P20 47A40 47F05 81F15)},
  _MRNUMBER = {1016891},
_MRREVIEWER = {J. A. Goldstein},
       _DOI = {10.1215/S0012-7094-89-05913-9},
       _URL = {https://doi.org/10.1215/S0012-7094-89-05913-9},
}

\setlength{\parindent}{0pt}

\end{document}